\let\phi\varphi
\let\subsubsection\subparagraph
\title  {Relations in singular instanton homology}
\author {P. B. Kronheimer and T. S. Mrowka%
	\thanks{%
		The work of the first author was supported by the National
		Science Foundation through NSF grants DMS-1707924 and
		DMS-2005310. The work of the second author was supported by
		NSF grants DMS-1808794 and DMS-2105512. Both authors were
                supported by a Simons Foundation Award \#994330 (Simons
                Collaboration on New Structures in Low-Dimensional
                Topology). This paper was completed while 
                the second author was in residence at the Simons
                Laufer Mathematical Sciences Institute as a  Clay
                Senior Scholar and supported by NSF grant DMS-1928930.}}
\address {Harvard University, Cambridge MA 02138 \\
	Massachusetts Institute of Technology, Cambridge MA 02139}
\begin{document}
	
	\maketitle
	
	\begin{abstract}
		We calculate the singular instanton homology with local
        coefficients for the simplest $n$-strand braids in
        $S^{1}\times S^{2}$ for all odd $n$,
        describing these homology groups and their module structures in terms of
        the coordinate rings of explicit algebraic curves. The
        calculation is expected to be equivalent to computing the quantum cohomology
        ring of a certain Fano variety, namely a moduli space of stable
        parabolic bundles on a sphere with $n$ marked points.
	\end{abstract}
	 
	\tableofcontents
	
	\section{Introduction}
        \subsection{Background}

        A pair $(Y,K)$, consisting of a closed, oriented 3-manifold
        and an embedded link, gives rise to a 3-dimensional orbifold
        $Z=Z(Y,K)$ whose underlying topology is that of $Y$ and whose
        singular consists of the locus $K$ where the orbifold
        structure has local stabilizers of order $2$. The pair
        $(Y,K)$, or the orbifold $Z$, is \emph{admissible} if $[K]$
        has odd pairing with some integer homology class. To an
        admissible orbifold $Z$, there is associated its
        \emph{singular instanton homology} \cite{KM-yaft},
        constructed from the Morse theory of the Chern-Simons
        functional on the space of $\SO(3)$ orbifold connections
        modulo a determinant-1 gauge group. With rational
        coefficients, we denote the singular instanton homology by
        $I(Z;\Q)$.

        A deformation of this instanton homology is
        described in \cite{KM-deformation}. It
        can be viewed as an instanton homology group with values in a local
        coefficient system on the space of connections modulo gauge,
        and it appears in this paper as $I(Z;
        \Gamma)$, where $\Gamma$ denotes a local system of free rank-1
        modules over the ring of Laurent polynomials \[ \cR =
        \Q[\tau^{\pm 1}]. \] (See section~\ref{sec:local-system}.)

        A choice of a 2-dimensional homology class in $Z$ gives rise
        to an operator $\alpha$, on both $I(Z;\Q)$ and $I(Z;\Gamma)$.
        For each choice of basepoint $p\in K$, there is also an
        operator $\delta_{p}$, depending on the connected
        component of $K$ on which $p$ lies and a choice of local
        orientation at $p$. These operators commute,
        and make $I(Z;\Q)$ and $I(Z;\Gamma)$ into modules over the
        rings $\Q[\alpha,\delta_{1},\dots,\delta_{n}]$ and
        $\cR[\alpha,\delta_{1},\dots,\delta_{n}]$ respectively.

        In \cite{Street}, Street completely described the instanton
        homology $I(Z;\Q)$ and its module structure in the case that
        $Z$ is the product
        \[
                    Z_{n}  = S^{1} \times S^{2}_{n}.
        \]
        Here $S^{2}_{n}$ denotes the 2-sphere with $n$ orbifold
        points. An extension of Street's result to the case of
        $S^{1}\times \Sigma_{g,n}$ was obtained by Xie and Zhang
        \cite{Xie-Zhang}, and an earlier model for both of these
        calculations is the work of Mu\~noz \cite{Munoz1,Munoz2} on
        the case of $S^{1}\times\Sigma_{g}$ (where the orbifold locus
        is empty).

        The purpose of this paper is to extend Street's
        calculation to the case of instanton homology with local
        coefficients $\Gamma$. Alongside $Z_{n}$, a closely related
        calculation is for the instanton homology of an orbifold we
        call
        $Z_{n,1}$. If the $n$ orbifold points in $S^{2}_{n}$ are
        arranged symmetrically around a circle, then a rotation $h$
        through $2\pi/n$ is an automorphism of $S^{2}_{n}$ which
        permutes the orbifold points, and we write $Z_{n,1}$ for its
        mapping torus:
        \[
        \begin{gathered}
        Z_{n,1} = M_{h}\\
        h : S^{2}_{n} \to S^{2}_{n}.
        \end{gathered}
        \]
        Since the orbifold locus in $Z_{n,1}$ is connected, there is
        only one operator $\delta=\delta_{p}$ in this case, and
        $I(Z_{n,1};\Gamma)$ is  a module for an algebra
        $\cR[\alpha,\delta]$ where $\cR$ is again a ring of Laurent
        polynomials. We can summarize the main theme of this paper as
        the solution to the following.

        \begin{problem}[$\star$]
            Describe $I(Z_{n};\Gamma)$ and\/ $I(Z_{n,1};\Gamma)$
            explicitly as modules for the algebras
            $\cR[\alpha,\delta_{1},\dots,\delta_{n}]$ and
            $\cR[\alpha,\delta]$ respectively.
        \end{problem}

        The motivation for studying this question came from a desire
        to calculate a variant of the singular instanton homology of torus knots,
        $\Inat(T_{n,q}; \Gamma)$, as studied in \cite{KM-InstConc},
        and the related knot concordance invariants of these. In
        \cite{KM-InstConc}, the base ring always had characteristic
        $2$, as necessitated by the construction there. An alternative
        formulation allows characteristic $0$, and the results of this
        paper are a main step. We return to this discussion briefly in
        section~\ref{sec:further}.
        
        \subsection{Statement of the result}
        
        We shall give a complete answer to $(\star)$, and 
        to give a flavor of the result here, we describe
        $I(Z_{n,1};\Gamma)$. First, there is an involution on the
        configuration space of connections on both of these orbifolds,
        defined by multiplying the holonomy on the $S^{1}$ factor in
        $S^{1}\times S^{2}$ by
        $-1 \in \SU(2)$. This gives rise to an operator $\epsilon$ on
        instanton homology, and there is therefore a decomposition
        \[
                I(Z_{n,1};\Gamma) = I(Z_{n,1};\Gamma)^{+} \oplus
                 I(Z_{n,1};\Gamma)^{-}
        \]
        into the eigenspaces of $\epsilon$. As modules, these two are
        related by changing the variable $\tau\in \cR$ to $-\tau$.
        Each of the two summands is a cyclic module for
        $\cR[\alpha,\delta]$ and they are therefore characterized by their
        ideals of relations, $J_{n,1}^{\pm}$ in the algebra:
        \[
        \begin{aligned}
        I(Z_{n,1};\Gamma)^{+} &\cong \cR[\alpha,\delta] /
               J_{n,1}^{+} \\
        I(Z_{n,1};\Gamma)^{-} &\cong \cR[\alpha,\delta] /
               J_{n,1}^{-} .
               \end{aligned}
        \]
        Over the
        field $\C$, we can regard $J_{n,1}^{+}$ and $J_{n,1}^{-}$
        as the defining ideals
        of possibly non-reduced curves \[
        \begin{aligned}
        D_{n}^{+} , D_{n}^{-}\subset
        \C^{*}\times \C\times \C \\
        \end{aligned}
        \] with coordinates $(\tau,
        \alpha,\delta)$. Our final description of these curves is as 
        determinantal varieties: they are the loci of points where 
        particular $m\times(m+1)$ matrices $S^{+}$ and $S^{-}$ with entries in
        $\cR[\alpha,\delta]$ fail to have full rank. Here
        $m=(n-1)/2$. Equivalently, $J_{n,1}^{\pm }$ is the ideal
        generated by the $m\times m$ minors of $S^{\pm}$. Explicitly when
        $n=11$ and $m=5$, the matrix $S^{\pm}$ is given by $S_{0} \pm S_{1}$
\[
S_{0}  = \begin{pmatrix}
 -\alpha -\delta/2  & \alpha -19 \delta/2  & 0 & 0 & 0 & 0 \\
 0 & -\alpha -5 \delta/2  & \alpha -15 \delta/2  & 0 & 0 & 0 \\
 0 & 0 & -\alpha -9 \delta/2  & \alpha -11 \delta/2  &  0 & 0 \\
 0 & 0 & 0 & -\alpha -13 \delta/2  & \alpha -7 \delta/2  & 0 \\
 0 & 0 & 0 & 0 & -\alpha -17 \delta/2  & \alpha -3 \delta/2  \\
\end{pmatrix}
 \]
 and
\begin{multline*}
    S_{1} = 
    \begin{pmatrix}
         \tau ^7 & 0 & 0 & 0 & 0 \\
 0 & \tau ^3 & 0 & 0 & 0 \\
 0 & 0 & \frac{1}{\tau } & 0 & 0 \\
 0 & 0 & 0 & \frac{1}{\tau ^5} & 0 \\
 0 & 0 & 0 & 0 & \frac{1}{\tau ^9} 
    \end{pmatrix} 
    \cdot
    \begin{pmatrix}
         0 & 0 & 0 & 0 & -9 & 5 \tau ^4+4 \\
 0 & 0 & 0 & -7 & 3 \tau ^4+2 & 2 \tau ^4 \\
 0 & 0 & -5 & \tau ^4 & 4 \tau ^4 & 0 \\
 0 & -3 & -\tau ^4-2 & 6 \tau ^4 & 0 & 0 \\
 -1 & -3 \tau ^4-4 & 8 \tau ^4 & 0 & 0 & 0
    \end{pmatrix}
\end{multline*}
 Although the matrices may look elaborate at first glance, they follow
 a fairly simple pattern that is readily described for general $n$.
 (See section~\ref{subsec:application-hilb}.)
 Note in particular that $S_{0}$ is a 2-band matrix with entries that
 are linear forms in $(\alpha, \delta)$, while the entries of $S_{1}$
 depend only on $\tau$.
         On setting $\tau=1$ in $S_{0}$ above, one
        recovers generators for the ideal that is identified by
        Street in \cite{Street}. For a general fixed value of $\tau$,
        the corresponding locus is a subscheme of the
        $(\alpha,\delta)$-plane of length $m(m+1)$. 
        A picture of the real locus of $D^{\pm}_{n}$ for
        $n=7$ is given in Figure~\ref{fig:curve7}, together the set of
        points on $D^{\pm}_{n}$ where $\tau=0.6$. 
\begin{figure}
    \begin{center}
       \includegraphics[width=10.5cm]{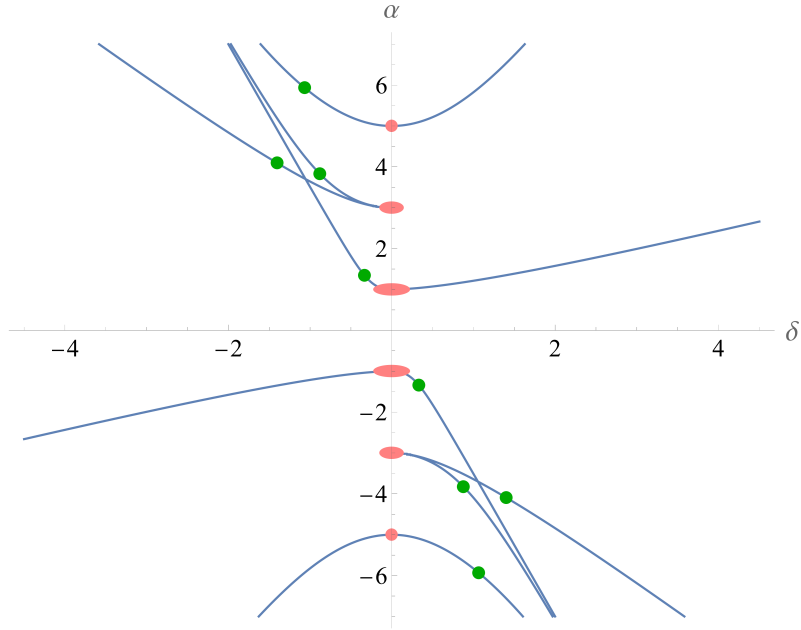}
    \end{center}
    \caption{\label{fig:curve7} The blue curve is the projection of
    the real locus of
    $D^{\pm}_{n}$ to the $(\delta,\alpha)$ plane for $n=7$. The green points are
    the points where $\tau=0.6$, showing the simultaneous eigenvalues
    of the operators $\delta$ and $\alpha$ for this value of $\tau$.
    There are 12 of these, only 8 of which are real. The pink points
    indicate the subscheme of total length 12 defined by the minors of
    $S_{0}\pm S_{1}$ when $\tau=1$, which is the case described by
    Street \cite{Street}. Although the real curve looks rather smooth at
    $\alpha=\pm 1$, it has a uni-branch triple points there: in local
    analytic coordinates, the equation of the curve has the form
    $y^{3}=x^{7}$.}
\end{figure}

\begin{remark}
This description of $D^{\pm}_{n}$ as a determinantal variety means that
the corresponding ideal $J_{n,1}^{\pm}$ is generated by $m+1$
elements, for this is the number of $m\times m$ minors. We shall see in
fact that each of these ideals can be generated by just two of the
minors.
\end{remark}

As in \cite{Munoz1, Munoz2, Street, Xie-Zhang}, the starting point for
the calculation is an explicit generating set for the ideal of
relations in the ordinary cohomology of a representation variety: in
our case, as in \cite{Xie-Zhang}, these are the ``Mumford relations''
in the cohomology of the representation variety associated to
$S^{2}_{n}$. (See \cite{Earl-Kirwan} for example.) We obtain simple
explicit formulae for these relations as products of linear forms in
the variables $\alpha$ and $\delta_{i}$. The matrix $S_{0}$ above
arises as a matrix of syzygies for the Mumford relations. To compute
the deforming term $S_{1}$, it is only necessary to understand the
contributions of moduli spaces of instantons on $\R\times Z_{n}$ of
smallest non-zero action (action $1/4$ in the normalization where the
standard instanton on $\R^{4}$ has action $1$). The contributions of
these moduli spaces can be understood quite explicitly by a
wall-crossing argument. A closely related phenomenon is present in
\cite{Munoz2}.
        
 \subsection{Outline}

 In section~\ref{sec:Instanton} we recall the definition of singular
 instanton homology with local coefficients and the construction of
 the operators that act on it in general. (Note that from
 section~\ref{sec:Instanton} onwards, we simply write $I(Z)$ for the
 homology group referred to as $I(Z;\Gamma)$ above, without explicit
 mention of the local coefficients.) In section~\ref{sec:braids}, we
 introduce $Z_{n}$ and $Z_{n,\pm 1}$ and study the ordinary cohomology
 of the relevant representation
 varieties and instanton homologies, enough to show that these can be
 described as cyclic modules for the algebra of operators which act on
 them. This material is quite standard.

 In section~\ref{sec:relations-ordinary}, we describe the Mumford
 relations in the ordinary cohomology of the representation variety
 of $Z_{n}$. We derive a very explicit formula for generators of the
 ideal of relations in these cohomology groups. The relations in the
 ordinary cohomology ring of the representation variety of $Z_{n}$
 admit a deformation which yields relations in the instanton homology
 $I(Z_{n})$. The existence of this deformation is established in
 section~\ref{sec:relations-instanton} together with a calculation of
 the subleading term using a wall-crossing calculation rather as in
 \cite{Munoz2}.

 Knowledge of the subleading term turns out to be sufficient to obtain
 a complete answer, and the description of $I(Z_{n,1})$ (or
 equivalently $I(Z_{n,-1})$) that is outlined earlier in this
 introduction is derived in section~\ref{sec:calculation}. Some further
 remarks are contained in section~\ref{sec:further} at the end of the
 paper.

	\section{A version of singular instanton homology}
        \label{sec:Instanton}

        In this section we review the construction of instanton
        homology with local coefficients, for admissible bifolds.
        General references include \cite{KM-yaft} and
        \cite{KM-ibn1}.
        
	\subsection{Bifolds and their Floer homology}
        \label{subsec:bifold-connections}
        
        For economy of notation, we will typically write simply $Z$
        for a pair consisting of a connected, oriented 3-manifold
        $Y$ and an embedded (unoriented) link $K=K(Z) \subset Y$.
        Following \cite{KM-yaft} and \cite{KM-unknot}, we will regard
        $Z$ as determining an orbifold (a \emph{bifold} in the
        notation of \cite{KM-Tait}) whose underlying topological
        space is $Y$ and whose singular set is $K(Z)$. The local
        stabilizer of the orbifold geometry at points of $K(Z)$ is of
        order $2$. When talking of (for example) Riemannian metrics on
        $Z$, we will always mean orbifold Riemannian metrics. A bifold $Z$ is
        \emph{admissible} if there is an element of $H^1(Y;\Z)$ which has
        non-zero mod-2 pairing with the class $[K(Z)] \in H_1(Y;\Z/2)$.
	
        Associated to a 3-dimensional bifold $Z$, we have a space of bifold 
        connections $\bonf(Z)$. In this paper, $\bonf(Z)$ will always
        consist of the bifold $\SO(3)$ connections with $w_{2}=0$
        modulo the determinant-1 gauge group. In the language of
        \cite[section~2]{KM-ibn1}, this is the space of marked
        bifold connections in which the marking region is the
        complement of the singular set $K(Z)$ and the bundle has
        $w_{2}=0$ on the marking region.

        \begin{remark}
        The space $\bonf(Z)$ can be identified with the
        space of gauge equivalence classes of $\SU(2)$ connections on the complement of the
        singular set $K(Z)$ such that the associated $\SO(3)$ bundle
        extends to an orbifold $\SO(3)$ bundle on $Z$ with non-trivial
        monodromy (of order 2) at the singular points. When
        interpreted as $\SU(2)$ connections in this way, the limiting
        holonomy of the $\SU(2)$ connections on small loops linking
        the singular locus has order $4$. This is the viewpoint adopted,
        for example, in \cite{KM-gtes1, KM-gtes2}.
        \end{remark}

        \begin{definition}\label{def:rep}
        We write  
        $\Rep(Z)\subset \bonf(Z)$ for the space of flat bifold
        connections modulo the determinant-1 gauge group. If $Z$ is admissible, then
        $\Rep(Z)$ consists only of irreducible connections.
        \end{definition}
	
	\subsection{A local coefficient system}
        \label{sec:local-system}

         For each
        component $K^i\subset K(Z)$, after choosing a framing, we
        obtain a map to $S^1$,
	\[
	             h_i : \bonf(Z)\to S^1,
	\]
        as in \cite{KM-yaft} and \cite[section 2.2]{KM-ibn1}.
        Specifically, following \cite{KM-yaft},
        given $[A]\in \bonf(Z)$, we may restrict the 
        connection $[A]$ to the boundary of the framed
        $\epsilon$-tubular neighborhood of $K^i$ and obtain, in the
        limit as $\epsilon\to 0$, a flat $\SO(3)$ connection on the
        torus whose structure group reduces to $\SO(2)$. The holonomy
        of the $\SO(2)$ connection along the longitude defines
        $h_i([A])$. 

        An orientation of $K^i$ is not needed here, because the
        orientation of the $\SO(2)$ bundle also depends on an
        orientation of $K^i$. (That is, the orientation of $K^i$ is
        used twice in this construction.) The framing is also
        inessential, as a change of framing will change $h_i$ by a
        half-period.

        Taking the product over the set of all components of $K$, we
        define a single map $h: \bonf(Z)\to S^{1}$ by
        \[
             h = \times_{i} h_i.
        \]
        
        Over the circle $S^1$, there is a standard local system with
        fiber the ring of finite Laurent series
        \begin{equation}\label{eq:cR-def}
                 \cR = \Q[\tau^{\pm 1}].
        \end{equation}
        such
        that the monodromy of the local system around the positive
        generator of $S^1$ is multiplication by $\tau$. Then by pulling
        back this local system by the  map $h$,
        we obtain a local system $\Gamma$ on
        $\bonf(Z)$. We summarize this construction with a definition.

        \begin{definition}\label{def:cR}
        Unless otherwise stated, the notation $\cR$ will denote the
        ring $\Q[\tau^{\pm 1}]$, and $\Gamma$ will denote the
        corresponding local system of free rank-1 $\cR$-modules over
        $\bonf(Z)$, for any 3-dimensional bifold $Z$.
        \end{definition}
        
        If $Z$ is
        admissible, then by the standard construction (see
        \cite{KM-yaft, KM-deformation}), we obtain an instanton homology
        group for admissible bifolds:

        \begin{definition}\label{def:I}
        Let $Z$ be an admissible bifold of dimension $3$.
        After choosing a Riemannian
        metric and perturbation to achieve a Morse-Smale condition for
        the gradient flow of the Chern-Simons functional on
        $\bonf(Z)$, we obtain an instanton Floer complex
        $\CI(Z;\Gamma)$ of free $\cR$-modules whose homology
        $I(Z;\Gamma)$ is the instanton homology
        of $Z$. We will generally write $I(Z)$ and omit $\Gamma$ from
        the notation, unless the context demands otherwise. This is a
        $\Z/4$ graded module.
        \end{definition}

        \subsection{Functoriality and operators}
        \label{subsec:operators}
        
        We consider 4-dimensional bifolds $W$ as cobordisms between
        3-dimensional bifolds. In the context of this paper, the
        singular locus $\Sigma=\Sigma(W)$ of the orbifold $W$ will
        always be an embedded surface (not necessarily orientable). In
        particular, we
        do not consider foams -- singular surfaces -- as in \cite{KM-Tait}. The Floer
        homology groups $I(Z)$ are  functorial
        in the sense that a bifold cobordism $W$ from $Z^{0}$ to
        $Z^{1}$ gives rise to a map
        \[
               I(W) : I(Z^{0})\to I(Z^{1})
        \]
         compatible with compositions.

         The map $I(W)$ is obtained from
         suitable weighted counts of solutions to the perturbed
         anti-self-duality equations on the bifold $W$, after
         attaching cylindrical ends. This construction initially
         gives rise only to a projective functor, in that the overall
         sign of $I(W)$ is ambiguous. When $\Sigma(W)$ is oriented, the sign
         ambiguity can be resolved by choosing a homology orientation
         for $W$ in the sense of \cite{KM-yaft}. In the case that
         $\Sigma(W)$ is not necessarily orientable, an appropriate
         substitute is the notion of an $\imath$-orientation introduced
         in \cite{KM-unknot}. (The sign ambiguity in the
         non-orientable case will not particularly concern us in this
         paper.)

         Recall that in the present context $I(Z)$ denotes the
         instanton homology with coefficients in the local system
         $\Gamma$. That being so, the solutions $A$ to the perturbed
         anti-self-duality equations on $W$ are counted not just with
         signs $\pm 1$, but with weights that are units in the ring
         $\cR$. More precisely, if $\rho_{0}$ and $\rho_{1}$ are
         critical points of the perturbed Chern-Simons functional in
         $\bonf(Z^{0})$ and $\bonf(Z^{1})$, and if $[A]$ is a solution
         of the perturbed equations on $W$ with cylindrical ends,
         asymptotic to $\rho_{0}$ and $\rho_{1}$, then $[A]$
         contributes to the matrix entry of the map $I(W)$ at the
         chain level with a contribution $\pm \Gamma(A)$, where
         $\Gamma(A): \Gamma(\rho_{0}) \to \Gamma(\rho_{1})$ is given
         by
         \begin{equation}\label{eq:Gamma-A}
                    \Gamma(A) = \tau^{\nu(A) +
                    (1/2)(\Sigma\cdot\Sigma)}.
         \end{equation}
	Here $\mu$ is obtained from a curvature integral on the
         2-dimensional singular set $\Sigma=\Sigma(W)$, and the
         self-intersection number $\Sigma\cdot\Sigma$ is computed
         relative to chosen framings of the singular sets $K(Z^{0})$
         and $K(Z^{1})$. The expression on the right-hand side of
         \eqref{eq:Gamma-A} is not an element of $\cR$ itself, because
         the exponent is not generally an integer. It is, however, a
         homomorphism between the rank-1 $\cR$-modules
         $\Gamma(\rho_{0})\to \Gamma(\rho_{1})$ in a natural way.
         For details of this
         construction see, for example, \cite[section 3.9]{KM-yaft} and
         \cite{KM-ibn1}. As explained there, the choice of framings
         is essentially immaterial. Consistent with our notation
         $I(Z)$ in which the local coefficient system $\Gamma$ is
         implied, we will continue to write simply $I(W)$ for the
         $\cR$-module homomorphism between these instanton
         homology groups.

	As well as the map $I(W)$ above, we have the generalizations
        obtained by cutting down the moduli spaces on $W$ by
        cohomology classes in the configuration space of bifold
        connections $\bonf(W)$. Here $\bonf(W)$ is a space of $\SO(3)$
        bifold connections modulo the determinant-1 gauge group, and
        in the language of \cite{KM-ibn1}, this is the space of marked
        bifold connections in which the marking region is the
        complement of the singular set $\Sigma(W)$ and the bundle has
        $w_{2}=0$ on the marking region.

        To describe the relevant cohomology classes more specifically,
        and to fix conventions,
        there is a universal orbifold $\SO(3)$ bundle,
        \[
                \mathbb{E}    \to \bonf^{*}(W) \times W
        \]
        which has an orbifold Pontryagin class, \[ p_{1}^{\orb}(\mathbb{E}) \in
        H^{4}(\bonf^{*}(W) \times W; \Q). \] We adopt
        the convention that our preferred 4-dimensional characteristic
        class is $-(1/4)p_{1}^{\orb}(\mathbb{E})$, which coincides with
        $c_{2}^{\orb}(\tilde {\mathbb{E}})$ in the case that there is a lift to an
        $\SU(2)$ bundle $\tilde {\mathbb{E}}$. Given a class $\gamma$ in
        $H^{2}(W;\Q)$ or $H^{0}(W;\Q)$, we obtain classes
        \begin{equation}\label{eq:slant-mu}
            -(1/4)p_{1}^{\orb}(\mathbb{E}) / [\gamma]
        \end{equation}
        in $H^{2}(\bonf^{*}(W);\Q)$ or $H^{4}(\bonf^{*}(W);\Q)$
        respectively. 

        In addition to the classes \eqref{eq:slant-mu}, if $p$ is a
        point of the orbifold locus $\Sigma(W)$, then the restriction
        of $\mathbb{E}$ to $\bonf^{*}(W) \times \{p\}$ has a decomposition
        \[
               \mathbb{E}_{p} = \R \oplus \mathbb{V}_{p}
        \]
        where $\mathbb{V}_{p}$ is a 2-plane bundle.
        An orientation of $\mathbb{V}_{p}$
        depends on a choice of normal orientation to the orbifold
        locus at $p$. Having chosen such an orientation,
        a class $\delta_{p} \in H^{2}(\bonf^{*}(W); \Q)$ is then defined as
        \begin{equation}\label{eq:delta-p-def}
                    \delta_{p} = \frac{1}{2} e(\mathbb{V}_{p}).
        \end{equation}
        We can regard $\delta$ here as depending on a choice of an
        element in $H_{0}(\Sigma(W);O)$, where $O$ is the orientation
        bundle of $\Sigma(W)$ with rational coefficients.
        
        Combining the classes \eqref{eq:slant-mu} for $\gamma\in
        H^{i}(W; \Q)$ and the classes $\delta_{p}$, we
        obtain homomorphisms of $\cR$-modules
        \begin{equation}\label{eq:IWa}
                I(W,a) : I(Z^{0})\to I(Z^{1})
        \end{equation}
        depending linearly on
        \begin{equation}\label{eq:sympolys}
                a \in \mathrm{Sym}_{*}\biggl( H_{2}(W; \Q) \oplus
                 H_{0}(W; \Q) \oplus
                          H_{0}(\Sigma(W); O)\biggr).
        \end{equation}
        Since $I(Z^{0})$ and $I(Z^{1})$ are $\cR$-modules, we may
        extend linearly over $\cR$ to allow also
      \begin{equation}\label{eq:sympolys-R}
                a \in \mathrm{Sym}_{*}\biggl( H_{2}(W; \Q) \oplus
                 H_{0}(W; \Q) \oplus
                          H_{0}(\Sigma(W); O)\biggr) \otimes \cR.
        \end{equation}

        The construction of the operators $I(W,a)$ is suitably
        functorial. In particular, this means for us that, in the case
        that $W$ is a cylinder $[0,1]\times Z$, we have
        \[
                I(W, a_{1} a_{2}) = I(W, a_{1}) I(W, a_{2}).
        \]
        
        We will always be dealing with the case that $W$ is connected,
        so there is only one class $[w]$ in $H_{0}(W;\Q)$.
        From \cite{Obstruction,KM-yaft}, we note the following relation
        among the homomorphisms $I(W,a)$.
        
        \begin{proposition}\label{prop:2-d-relation}
        Let $p$ a point in $\Sigma(W)$  with a chosen orientation of
        $T_{p}\Sigma(W)$, representing a class in
        $H_{0}(\Sigma(W); O)$ in the algebra \eqref{eq:sympolys}.
        Let $w$ be a point in $W$, representing a class in
        $H_{0}(W; \Q)$. Then we have a relation
        \[
                     I\bigl(W, (p^{2} + w - \tau^{2} -
                     \tau^{-2})b\bigr) = 0,
        \]
        for any $b$ in the algebra \eqref{eq:sympolys}.
        \end{proposition}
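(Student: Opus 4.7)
The plan is to prove the relation by reducing it, via functoriality, to an operator identity on $I(Z)$, and then running a local moduli-space calculation in the spirit of \cite{Obstruction, KM-yaft}, decorated with the local coefficient weights $\tau^{\nu(A) + (1/2)(\Sigma\cdot\Sigma)}$ from \eqref{eq:Gamma-A}.

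The reduction step amounts to showing that
$$
\delta_p^2 + \mu(w) = (\tau^2 + \tau^{-2}) \cdot \mathrm{id}
$$
as endomorphisms of $I(Z)$, for any admissible $3$-bifold $Z$ with a marked point $p \in K(Z)$ and regular point $w$. Given this, the general statement follows from the product rule $I(W, a_1 a_2) = I(W, a_1) I(W, a_2)$ on a cylindrical collar of $W$, with $a_1 = (p^2 + w - \tau^2 - \tau^{-2})$ and $a_2 = b$. Next I would recognize $\delta_p^2 + \mu(w)$ as a single cohomology class on $\bonf^*(Z)$ carrying purely local information about the $\Z/2$ orbifold structure: using $\mathbb{E}|_p = \R \oplus \mathbb{V}_p$ together with $e(\mathbb{V}_p)^2 = p_1(\mathbb{V}_p)$, one identifies $\delta_p^2 = (1/4) p_1(\mathbb{E}|_p)$ (ordinary Pontryagin class), while $\mu(w) = -(1/4) p_1^{\orb}(\mathbb{E})/[w]$ is the orbifold Pontryagin class evaluation at a regular $w$. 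Up to a factor of $\pm 1/4$, their sum is precisely the defect between the orbifold and ordinary Pontryagin classes at the singular stratum, supported near $K(Z)$.

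The heart of the argument is computing this local defect. Following \cite{Obstruction, KM-yaft}, its action on $I(Z)$ is evaluated by pairing with a moduli space of low-action bubbles concentrated at a point of the singular circle. The local model is the standard $\SU(2)$ instanton on $\R^4$ pulled back to $\R^2 \times (\R^2/\{\pm 1\})$; modulo centering, there are two such configurations distinguished by the choice of $\SU(2)$-lift of the bubble, whose relative monodromy invariants $\nu(A)$ differ by $\pm 2$. In ordinary coefficients ($\tau = 1$) the two contribute $1$ each, summing to the constant $2$; with the local system $\Gamma$, the common framing contribution $(1/2)(\Sigma\cdot\Sigma)$ cancels between them, leaving the weights $\tau^{+2}$ and $\tau^{-2}$, which sum to $\tau^2 + \tau^{-2}$.

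The main obstacle is this last local calculation: one must pin down the two distinguished bubbles precisely, verify that they enter with the same sign (so that they add rather than cancel), and confirm that the relative shift in $\nu(A)$ is exactly $\pm 2$ rather than some other value. The reduction and characteristic-class steps are essentially formal, but the coefficient $\tau^2 + \tau^{-2}$ — rather than some other symmetric combination of $\tau^{\pm k}$ — is pinned down only by this explicit standard-model analysis.
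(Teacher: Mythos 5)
The paper does not actually prove this proposition: it is quoted from \cite{Obstruction,KM-yaft}, and the remark that follows it records exactly the two ingredients you identify, namely the identity $\delta_{p}^{2}-(1/4)p_{1}^{\orb}(\mathbb{E})/[w]=0$ of \eqref{eq:2-d-relation-coh} in $H^{4}(\bonf^{*}(W);\Q)$ together with instanton bubbling at $p$ as the source of $\tau^{2}+\tau^{-2}$. So your outline has the right overall shape, but it is not a proof: everything that separates the proposition from the formal characteristic-class identity lives in the bubbling analysis, and that is precisely the step you defer. Concretely, you do not establish (i) that the failure of the naive pairing argument is a correction of the form $(\text{scalar})\cdot I(W,b)$ at all --- this requires localizing the excess intersection at $p$, a gluing description of the ends of the cut-down moduli spaces, and orientation bookkeeping --- nor (ii) that the scalar is $\tau^{2}+\tau^{-2}$ with both signs $+1$. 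Moreover your proposed local model is not right as stated: the two contributions do not come from ``two $\SU(2)$-lifts of the standard $\R^{4}$ instanton'' (lifts differing by the center are identified in the determinant-$1$ configuration space), but from the two minimal singular bubbles concentrated at the point $p$ of $\Sigma$, which in the notation of \cite{KM-gtes1} have $(k,l)=(0,1)$ and $(1,-1)$, both of action $1/2$; since $\nu(A)$ shifts by $-2l$, they carry weights $\tau^{\mp 2}$, and identifying these, and only these, as the contributing configurations, each with sign $+1$, is the entire content of the computation cited in \cite{Obstruction}.

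Two smaller points. First, the ``defect between the orbifold and ordinary Pontryagin classes'' framing is misleading: in $H^{4}(\bonf^{*}(W);\Q)$ the class represented by $p^{2}+w$ is exactly zero by \eqref{eq:2-d-relation-coh}, so there is no cohomological discrepancy to compute; the correction is a non-compactness effect in the definition of $I(W,\cdot)$, not a characteristic-class defect supported along $K$. Second, the reduction to an operator identity on $I(Z)$ via $I(W,a_{1}a_{2})=I(W,a_{1})I(W,a_{2})$ uses the product rule, which the paper states only for cylinders; it applies here only after pushing the classes of $p$ and $w$ into a collar, which is not possible when the component of $\Sigma(W)$ containing $p$ is a closed surface in the interior of $W$. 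In that generality one must argue directly on $W$ with cylindrical ends (or factor $W$ through a cobordism containing that component), so even the ``formal'' reduction step needs more care than the proposal gives it.
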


        \begin{corollary}
            The map $I(W, p^{2}b)$ is
            independent of the choice of oriented point $p \in \Sigma(W)$.            
        \end{corollary}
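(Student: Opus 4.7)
The plan is to deduce the corollary as an immediate consequence of Proposition~\ref{prop:2-d-relation}. Given any oriented point $p \in \Sigma(W)$, I would apply the proposition and use the $\cR$-linearity of $I(W,\cdot)$ in its argument to rearrange the vanishing relation as
\[
I(W, p^{2} b) = I\bigl(W, (\tau^{2} + \tau^{-2} - w)\, b\bigr).
\]
The right-hand side has no dependence on $p$: the elements $\tau^{\pm 2}$ are fixed scalars in $\cR$, and the class $w \in H_{0}(W;\Q)$ is canonical because $W$ is connected (as noted just before the proposition). Consequently, for any two oriented points $p, p' \in \Sigma(W)$, both $I(W, p^{2} b)$ and $I(W, (p')^{2} b)$ are equal to the same expression in $w$, $b$ and $\tau^{\pm 2}$, which yields the claimed independence.

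As a sanity check I would also note two side points. First, $\delta_{p}$ reverses sign under reversal of the normal orientation used in its definition via \eqref{eq:delta-p-def}, so $p^{2} = \delta_{p}^{2}$ is automatically insensitive to the choice of orientation of $T_{p}\Sigma(W)$, even before invoking the proposition. Second, the argument is unaffected if $p$ and $p'$ lie in different connected components of $\Sigma(W)$, since the right-hand side of the displayed identity above is blind to the component containing $p$. There is no real obstacle here: all the content of the corollary is already carried by Proposition~\ref{prop:2-d-relation}, and the proof amounts to this one rearrangement.
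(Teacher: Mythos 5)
Your rearrangement is exactly the intended argument: the paper states the corollary as an immediate consequence of Proposition~\ref{prop:2-d-relation} (with $[w]$ canonical since $W$ is connected) and gives no separate proof. Your proposal is correct and matches that reasoning.
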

 
        \begin{remark}
        The relation in Proposition~\ref{prop:2-d-relation} reflects
        (in part) a relation in the cohomology ring of
        $\bonf^{*}(W)$, where we have a 2-dimensional class
        $\delta_{p}$ and a 4-dimensional class
        $-(1/4)p_{1}^{\orb}(\mathbb{E})/ [w]$. From their construction as
        characteristic classes, these satisfy
        \begin{equation}\label{eq:2-d-relation-coh}
               \delta^{2}_{p} -(1/4)p_{1}^{\orb}(\mathbb{E})/ [w] = 0
        \end{equation}
        in $H^{4}(\bonf^{*}(W); \Q)$. The extra terms
        $\tau^{2}+\tau^{-2}$ in the proposition arise from instanton
        bubbling
        contributions \cite{Obstruction}. 
        \end{remark}

        Proposition~\ref{prop:2-d-relation} also tells that the generator
        corresponding to $[w]\in H_{0}(W;\Q)$ is redundant. We obtain
        the most general homomorphism $I(W,a)$ if we only take $a$ in
        the smaller algebra
        \begin{equation}\label{eq:sympolys-reduced}
        \mathrm{Sym}_{*}\biggl( H_{2}(W; \Q) \oplus
                          H_{0}(\Sigma(W); O)\biggr).
        \end{equation}

        There is an additional construction we can make if
        we are given a distinguished class $e\in
        H_{2}(W; \Z)$. We consider the space $\bonf(W)^{e}$ of marked bifold
        $\SO(3)$ connections on $W$ where the marking region is again
        the complement of $\Sigma(W)$ and where the marking data has
        \[
        w_{2}=\PD(e)|_{W\setminus \Sigma(W)} \bmod  2.
        \]
        After attaching cylindrical ends, the instantons in
        $\bonf(W)^{e}$  provide us with maps
        \begin{equation}\label{eq:IWa-e}
                 I(W,a)^{e} : I(Z^{0})\to I(Z^{1}) .
        \end{equation}
        The integer lift $e$ in homology is used to orient the moduli
        spaces and determines the overall sign of the map
        $I(W,a)^{e}$. If $e - e' = 2 v$, so that $e$ and $e'$ define
        the same mod 2 class, then (as in \cite{Donaldson-orientations})
        we have
        \begin{equation}\label{eq:e-sign}
                        I(W,a)^{e'} = (-1)^{v\cdot v} I(W,a)^{e}.
        \end{equation}
        
        \begin{remark}
        Note that, as discussed for example in \cite{KM-unknot}, one
        can more generally consider the case that $e$ is a relative
        class so that  $\partial e \in H_{1}(\Sigma(W))$, but the more
        restrictive version here is required because we wish to use
        the local coefficient system $\Gamma$, which is otherwise not
        defined. See also \cite[section 2.2]{KM-ibn1}.
        \end{remark}

	\section{Torus braids in $S^1\times S^2$}
        \label{sec:braids}

        \subsection{The torus braids}
        
        The following examples play an important role for us.
	
	\begin{definition}\label{def:Zn}
        Let $\pp=\{p_1, \dots, p_n\}$ be $n$ points arranged
        symmetrically around the equator of $S^2$. We write $Z_{n}$
        for the bifold whose underlying 3-manifold $Y$ is the product
        $S^1\times S^2$ and whose singular locus $K$ is the
        $n$-component link \[ K_{n}=S^1\times \pp \subset S^{1}\times
        S^{2}.\] 
	\end{definition}

	\begin{definition}\label{def:Znq}
    For any $q\in \Z$, we define a bifold $Z_{n,q}$ as follows. The
    3-manifold $Y$ is again $S^1\times S^2$. If $\phi\in \R/(2\pi\Z)$
    denotes an angular coordinate on the equator of $S^2$, and
    $\theta$ a coordinate on the $S^1$ factor, then 
    $K=K_{n,q}$ will be the link determined by $n\phi =
    q\theta\pmod{2\pi}$.
	\end{definition}

        The bifold $Z_{n,q}$ is admissible when $n$ is odd. The link
        $K_{n,q}\subset S^1\times S^2$ is connected (a knot) when $n$
        and $q$ are coprime. When $q=0$, the orbifold $Z_{n,0}$
        coincides with $Z_{n}$ above.

        It is evident from the definitions that the orbifold $Z_{n,q}$
        is isomorphic to $Z_{n,-q}$ by an orientation-reversing map.
        With a little more thought, one can see that there is also an
        orientation-\emph{preserving} isomorphism:

\begin{lemma}\label{lem:orientation-Knq}
       The link $K_{n,q}$ is isotopic in $S^{1}\times S^{2}$ to the
       link $K_{n,-q}$. As a consequence, there is an
       orientation-preserving isomorphism of bifolds from $Z_{n,q}$ to
       $Z_{n,-q}$. 
\end{lemma}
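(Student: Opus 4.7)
The plan is to exhibit an explicit ambient isotopy of $S^{1}\times S^{2}$, through orientation-preserving diffeomorphisms, that carries $K_{n,q}$ to $K_{n,-q}$. The second assertion of the lemma, the existence of an orientation-preserving isomorphism of bifolds $Z_{n,q}\to Z_{n,-q}$, will then follow automatically, since an ambient isotopy of $S^{1}\times S^{2}$ produces a self-diffeomorphism isotopic to the identity which is orientation-preserving and which restricts to a diffeomorphism of the singular locus.

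I will choose coordinates on $S^{2}\subset \R^{3}$ so that the equator lies in the $x_{1}x_{2}$-plane and $\phi$ is measured from the positive $x_{1}$-axis. Let $R_{\eta}^{y}\in \SO(3)$ denote rotation by angle $\eta$ about the $x_{2}$-axis. The family
\[
\Phi_{\eta}(\theta,x) = \bigl(\theta,\,R_{\eta}^{y}\cdot x\bigr), \qquad \eta\in [0,\pi],
\]
is then an ambient isotopy of $S^{1}\times S^{2}$ from the identity to the orientation-preserving diffeomorphism $\Phi_{\pi}$. The key step is a short coordinate computation: the restriction of $R_{\pi}^{y}$ to the equator is the involution $\phi\mapsto \pi-\phi$, so the defining equation $n\phi\equiv q\theta\pmod{2\pi}$ for $K_{n,q}$ transforms into
\[
n\phi\equiv n\pi - q\theta \pmod{2\pi},
\]
which is recognizable as $K_{n,-q}$ translated by the constant $\pi/n$ in $\phi$ when $n$ is odd (or by $0$ when $n$ is even). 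A further ambient isotopy by rotations about the polar axis absorbs this residual constant shift, and concatenating the two families produces an orientation-preserving diffeomorphism $F$ isotopic to the identity with $F(K_{n,q})=K_{n,-q}$.

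I expect the only subtlety to lie in the middle coordinate calculation: one has to verify that a half-turn about a horizontal axis actually reverses the sense of winding of every strand around the polar axis, and to track the constant shift $n\pi\pmod{2\pi}$ correctly rather than mistaking it for an essential deformation. Once that is confirmed, the rest is routine concatenation of isotopies. An alternative, more abstract route realises $Z_{n,q}$ as the mapping torus $M_{h^{q}}$ where $h$ denotes rotation of $S^{2}_{n}$ by $2\pi/n$, and exploits the relation $j h^{q} j^{-1}=h^{-q}$ for a reflection $j$ of $S^{2}$ preserving the $n$ marked points to produce a diffeomorphism $M_{h^{q}}\to M_{h^{-q}}$ by $(t,x)\mapsto(1-t,j(x))$; this yields the bifold isomorphism directly but does not manifestly exhibit the underlying self-map of $S^{1}\times S^{2}$ as isotopic to the identity, so the geometric argument above is preferable.
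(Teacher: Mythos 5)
Your proof is correct and is essentially the argument in the paper: both realize the isotopy by rotating the $S^{2}$ factor through a half-turn about an axis lying in the equatorial plane, via the path of rotations $1\times\rho_{t}$, which reverses the sense of the equatorial angle and so carries $K_{n,q}$ to $K_{n,-q}$. The only cosmetic difference is that the paper chooses the axis so that no residual phase shift appears, whereas you absorb the shift of $\pi/n$ by an extra rotation about the polar axis.
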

	
	\begin{proof}
            Let $L$ be an oriented axis in $\R^{3}$ passing through
        two points of the equatorial circle in the above description
        of $K_{n,q}$. Let $\rho_{t}$ be the
        rotation of $S^{2}$ about this axis through angle $2\pi t$,
        and let $1\times \rho_{t}$ be the resulting map $S^{1}\times
        S^{2}\to S^{1}\times S^{2}$. Then the link
        \[
               K_{t} = (1\times \rho_{t})(K_{n,-q}) \subset
               S^{1}\times S^{2}
        \]
        coincides with $K_{n,-q}$ when $t=0$ and with $K_{n,q}$ when
        $t=1/2$.
        \end{proof}

        We aim to give a description of $I(Z_{n})$ (the
        instanton homology with local coefficients) as an
        $\cR$-module, together with a description of the operators
        \[
            I([0,1] \times Z_{n}, a) : I(Z_{n}) \to I(Z_{n})
        \]
        and
        \[
               I([0,1] \times Z_{n}, a)^{e} : I(Z_{n}) \to I(Z_{n})
        \]
        arising from classes $a$ by the general construction
        \eqref{eq:IWa} and \eqref{eq:IWa-e}, where $e$ is the
        2-dimensional class in $H_{2}(Z_{n};\Q)$. 
        
	\subsection{The representation variety of $S^{2}_{n}$.}
        \label{subsec:Rep2d}
	
         Let us assume henceforth that $n$ is odd, so that the orbifold $Z_{n}$ described
         above is
         admissible. We may describe $Z_{n}$ as a
        product $S^1\times S^2_n$, where $S^2_n$ is a 2-dimensional
        bifold of genus $0$, and we begin with some observations about
        the 
        the representation variety $\Rep(S^2_{n})$, drawn from
        \cite{Boden, Weitsman, Street}. Note that we can identify
        $\Rep(S^{2}_{n})$ with the space of flat $\SU(2)$ connections
        on the complement of the $n$ singular points such that the
        monodromy at each puncture has order $4$. (See the remark in
        section~\ref{subsec:bifold-connections}.)
        
        First, as $n$ is odd, the variety $\Rep(S^2_{n})$ consists entirely of
        irreducible connections. It is a smooth, compact, connected
        manifold of dimension $2n-6$ for $n \ge 3$, and is empty for
        $n=1$. We have no need for a detailed description of their
        topology, but we record the fact that $\Rep(S^{2}_{3})$ is a
        single point and $\Rep(S^{2}_{5})$ is diffeomorphic to the
        blow up of $\CP^2$ at $5$ points.  It will be convenient to
        make use of the following result, which the authors believe
        has the status of folklore. The statement and proof are very
        minor adaptations of the main result of \cite{Kirk-Klassen}.
        See also \cite{Thaddeus}.

        \begin{lemma}\label{lem:Morse-even}
            For any odd $n$, the manifold $\Rep(S^{2}_{n})$ admits a
            Morse function with critical points only in even index.
        \end{lemma}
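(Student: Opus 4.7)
I would proceed by induction on odd $n$, following the bending-flow construction of \cite{Kirk-Klassen}. The base cases $n=3$ (a single point) and $n=5$ (the blow-up of $\CP^{2}$ at five points) clearly admit Morse functions with only even-index critical points, since each is either zero-dimensional or a Kähler manifold with a cell decomposition into even-dimensional cells.

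For the inductive step, I would choose a simple closed curve $\gamma \subset S^{2}\setminus\pp$ separating the punctures into two non-empty groups of sizes $n_{1}$ and $n_{2}$ with $n_{1}+n_{2}=n$, and define the trace-of-holonomy function
\[
f : \Rep(S^{2}_{n}) \to \R, \qquad f([\rho]) = \tfrac{1}{2}\mathrm{tr}\,\rho(\gamma).
\]
On the open locus $U \subset \Rep(S^{2}_{n})$ where $\rho(\gamma) \neq \pm I$, the function $f$ is the moment map for the Goldman twist flow, a Hamiltonian $S^{1}$-action that conjugates the restriction of $\rho$ to one side of $\gamma$ by $\exp(t\log\rho(\gamma))$. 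By Frankel's theorem (applied to the Kähler structure on $\Rep(S^{2}_{n})$ furnished by the Mehta--Seshadri identification with a moduli of parabolic bundles), $f$ is Morse--Bott on $U$ and its negative normal bundles along critical submanifolds in $U$ are all even-dimensional.

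The critical locus of $f$ outside $U$ consists of the representations with $\rho(\gamma) = \pm I$; each such representation splits as a pair of representations on the two smaller punctured spheres, with product of puncture holonomies on each side equal to $\pm I$. This locus is naturally stratified by smaller representation varieties to which the inductive hypothesis applies, so each stratum admits a Morse function with only even-index critical points. Combining these inductive Morse functions with the Morse--Bott normal model of $f$ produces a Morse function on $\Rep(S^{2}_{n})$ all of whose critical points are of even index.

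The main obstacle is the local analysis of $f$ along $\{\rho(\gamma) = \pm I\}$: one must verify that $f$ is Morse--Bott along this degeneration locus (not only on $U$, where the $S^{1}$-action is free on the level sets), and show that both its positive and negative normal Hessian eigenspaces are even-dimensional. This is exactly where the order-$4$ holonomy at each puncture is essential, since it forces $\Rep(S^{2}_{n})$ to be smooth and prevents the sort of extra reducibility at the boundary of $U$ that would spoil the normal model. Apart from this local calculation, the argument is a direct translation of \cite{Kirk-Klassen} to the parabolic setup at the fixed symmetric weights corresponding to order-$4$ holonomy.
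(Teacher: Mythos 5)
Your overall strategy---induction on $n$ via the trace of holonomy around a curve separating the punctures, with the Goldman bending circle action supplying even indices, following \cite{Kirk-Klassen}---is the same as the paper's, but the freedom you allow in the splitting creates a genuine gap. The paper separates \emph{exactly two} punctures from the other $n$: then the extreme level sets $h^{-1}(\pm 1)$, where $\rho(\gamma)=\pm I$, are smooth $2$-sphere bundles over $\Rep(S^{2}_{n})$, so the inductive hypothesis applies directly to the base and a perturbation of $h$ near these level sets has only even-index critical points; and on $h^{-1}((-1,1))$ the fixed points of the bending circle action are \emph{isolated}, hence automatically nondegenerate of even index. With your arbitrary splitting $n_{1}+n_{2}=n$, one of $n_{1},n_{2}$ is even, and the locus $\{\rho(\gamma)=\pm I\}$ is then built from representations of an even-punctured sphere with order-$4$ monodromies; such representations can be reducible (all monodromies in a common maximal torus), so this locus is in general singular, your inductive hypothesis---which concerns only odd-punctured spheres---does not apply to it, and your claim that the order-$4$ holonomy ``prevents the sort of extra reducibility at the boundary of $U$'' is incorrect: it rules out reducibility of the full representation, not of its restriction to the side of $\gamma$ carrying an even number of punctures. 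The ``main obstacle'' you acknowledge (a Morse--Bott normal form along $\{\rho(\gamma)=\pm I\}$) is therefore not a routine local verification; it is precisely where the argument fails for a general splitting, and the paper's choice $n_{2}=2$ is what eliminates it.

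Two further points. Even inside $U$, for a general splitting the fixed locus of the bending action need not consist of isolated points, so Frankel-type arguments only give even-dimensional negative normal bundles; to get an honest Morse function you would still need even Morse functions on those critical submanifolds, which you do not provide and which are again not covered by the induction. (Also, strictly speaking it is $\cos^{-1}(f)$, not $f$, that is the moment map of the twist action, though this does not affect the critical points on $U$.) Both difficulties disappear if you take the separating curve to enclose exactly two punctures, at which point your argument becomes the proof given in the paper.
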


        \begin{proof}
        Following \cite{Kirk-Klassen}, we present a proof by induction
        on $n$. So assume the result is true for a particular $n$, and
        consider $\Rep(S^{2}_{n+2})$. Let $C\subset \SU(2)$ be the
        subset of elements of order $4$, i.e.~the unit sphere of
        imaginary quaternions. Let $\tilde R \subset C^{n+2}$ be the
        locus
        \[
                \{ \, (i_{1},\dots, i_{n+2}) \in C^{n+2} \mid
                  i_{1}i_{2}\cdots i_{n+2}=1 \,\},
        \]
        so that the representation variety $\Rep(S^{2}_{n+1})$ is the
        quotient of $\tilde R$ by conjugation. For $\mathbf{i} \in
        \tilde R$, there is a unique $\theta\in [0,\pi]$ such that
        \[
            i_{n+1}i_{n+2} \sim 
            \begin{pmatrix}
                e^{i\theta} & 0 \\ 0 & e^{-i\theta}                
            \end{pmatrix}
        \]
        and we have a smooth function
        \[
                h = \cos(\theta) = \frac{1}{2}\tr ( i_{n+1}i_{n+2})
        \]
        which descends to a smooth function
        \[
                 h : \Rep(S^{2}_{n+2})\to [-1,1].
        \]
        We consider separately the loci $h^{-1}(1)$, $h^{-1}(-1)$ and
        $h^{-1}((-1,1))$.

        If $\mathbf{i}\in h^{-1}(1)$, then $i_{n+1}i_{n+2} =1$, and it
        follows that $i_{1}i_{2}\cdots i_{n}=1$.
        So these remaining $n$ points define a point in
        $\Rep(S^{2}_{n})$. The remaining choice of $i_{n+1}$ exhibits
        $h^{-1}(1)$ as a 2-sphere bundle over $\Rep(S^{2}_{n})$. As in
        \cite{Kirk-Klassen}, we may use the induction hypothesis to
        show that a perturbation of $h$ has critical points only of
        even index near $h=1$. The situation at $h^{-1}(-1)$ is
        essentially the same: multiplying $i_{1}$ and $i_{n+2}$ by $-1$
        interchanges these two loci.

        On the locus $h^{-1}((-1,1))$, the function $h$ itself is
        Morse and its critical points can be described as follows. Let
        $i$, $j$, $k$ in $C$ be the standard unit quaternions with
        $ijk=1$. Given any element of $h^{-1}((-1,1))$ we can use the
        action of conjugation to uniquely put in standard form with
        $i_{n+1}=i$ and $i_{n+2}$ lying in the interior of the
        semicircle $\gamma$ which joins $i$ to $-i$ and passes through
        $j$. In this standard form, there is a circle action on $h^{-1}((-1,1))$ which fixes
        $i_{n+1}$ and $i_{n+2}$ and rotates the points $i_{1},\dots,
        i_{n}$ about the axis through $k$. The function $\theta =
        \cos^{-1}(h)$ is smooth on this locus and is the moment map of
        the circle action. 
        The critical points of $h$
        are therefore precisely the fixed points of this circle
        action. These fixed points
        are the points which in standard form have $i_{n+1}=i$,
        $i_{n+2}=j$ and $i_{m}=\pm k$ for all other $m$. The
        constraint $i_{1}i_{2}\cdots i_{n+1}=1$ means that $i_{m}=-k$
        for an even number of indices $m$ in the range $1,\dots, n$.
        As a general property of moment maps, because these fixed
        points are isolated, they are Morse critical points for $h$,
        of even index.        
        \end{proof}

        \begin{remark}
            The proof of the lemma above gives a little bit more, for
            we can easily identify the indices of the critical points,
            and hence establish the recursive formula for the
            Poincar\'e polynomial of $\Rep(S^{2}_{n})$ which is given
            in \cite{Street}. The loci $h^{-1}(1)$ and $h^{-1}(-1)$,
            which are the 2-sphere bundles over $\Rep(S^{2}_{n})$
            inside $\Rep(S^{2}_{n+2})$, are the minima and maxima of
            $h$ and together make a contribution
            \[
                                 (1+t^{2})^{2} P_{n}(t) 
            \]
            to the Poincar\'e polynomial $P_{n+2}$ for
            $\Rep(S^{2}_{n+2})$. Using the symmetries of
            $\Rep(S^{2}_{n+2})$ obtained by multiplying an even
            number of the $i_{l}$ by $-1$, it is easy to see that the
            remaining critical points in $h^{-1}((-1, 1))$ all have
            the same index and that this index is the middle
            dimension $(n-1)$. There are $2^{n-1}$ of these critical points,
            so we recover the recursive formula
            \[
                        P_{n+2}(t) = (1+t^{2})^{2} P_{n}(t) +
                          (2t)^{n-1}
            \]
            from \cite{Street}.
        \end{remark}

              Atiyah and Bott \cite{AB}
        described standard generators for the cohomology ring of
        representation varieties of surfaces in the non-orbifold case (a smooth
        surface of genus $g$), and there is an extension of those
        techniques for the orbifold case, developed in
        \cite{Biswas-Raghavendra}. For the specific case of
        $S^{2}_{n}$, the results are given in \cite{Street}.

        In this description, the generators of the cohomology ring
        $H^{*}(\Rep(S^{2}_{n} ; \Q))$ are classes
        \begin{equation}\label{eq:AB-classes}
                   \begin{aligned}
                    \alpha &\in H^{2}(\Rep(S^{2}_{n} ); \Q) \\
                    \beta &\in H^{4}(\Rep(S^{2}_{n} ); \Q) \\
                    \delta_{p} &\in H^{2}(\Rep(S^{2}_{n} ); \Q),\quad
                    p\in \pi,
                   \end{aligned}
        \end{equation}
        which are the restrictions to $\Rep(S^{2}_{n})$ of classes
        defined on the space of irreducible bifold connections,
        $\bonf^{*}(S^{2}_{n})$ arising from the slant product
        construction \eqref{eq:slant-mu}. More specifically, the
        classes $\alpha$ and $\beta$ arise from the fundamental
        2-dimensional class $[S^{2}_{n}] \in H_{2}(S^{2}_{n})$ and
        the point class $[w] \in H_{0}(S^{2}_{n})$ respectively, while
        $\delta_{p}$ is defined as in \eqref{eq:delta-p-def}:
        \begin{equation}\label{eq:AB-generators}
        \begin{aligned}
        \alpha &= -(1/4)p_{1}^{\orb}(\mathbb{E}) / [S^{2}_{n}] \\
        \beta &= -(1/4)p_{1}^{\orb}(\mathbb{E}) / [w] \\
        \delta_{p} &= \frac{1}{2} e(\mathbb{V}_{p}).
        \end{aligned}
        \end{equation}
        We will sometimes write
        \[
                \delta_{1} , \dots, \delta_{n}
        \]
        for the classes $\delta_{p_{i}}$ as $p_{i}$ runs through
        $\pi$.

        The classes $\alpha$ and $\beta$ can also be seen as arising
        from the K\"unneth
        decomposition in $H^{4}(\bonf^{*}(S^{2}_{n}) \times S^{2}_{n};
        \Q)$,
        \[
            -(1/4)p_{1}^{\orb}(\mathbb{E}) = \beta\times 1 + \alpha\times v
        \]
        where $v$ is the generator of $H^{2}(S^{2}_{n};\Q)$.
        The
        generator $\beta$ is redundant, because of the relation
        \[
                  \delta_{p}^{2} = -\beta, \quad \forall p \in \pi,
        \]
        which is a restatement of \eqref{eq:2-d-relation-coh} in the
        current situation.
        
        In the rational cohomology ring of $\bonf^{*}(S^{2}_{n})$, there
        are no further relations: the cohomology ring is the algebra
        \begin{equation}\label{eq:bonf-algebra}
            H^{*}(\bonf^{*}(S^{2}_{n});\Q) = \Q[\alpha, \delta_{1}, \dots,
            \delta_{n}] / \langle
            \delta_{k}^{2}-\delta_{l}^{2} \rangle_{k,l}.
        \end{equation}
        
        We have a  surjective homomorphism
        \begin{equation}\label{eq:restriction-ker}
               \phi : H^{*}(\bonf^{*}(S^{2}_{n});\Q) \to
               H^{*}(\Rep(S^{2}_{n}); \Q).
        \end{equation}

        \begin{definition}\label{def:lower-jn}
        We write $A_{n}$ for the algebra
        \[
        \begin{aligned}
        A_{n} &= H^{*}(\bonf^{*}(S^{2}_{n});\Q) \\
                      &= \Q[\alpha, \delta_{1}, \dots,
            \delta_{n}] / \langle
            \delta_{k}^{2}-\delta_{l}^{2} \rangle_{k,l}
            \end{aligned}
        \]
          and we write
        \[
                j_{n}  \subset A_{n}
        \]
       for the kernel of the surjective homomorphism $\phi$.
        \end{definition}
        
        Generators for the ideal $j_{n}$
        are described in detail in \cite{Street}, which leads to a
        complete description of the cohomology ring, 
\begin{equation}\label{eq:coho-Rep}
                 H^{*}(\Rep(S^{2}_{n}); \Q) =A_{n} /
            j_{n}.
\end{equation}
       See also Proposition~\ref{prop:little-w-generate}.

\subsection{The representation variety of $Z_{n}$}
 \label{subsec:Rep3d}

       The flat bifold connections on $Z_{n}$ are of two sorts,
        which we call the ``plus'' and ``minus'' components which can
        be distinguished by examining the holonomy of the flat
        connection along the $S^{1}$ factor in $Z_{n}=S^{1}\times
        S^{2}_{n}$. The
        representations in the plus component are pulled back from
        $S^2_n$. The representations in the minus component are
        obtained from these by multiplication by a flat real line
        bundle with holonomy $-1$ on the $S^1$ factor.
        Thus we have
	\begin{equation}\label{eq:RepZn0}
	\begin{aligned}		
	   \Rep(Z_{n}) &=  \Rep(Z_{n})_+ \cup  \Rep(Z_{n})_- \\
	           &= \Rep(S^2_n) \cup \Rep(S^2_n).
	\end{aligned}
	\end{equation}

        Because of this, the description
        \eqref{eq:coho-Rep} of the cohomology ring of
        $\Rep(S^{2}_{n})$ leads immediately to a description of the
        cohomology of $\Rep(Z_{n})$. We are also eventually
        interested in the cohomology of the representation variety
        with constant coefficients $\cR$ rather than $\Q$ (because of
        our interest in instanton homology with local coefficients
        $\Gamma$). With this in mind,
        let
        \[
            \epsilon : H^{*}(\Rep(Z_{n});\cR) \to H^{*}(\Rep(Z_{n});\cR)
        \]
        be the map obtained from interchanging the two copies, so
        that $\epsilon^{2}=1$. 
        We write $\cA_{n}$ for the algebra
        \begin{equation}\label{eq:cA-newdef}
               \cA_{n} = \cR[\alpha, \delta_{1}, \dots, \delta_{n}, \epsilon]\bigm/ \bigl\langle \epsilon^{2}-1,
                 \delta_{k}^{2}-\delta_{l}^{2}\bigr\rangle_{k,l}.
        \end{equation}
        That is, we extend the coefficient ring of the algebra
        \eqref{eq:coho-Rep} from $\Q$ to $\cR$, and we adjoin the
        element $\epsilon$ with square $1$. This provides us with the
        following description. In the statement below, we write
        \[
                1_{+} \in H^{0}(\Rep(Z_{n}))
        \]
        for the element Poincar\'e dual to the fundamental class of
        the component $\Rep(Z_{n})_{+}$.

        \begin{proposition}
\label{prop:coho-Z-cyclic}
        The cohomology of the representation variety $\Rep(Z_{n})$
        with coefficients in $\cR$ is a cyclic module for the algebra
        $\cA_{n}$ with generator the element $1\in
        H^{0}(\Rep(Z_{n});\cR)$. We have
           \begin{equation}\label{eq:HRep-as-quotient}
                H^{*}(\Rep(Z_{n}) ;\cR) \cong \cA_{n} / J_{n}
        \end{equation} 
        where
        \[
            J_{n} = (j_{n} + \epsilon j_{n})\otimes \cR
        \]
        and $j_{n}$ is the ideal in \eqref{eq:coho-Rep}. Using
        Poincar\'e duality, we can equivalently describe the homology
        $H_{*}(\Rep(Z_{n});\cR)$ as a cyclic $\cA_{n}$-module with
        generator the class $[\Rep(Z_{n})_{+}]$, with the classes
        $\alpha$ and $\delta_{k}$ acting by cap product.
\end{proposition}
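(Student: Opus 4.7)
The strategy is to exploit the disjoint-union decomposition $\Rep(Z_{n}) = \Rep(Z_{n})_{+}\sqcup\Rep(Z_{n})_{-}$ from \eqref{eq:RepZn0}, identify each component with $\Rep(S^{2}_{n})$, and then transport the already-established description \eqref{eq:coho-Rep} of the cohomology of $\Rep(S^{2}_{n})$. Additively, this gives
\[
H^{*}(\Rep(Z_{n});\cR)\cong\bigl((A_{n}/j_{n})\otimes\cR\bigr)^{\oplus 2},
\]
one summand per component. The proposition is then a matter of showing that the $\cA_{n}$-action is the obvious one and computing the annihilator of the claimed cyclic generator $1_{+}$, i.e.\ the class Poincar\'e dual to $[\Rep(Z_{n})_{+}]$.

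The first step I would carry out is to verify that $\alpha$ and each $\delta_{k}$ act diagonally on the two summands via the standard generators of $A_{n}/j_{n}$. These classes come from the universal orbifold $\SO(3)$ bundle $\mathbb{E}\to\bonf^{*}(Z_{n})\times Z_{n}$ through the formulas \eqref{eq:AB-generators}. Along the inclusion of either component of $\Rep(Z_{n})$, the bundle $\mathbb{E}$ restricts to the pullback of the universal $\SO(3)$ bundle over $\bonf^{*}(S^{2}_{n})\times S^{2}_{n}$ along the projection $Z_{n}\to S^{2}_{n}$: the twist by a flat $\{\pm 1\}$-line bundle that distinguishes the two components is central, so it leaves the associated $\SO(3)$ bundle unchanged. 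Naturality of slant product and of the Euler class of $\mathbb{V}_{p}$ therefore identifies the restrictions of $\alpha$ and $\delta_{k}$ on each summand with the corresponding generators of $A_{n}/j_{n}\otimes\cR$. The operator $\epsilon$, by its very definition, swaps the two summands.

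Granting these actions, the rest is a direct computation. Every element $a\in\cA_{n}$ has a unique expression $a = p + \epsilon q$ with $p,q\in\cR[\alpha,\delta_{1},\dots,\delta_{n}]/\langle\delta_{k}^{2}-\delta_{l}^{2}\rangle = A_{n}\otimes\cR$. Using $\epsilon\cdot 1_{+}=1_{-}$ and the diagonal action of $\alpha,\delta_{k}$, one finds
\[
a\cdot 1_{+} \;=\; p\cdot 1_{+} \,+\, q\cdot 1_{-} \;=\; (\bar p,\bar q),
\]
where $\bar p,\bar q$ denote the images in $(A_{n}/j_{n})\otimes\cR$. Every element of the target is of this form, so $1_{+}$ is a cyclic generator, and $a\cdot 1_{+}$ vanishes precisely when $p,q\in j_{n}\otimes\cR$, i.e.\ when $a\in (j_{n}+\epsilon j_{n})\otimes\cR = J_{n}$. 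This gives the claimed isomorphism $\cA_{n}/J_{n}\cong H^{*}(\Rep(Z_{n});\cR)$. The homology formulation follows by Poincar\'e duality, since $[\Rep(Z_{n})_{+}]$ is dual to $1_{+}$ and cap product is intertwined with cup product under the duality isomorphism.

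The one step requiring any care is the naturality check in the second paragraph, disentangling the $\SU(2)$ versus $\SO(3)$ structure to see that $\alpha$ and the $\delta_{k}$ restrict correctly on both components. Once that is established, cyclicity and the computation of the annihilator are essentially automatic from the bookkeeping of $\cA_{n}$ as $A_{n}\otimes\cR$ with $\epsilon$ adjoined.
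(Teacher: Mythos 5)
Your proof is correct and takes essentially the same route the paper does: the paper states this proposition without a separate proof, treating it as immediate from the decomposition \eqref{eq:RepZn0}, the presentation \eqref{eq:coho-Rep}, and the definitions of $\alpha$, $\delta_{k}$ and $\epsilon$, which is exactly the naturality check and bookkeeping you spell out. Your reading of the cyclic generator as $1_{+}$ (dual to $[\Rep(Z_{n})_{+}]$) rather than the full identity is the right one — it is what makes the annihilator equal to $J_{n}$ and is consistent with the homology formulation at the end of the statement.
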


        We regard $\cA_{n}$ as a graded algebra
        with the generators $\alpha$
        and $\delta_{k}$ in grading $1$ (not $2$) and $\epsilon$ in grading $0$. From the
        grading, $\cA_{n}$ obtains an increasing filtration, which for future
        reference we record as
        \begin{equation}\label{eq:filtration}
                \cA_{n}^{(0)} \subset   \cA_{n}^{(1)} \subset
                \cA_{n}^{(2)} \subset \cdots \subset \cA_{n},
        \end{equation}
        where $\cA_{n}^{(s)}$ is the $\cR$-submodule generated by
        elements in grading less than or equal to $s$.

       From the explicit description of the generators of $j_{n}$
       given in \cite{Street} (for rational coefficients),
       we can read off that there are no
       relations between the generators up to the middle dimension of
       $\Rep(Z_{n})$:

       \begin{proposition}\label{prop:below-middle}
         For $s\le (n-3)/2$, we have $J_{n}\cap \cA_{n}^{(s)}=\{0\}$.
         \qed
       \end{proposition}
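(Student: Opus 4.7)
The plan is to reduce the claim to an analogous statement about $j_{n}\subset A_{n}$, and then invoke Street's explicit presentation of the generators.

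First I would exploit the $\epsilon$-eigendecomposition. Since $\epsilon$ has grading $0$ and $\epsilon^{2}=1$,
\[
\cA_{n} \;=\; (A_{n}\otimes_{\Q}\cR) \;\oplus\; \epsilon\cdot(A_{n}\otimes_{\Q}\cR),
\]
and this splitting respects the filtration \eqref{eq:filtration}. The definition $J_{n}=(j_{n}+\epsilon j_{n})\otimes\cR$ similarly gives $J_{n}=(j_{n}\otimes\cR)\oplus\epsilon(j_{n}\otimes\cR)$, and by flatness of $\cR$ over $\Q$ one obtains
\[
J_{n}\cap\cA_{n}^{(s)} \;=\; \bigl((j_{n}\cap A_{n}^{(s)})\otimes\cR\bigr) \;\oplus\; \epsilon\bigl((j_{n}\cap A_{n}^{(s)})\otimes\cR\bigr).
\]
So it is enough to prove that $j_{n}\cap A_{n}^{(s)}=0$ for $s\le (n-3)/2$.

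The restriction map $\phi$ of \eqref{eq:restriction-ker} is grading-preserving (cohomological degree $2s$ corresponding to grading $s$), so $j_{n}=\ker\phi$ is a \emph{homogeneous} ideal in the graded algebra $A_{n}$. It therefore suffices to check that $j_{n}$ admits a set of homogeneous generators all of grading at least $(n-1)/2$: a homogeneous ideal generated in gradings $\ge (n-1)/2$ contains no nonzero element of smaller grading, and $(n-3)/2<(n-1)/2$. For this I would invoke the explicit presentation in \cite{Street}, whose Mumford-type generators are homogeneous polynomials in $\alpha$ and $\delta_{1},\dots,\delta_{n}$ of grading at least $m=(n-1)/2$; this finishes the proof.

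The only substantive input is the verification of these grades from Street's formulas; everything else is formal. As a sanity check one can compare Hilbert series in the halved grading: $\mathrm{Hilb}(A_{n},t)=(1+t)^{n-1}/(1-t)^{2}$, while the Poincar\'e polynomial $P_{n}(t)$ of $\Rep(S^{2}_{n})$ satisfies the recursion $P_{n+2}=(1+t)^{2}P_{n}+2^{n-1}t^{(n-1)/2}$ noted in the remark after Lemma~\ref{lem:Morse-even}; a short induction (base case $n=3$, where $\Rep(S^{2}_{3})$ is a point) shows that the two series agree in gradings $\le (n-3)/2$, equivalently that $\mathrm{Hilb}(j_{n},t)$ vanishes in this range. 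This is the expected main---and only---obstacle: faithfully reading off the degrees of the generators from \cite{Street}.
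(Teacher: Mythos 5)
Your proposal is correct and matches the paper's own (essentially unstated) argument: the paper simply reads off from Street's explicit presentation that all generators of $j_{n}$ are homogeneous of degree $m=(n-1)/2$, so no nonzero relations exist in filtration $\le (n-3)/2$, and the passage from $j_{n}$ to $J_{n}=(j_{n}+\epsilon j_{n})\otimes\cR$ is the same formal step you spell out. Your Hilbert-series sanity check is a reasonable independent confirmation but is not needed once the degrees of Street's generators $R^{m}_{\zeta}$ are known.
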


       \subsection{The instanton homology of $Z_{n}$}

               The instanton homology $I(Z_{n};\Q)$ with rational
        coefficients was described, together with its ring structure,
        by Street \cite{Street} drawing on work of Boden \cite{Boden}
        and Weitsman \cite{Weitsman}. We summarize part of these
        results here, adapted to the case of $I(Z_{n})$ (by which we
        continue to mean the instanton homology with local
        coefficients). 

        The representation variety $\Rep(Z_{n})$ is a Morse-Bott
        critical locus for the Chern-Simons functional. By
        Lemma~\ref{lem:Morse-even}, there is a Morse function on
        $\Rep(Z_{n})$ with critical points only in even index. The
        proof of that lemma allows one to construct such a Morse
        function as a linear combination of traces of holonomies
        around loops in $Z_{n}$.  We may use such
        a Morse function as a holonomy perturbation for the
        Chern-Simons functional, so that the critical points of the
        perturbed Chern-Simons functional correspond to the critical
        points of the Morse function on $\Rep(Z_{n})$. After making such a perturbation, the
        set of critical points forms a natural basis both for the
        ordinary homology of $\Rep(Z_{n})$ as a $\Q$-vector space, and for the instanton
        homology $I(Z_{n})$ as an $\cR$-module. We therefore obtain an
        isomorphism
        \[
                I(Z_{n})= H_{*}(\Rep(Z_{n}))\otimes \cR.
        \]
        
        In the $\Z/4$ grading of the instanton homology, the minus component
        $\Rep(S^{2}_{n})_{-}$
        is shifted by $2$  relative
        to the plus component. This is established in \cite{Street}
        for rational coefficients, but the argument extends to any
        coefficients, including our local coefficient system
        $\Gamma$.  We record this in the following proposition.
 
        \begin{proposition}\label{prop:IZ-two-copies}
        As $\cR$-modules with $\Z/4$ grading, we have an isomorphism,
	\[
	      \Lam: I_{*}(Z_{n}) = H_*(\Rep(S^2_n);\cR) \oplus H_*(\Rep(S^2_n);\cR)[2]
	\]
        for all odd $n\ge 1$. In particular, the instanton homology is
        a free $\cR$-module and is non-zero only in even degrees mod
        $4$.
	\end{proposition}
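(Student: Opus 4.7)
The plan is a standard Morse--Bott computation for the Chern--Simons functional, with the grading input supplied by Lemma~\ref{lem:Morse-even} and by Street. Applying Lemma~\ref{lem:Morse-even} on each of the two copies of $\Rep(S^{2}_{n})$ in the decomposition \eqref{eq:RepZn0}, I would choose a Morse function $f$ on $\Rep(Z_{n})$ whose critical points all lie in even index. As described in the paragraph preceding the proposition, $f$ may be realised as a small holonomy perturbation of the Chern--Simons functional, and this perturbation can be chosen so that the perturbed critical set on $\bonf(Z_{n})$ is in bijection with the critical set of $f$ on $\Rep(Z_{n})$, i.e.\ two copies of the critical set of the chosen Morse function on $\Rep(S^{2}_{n})$.

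The next step is to fix the $\Z/4$ degrees of these generators. The relative $\Z/4$ grading between two critical points is determined by a spectral flow between flat orbifold connections, a differential--geometric quantity which is independent of the choice of local coefficient system. Consequently the grading calculation carried out in \cite{Street} for rational coefficients carries over verbatim to the $\Gamma$--coefficient setting: each generator on the plus component sits in the even degree given by (twice) its Morse index on $\Rep(S^{2}_{n})$ reduced mod $4$, and each generator on the minus component sits in that degree shifted by $2$. In particular every generator of $\CI(Z_{n};\Gamma)$ lies in even $\Z/4$--degree.

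From this the proposition follows immediately. The Floer differential lowers the $\Z/4$ grading by $1$, so it must have trivial matrix coefficient between any two generators of even degree; thus the differential vanishes identically. Since $\CI(Z_{n};\Gamma)$ is by construction a free $\cR$--module on the perturbed critical points, we conclude that $I(Z_{n};\Gamma) = \CI(Z_{n};\Gamma)$ is a free $\cR$--module concentrated in even $\Z/4$--degree. The desired isomorphism $\Lam$ is then obtained by sending the plus--component generators, in degree $k$, to the corresponding Morse generators of $H_{k}(\Rep(S^{2}_{n});\cR)$, and the minus--component generators, in degree $k+2$, to the corresponding generators of the shifted copy $H_{k}(\Rep(S^{2}_{n});\cR)[2]$.

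The only genuinely non--routine point is the extension of Street's grading argument to the local--coefficient setting, but this is really a statement about spectral flow in the moduli space of flat $\SO(3)$ bifold connections and does not interact with $\Gamma$; as remarked in the text preceding the proposition, the argument applies unchanged. Everything else is bookkeeping built on top of Lemma~\ref{lem:Morse-even} and the standard Morse--Bott machinery of holonomy perturbations in singular instanton Floer theory, together with the fact that a differential which is forced to vanish for parity reasons leaves the chain complex and its homology canonically identified.
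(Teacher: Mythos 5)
Your argument is correct and is essentially the paper's own: the Morse function with even-index critical points from Lemma~\ref{lem:Morse-even} is used as a holonomy perturbation, Street's $\Z/4$-grading computation (a spectral-flow statement independent of the coefficient system) places the two components in even degrees differing by $2$, and the odd-degree differential then vanishes, identifying $I(Z_{n})$ with the free $\cR$-module on the critical points. No substantive difference from the paper's treatment.
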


        The isomorphism $\Lam$ in the above proposition depends on the choice
        of perturbation (at least a priori), because the isomorphism
        goes by identifying both sides with the free $\cR$-module
        generated by the critical points. The following two
        propositions
        add some additional structure. In the statement of the first
        proposition below, we write $\1_{+}\in I(Z_{n})$
        for the relative invariant of the 4-dimensional
            orbifold $D^{2}\times S^{2}_{n}$ with boundary $Z^{n}$:
            \[
            \1_{+} = I(D^{2}\times S^{2}_{n}).
            \]

        \begin{proposition}\label{prop:instanton-cyclic}
            The instanton homology $I(Z_{n})$ is
            a cyclic module for the filtered algebra $\cA_{n}$
            \eqref{eq:cA-newdef}, with cyclic generator the element
            $\1_{+}$. 
        \end{proposition}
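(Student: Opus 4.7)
The plan is to transport the cyclic generation of $H_{*}(\Rep(Z_{n}); \cR)$ from Proposition~\ref{prop:coho-Z-cyclic} across the identification of $I(Z_{n})$ with $H_{*}(\Rep(Z_{n}); \cR)$ provided by Proposition~\ref{prop:IZ-two-copies}. Once the generator $\1_{+}$ is identified with the fundamental class $[\Rep(Z_{n})_{+}]$ and the $\cA_{n}$-actions on the two sides are matched, cyclicity in $I(Z_{n})$ follows immediately.

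For the first step, I would use Lemma~\ref{lem:Morse-even} to choose a holonomy perturbation of the Chern-Simons functional whose critical points are the critical points of an even-index Morse function on each copy of $\Rep(S^{2}_{n})$. Because all Floer indices are then concentrated in even degrees (mod $4$), the Floer differential vanishes for degree reasons, and the isomorphism $\Lam$ is realized at the chain level by identifying Floer generators with critical points. Next I would identify the generator $\1_{+} = I(D^{2}\times S^{2}_{n})$: on the bifold $D^{2}\times S^{2}_{n}$ every flat $\SO(3)$ connection is pulled back from the $S^{2}_{n}$ factor, so the moduli problem defining the relative invariant is modeled on $\Rep(S^{2}_{n}) = \Rep(Z_{n})_{+}$, and with no characteristic-class insertions the Morse-Bott count delivers (up to a unit in $\cR$) the class $[\Rep(Z_{n})_{+}]$ on the plus summand.

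It then remains to match the actions. The generators $\alpha$ and $\delta_{k}$ of $\cA_{n}$, defined by cutting down instanton moduli on $[0,1]\times Z_{n}$ by geometric representatives of $-(1/4)p_{1}^{\orb}(\mathbb{E})/[S^{2}_{n}]$ and $(1/2)e(\mathbb{V}_{p_{k}})$, reduce in the Morse-Bott regime to cap product with the restrictions of those same classes from $\bonf^{*}(Z_{n})$ to $\Rep(Z_{n})$; the involution $\epsilon$ swaps the two components $\Rep(Z_{n})_{\pm}$ by construction, since on the configuration space it is induced by multiplying the $S^{1}$-holonomy by $-1$. Cyclicity of $I(Z_{n})$ over $\cA_{n}$ with generator $\1_{+}$ then follows from the homological form of Proposition~\ref{prop:coho-Z-cyclic}.

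The main obstacle is the operator-matching step: one must control the cylindrical instanton moduli on $\R\times Z_{n}$ and verify that the geometric insertions reduce, after retraction onto the Morse-Bott critical set, to cap products by the corresponding classes on $\Rep(Z_{n})$. This is a standard Morse-Bott comparison, and the local system $\Gamma$ plays only a passive role, because the operator classes live in the rational cohomology of $\bonf^{*}(Z_{n})$ and the coefficient extension to $\cR$ is a formal tensor operation that commutes with the argument.
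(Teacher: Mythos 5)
Your overall route is the same as the paper's: transport the cyclicity of $H_{*}(\Rep(Z_{n});\cR)$ (Proposition~\ref{prop:coho-Z-cyclic}) through the identification $\Lam$ of Proposition~\ref{prop:IZ-two-copies}. But the step you lean on --- that the operators $\opalpha$ and $\opdelta_{p}$ ``reduce in the Morse--Bott regime to cap product'' by the corresponding classes on $\Rep(Z_{n})$, so that $\Lam$ intertwines the two $\cA_{n}$-actions --- is false as stated, and it is precisely where the content lies. These operators are defined by cutting down moduli spaces on the cylinder $\R\times Z_{n}$ of \emph{all} actions $\kappa\in(1/4)\Z_{\geq 0}$; only the $\kappa=0$ stratum retracts onto the Morse trajectory spaces in $\Rep(Z_{n})$ and yields the cap product, while the positive-action instantons contribute additional terms (weighted by powers of $\tau$) that have no counterpart in the ordinary cohomology action. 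If the matching were exact, $\Lam$ would be an isomorphism of $\cA_{n}$-modules and the annihilator $\cJ_{n}$ would coincide with $J_{n}$, contradicting the main computations of the paper: Propositions~\ref{prop:quantum-Mumford} and \ref{prop:quantum-Mumford-subleading} show that the instanton relations acquire a nonzero $\tau$-dependent subleading correction. What is true, and what the paper proves as Proposition~\ref{prop:leading-terms}, is the weaker statement that $\Lam(u\xi)=u\Lam(\xi)$ modulo one step lower in the filtration, because the dimension formula $\dim M_{\kappa}(c,c')=8\kappa+\ind(c)-\ind(c')$ forces the $\kappa>0$ contributions to connect critical points whose index difference exceeds the degree of $u$. (For $\opepsilon$ the matching is indeed exact, by the translation argument; your claim is fine there.)

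Because of this, cyclicity does not ``follow immediately'': you still need a filtration argument that your proposal omits. From the leading-order agreement and $\Lam(\1_{+})=1_{+}$, one gets that $\Lam(\cA_{n}^{(s)}\1_{+})$ covers the degree-$\leq s$ part of $H_{*}(\Rep(Z_{n});\cR)$ modulo lower degrees, since $H_{*}(\Rep(Z_{n});\cR)=\cA_{n}\cdot 1_{+}$ with $\cA_{n}^{(s)}$ hitting the corresponding graded pieces; an induction on $s$ over the finite, exhaustive filtration then shows $\cA_{n}\1_{+}=I(Z_{n})$. So the repair is to replace your exact operator-matching claim by the leading-term comparison (this is the genuinely geometric input, requiring the $\kappa$-expansion of the operators) and then supply the induction on the filtration; as written, your argument both asserts something false and, by asserting it, hides the step that makes the conclusion nontrivial.
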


        This proposition (whose proof is given below) prompts the
        following definition.
        
              \begin{definition}
        We write $\J{n}\subset \cA_{n}$ for the annihilator of the
        cyclic module $I(Z_{n})$, so that
        \[
                I(Z_{n}) \cong \cA_{n} /\J{n}.
        \]
        \end{definition}

        From this description, the instanton homology $I(Z_{n})$
        inherits an increasing filtration from the filtration of
        $\cA_{n}$:
        \[
                I(Z_{n})^{(m)} = ( \cA_{n}^{(m)} + \J{n}) / \J{n} .
        \]
        
        \begin{proposition}\label{prop:assoc-graded}
            The isomorphism $\Lam$ of
            Proposition~\ref{prop:IZ-two-copies} respects the
            filtrations, and the isomorphism on the associated graded
            is an isomorphism of 
            $\cA_{n}$-modules, independent of the choice of
            perturbations.
        \end{proposition}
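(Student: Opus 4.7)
The plan is to work at the chain level with a Morse--Bott perturbation. Choose a Morse function $f$ on $\Rep(Z_n)$ with only even-index critical points (Lemma~\ref{lem:Morse-even}) and a corresponding holonomy perturbation of the Chern--Simons functional, so that the perturbed critical points are in bijection with those of $f$ on each component $\Rep(Z_n)_\pm$. In this basis, both $I(Z_n)$ and $H_*(\Rep(Z_n);\cR)\oplus H_*(\Rep(Z_n);\cR)[2]$ are identified with the free $\cR$-module $F$ on the critical points, and $\Lambda$ is the identity. Each chain-level operator $\hat\xi$ for $\xi\in\{\alpha,\delta_1,\dots,\delta_n\}$ splits as $\hat\xi=\hat\xi^{\mathrm{cl}}+\hat\xi^{\mathrm{qu}}$ according to the orbifold second Chern class $c_2^{\mathrm{orb}}$ of the contributing instantons on $\R\times Z_n$: the sector $c_2^{\mathrm{orb}}=0$ gives the constant cylindrical part, and $c_2^{\mathrm{orb}}>0$ the quantum part. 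A standard Morse--Bott reduction identifies $\hat\xi^{\mathrm{cl}}$ on homology with the classical cap product by $\xi$ from Proposition~\ref{prop:coho-Z-cyclic}. The key aim is to show that $\hat\xi^{\mathrm{qu}}$ preserves each filtration level $H^{(s)}$ rather than strictly raising it to $H^{(s+1)}$; the associated graded then carries only the classical action.

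The filtration-preservation claim uses the bifold dimension formula $\dim\mathcal{M}(x,y;c_2^{\mathrm{orb}})=\mu(x)-\mu(y)+8c_2^{\mathrm{orb}}$: for a $\hat\xi^{\mathrm{qu}}$-matrix element between Morse critical points $x,y$ with $c_2^{\mathrm{orb}}>0$, the quantity $\mu(x)-\mu(y)$ is strictly less than the classical value $-2$ for $\xi=\alpha$ or $\delta_k$, so $\mu(y)>\mu(y_{\mathrm{cl}})$ where $y_{\mathrm{cl}}$ is the classical target. Combined with the even parity of Morse indices on $\Rep(Z_n)$, this strict inequality upgrades to $\mu(y)\ge\mu(y_{\mathrm{cl}})+2=\mu(x)$. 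Since $H^{(s)}$ is the set of classes of Morse index $\ge d-2s$, with $d=\dim\Rep(Z_n)$, the quantum target $y$ lies in $H^{(s)}$ whenever $x$ does. Granted this, an induction on $s$ yields $\Lambda(I^{(s)})\subset H^{(s)}$ with the induced map on $\gr^s$ equal to the classical cap product by $\xi$, hence $\cA_n$-linear. A comparison of total $\cR$-ranks via Proposition~\ref{prop:IZ-two-copies} upgrades the inclusion to an equality of filtered modules, and perturbation independence is then immediate since the classical cap-product action on $H_*(\Rep(Z_n);\cR)$ is topologically determined and independent of $f$.

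The main obstacle is this filtration-preservation step for $\hat\xi^{\mathrm{qu}}$. It rests on the bifold index formula, the even parity of Morse indices on $\Rep(Z_n)$, and the $\Z/4$-grading constraint on the Floer complex. Together these force quantum matrix-element contributions to shift Morse indices strictly \emph{towards} the top (by at least $2$) rather than sideways or away, which is exactly what is required for the filtration levels to be preserved and the associated graded to reduce to the classical data.
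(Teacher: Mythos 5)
Your core mechanism is the same as the paper's (Proposition~\ref{prop:leading-terms}): decompose each operator according to the action $\kappa$ of the contributing instantons, identify the $\kappa=0$ part with the classical cap product via the Morse--Bott perturbation coming from Lemma~\ref{lem:Morse-even}, and use the dimension formula $\dim M_{\kappa}(c,c')=8\kappa+\ind(c)-\ind(c')$ together with evenness of the Morse indices to see that every $\kappa>0$ matrix element sends a critical point of index $i$ to critical points of index $i-2+8\kappa\ge i$, hence lands in the same (or a lower) filtration step rather than raising it. That part of your argument is correct and is exactly the heart of the paper's proof.

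There is, however, a genuine gap: you never tie the algebraic filtration on $I(Z_{n})$ to your chain-level picture. In the paper the filtration is $I(Z_{n})^{(s)}=(\cA_{n}^{(s)}+\J{n})/\J{n}$, i.e.\ it is anchored at the cyclic generator $\1_{+}=I(D^{2}\times S^{2}_{n})$ of Proposition~\ref{prop:instanton-cyclic}, so your inductive claim $\Lam(I(Z_{n})^{(s)})\subset H^{(s)}$ requires the base case $\Lam(\1_{\pm})\in H^{(0)}$; indeed the paper asserts $\Lam(\1_{+})=1_{+}$ as part of Proposition~\ref{prop:leading-terms}, established by the same zero-action versus positive-action dichotomy applied to the relative invariant of $D^{2}\times S^{2}_{n}$. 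Your proposal never mentions $\1_{+}$, so the induction has no starting point and ``respects the filtrations'' is not actually proved. Two smaller points: the operator $\opepsilon$ is part of the $\cA_{n}$-module structure and must also be compared (the paper shows the agreement is exact, the positive-action contributions vanishing by translation invariance, cf.\ Lemma~\ref{lem:epsilon-squared}), and your closing step ``comparison of total $\cR$-ranks upgrades the inclusion to an equality of filtered modules'' is not valid as stated over $\cR=\Q[\tau^{\pm1}]$, where a submodule of full rank need not be the whole module; once $\Lam(\1_{\pm})=1_{\pm}$ is in place this step is unnecessary, since the surjectivity of $u\mapsto u\cdot 1_{+}$ onto $H^{(s)}$ modulo $H^{(s-1)}$ combined with your inclusion gives $\Lam(I(Z_{n})^{(s)})=H^{(s)}$ by induction on $s$, and the perturbation-independence of the associated graded then follows as you indicate.
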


We begin the proof of the two propositions above by describing the
$\cA_{n}$-module structure of $I(Z_{n})$.
Recall from
        that the $\cA_{n}$-module
        structure of 
        $H_{*}(\Rep(Z_{n});\cR)$ arises from operators $\alpha$,
        $\delta_{1},\dots, \delta_{n}$ (acting by cap product) and
        $\epsilon$.
        The instanton homology
        $I(Z_{n})$ carries parallel operators which we now make
        explicit.
        
        First, the classes $\alpha$, $\beta$ and $\delta_{p}$ in
        $H^{*}(\bonf^{*}(Z_{n});\Q)$ correspond to operators on the
        Floer homology $I(Z_{n})$ by the general construction
        \eqref{eq:IWa}. We write these operators as
	\begin{equation}\label{eq:ops-delta-beta}
       \begin{aligned} 
       \opalpha: I_{*}(Z_{n}) &\to I_{*-2}(Z_{n}),\\
       \opbeta : I_*(Z_{n}) &\to I_{*-4}(Z_{n}) = I_{*}(Z_{n})
       ,\\
        \opdelta_{p} : I_*(Z_{n}) &\to I_{*-2}(Z_{n}) , 
        \end{aligned}
	\end{equation}
        where the subscripts denote the mod 4 grading. In the notation
        of \eqref{eq:IWa}, these are the operators
        \[
              \begin{aligned}
                \opalpha &= I([0,1]\times Z_{n}, [S^{2}_{n}]) &&\\
                \opbeta &= I([0,1]\times Z_{n}, [w]), &\qquad [w]&\in
                H_{0}([0,1]\times Z_{n}), \\
                \opdelta_{p} &=  I([0,1]\times Z_{n}, [p]), &\qquad
                [p]&\in
                H_{0}([0,1]\times K_{n}).\\
              \end{aligned}
        \]
  
        \begin{remark}
        According to the results of
         \cite{Obstruction}, the operator $2 \opdelta_{p}$ can be realized
         as the map corresponding to a cobordism $W_1$ from $Z$ to $Z$,
         derived from the product cobordism $I\times Z$ by summing a
         standard torus to $I\times K$ at the point $(1/2, p)$. The
         local orientation of $K$ is used to fix a homology orientation
         of the torus.
         \end{remark}
         
 The counterpart of the operator $\epsilon$ is a special case of the
 construction of $I(W,a)^{e}$. Specifically, following Street
 \cite{Street}, it is
        the map \eqref{eq:IWa-e} in the special case that
        $W$ is the cylindrical cobordism, the element $a$ is $1$, and
        $e$ is the class $[\{\mathrm{point}\} \times S^{2}_{n}]$:
        \[
        \opepsilon = I\bigl([0,1]\times S^{2}_{n}\bigr)^{e}.
        \]

In order for the operators $\opalpha$, $\opdelta_{p}$ and $\opepsilon$ to
make the instanton homology $I(Z_{n})$ into a module over the algebra
$\cA_{n}$, we need to see that they satisfy the relations that are
baked into the definition of $\cA_{n}$. We turn to this next.
The relation in Proposition~\ref{prop:2-d-relation}
        specializes to the following:

        \begin{lemma}\label{lemma:delta-squared}
           With $\cR= \Q[\tau^{\pm 1}]$ as usual,
           the actions of the operators $\opdelta_{p}$ and $\opbeta$ on
           the $\cR$-module $I(Z_{n})$ are related by
           \[
                            \opdelta^{2}_{p} = -\opbeta + \tau^{2}  +
                            \tau^{-2}.
           \]
           In particular, $\opdelta^{2}_{p}$ is independent of the
           chosen point $p$ on the singular set of $Z_{n}$. \qed
        \end{lemma}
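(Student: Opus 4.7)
The plan is to deduce the lemma as a direct specialization of Proposition~\ref{prop:2-d-relation} applied to the cylindrical cobordism. Take $W=[0,1]\times Z_{n}$, viewed as a bifold cobordism from $Z_{n}$ to itself, whose singular locus is $\Sigma(W)=[0,1]\times K_{n}$. Choose an oriented point $p$ in $\Sigma(W)$ (say at a fixed level $\{1/2\}\times\{p_{i}\}$) and a point $w$ in the interior of $W$; these represent the classes appearing in \eqref{eq:sympolys}.

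With $b=1$, Proposition~\ref{prop:2-d-relation} gives
\[
I\bigl(W,\,p^{2}+w-\tau^{2}-\tau^{-2}\bigr)=0
\]
as an $\cR$-linear endomorphism of $I(Z_{n})$. Because $W$ is the cylinder, the functorial identity $I(W,a_{1}a_{2})=I(W,a_{1})I(W,a_{2})$ applies. Combined with the $\cR$-linearity in the argument and the fact that $I(W,1)$ is the identity map of $I(Z_{n})$ (since $W$ is the product cobordism), the displayed equation expands to
\[
I(W,[p])^{2}+I(W,[w])-(\tau^{2}+\tau^{-2})\,\mathrm{id}_{I(Z_{n})}=0.
\]
By the definitions in \eqref{eq:ops-delta-beta} and the lines following, $I(W,[p])=\opdelta_{p}$ and $I(W,[w])=\opbeta$, so rearranging yields
\[
\opdelta_{p}^{2}=-\opbeta+\tau^{2}+\tau^{-2}.
\]

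The last clause of the lemma, that $\opdelta_{p}^{2}$ is independent of the chosen point $p\in K(Z_{n})$, is immediate from the formula just derived (the right-hand side involves neither $p$ nor its component); alternatively, it is a restatement of the Corollary to Proposition~\ref{prop:2-d-relation}. There is no real obstacle here: the entire content is already contained in Proposition~\ref{prop:2-d-relation}, and the only work is the bookkeeping to identify $I([0,1]\times Z_{n},\,\cdot\,)$ evaluated on point classes with the operators $\opdelta_{p}$ and $\opbeta$ as defined in \eqref{eq:ops-delta-beta}.
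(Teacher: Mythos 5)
Your argument is correct and is exactly the paper's intended proof: the lemma is stated as an immediate specialization of Proposition~\ref{prop:2-d-relation} to the product cobordism $[0,1]\times Z_{n}$, using functoriality on the cylinder to identify $I(W,[p])$ and $I(W,[w])$ with $\opdelta_{p}$ and $\opbeta$, which is precisely the bookkeeping you carry out.
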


        The element $\epsilon$ in $\cA_{n}$ has square $1$ by
        definition, so we need the following lemma also.

        \begin{lemma}\label{lem:epsilon-squared}
        The operator $\opepsilon : I(Z_{n})\to I(Z_{n})$ has square
        $1$, and under the isomorphism of
        Proposition~\ref{prop:IZ-two-copies} it corresponds to the
        interchange of the two summands.
        \end{lemma}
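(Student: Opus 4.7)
The operator $\opepsilon$ is defined as $I(W)^{e}$ for the cylindrical cobordism $W = [0,1]\times Z_{n}$, with $a = 1$ and $e = [\{p\}\times S^{2}_{n}] \in H_{2}(W;\Z)$. My plan is to prove the two claims separately: the identity $\opepsilon^{2}=1$ by a composition argument using the sign formula \eqref{eq:e-sign}, and the interchange of summands by identifying $\opepsilon$ with the involution of $\bonf(Z_{n})$ singled out in the introduction.

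For $\opepsilon^{2}=1$, I would invoke the multiplicativity of $I(W,a)^{e}$ under composition of cobordisms. Stacking two copies of $W$ yields another cylindrical cobordism $W'$ carrying the sum of two parallel copies $e_{1}, e_{2}$ of the class $e$, both homologous to a common class $v := [\{p\}\times S^{2}_{n}]$ in $W'$. Thus $e_{1}+e_{2} = 2v$, whose mod-$2$ reduction vanishes. Applying \eqref{eq:e-sign} with $e' = 0$ gives $\opepsilon^{2} = (-1)^{v\cdot v}\, I(W')^{0}$. Since $v$ is an embedded $2$-sphere with trivial normal bundle in the $4$-manifold $W'$, the self-intersection $v\cdot v$ is zero, and the untwisted map on a product cobordism is the identity on $I(Z_{n})$. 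Hence $\opepsilon^{2}=\mathrm{id}$.

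For the interchange of summands, let $\tau$ denote the involution of $\bonf(Z_{n})$ obtained by multiplying the $\SU(2)$-holonomy along the $S^{1}$ factor by $-1$, as described in the introduction. In the $\SU(2)$ picture of bifold connections from the remark in Section~\ref{subsec:bifold-connections}, $\tau$ is implemented by tensoring with the flat real line bundle $L\to Z_{n}$ whose holonomy around $S^{1}$ is $-1$. By the decomposition \eqref{eq:RepZn0}, $\tau$ exchanges the two components of $\Rep(Z_{n})$. The key step is then to identify $\opepsilon$ with the chain-level map induced by $\tau$. This follows from a clutching description of the twisted bundle: one represents the class $\PD(e)\bmod 2$ on $W\setminus\Sigma(W)$ by modifying the trivial $\SO(3)$ bundle along a path $\gamma$ joining the incoming and outgoing boundary components and meeting $e$ transversally in one point. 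A marked bifold connection in $\bonf(W)^{e}$ then corresponds canonically to a connection in $\bonf(W)$ whose trivializations on the two ends differ by a gauge transformation realizing $\tau$. After the holonomy perturbation, only constant trajectories contribute to $\opepsilon$ at the chain level, so $\opepsilon$ acts on generators by $\rho\mapsto\tau(\rho)$, which swaps the components $\Rep(Z_{n})_{\pm}$. Under the isomorphism $\Lam$ of Proposition~\ref{prop:IZ-two-copies}, this is precisely the interchange of the two summands.

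The main obstacle is making precise the identification between the twisted moduli space $\bonf(W)^{e}$ and the untwisted one composed with $\tau$, including compatibility with the local coefficient system $\Gamma$ and with the sign conventions inherent in \eqref{eq:Gamma-A} and \eqref{eq:e-sign}. The clutching construction is standard in gauge theory, but a careful argument is needed to verify that the class $\PD(e)\bmod 2$ is realized by the specific $-1$ gauge transformation on $\gamma$, and that the weight $\Gamma(A)$ transforms correctly under the resulting identification of moduli spaces.
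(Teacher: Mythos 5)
Your treatment of $\opepsilon^{2}=1$ is exactly the paper's proof: compose the two twisted cylinders, use functoriality to rewrite the composite as $I([0,1]\times Z_{n})^{2e}$, and then apply \eqref{eq:e-sign} together with $e\cdot e=0$ and the fact that the untwisted cylinder induces the identity. For the second assertion (that $\opepsilon$ interchanges the two summands), the paper does not argue from scratch: it quotes \cite{Street}, where this is proved with rational coefficients (only the sign of $\opepsilon^{2}$ was left unresolved there), and the mechanism for extending it -- action quantization plus vanishing of positive-action contributions by translation invariance -- is recorded in the proof of Proposition~\ref{prop:leading-terms}. So your direct clutching argument is a more ambitious route than the paper takes, though it is essentially the mechanism underlying Street's identification.

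Within that direct argument two steps need repair. First, you cannot realize $w_{2}=\PD(e)\bmod 2$ by ``modifying the trivial bundle along a path $\gamma$'': changing $w_{2}$ is a codimension-two operation, and a path is dual to a degree-three class. The relevant datum is the relative class $\PD(e)\in H^{2}(W,\partial W;\Z/2)$ (the absolute $w_{2}$ of the twisted cylinder is in fact trivial), which is dual to the annulus $[0,1]\times c$, where $c\subset Z_{n}$ is a circle generating $H_{1}(S^{1}\times S^{2};\Z/2)$ and meeting the sphere $e$ once; equivalently, the marked bundle is obtained by regluing across a middle slice $\{1/2\}\times Z_{n}$ by a gauge transformation that does not lift to the determinant-$1$ group, and it is this regluing that implements the holonomy flip between the markings at the two ends -- precisely the delicate identification you flag at the end, which is the real content. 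Second, ``only constant trajectories contribute'' is itself the point that needs proof: one needs Lemma~\ref{lemma:dimensions-kappa} (zero-action twisted trajectories only join $\Crit_{+}$ to $\Crit_{-}$, while twisted trajectories between critical points of the same component have $\kappa\ge 1/4$), the translation-invariance argument of Proposition~\ref{prop:leading-terms} to kill the positive-action moduli spaces, and a choice of holonomy perturbation invariant under the flip so that the flip carries $\Crit_{+}$ bijectively to $\Crit_{-}$; the compatibility with $\Gamma$ then follows because the flip multiplies the $\SU(2)$ holonomy along the $S^{1}$ factor by the central element $-1$ and so does not change the $\SO(2)$ holonomies defining $h$. (Also, calling the involution $\tau$ collides with the variable $\tau$ of $\cR$.)
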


        \begin{proof}[Proof of the lemma]
        This is proved in \cite{Street} for rational coefficients,
        except that an ambiguity in the orientation of the moduli
        spaces left the sign of $\opepsilon^{2}$ unresolved there.
        (See also the proof of Proposition~\ref{prop:leading-terms}
        below.) In our
        present context we have
        \[
        \begin{aligned}
                  \opepsilon^{2} &= I\bigl([0,1]\times S^{2}_{n}\bigr)^{e} \circ
                   I\bigl([0,1]\times S^{2}_{n}\bigr)^{e} \\
                   &= I\bigl([0,1]\times S^{2}_{n}\bigr)^{2e} \\
                   &= (-1)^{e\cdot e} I\bigl([0,1]\times S^{2}_{n}\bigr) \\
                   &= 1,
        \end{aligned}
        \]
        where the equality in the second line is by functoriality and
        the equality in the third line is from \eqref{eq:e-sign}.
        \end{proof}

        The relations in Lemmas~\ref{lemma:delta-squared} and
        \ref{lem:epsilon-squared} are the same relations
        satisfied by the elements $\epsilon$ and $\delta_{k}$ in the algebra
        $\cA_{n}$, so we can indeed use these
        operators to define an $\cA_{n}$-module structure on
        $I(Z_{n})$ by
        \begin{equation}\label{eq:a-to-alpha}
               \begin{aligned}
                \alpha &\mapsto \opalpha, \\
                \delta_{i} &\mapsto \opdelta_{i}, \quad i=1,\dots,n,\\
                \epsilon & \mapsto \opepsilon.
               \end{aligned}
        \end{equation}

Having described the module structure of $I(Z_{n})$, the fact that
it is a cyclic module generated by $\1_{+}$
(Proposition~\ref{prop:instanton-cyclic}) and the assertions of
Proposition~\ref{prop:assoc-graded} are both consequences of the fact
that, under the isomorphism of Proposition~\ref{prop:IZ-two-copies},
the operators $\opalpha$,
$\opdelta_{p}$ and $\opepsilon$ agree with the operators $\alpha$,
$\delta_{p}$ and $\epsilon$ on $H_{*}(\Rep(S^{2}_{n}))$ in their
leading terms. This is the assertion of the proposition below, which
is the final proposition of this subsection.

\begin{proposition}\label{prop:leading-terms}
    Let $\Lam$ be the isomorphism of
    Proposition~\ref{prop:IZ-two-copies}. Then for any $\xi\in
    I(Z_{n})^{(m)}$ and $u\in \cA_{n}^{(k)}$, we have
    \begin{equation}\label{eq:leading-comparison}
                \Lam (u\xi) = u\Lam(\xi) \quad \bmod
                I(Z_{n})^{(m+k-1)},
    \end{equation}
    and $\Lam(\1_{+})=1_{+}$.
\end{proposition}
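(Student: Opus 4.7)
The strategy is a Morse--Bott analysis, following Street's approach in the rational case. Using Lemma~\ref{lem:Morse-even}, I choose a Morse function $f$ on $\Rep(Z_{n})$ with only even-index critical points and realize it as a holonomy perturbation of the Chern--Simons functional. Since all generators are then in even degree, the Floer differential vanishes, and both $I(Z_{n})$ and $H_{*}(\Rep(S^{2}_{n});\cR) \oplus H_{*}(\Rep(S^{2}_{n});\cR)[2]$ are identified with the free $\cR$-module on the critical points of $f$; this fixes the isomorphism $\Lam$ of Proposition~\ref{prop:IZ-two-copies}.

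The identity $\Lam(\1_{+}) = 1_{+}$ then follows from the Morse--Bott analysis of the 4-orbifold $D^{2}\times S^{2}_{n}$: its moduli of flat bifold connections is exactly $\Rep(S^{2}_{n})_{+}$, the weight $\Gamma(A)$ in \eqref{eq:Gamma-A} is trivial on these flat solutions, and the relative invariant therefore contributes the fundamental class of the $+$ component with coefficient $1$. For the operator statement \eqref{eq:leading-comparison}, by multiplicativity it suffices to verify the claim for each generator $u \in \{\alpha, \delta_{p}, \epsilon\}$ of $\cA_{n}$. The case $u = \epsilon$ is immediate from Lemma~\ref{lem:epsilon-squared}, and $\epsilon$ has filtration degree $0$ so no lower-filtration correction is needed. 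For $u = \alpha$ or $u = \delta_{p}$, I decompose the cut-down moduli space on $\R \times Z_{n}$ by the action of solutions: the zero-action stratum consists of constant Morse trajectories at points of $\Rep(Z_{n})$ intersected with a geometric representative of $[S^{2}_{n}]$ or $[p]$, and by the Atiyah--Bott--Biswas--Raghavendra construction this contribution is exactly cap product with the class $\alpha$ or $\delta_{p}$ of \eqref{eq:AB-generators} on $H_{*}(\Rep(Z_{n});\cR)$.

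The main obstacle is showing that the positive-action corrections lie in strictly lower filtration. Such a correction enters $\opalpha$ or $\opdelta_{p}$ weighted by some $\tau^{N}\in\cR$, and the claim is that on passing to $I(Z_{n})$ it is representable by an element of $\cA_{n}^{(k-1)}\cdot \1_{+}$ rather than merely $\cA_{n}^{(k)}\cdot \1_{+}$. The expected mechanism is a dimension count: the extra topology of a positive-action instanton strictly reduces the geometric dimension of the image in $\Rep(Z_{n})$ after cutting down by a class of cohomological degree $2k$, so that after applying to $\1_{+}$ the result sits in a strictly lower level of the polynomial-degree filtration on the cyclic module. Combining this dimension bound with an induction on the filtration degree $m$---propagated simultaneously with the filtration-respecting property of $\Lam$, using that $\opalpha$ and $\opdelta_{p}$ raise the $\cA_{n}$-filtration by exactly one by construction---is where the main technical bookkeeping lies, and is the step I expect to require the most care.
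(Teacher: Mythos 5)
You follow the same route as the paper: the even-index Morse model of Lemma~\ref{lem:Morse-even} realized as a holonomy perturbation, identification of both sides of Proposition~\ref{prop:IZ-two-copies} with the free $\cR$-module on the critical points, reduction of \eqref{eq:leading-comparison} to the generators $\alpha$, $\delta_{p}$, $\epsilon$, the expansion of $\opalpha$ and $\opdelta_{p}$ by the action $\kappa$ of instantons on $\R\times Z_{n}$, identification of the $\kappa=0$ term with cap product, and the case of $\epsilon$ via Lemma~\ref{lem:epsilon-squared}. All of that matches the paper's proof, and your sketch of $\Lam(\1_{+})=1_{+}$ (flat solutions on $D^{2}\times S^{2}_{n}$ give the fundamental class of the plus component with trivial $\Gamma$-weight) is in the right spirit, though for the exact equality one should also note that positive-action solutions on $D^{2}\times S^{2}_{n}$ contribute nothing because their moduli spaces have strictly positive dimension.

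The genuine gap is at the one step that carries the content of the proposition, which you explicitly defer: showing that the $\kappa>0$ contributions land in lower filtration. The heuristic you give for it is not the correct mechanism --- positive action does not ``strictly reduce the geometric dimension of the image in $\Rep(Z_{n})$''; it \emph{raises} the formal dimension of the trajectory moduli spaces by $8\kappa$. What is actually needed, and what the paper supplies in Lemma~\ref{lemma:dimensions-kappa}, is (i) energy quantization: a nonzero action on the cylinder is at least $1/4$ (and at least $1/2$ between critical points lying in the same component $R_{\pm}$), proved by comparison with the closed bifold $T^{2}\times S^{2}_{n}$ and the gluing description of the moduli spaces; and (ii) the dimension formula $\dim M_{\kappa}(c,c')=8\kappa+\ind(c)-\ind(c')$. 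For the degree-two operators $\opalpha$, $\opdelta_{p}$ the matrix entries are evaluations on two-dimensional moduli spaces, so for $\kappa>0$ one has $8\kappa\ge 2$ and the output critical point has Morse index at least $\ind(c)$, i.e.\ codimension in $\Rep(Z_{n})$ no larger than that of the input, rather than codimension two larger as for the leading term; this is exactly the statement that the correction lies in $I(Z_{n})^{(m+k-1)}$. Without the quantization statement and the dimension formula your ``dimension count'' has no content, and once they are in place the simultaneous induction on the filtration degree that you propose is unnecessary: the comparison holds matrix entry by matrix entry, as in the paper.
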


\begin{proof}
   It is enough to verify \eqref{eq:leading-comparison} in the case that $u$ is one of the
   generators, $\alpha$, $\delta_{p}$ or $\epsilon$. The essential
   point is that $u\xi$ is defined using instantons on the cylinder
   $\R\times Z_{n}$ and that the leading term is defined by
   (perturbations of) the flat connections, while the non-leading
   terms are defined by instantons with positive action. 

   In more detail, let us write $\Rep(Z_{n}) =  R_{+} \cup R_{-}$, as
   an abbreviation for the components $\Rep(Z_{n})_{\pm}$. Before any
   perturbations are made, we have seen that the two
          components $R_{+} \cup R_{-}$ are copies
          of the representation variety $\Rep(S^{2}_{n})$ of the
          orbifold sphere (Proposition~\ref{eq:RepZn0}). For each
          $\kappa>0$, let us write
          \[
                M_{\kappa}(R_{\pm}, R_{\pm})
          \]
          for the moduli space of (unperturbed) instanton trajectories from one
          component of $\Rep(Z_{n})$ to another, with action $\kappa$.

          \begin{lemma}\label{lemma:dimensions-kappa}
            \begin{enumerate}
                \item The moduli spaces $M_{\kappa}(R_{+}, R_{+})$ and
                $M_{\kappa}(R_{-}, R_{-})$
                are
                non-empty only for $\kappa \in (1/2)\Z$.
                \item \label{item:kappa-quantized} The moduli spaces $M_{\kappa}(R_{+}, R_{-})$ and
                $M_{\kappa}(R_{-}, R_{+})$
                are
                non-empty only for $\kappa \in (1/2)\Z + (1/4)$.
                \item The formal dimension of the moduli space, in
                every case, is $8\kappa + (2n - 6)$.
            \end{enumerate}
          \end{lemma}

          \begin{proof}[Proof of the lemma]
          The moduli spaces $M_{\kappa}(R_{+}, R_{+})$ and
          $M_{\kappa}(R_{-}, R_{-})$ are non-empty when $\kappa=0$,
          consisting then of constant trajectories on the cylinder and
          forming a regular moduli space of dimension $2n-6$ (the
          dimension of the representation variety). For other values
          of $\kappa$, these moduli spaces are related to each other
          by glueing in instantons and monopoles, which
          will change $\kappa$ by multiples of $1/2$ while always
          changing the formal dimension by $8\kappa$
          \cite{KM-gtes1,KM-yaft}.

          The formal dimension and action $\kappa$ for the moduli
          spaces $M_{\kappa}(R_{+}, R_{-})$ and $M_{\kappa}(R_{-},
          R_{+})$ are the same as for moduli spaces on the closed
          bifold $S^{1}\times Z_{n} = T^{2}\times S^{2}_{n}$ for a
          bundle with marking data where $w_{2}(E)$ is dual to the
          class $T^{2}\times\{\mathrm{point}\}$. The action in this
          case is equal to $n/4$ modulo $1/2$, or in other words
          belongs to $(1/4) + (1/2)\Z$ since $n$ is odd. (In the
          language of \cite{KM-gtes1}, the monopole number on each
          of the $n$ components of the singular set is a
          half-integer.) The formula for the formal dimension in terms
          of the action $\kappa$ is unchanged.
          \end{proof}

 After perturbation of the Chern-Simons functional, the manifolds
 $R_{+}$ and $R_{-}$ each become a finite set of non-degenerate
 critical points, $\Crit_{+}$ and $\Crit_{-}$. The action of the
 perturbed instantons will be close to integer multiples of $1/4$ is
 the perturbation is small, so for critical points $c$ and $c'$ and
 $\kappa\in (1/4)\Z$ we continue to write $M_{\kappa}(c,c')$ for the
 perturbed moduli spaces. We have the dimension formula
 \[
            \dim M_{\kappa}(c,c') = 8\kappa + \ind(c) - \ind(c')
 \]
where $\ind$ denotes the ordinary Morse index for the Morse function
on $R_{\pm}$. Furthermore, the moduli space is non-empty only if
$\kappa\in (1/2)\Z$ in the case that $c, c'$ both belong to
$\Crit_{+}$ or to $\Crit_{-}$, and only if $\kappa\in(1/4) + (1/2)\Z$
otherwise.

Consider now the operator $\opalpha$ for example. (The case of
$\opdelta_{p}$ is no different.) When $\kappa=0$, the moduli space
$M_{0}(c,c')$ between critical points $c,c'\in \Crit_{+}$ or $c,c'\in
\Crit_{-}$ coincides with a perturbation of the space of ordinary
Morse trajectories between the critical points in $R_{\pm}$. The
construction of $\opalpha$ means that we can write it as a sum
\begin{equation}\label{eq:opalpha-expansion}
        \opalpha = \sum_{\substack{
        \kappa\in (1/4)\Z \\ \kappa
        \ge 0}
}        \opalpha_{(\kappa)}
\end{equation}
according to the contributions of the different moduli spaces
$M_{\kappa}$.  The matrix entry of $\opalpha_{(0)}$ is the evaluation
of the cohomology class $\alpha$ on the Morse trajectory space
$M_{0}(c,c')$ between critical points on $R_{+}$ or $R_{-}$ with
$\ind(c)-\ind(c')=2$. This is the cap product by the class $\alpha$,
under the isomorphism between Morse homology and singular homology.
Thus we have
\[
                \Lam (\opalpha_{(0)}\xi) = \alpha\Lam(\xi)
\]
where $\xi$ is the class corresponding to the critical point $c$. The
dimension formula shows that the remaining terms
$\Lam(\opalpha_{(\kappa)}\xi)$ for positive $\kappa$ correspond to
2-dimensional moduli spaces $M_{\kappa}(c,c'')$ where the index
difference $\ind(c)-\ind(c'')$ is $4$ or more.

   In the case of
   $\opepsilon$, the equality \eqref{eq:leading-comparison} holds
   exactly. This is the content of Lemma~\ref{lem:epsilon-squared}. In
   the present context it can be understood by the same argument as
   applies to $\opalpha$ and $\opdelta_{p}$, but with the additional
   observation that the moduli spaces of positive action contribute
   zero because of action of translation on these moduli spaces.
\end{proof}

       If we keep track of the difference between $R_{+}$ and $R_{-}$
       which is
       highlighted in part \ref{item:kappa-quantized} of
       Lemma~\ref{lemma:dimensions-kappa}, then we can extract a
       slightly more detailed statement from the proof of the
       proposition above.        Recall that $J_{n}\subset \cA_{n}$ is the annihilator
        of $H_{*}(\Rep(Z_{n})$. (See
        Proposition~\ref{prop:coho-Z-cyclic}.) In
        the following corollary, we also write
        \[
                \cA_{n}^{+}\subset \cA_{n}
        \]
        for the subalgebra generated over $\cR$ by $\alpha$ and
        $\delta_{1},\dots, \delta_{n}$, so that
        \[
                \cA_{n} = \cA_{n}^{+} + \epsilon\cA_{n}^{+}.
        \]

        \begin{corollary}\label{cor:relations-coh-to-inst}
            For any element $w\in J_{n}\cap \cA_{n}^{(m)}$, there
            exists $\omega\in \cJ_{n}\cap \cA_{n}^{(m)}$ with
            \[
                            \omega - w \in \cA_{n}^{(m-1)}.
            \]
            More particularly, if $w$ is a homogeneous element of
            degree $m$ in the graded algebra $\cA_{n}$, then $\omega$
            can be taken to have the form
            \[
            \begin{aligned}
                        \omega = w(0) &+ w(2) + w(4) + \cdots\\
                        &\null + \epsilon( w(1) +  w(3) +
                        \cdots).
            \end{aligned}
            \]
            where $w(0)=w$ and $w(i)\in \cA_{n}^{(m-i)}\cap \cA_{n}^{+}$ is
            homogeneous of degree $m-i$ for all
            $i$.
            Furthermore, if $m\le (n-1)/2$, then $\omega$ is uniquely
            determined by $w$. 
        \end{corollary}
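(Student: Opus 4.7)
The plan is to find $\omega$ by iteratively cancelling the terms of $w\cdot\1_+$ against the filtration on $I(Z_n)$, using that $I(Z_n)$ is a cyclic $\cA_n$-module generated by $\1_+$ with annihilator $\cJ_n$ (Proposition~\ref{prop:instanton-cyclic}). Since $J_n=(j_n+\epsilon j_n)\otimes\cR$ is a graded ideal and the prescribed form of $\omega$ is additive in the homogeneous components of $w$, it suffices to treat $w$ homogeneous of degree $m$; the general statement then follows by summing over degrees.

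I would build $\omega$ inductively, setting $\omega_0=w$ and adding, at step $k+1$, a correction $w(k+1)$ (if $k+1$ is even) or $\epsilon\, w(k+1)$ (if $k+1$ is odd), with $w(k+1)\in\cA_n^+\cap\cA_n^{(m-k-1)}$ homogeneous of degree $m-k-1$, chosen so that $\xi_{k+1}:=\omega_{k+1}\cdot\1_+$ lies in $I(Z_n)^{(m-k-2)}$. The base case $\xi_0\in I(Z_n)^{(m-1)}$ is immediate from Proposition~\ref{prop:leading-terms}: modulo $I(Z_n)^{(m-1)}$ we have $\Lam(\xi_0)\equiv w\cdot 1_+$, which vanishes as $w\in J_n$ annihilates $1_+$. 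At each inductive step, one reads off the image of $\xi_k$ in the associated graded $I(Z_n)^{(m-k-1)}/I(Z_n)^{(m-k-2)}$, identified by Proposition~\ref{prop:assoc-graded} with a summand of $H_{2n-6-2(m-k-1)}(\Rep(Z_n);\cR)$; one then solves for $w(k+1)$ using the surjectivity of cap product with $1_\pm$, itself a consequence of Poincar\'e duality on $\Rep(S^2_n)$ combined with the fact that $\cA_n^+$ surjects onto $H^*(\Rep(S^2_n);\cR)$. The induction terminates at $k=m$ with $\xi_m\in I(Z_n)^{(-1)}=\{0\}$, so $\omega:=\omega_m\in\cJ_n$ has the required shape.

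The main obstacle in this plan is a \emph{parity statement}: at each step the leading part of $\xi_k$ must lie in the single $R_\pm$ component accommodated by the prescribed form, namely in $R_-$ when $k$ is even and in $R_+$ when $k$ is odd. This is exactly what dictates the alternation between $w(i)$ and $\epsilon\, w(i)$ in the form of $\omega$. The required parity follows from Lemma~\ref{lemma:dimensions-kappa}(\ref{item:kappa-quantized}): an operator in $\cA_n^+$ acting on $\1_+$ via an instanton moduli space of action $\ell/4$ lowers the filtration by $\ell$ and swaps the $R_\pm$ component precisely when $\ell$ is odd; combined with the fact that each correction $\epsilon\,w(j)$ introduced at an odd step $j$ carries an additional swap from $\epsilon$, a direct parity count shows that all contributions to $\xi_k$ at filtration level $m-k-1$ land in the same $R_\pm$ component, with sign determined by the parity of $k+1$. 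Verifying this requires some care, since the $\kappa$-decomposition of a composite operator such as $\alpha\,\delta_p$ has to be obtained from those of its factors by functoriality, and one must check that the net parity rule is preserved under this multiplication.

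For uniqueness when $m\le(n-1)/2$, I would argue as follows. Suppose $\omega$ and $\omega'$ both satisfy the conclusion. Their difference has zero degree-$m$ part and lies in $\cJ_n$, so $(\omega-\omega')\cdot\1_+=0$. Choose $i\ge 1$ minimal with $w(i)\ne w'(i)$; examining the leading term of $(\omega-\omega')\cdot\1_+$ in the associated graded at filtration $m-i$ forces $(w(i)-w'(i))\cap 1_\pm=0$ in $H_*(\Rep(S^2_n);\cR)$, hence $w(i)-w'(i)\in J_n\cap\cA_n^{(m-i)}$. Since $m-i\le(n-1)/2-1=(n-3)/2$, Proposition~\ref{prop:below-middle} yields $w(i)-w'(i)=0$, a contradiction.
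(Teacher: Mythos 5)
Your proof is correct and is essentially the paper's intended argument: the paper's proof is just a citation of Proposition~\ref{prop:leading-terms} (whose proof supplies the $\kappa$-expansion and, via Lemma~\ref{lemma:dimensions-kappa}(\ref{item:kappa-quantized}), exactly the $R_{+}/R_{-}$ parity rule you use to force the alternating $\epsilon$-pattern) together with Proposition~\ref{prop:below-middle} for uniqueness, and your inductive correction scheme fills in those details faithfully. The compositional subtlety you flag is already handled by the paper's framework, since the $\kappa$-expansion is defined for arbitrary monomials directly from cylinder moduli spaces, to which Lemma~\ref{lemma:dimensions-kappa} applies verbatim.
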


        \begin{proof}
         This follows from the proposition above and
         Proposition~\ref{prop:below-middle}.
          \end{proof}

         \subsection{The instanton homology of $Z_{n,-1}$}

        We now examine the bifold $Z_{n,-1}$
        (see Definition~\ref{def:Znq}). The singular locus
        $K(Z_{n,-1})$ in this
        case is a knot in $S^{1}\times S^{2}$, with winding number
        $n$. We still require $n$ to be odd, so that this is an
        admissible bifold. We can view $K(Z_{n,-1})$ as the closure
        of a braid in $S^{1}\times D^{2}\subset S^{1} \times S^{2}$
        whose braid diagram has $n-1$ negative crossings. There is
        therefore a cobordism  $W$ of bifolds, from $Z_{n,-1}$ to
        $Z_{n}$, obtained by smoothing each of the crossings.
        We can write $W$ as a composite
        of $(n-1)$ cobordisms, $W_{1}, \dots, W_{n-1}$, in the order
        illustrated in Figure~\ref{fig:cobordisms}.
\begin{figure}
    \begin{center}
       \includegraphics[width=5in]{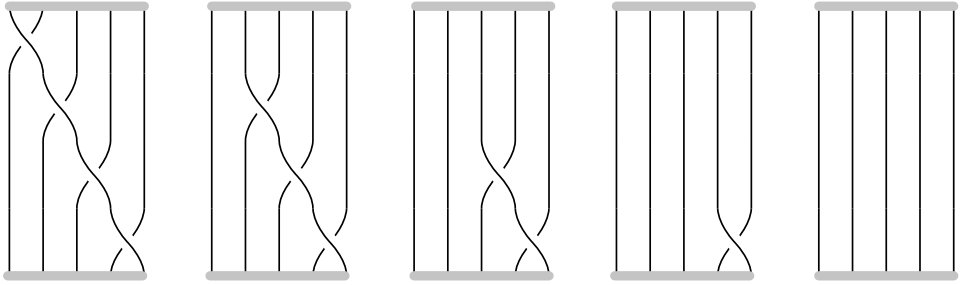}
    \end{center}
    \caption{\label{fig:cobordisms} The composite cobordism from
    $Z_{n,-1}$ to $Z_{n}$, illustrated for $n=5$.}
\end{figure}
The intermediate
        bifolds  each correspond to braids with $k$ ``straight''
        strands and $n-k$ braided strands: a side-by-side
        juxtaposition of $Z_{k}$ and $Z_{n-k,-1}$, which we
        temporarily denote by $Z_{k}* Z_{n-k,-1}$ (with the
        understanding that $Z_{0}$ is $S^{1}\times S^{2}$ with an
        empty link). So we have
        \[
                   I( W_{k} ) :I( Z_{k-1}* Z_{n-k+1,-1})\to
                    I(Z_{k}*Z_{n-k,-1} ),\qquad (k=1,\dots,n-1).
        \]
        (Note that, when $k=n-1$, we have $Z_{k}* Z_{n-k,-1} \cong
        Z_{n}$.)

        \begin{proposition}\label{prop:inclusion-free}
            For each odd $n$ and each $k \le n-1$, the induced map
            $I(W_{k})$ is an inclusion of one free $\cR$-module in
            another, as a direct summand.
        \end{proposition}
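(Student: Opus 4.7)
The plan is to construct an auxiliary cobordism $V_k$ from $Z_k * Z_{n-k,-1}$ back to $Z_{k-1} * Z_{n-k+1,-1}$ so that the composition $I(V_k)\circ I(W_k)$ becomes an explicitly computable endomorphism of $I(Z_{k-1}*Z_{n-k+1,-1})$, namely multiplication by a non-zero-divisor in $\cR$ (very possibly just a unit). Since both source and target of $I(W_k)$ are already known to be free $\cR$-modules by Proposition~\ref{prop:IZ-two-copies}, producing such a left inverse up to a non-zero-divisor immediately forces $I(W_k)$ to be a split injection of free $\cR$-modules onto a direct summand.

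To construct $V_k$, I would take the reverse saddle: the same local pair-of-pants cobordism as $W_k$, but run in the opposite time direction, so that it reglues the two smoothed strands back into a crossing. Topologically the composite $V_k\circ W_k$ is then the identity cobordism $[0,1]\times (Z_{k-1}*Z_{n-k+1,-1})$ with a small local modification supported in a 4-ball around the smoothed crossing: the singular surface in that 4-ball is changed from the product of a crossing diagram with an interval to the same surface connect-summed with an embedded 2-sphere (the "bubble" produced by the birth-death of the pair of pants). The upshot is that $I(V_k)\circ I(W_k)$ equals the identity composed with the operator associated to this local modification.

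The next step is to evaluate that local operator. By a neck-stretching argument near the bubble one reduces to a standard model: a 4-ball with an embedded 2-sphere meeting the orbifold locus in two points. The contributions of the relevant instanton moduli spaces on this model can be identified using the relations already available in the excerpt, in particular Proposition~\ref{prop:2-d-relation} and the framing/bubbling corrections of \eqref{eq:Gamma-A}. One expects the outcome to be multiplication by a specific element of $\cR$ built from $\tau^{\pm 2}$ (for instance a factor of the form $\pm(\tau^2-\tau^{-2})$ or a unit), which is patently a non-zero-divisor in $\cR=\Q[\tau^{\pm 1}]$.

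The main obstacle is precisely this local instanton calculation: identifying the exact element of $\cR$ produced by the bubble and checking it is not a zero-divisor. A secondary, but minor, issue is promoting "injection with saturated image" to "split injection"; this is automatic once one also knows the cokernel is $\cR$-torsion-free, which can be read off by specializing $\tau$ to a unit (say $\tau=1$) and invoking Street's calculation of $I(Z_{k-1}*Z_{n-k+1};\Q)\to I(Z_k * Z_{n-k};\Q)$, since ranks of free modules are constant and the specialization of a split injection remains a split injection. If the direct construction of $V_k$ turns out to be awkward to set up rigorously, a viable alternative is to invoke the unoriented skein exact triangle relating the three local resolutions of the crossing and argue that the third term (in which the crossing is replaced by a cap-cup) has the right rank and grading behavior to force $I(W_k)$ to be a split inclusion.
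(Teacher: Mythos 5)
Your main route has a genuine gap at exactly the point the proposition is about. A composite $I(V_k)\circ I(W_k)$ equal to multiplication by a non-zero-divisor $c\in\cR$ only gives injectivity of $I(W_k)$; unless $c$ is a unit it does \emph{not} force the image to be a direct summand (consider $\cR\xrightarrow{\;c\;}\cR$ with $c=\tau^2-\tau^{-2}$). In this theory the element produced by such local computations is typically of the form $\pm(\tau^{2}-\tau^{-2})$ or worse, not a unit, so the ``immediately forces a split injection'' step fails. Moreover the local model is misidentified: a saddle followed by its time-reverse is the product cobordism with a \emph{tube} joining two sheets of the singular surface (Euler characteristic drops by $2$), not a sphere bubble, and the resulting endomorphism is in general an expression in the $\delta$-operators (compare the remark that summing on a torus realizes $2\opdelta_{p}$), not a scalar in $\cR$ at all — so even the premise ``multiplication by a non-zero-divisor'' is unsubstantiated. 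Finally, your repair of the splitting issue does not work: Street computes $I(Z_{n};\Q)$, not the intermediate groups $I(Z_{k}*Z_{n-k,-1})$ or the maps $I(W_k)$ between them, and a single specialization at $\tau=1$ cannot detect cokernel torsion supported elsewhere (e.g.\ at $\tau^{4}=1$, which is precisely where torsion phenomena occur in this theory, cf.\ the $(\tau^{4}-1)^{f}$ factors in the finger-move relations), so torsion-freeness of the cokernel cannot be ``read off'' this way.

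Your fallback suggestion — use the unoriented skein exact triangle — is in fact the paper's argument, but it needs the content you left out. The paper argues by induction on odd $n$: the third term of the triangle for $W_k$ is $I(X_{n,k})$, and after an isotopy $X_{n,k}\cong Z_{k-1}*Z_{n-k-1,-1}$ for $k\le n-2$ (free and supported in even $\Z/4$-degrees by the inductive hypothesis), while for $k=n-1$ a second skein triangle exhibits $I(X_{n,n-1})$ as an extension of $I(Z_{n-2})$ by $I(Z_{n-2})$, hence free. Because all these modules sit in even degrees and one map of each triangle has odd degree, the long exact sequences collapse to short exact sequences, and freeness of the third term splits them, giving the direct-summand statement. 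Without identifying $X_{n,k}$ and establishing its freeness and parity, ``the third term has the right rank and grading behavior'' is an assertion of the conclusion rather than a proof.
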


        \begin{proof}[Proof of Proposition~\ref{prop:inclusion-free}]
            As an inductive hypothesis, let us suppose that
            $I(Z_{j}*Z_{n'-j,-1} )$ is a free $\cR$-module for all
            odd $n'<n$ and all $j\le n'-1$. We assume also that, in
            this range, the module is supported in even degrees in the
            mod 4 grading.             
            All this is true when $n=3$,
            because the groups referenced in the hypothesis are all
            zero. We also recall that $I(Z_{n})$ is free
            and supported in even gradings.

            The cobordism $W_{k}$ is
            one map in a skein exact triangle \cite{KM-ibn1,
            KM-unknot}, in which the third
            instanton homology group is $I(X_{n,k})$, where $X_{n,k}$
            is a braid as shown in Figure~\ref{fig:thirdbraid}.
            \begin{figure}
    \begin{center}
       \includegraphics{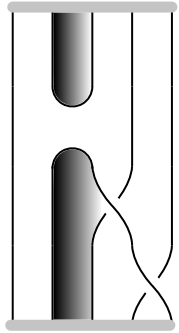}
    \end{center}
    \caption{\label{fig:thirdbraid} The third braid $X_{n,k}$ in the exact
    triangle, illustrated in the case $n=5$ and $k=2$. The shaded region (which is connected in a projection of
    $S^{1}\times S^{2}$) can be eliminated by a Reidemeister-I move.}
\end{figure}
            Thus,
            \[
                \dots \to    I( Z_{k-1}* Z_{n-k+1,-1})\to
                    I(Z_{k}*Z_{n-k,-1} ) \to
                I(X_{n,k}) \to \dots
            \]
            is a long exact sequence.
            
            After
            an isotopy, we have, for $k\le n-2$, 
            \begin{equation}\label{eq:Xnk}
                    X_{n,k} = 
                           Z_{k-1} *Z_{n-2-k+1,-1}, \qquad k \le n-2 .
            \end{equation}
            From our inductive hypothesis, $I(X_{n,k})$ is free
            in this range. The case $k=n-1$ is slightly
            different: in this case $X_{n,n-1}$ is the connected sum
            of $Z_{n-2}$ and the bifold obtained from an unknot in
            $S^{3}$. (See Figure~\ref{fig:thirdbraid} again.) From
            another application of the skein triangle, we have an
            exact sequence
            \[
                \dots \to    I(Z_{n-2}) \to I(X_{n,n-1}) \to
                I(Z_{n-2}) \to \dots.
            \]

            All of these exact sequences are sequences of
            $(\Z/2)$-graded  modules, in which just one of the three maps always
            has odd degree. We therefore have short exact sequences,
            \begin{equation}\label{eq:Xnnminus1}
               0 \to    I(Z_{n-2}) \to I(X_{n,n-1}) \to
                I(Z_{n-2}) \to 0,
            \end{equation}
            and 
            \begin{equation}\label{eq:short-exact}
                0\to    I( Z_{k-1}* Z_{n-k+1,-1})\to
                    I(Z_{k}*Z_{n-k,-1} ) \to
                I(X_{n,k}) \to 0.
            \end{equation}
            From \eqref{eq:Xnnminus1}, we see that $I(X_{n,k})$ is
            free when $k=n-1$. It follows that the sequence
            \eqref{eq:short-exact} splits when $k=n-1$. We already
            observed that $I(X_{n,k})$ is free for $k<n-1$, so all the
            sequences split and all the maps $I( Z_{k-1}*
            Z_{n-k+1,-1})\to I(Z_{k}*Z_{n-k,-1} )$ are split
            inclusions of free modules.
        \end{proof}

        \begin{proposition}\label{prop:lattice-points}
            The representation variety of $Z_{n,-1}$ is
            non-degenerate and
            consists of $(n^{2}-1)/4$ points.
           \end{proposition}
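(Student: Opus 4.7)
The plan is to identify $\Rep(Z_{n,-1})$ with explicit $\SU(2)$-orbits and enumerate them. By Lemma~\ref{lem:orientation-Knq}, $Z_{n,-1}\cong Z_{n,1}$, which is the mapping torus of the rotation $h:S^{2}_{n}\to S^{2}_{n}$ through $2\pi/n$ cyclically permuting the marked points. Hence $\Rep(Z_{n,-1})$ can be identified with the space of tuples $((i_1,\dots,i_n),g)$, where each $i_k$ is a unit imaginary quaternion with $i_1 i_2\cdots i_n=1$ and $g\in\SU(2)$ satisfies $g i_k g^{-1}=i_{k+1\bmod n}$, modulo simultaneous conjugation.

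Since $n$ is odd, the underlying representation $(i_1,\dots,i_n)$ of $\pi_1^{\orb}(S^{2}_{n})$ is irreducible (Section~\ref{subsec:Rep2d}), so Schur's lemma forces $g^n=\pm 1$. The central cases $g=\pm 1$ are ruled out: they would force all $i_k$ equal, contradicting $i_1^n=1$ for an order-$4$ element with $n$ odd. Conjugating $g$ into the maximal torus, we take $g=\cos\theta+\sin\theta\,\mathbf{i}$ with $\theta=k\pi/n$ for some $k\in\{1,\dots,n-1\}$, and then $g^n=(-1)^k$.

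The identity $i_k=g^{k-1}i_1 g^{-(k-1)}$ together with an easy induction gives the telescoping
\[
i_1 i_2\cdots i_\ell = (i_1 g)^\ell g^{-\ell},
\]
so the relation $i_1\cdots i_n=1$ is equivalent to $(i_1 g)^n = g^n = (-1)^k$. Writing $i_1=x\mathbf{i}+y\mathbf{j}+z\mathbf{k}$ with $x^2+y^2+z^2=1$, a direct quaternion multiplication shows the scalar part of $i_1 g$ equals $-x\sin\theta$. Hence $(i_1 g)^n = \pm 1$ forces the eigenvalue angle $\eta$ of $i_1 g$ to lie in $\tfrac{\pi}{n}\Z$, so $\eta=j\pi/n$ with $j\in\{1,\dots,n-1\}$, and the sign requirement yields $j\equiv k\pmod 2$. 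The equation $\cos(j\pi/n)=-x\sin(k\pi/n)$ then determines $x$, and the residual freedom in $(y,z)$ on the circle $y^2+z^2=1-x^2$ is a single orbit of the centralizer torus of $g$. Thus each admissible pair $(k,j)$ contributes at most one point, with existence precisely when $|x|\le 1$, equivalently
\[
\cos\bigl((k+j)\pi/n\bigr)\cos\bigl((k-j)\pi/n\bigr)\le 0.
\]

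The principal remaining task, and the main technical step, is to verify that the number of pairs $(k,j)\in\{1,\dots,n-1\}^2$ with $k\equiv j\pmod 2$ satisfying this inequality equals $(n^{2}-1)/4$. The involution $(k,j)\mapsto(n-k,n-j)$ swaps the parity classes (both odd versus both even), so it suffices to count one class and double; a case analysis on the sign patterns of $\cos((k\pm j)\pi/n)$ according to whether $(k\pm j)/2$ lies below or above $n/4$ then yields $(n^{2}-1)/8$ per class. Finally, non-degeneracy at each flat connection follows from the Hochschild--Serre spectral sequence for the mapping torus, which identifies $H^1(\pi_1^{\orb}(Z_{n,1});\mathrm{ad}(\rho))$ with the $h^{*}$-invariant subspace of $T_{\rho_0}\Rep(S^{2}_{n})$; since $\Rep(S^{2}_{n})$ is smooth and the count above shows that $\rho_0$ is an isolated fixed point of $h$, this invariant subspace is zero.
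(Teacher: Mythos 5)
Your argument is correct and arrives at the right count, but by a genuinely different route from the paper. The paper also uses the fibration of $Z_{n,\pm1}$ over the circle, but it identifies the $h_{*}$-invariant locus of $\Rep(S^{2}_{n})$ with the representation variety of the quotient orbifold $S^{2}_{n}/\langle h\rangle$ --- a sphere with one orbifold point of order $2$ and two of order $n$ --- and then quotes the lattice-point enumeration of \cite{FS} (equivalently, the parabolic-bundle picture) to get $(n^{2}-1)/8$ invariant representations, doubling because the restriction map is two-to-one (the two lifts $\pm g$ of the monodromy). You instead stay inside the mapping-torus group: Schur's lemma pins down $g^{n}=\pm1$, the telescoping identity $i_{1}\cdots i_{\ell}=(i_{1}g)^{\ell}g^{-\ell}$ converts the sphere relation into a condition on the conjugacy classes of $g$ and $i_{1}g$, and the problem becomes an explicit count of pairs $(k,j)$; this is self-contained, avoids \cite{FS} and parabolic bundles entirely, and your $\pm g$ ambiguity reappears as the parity-swapping involution $(k,j)\mapsto(n-k,n-j)$, matching the paper's factor of $2$. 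Two places need tightening. First, the final enumeration is only sketched; it does come out to $(n^{2}-1)/4$ (and the boundary case $|x|=1$, i.e.\ $\cos^{2}(j\pi/n)=\sin^{2}(k\pi/n)$, cannot occur for $n$ odd, so the weak inequality is harmless), but the sign-pattern case analysis should be written out --- it is precisely the lattice count that the paper delegates to \cite{FS}. Second, in the non-degeneracy step, ``$\rho_{0}$ is an isolated fixed point of $h_{*}$'' does not by itself force the invariant subspace of $dh_{*}$ on $H^{1}(S^{2}_{n};\ad\rho)\cong T_{\rho_{0}}\Rep(S^{2}_{n})$ to vanish for a general diffeomorphism (think of a map with unipotent differential); you need that $h_{*}$ has finite order, which holds here since $h^{n}=\mathrm{id}$, so that the fixed locus is a smooth submanifold whose tangent space at $\rho_{0}$ is exactly the invariant subspace. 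With that observation, isolatedness gives the vanishing, and your Wang/Hochschild--Serre argument (using $H^{0}=0$ by irreducibility) yields $H^{1}(Z_{n,\pm1};\ad\rho)=0$, i.e.\ non-degeneracy, as required.
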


        \begin{proof}
            The orbifold $Z_{n,-1}$ is a fiber bundle over the circle,
            with fiber the orbifold sphere $S^{2}_{n}$. The
            restriction map to the fiber,
            \[
                \Rep(Z_{n,-1}) \to \Rep(S^{2}_{n}),
            \]
            has image the set of representations in $\Rep(S^{2}_{n})$
            which are invariant under the action $h_{*}$ of the
            monodromy of the circle bundle, $h: S^{2}_{n} \to
            S^{2}_{n}$. The latter is the map which rotates the sphere
            through $2\pi/n$. The restriction map is two-to-one, just
            as it is for $Z_{n}$, and for the same reason.

            The fixed points of $h_{*}$ are representations of the
            orbifold fundamental group of the quotient $\Sigma =
            S^{2}_{n}/\langle h \rangle$. This orbifold surface has
            one orbifold point of order $2$ and two orbifold points of
            order $n$. For a spherical orbifold with three singular
            points, the representation variety consists of isolated
            points, and this is essentially the situation considered
            in \cite{FS} (for example). The enumeration of
            representations, as in \cite{FS}, becomes an enumeration
            of lattice points in a region. (The same conclusion can
            also be reached by identifying the representations  with
            stable parabolic bundles on a curve of genus 0 with
            appropriate parabolic structure at the orbifold points.
            See section~\ref{subsec:loci})
            In this particular case, the number of representations of
            the orbifold fundamental group of $S^{2}_{n}/\langle h
            \rangle$ is $(n^{2}-1)/8$, and $\Rep(Z_{n,-1})$ therefore
            consists of $(n^{2}-1)/4$ points. The non-degeneracy of
            the former leads to the non-degeneracy of the latter.
        \end{proof}

        The following corollary summarizes the conclusions of the
        previous two propositions.
        
        \begin{corollary}\label{cor:rank-Znm1}
             The instanton homology $I(Z_{n,-1})$ with local
            coefficients is a free $\cR$-module of rank $(n^{2}-1)/4$,
            supported in even degrees mod 4. 
            The cobordism $W: Z_{n,-1} \to Z_{n}$ induces a map
            $I(W)$ on instanton homology with local coefficients,
\[
            I(W) : I(Z_{n,-1}) \to I(Z_{n})
\]
            which
            is an inclusion of this free $\cR$-module as a direct
            summand. \qed
        \end{corollary}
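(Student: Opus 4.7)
The plan is to assemble Proposition~\ref{prop:lattice-points} and Proposition~\ref{prop:inclusion-free}, using a parity argument at the end to pin down the exact rank.

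First, from Proposition~\ref{prop:lattice-points}, the representation variety $\Rep(Z_{n,-1})$ consists of $(n^{2}-1)/4$ non-degenerate points. After a small holonomy perturbation of the Chern-Simons functional achieving the Morse-Smale condition, these persist as the full critical set, so the Floer chain complex $CI(Z_{n,-1};\Gamma)$ is a free $\cR$-module of rank $(n^{2}-1)/4$, and its homology $I(Z_{n,-1})$ has rank at most $(n^{2}-1)/4$ over $\cR$.

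Second, decompose $W = W_{n-1}\circ\cdots\circ W_{1}$ as in Figure~\ref{fig:cobordisms}. By Proposition~\ref{prop:inclusion-free} each $I(W_{k})$ is a split inclusion of one free $\cR$-module as a direct summand of another, and a composition of split inclusions of free modules is again a split inclusion (split them off one at a time). Hence $I(W): I(Z_{n,-1})\to I(Z_{n})$ exhibits $I(Z_{n,-1})$ as a free $\cR$-module direct summand of $I(Z_{n})$.

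To promote the rank bound to an equality, I would use the fact that $I(Z_{n})$ is supported in even degrees mod~$4$ (Proposition~\ref{prop:IZ-two-copies}) and that each saddle map $I(W_{k})$ carries a $\Z/4$ degree computed from the formal dimension of its moduli space over the bifold cobordism $W_{k}$, which is even. It follows that the summand $I(Z_{n,-1})$ is likewise supported in even mod-$4$ degrees, so the Floer differential on $CI(Z_{n,-1})$, which has odd $\Z/4$ degree, must vanish for parity reasons, and the homology rank equals the chain complex rank $(n^{2}-1)/4$. The main obstacle is this last parity check; everything else is a direct assembly of the two preceding propositions and the general fact that compositions of split embeddings are split.
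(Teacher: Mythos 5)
Your outline matches the paper's: Corollary~\ref{cor:rank-Znm1} is indeed meant as an assembly of Proposition~\ref{prop:lattice-points} (which gives a Floer complex with exactly $(n^{2}-1)/4$ generators, hence the upper bound on the rank) and Proposition~\ref{prop:inclusion-free} (whose proof, via the split skein exact sequences, gives freeness, even-degree support, and the fact that the composite $I(W)$ is a split inclusion). Your first two steps are fine.

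The gap is in your final parity step. Even granting that $I(Z_{n,-1})$ is supported in even degrees mod $4$, that is a statement about the \emph{homology}, and it does not imply that the differential on the chain complex $\CI(Z_{n,-1};\Gamma)$ vanishes. If $N=(n^{2}-1)/4$ is the number of generators and $N_{\mathrm{odd}}$ of them lie in odd mod-$4$ degree, then a free, even-supported homology forces only $\rank_{\cR} I(Z_{n,-1}) = N - 2N_{\mathrm{odd}}$: odd-degree generators can be cancelled in pairs by the differential while leaving the homology free and even. So the equality $\rank = (n^{2}-1)/4$ is equivalent to the \emph{chain-level} statement that all $(n^{2}-1)/4$ non-degenerate flat connections on the mapping torus $Z_{n,-1}$ have even Floer grading, and neither Proposition~\ref{prop:lattice-points} (which only counts them) nor your homology-level parity argument supplies this; it needs a separate input, e.g.\ a Fintushel--Stern-type grading computation for these Seifert-fibred generators, or extracting the rank recursively from the split short exact sequences in the proof of Proposition~\ref{prop:inclusion-free}. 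A secondary, smaller issue: you assert that each $I(W_{k})$ has even $\Z/4$ degree "because the formal dimension is even" without computation, whereas the paper's own triangle argument records that one of the three maps in each skein triangle has odd degree, so the degrees of the cobordism maps are exactly the kind of thing that must be checked rather than assumed; in the paper the even-degree support of $I(Z_{n,-1})$ is carried along as part of the induction inside Proposition~\ref{prop:inclusion-free} rather than deduced from degrees of the $I(W_{k})$ after the fact.
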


      The bifold obtained from $Z(n,-k)$ by
       reversing the orientation is $Z(n,k)$, and by dualizing the
       above corollary we obtain:

           \begin{corollary}\label{cor:rankZn1}
             The instanton homology $I(Z_{n,1})$ with local
            coefficients is also a free $\cR$-module of rank
            $(n^{2}-1)/4$. 
            The cobordism $W^{\dag}: Z_{n} \to Z_{n,1}$ induces a
            surjective map $I(W^{\dag})$ on these free modules. \qed
        \end{corollary}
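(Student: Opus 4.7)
The plan is to obtain Corollary~\ref{cor:rankZn1} as the orientation-reversal dual of Corollary~\ref{cor:rank-Znm1}, exploiting the standard duality between the instanton homology of a 3-bifold and that of its orientation-reverse. The first step is to make the identifications precise. The map $\theta \mapsto -\theta$ on the $S^{1}$ factor of $S^{1}\times S^{2}$ is an orientation-reversing self-diffeomorphism which preserves $K_{n}=S^{1}\times\pp$, hence gives an orientation-preserving isomorphism $-Z_{n}\cong Z_{n}$. The same flip carries the locus $n\phi = -\theta$ to $n\phi = \theta$, so it also furnishes an orientation-preserving isomorphism $-Z_{n,-1}\cong Z_{n,1}$. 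Under these two identifications, the cobordism $W: Z_{n,-1}\to Z_{n}$ of Corollary~\ref{cor:rank-Znm1}, with reversed orientation and read in the opposite direction, is exactly the cobordism $W^{\dag}: Z_{n}\to Z_{n,1}$ of the statement.

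Next I would invoke the duality pairing for singular instanton homology with local coefficients $\Gamma$. For any admissible bifold $Z$ there is a perfect pairing of $\cR$-modules
\[
    I(Z;\Gamma) \otimes_{\cR} I(-Z;\Gamma) \to \cR,
\]
where the $\cR$-structure on $I(-Z;\Gamma)$ is taken with respect to the ring involution $\sigma:\tau\mapsto\tau^{-1}$ of $\cR$. This involution appears because the map $h:\bonf(Z)\to S^{1}$ of section~\ref{sec:local-system} is replaced by its inverse under orientation reversal of $Z$. Since $\sigma$ is an automorphism of $\cR$, rank and freeness are preserved under the pairing, so Corollary~\ref{cor:rank-Znm1} immediately yields that $I(Z_{n,1})$ is a free $\cR$-module of rank $(n^{2}-1)/4$.

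Finally, functoriality of the duality pairing means that for a bifold cobordism $W$, the reversed cobordism $-W$ induces the $\sigma$-twisted transpose of $I(W)$. Corollary~\ref{cor:rank-Znm1} gives $I(W)$ as a split inclusion of free $\cR$-modules; its transpose $I(W^{\dag})$ is therefore a split surjection, hence in particular surjective. The one point that will require real care is checking that the framings of the components of the singular locus, the homology orientations used to fix signs of cobordism maps, and the local coefficient conventions all behave compatibly under the orientation-reversing self-diffeomorphism $\theta\mapsto-\theta$; these compatibilities are standard for ordinary instanton homology but deserve to be traced through carefully in the local-coefficient bifold setting.
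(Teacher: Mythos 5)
Your proposal is correct and is essentially the paper's own argument: the paper obtains Corollary~\ref{cor:rankZn1} in one line by noting that orientation reversal takes $Z_{n,-1}$ to $Z_{n,1}$ (and $Z_{n}$ to itself) and then dualizing Corollary~\ref{cor:rank-Znm1}, so the split inclusion $I(W)$ dualizes to the surjection $I(W^{\dag})$. Your elaborations (the explicit $\theta\mapsto-\theta$ identification, the duality pairing with the $\tau\mapsto\tau^{-1}$ twist, which does not affect freeness or rank) are consistent fleshings-out of that same route rather than a different proof.
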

        
         On the other hand, we have
         Lemma~\ref{lem:orientation-Knq} which identifies $Z_{n,-1}$
         and $Z_{n,1}$ in an orientation-preserving manner by an
         isotopy.
         So we have another variant of the corollary:

          \begin{corollary}\label{cor:surjectZ}
             There is a surjective homomorphism of free $\cR$-modules
             from $I(Z_{n})$ to $I(Z_{n,-1})$ obtained from a
             cobordism between the links $K(Z_{n})$ and
             $K(Z_{n,-1})$ inside $[0,1]\times S^{1}\times S^{2}$. 
          \qed
        \end{corollary}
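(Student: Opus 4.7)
The plan is to combine Corollary~\ref{cor:rankZn1} with the isotopy supplied by Lemma~\ref{lem:orientation-Knq}. Corollary~\ref{cor:rankZn1} already provides a surjective $\cR$-module map $I(W^{\dag}) : I(Z_{n}) \to I(Z_{n,1})$, where $W^{\dag}$ is the dualized smoothing cobordism. The underlying 4-manifold of $W^{\dag}$ is $[0,1]\times S^{1}\times S^{2}$, with singular locus an embedded surface cobordism between $K(Z_{n})$ and $K(Z_{n,1})$. So the only remaining point is to convert the target from $Z_{n,1}$ to $Z_{n,-1}$ while staying inside $[0,1]\times S^{1}\times S^{2}$.

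For this I would use the explicit isotopy constructed in the proof of Lemma~\ref{lem:orientation-Knq}: the family $K_{t} = (1 \times \rho_{t})(K_{n,-1})$ in $S^{1}\times S^{2}$ interpolates between $K_{n,1}$ (at $t=0$, after relabeling) and $K_{n,-1}$ (at $t=1/2$), where $\rho_{t}$ is a rotation of $S^{2}$ about an axis through two equatorial points. The mapping cylinder of this isotopy is a product cobordism $V$ from $Z_{n,1}$ to $Z_{n,-1}$ whose underlying 4-manifold is again $[0,1]\times S^{1}\times S^{2}$, and whose singular locus is the embedded annular cobordism swept out by $K_{t}$ for $t\in[0,1/2]$. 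Because $V$ is a mapping cylinder of a bifold diffeomorphism, the induced map $I(V)$ is an isomorphism of free $\cR$-modules (the standard fact that the instanton functor sends a mapping cylinder of a diffeomorphism to an isomorphism).

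Now I would stack $V$ on top of $W^{\dag}$ in $[0,1]\times S^{1}\times S^{2}$: reparametrize $W^{\dag}$ to live in $[0,1/2]\times S^{1}\times S^{2}$ and $V$ in $[1/2,1]\times S^{1}\times S^{2}$, and glue along the common boundary copy of $Z_{n,1}$. The result is a single bifold cobordism in $[0,1]\times S^{1}\times S^{2}$ from $Z_{n}$ to $Z_{n,-1}$, whose singular locus is a surface cobordism between the links $K(Z_{n})$ and $K(Z_{n,-1})$ as required.

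The final step is just functoriality: the composition law in the projective instanton functor gives $I(V)\circ I(W^{\dag})$ (up to a unit of $\cR$, which can be absorbed), and since $I(W^{\dag})$ is surjective by Corollary~\ref{cor:rankZn1} while $I(V)$ is an isomorphism, the composite is surjective. The only subtlety I anticipate is bookkeeping: confirming that the isotopy of Lemma~\ref{lem:orientation-Knq} indeed produces an orientation-preserving bifold diffeomorphism (so that $I(V)$ is a genuine isomorphism, not merely an isomorphism of abstract modules up to the $\epsilon$-swap), and verifying that the two pieces match on $Z_{n,1}$ with compatible framings so that the coefficient system $\Gamma$ glues. Both are straightforward since the gluing takes place along a product neighborhood and the isotopy is supported away from a collar.
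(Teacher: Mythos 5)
Your proposal is correct and follows essentially the same route as the paper: the authors obtain Corollary~\ref{cor:surjectZ} precisely by combining the surjection of Corollary~\ref{cor:rankZn1} with the orientation-preserving identification of $Z_{n,1}$ and $Z_{n,-1}$ supplied by the isotopy in Lemma~\ref{lem:orientation-Knq}. You merely spell out the composition (mapping cylinder of the isotopy stacked on $W^{\dag}$ inside $[0,1]\times S^{1}\times S^{2}$) that the paper leaves implicit, and the minor sign/coefficient issues you flag do not affect surjectivity.
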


        Like $Z_{n}$, the bifold $Z_{n,-1}$ contains a copy $S$ of
        the orbifold sphere $S^{2}_{n}$ intersecting the singular
        locus in $n$ points. By the general constructions
        of section~\ref{subsec:operators}, this gives rise to operators
        $\opalpha$, $\opdelta_{1},\dots, \opdelta_{n}$ and $\opepsilon$,
        acting on $I(Z_{n,-1})$ just as in the case of $I(Z_{n})$,
        making $I(Z_{n,-1})$ also an $\cA_{n}$-module. Note that the $n$
        points of intersection with $S$ all lie on the same component
        of the singular locus $K(Z_{n,-1})$ (which is now a knot, not a link). The
        operators $\opdelta_{p}$ are therefore all equal on
        $I(Z_{n,-1})$, and we will sometimes write this operator as
        $\opdelta$.

        \begin{proposition}\label{prop:Zn1-as-quotient}
            With the instanton module structure in which $\alpha, \delta_{i},
            \epsilon\in \cA_{n}$ act by the operators $\opalpha$, $\opdelta$
            and $\opepsilon$, the instanton homology $I(Z_{n,-1})$ is a cyclic module
            for the algebra $\cA_{n}$ and can therefore be described
            as a quotient,
                        \[
                    I(Z_{n,-1}) \cong \cA_{n} / \JOne{n}.
             \]
            The ideal $\JOne{n}$ contains the ideal $\J{n}$ as well
            as the elements $\delta_{i}-\delta_{j}$.
        \end{proposition}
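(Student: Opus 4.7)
My plan is to exploit two facts that are essentially in hand already: the connectedness of the singular locus of $Z_{n,-1}$, and the surjective cobordism map of Corollary~\ref{cor:surjectZ}. Then everything should reduce to a functoriality argument.

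First I would dispose of the containment $\delta_i - \delta_j \in \JOne{n}$. The operator $\opdelta_p$ depends only on the component of $K(Z_{n,-1})$ on which $p$ lies (see section~\ref{sec:local-system} and the description of $\opdelta_p$ in section~\ref{subsec:operators}). Since $K(Z_{n,-1})$ is a knot, all $n$ of the operators $\opdelta_{p_i}$ coming from the points of $S\cap K(Z_{n,-1})$ coincide, so $\delta_i - \delta_j$ acts as zero on $I(Z_{n,-1})$.

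Next I would turn the cobordism from Corollary~\ref{cor:surjectZ} into an $\cA_n$-module map. Call this cobordism $V\subset [0,1]\times S^{1}\times S^{2}$, and let $I(V): I(Z_n)\to I(Z_{n,-1})$ be the induced surjection. The surface $\{0\}\times S^{2}_{n}\subset Z_{n}$ and the corresponding surface $S\subset Z_{n,-1}$ used to define $\opalpha$ on each side are isotopic inside the ambient 4-manifold $[0,1]\times S^{1}\times S^{2}$, and hence cobound an embedded surface in $V$ representing the joint homology class. By the multiplicative property $I(V\circ ([0,1]\times Z_{n}), a) = I(V)\circ I([0,1]\times Z_{n}, a)$ together with the analogous equality obtained by placing the cut on the other side, sliding the 2-dimensional class through $V$ gives $I(V)\opalpha = \opalpha I(V)$. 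The same argument with points on the singular strands (all lying on one component of the braid cobordism) yields $I(V)\opdelta_{p} = \opdelta I(V)$, and the $e$-class used to define $\opepsilon$ likewise extends across $V$, giving $I(V)\opepsilon = \opepsilon I(V)$. Thus $I(V)$ is $\cA_n$-linear.

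Finally, by Proposition~\ref{prop:instanton-cyclic}, $I(Z_{n})$ is a cyclic $\cA_{n}$-module generated by $\1_{+}$. Surjectivity of the $\cA_n$-linear map $I(V)$ then exhibits $I(Z_{n,-1})$ as a cyclic $\cA_n$-module with generator $I(V)(\1_{+})$. Writing $\JOne{n}$ for its annihilator gives the desired presentation $I(Z_{n,-1})\cong \cA_{n}/\JOne{n}$. Any $w\in \J{n}$ annihilates $\1_{+}$, hence also its image under $I(V)$, so $\J{n}\subset \JOne{n}$, completing the proof.

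The only nontrivial step is checking that $I(V)$ is $\cA_n$-linear; this is the point at which one must be a little careful to verify that the classes defining $\opalpha$, $\opdelta$ and $\opepsilon$ extend compatibly across $V$. Everything else is formal.
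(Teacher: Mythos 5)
Your argument is essentially the paper's own proof: the paper likewise deduces the proposition from the surjection of Corollary~\ref{cor:surjectZ}, the cyclicity of $I(Z_{n})$ (Proposition~\ref{prop:instanton-cyclic}), and the remark that all the $\opdelta_{p}$ coincide because $K(Z_{n,-1})$ is a knot. The only difference is that you spell out the $\cA_{n}$-equivariance of the cobordism map, which the paper leaves implicit; your verification of it is correct.
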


        \begin{proof}
            We have seen that there is a cobordism from $Z_{n}$ to
            $Z_{n,-1}$ inducing a surjection on instanton homology
            (Corollary~\ref{cor:surjectZ}). The proposition follows
            from this and the above remark that the actions of the
            $\opdelta_{i}$ are all equal.
        \end{proof}

        It is helpful here to introduce the smaller algebra
        \[
                \bar\cA = \cA_{n} / \langle
                \delta_{i}-\delta_{j}\rangle_{i,j}
        \]
        which we can write simply as
        \begin{equation}\label{eq:barA-def}
                    \bar\cA = \cR[\alpha,\delta,\epsilon] /
                    \langle \epsilon^{2}-1\rangle,
        \end{equation}
        where $\delta$ denotes the image of the $\delta_{i}$ in the
        quotient ring. The algebra $\bar\cA$ described this way is
        independent of $n$.
        The above proposition then can be recast as,
        \begin{equation}\label{eq:Zn1-raw}
                                     I(Z_{n,-1}) \cong \bar \cA /
                                     \barJOne{n},
        \end{equation}
        where $\barJOne{n}$ is the image of $\JOne{n}$ in
        $\bar\cA$.

        Our main goal in this paper is to identify $I(Z_{n})$ and $I(Z_{n,-1})$
        completely, by describing the ideals $\J{n}\subset \cA_{n}$ and $\barJOne{n} \subset
        \bar\cA$. In particular, as described in the introduction, we will eventually provide a set of
        generators of $\barJOne{n}$ in closed form, as minors of an
        explicit matrix.

          \section{Relations in ordinary cohomology}
            \label{sec:relations-ordinary}
          
          \subsection{Loci in families of parabolic bundles on
          $S^{2}_{n}$}
          \label{subsec:loci}
          
          Recall from Proposition~\ref{prop:coho-Z-cyclic} the description of
          the cohomology ring of the representation variety
          \[
               \Rep(Z_{n}) = \Rep(S^{2}_{n}) \cup \Rep(S^{2}_{n})
          \]
          as a quotient $\cA_{n}/J_{n}$, where $J_{n}$ is an ideal.
          (The coefficient ring here, as in
          Proposition~\ref{prop:coho-Z-cyclic}, is $\cR$, though at
          this point our calculations will involve only $\Q$, so
          rational coefficients would suffice.) The betti numbers of
          $\Rep(S^{2}_{n})$ were calculated recursively by Boden
          \cite{Boden}, and a full presentation of the cohomology ring
          (in a more general case) is described in \cite{Earl-Kirwan}.
          Generators for the ideal of relations in the specific case
          of $\Rep(S^{2}_{n})$ are given by Street \cite{Street}. We
          shall describe a particular source of such relations,
          arising from a mechanism first pointed out by Mumford
          in the smooth case \cite{AB}. (In \cite{Earl-Kirwan} it
          is shown that essentially the same mechanism gives rise to a
          complete set of relations in the orbifold case.) 

          As stated earlier, although we have taken $\SO(3)$
          connections as our starting point, the representation
          variety $\Rep(S^{2}_{n})$ can be identified with the space
          of flat $\SU(2)$ connections having monodromy of order $4$
          at each of the $n$ punctures. In turn, this representation
          variety can be identified
          with a moduli
          space of stable parabolic bundles by the results of
          \cite{Mehta-Seshadri}. We adopt the following conventions to
          make this more specific in
          the rank-2 case, following \cite{KM-gtes1, KM-gtes2}.

          We consider a compact Riemann surface $S$ equipped with a
          set of
          distinguished points $\pi = \{p_1,\dots, p_n\}$, and a parameter
          $\alpha\in(0,1/2)$. Given a fixed holomorphic line bundle
          $\Theta\to S$ (usually trivial in our case), we study rank-2
          holomorphic bundles $\cE\to S$ with $\Lambda^2\cE=\Theta$,
          together with a filtration of the rank-2 fiber at each
          $p\in\pi$
          determined by a choice of a one-dimensional subspace (a line)
          $\cL_p \subset \cE_{p}$. The data $(\cE,
          \cL_{p_{1}},\dots,\cL_{p_{n}},\alpha)$ is a bundle with parabolic
          structure. Given a line subbundle $\cF\subset \cE$, the
          \emph{parabolic degree} of $\cF$ is defined by
	\begin{equation}\label{eq:pdeg-def}
	           \pdeg \cF = c_1(\cF)[S] + \sum_{\pi}\pm \alpha
	\end{equation}
        where we take $+\alpha$ in the sum when $\cF$ contains $\cL_p$
        at $p$ and $-\alpha$ when it does not. The parabolic bundle
        is \emph{semi-stable} if \[ \pdeg \cF \le 1/2\deg\Theta \] for
        every line subbundle $\cF$, and is \emph{stable} if strict
        inequality holds. At present we will take $\Theta$ to be
        trivial and \emph{we are only concerned with the special case
        $\alpha=1/4$}. In this case, when $n$ is odd, all semi-stable
        bundles are strictly stable, and the moduli space of stable
        parabolic bundles is a projective variety of complex dimension
        $3g-3+n$. In the case of genus 0, we write $\cM(S^{2}_{n})$
        for this projective variety: the moduli space of stable
        parabolic bundles, with parabolic structure at the $n$ marked
        points and $\alpha=1/4$.

        With this notation understood, the theorem of
        \cite{Mehta-Seshadri} identifies the representation variety
        $\Rep(S^{2}_{n})$ for odd $n$ with the moduli space of stable
        parabolic bundles:
         \[
           \Rep(S^{2}_{n}) \cong \cM(S^{2}_{n}).
         \]

        Suppose now that we have a family of parabolic bundles on
        $S^{2}_{n}$ parametrized by a space $T$. This means that we
        have a rank-2 bundle,
        \[
           \fE  \to T\times S^{2}
        \]
	with $\Lambda^{2}\fE \cong \Phi \boxtimes \Theta$ (with $\Theta$
        still trivial on $S^{2}$ at the moment, but $\Phi$ a
        non-trivial line bundle on the base $T$), together with line
        subbundles
        \[
               \fL_{p}\subset \fE |_{T\times p}, \quad p\in\pi.
        \]
        The bundle $\fE$ is equipped with a holomorphic structure on
        each $\{t\} \times S^{2}$, giving rise to parabolic bundles
        $\cE_{t}$.

        In such a family over $T$, we can consider the locus of those
        $t\in T$ where the parabolic bundle $\cE_{t}$ is unstable (for
        $\alpha=1/4$).
        From the definition at~\eqref{eq:pdeg-def}, being
        unstable means the following.
        \begin{enumerate}
            \item \label{item:filt1} We have a holomorphic line bundle $\cF\to S^{2}$, of
            degree degree $f$ say, necessarily the bundle $\cO(f)$.
            \item We have a subset $\eta \subset \pp$, whose cardinality we denote by $h$.
            \item There is a non-zero holomorphic map $\iota : \cF\to
            \cE_{t}$ such that $\iota(\cF|_{p}) \subset \cL_{t}|_{p}$
            for all $p\in \eta$.
            \item \label{item:degree} We have $f + (1/4)(2h-n) > 0$.
        \end{enumerate}

        Altering this slightly, given any $\lambda\in \R$, we make the
        following definition.

        \begin{definition}\label{def:loci-def}
          Let $\eta\subset \pp = \{p_{1},\dots, p_{n}\}$ be any
          subset, and write $h=|\eta|$ for its cardinality. Let
          $\lambda$ be an odd multiple of $1/4$ satisfying the
          additional constraint that
          \begin{equation}\label{eq:h-parity}
             h   = (n-4\lambda)/2 \pmod{2}.
        \end{equation}
        This being so, there is
        $f\in \Z$ such that
            \begin{equation}\label{eq:constraint-lambda}
                    f + (1/4)(2h - n) = -\lambda. 
            \end{equation}
            Let $\cF\to S^{2}$ be the line bundle $\cO(f)$. Given a
            family of parabolic bundles on $S^{2}_{n}$ parametrized by
            $T$ as above, we define
            \begin{equation}\label{eq:T-stratum}
                    T^{\eta}_{\lambda}\subset T
            \end{equation}
            to be the locus of points $t\in T$ such that there is a
            non-zero
            holomorphic map $\iota: \cF \to \cE_{t}$ with
            $\iota(\cF|_{p}) \subset \cL_{t}|_{p}$ for all $p\in
            \eta$.
        \end{definition}
        This definition is set up so that
          the unstable locus is the union
         \[
                \bigcup_{\lambda\le -1/4} T^{\eta}_{\lambda}.
         \]

        The definition of the locus $T^{\eta}_{\lambda}$ is
        readily rephrased as the statement that a certain Fredholm
        operator $P_{t}$ (determined by the parabolic bundle $\cE_{t}$
        and the choice of $\lambda$ and $\eta$) has non-zero kernel.
        If we suppose that the resulting map
        \[
            P : T \to \mathrm{Fred}
        \]
        is transverse to the stratification of the space of Fredholm
        operators by the dimension of the kernel, then the locus
        $T_{\lambda}^{\eta}\subset T$ will itself be a stratified space
        whose Poincar\'e dual is a cohomology class that one can
        calculate using the index theorem for families. With slight
        abuse of notation, we write \eqref{eq:T-stratum} as
        \[
              T_{\lambda}^{\eta} =  T \cap U_{\lambda}^{\eta},
        \]
        where $U_{\lambda}^{\eta}$ denotes the locus where the
        Fredholm operator has kernel.
        It will also be useful to group together the different subsets
        $\eta$ according to their size $h = |\eta|$, so that we write
        (with a slight further abuse of notation),
        \[
        \begin{gathered}
        U_{\lambda}^{h} = \bigcup_{|\eta|=h} U_{\lambda}^{\eta}\\
        T_{\lambda}^{h} = T \cap U_{\lambda}^{h}. 
        \end{gathered}
        \]
        Again, this locus is non-empty only if $h$ satisfies
        the parity condition \eqref{eq:h-parity}.

        We now compute the Chern classes of the index of the family of
        operators $P$ in order to derive a formula for the class dual
        to the stratum $T^{\eta}_{\lambda}$. Note that if $P$ is a
        family of complex Fredholm operators of index $-k+1$, then
        (assuming transversality) the locus where $P_{t}$ has kernel
        is dual to
        \begin{equation}\label{eq:cp}
            c_{k}(-\ind(P)) \in H^{2k}(T).
        \end{equation}
        (This is the first case of Porteous's formula in the case of
        Fredholm maps
        \cite{Porteous,Koschorke}.)
        
        It is evident from the definition that the locus
        $T^{\eta}_{\lambda}$ is unchanged if the family of bundles
        $\fE$ is modified by tensoring with a line bundle pulled back
        from the base $T$. Recall that we have written
        $\Lambda^{2}\fE= \Phi\boxtimes\Theta$, where $\Phi\to T$ is a
        line bundle and $\Theta$ is taken to be trivial. If $\Phi$
        has a square root, we may tensor by $\Phi^{-1/2}$ to make
        $c_{1}(\fE)=0$. Although a square root will not exist in
        general, the calculation below is not invalidated by assuming
        that $c_{1}(\fE)=0$, and we will make this simplification from
        here on. This means in particular that $c_{2}(\fE) =
        -p_{1}(\ad \fE)/4$.
         Let us then write
         \[
         c_{2}(\fE) =
               \beta\times 1 + \hat \alpha \times
                v \in H^{4}(T\times S^{2}),
         \]
        where $v$ is the unit volume form on $S^{2}$. From the
        binomial theorem, we have
        \begin{equation}\label{eq:c2-powers}
         c_{2}(\fE)^{r} =
                 \beta^{r}\times 1 + r \hat{\alpha} \beta^{r-1} \times
                v.
        \end{equation}
        The class $\hat{\alpha}$ here does not quite correspond to the
        class $\alpha$ in \eqref{eq:AB-generators}, because the latter was
        defined using the orbifold Pontryagin class. The relation
        between the two is:
        \begin{equation}\label{eq:a-to-tilde-a}
                \hat \alpha = \alpha - \frac{1}{2} \sum_{p\in\pi}
                \delta_{p}.
        \end{equation}
        
        For each $p\in \pp$ we also have the line subbundle
        $\fL_{p}$ and the quotient line bundle $\fQ_{p}=
        (\fE|_{T\times p})/\fL_{p}$, and from these we
        obtain the cohomology class
        \[
             \delta_{p} = \frac{1}{2}(c_{1}(\fQ_{p}) - c_{1}(\fL_{p}))
        \]
        The definition is set up so that $\delta_{p}$
        coincides with the Euler class of the oriented rank-2
        subbundle of $\ad(\fE|_{p\times T})$ determined by
        $\fL_{p}$).

        Fix a holomorphic line bundle $\cF \cong \cO(f)$ on $S^{2}$.
        We are seeking a non-zero holomorphic map $\iota:\cF \to \cE_{t}$ such
        that the composite with the quotient map,
        \[
             \cF \to \cE_{t} \to \cQ_{(t,p)},
        \]
        vanishes for all $p \in \eta$. So, for the family of Fredholm
        operators $P$ that we are interested in,
        \[
            \ind(P) = \ind(\bar\partial_{\cF^{*}\otimes \fE}) -
            \sum_{p\in \eta}
            [\fQ_{p}],
        \]
        where the first part is the ordinary family $\bar\partial$
        operators. From the index theorem for families, we have
        \begin{equation}\label{eq:ch-index}
            \ch(\ind(P)) = \left(\left( \mathrm{Todd}(S^{2}) \cupprod
            \ch(\cF^{*}\otimes \fE) \right) \big/ [S^{2}] \right)-
            \sum_{p\in\eta} \ch(\fQ_{p}).
        \end{equation}

        To compute the Chern characters that appear on the right-hand
        side of this formula,
         we introduce formal Chern roots $\pm \rho \in H^{2}(T\times
         S^{2};\Q)$ so that $c_{2}(\fE)=-\rho^{2}$. Then we can
         write $\ch(\fE)=e^{-\rho}+e^{\rho} = 2\cosh(\sqrt{-c_{2}(\fE)})$,
         and a short calculation using \eqref{eq:c2-powers} yields
         \[ 
                    \ch(\fE) = 2 \cosh(\sqrt{-\beta}) - v
                    \frac{\sinh(\sqrt{-\beta})}{\sqrt{-\beta}}\hat{ \alpha}.
         \]
         We also have
        \[
         \ch(\cF^{*}) = 1- f\,v,
        \]         
        and
        \[
          \ch(\fQ_{p}) = e^{\delta_{p}}.
        \]
        Finally on the right-hand side of \eqref{eq:ch-index} we have
        $\mathrm{Todd}(S^{2}) = 1 + v$. Assembling these and
        calculating the slant product by $[S^{2}]$, we find
        \[
               \ch\bigl(\ind(P)\bigr) = (2-2f-h) \cosh(\sqrt{-\beta}) -
                  \frac{\sinh(\sqrt{-\beta})}{\sqrt{-\beta}}\left(
                  \hat \alpha + \sum_{p\in
                  \eta} \delta_{p}\right),
        \]
        where $h$ is the number of elements of $\eta$.
        If we use the fact that we are assuming equality in item
        \ref{item:degree} above, and if we substitute $\alpha$ for $\hat \alpha$
        using the relation \eqref{eq:a-to-tilde-a}, we obtain:
        \begin{equation}\label{eq:ch-minus-ind}
        \begin{aligned}
             \ch\bigl(-\ind(P)\bigr) = (n/2-2\lambda -2 ) & \cosh(\sqrt{-\beta})\\
                  &\null +
                  \frac{\sinh(\sqrt{-\beta})}{\sqrt{-\beta}}\left( \alpha +
                  \frac{1}{2}\sum_{p\in
                  \eta} \delta_{p} - \frac{1}{2}\sum_{p\not\in
                  \eta} \delta_{p}\right).
       \end{aligned}            
        \end{equation}
        If we recall that $\delta_{p}^{2}=-\beta$ for all $p$, then we can
        equivalently write this formula as
              \begin{equation}\label{eq:ch-minus-ind-d1}
        \begin{aligned}
             \ch\bigl(-\ind(P)\bigr) &= (n/2-2\lambda -2 )  \cosh(\delta_{1})\\
                 & \qquad\qquad \null +
                 \frac{\sinh(\delta_{1})}{\delta_{1}}\left( \alpha +
                  \frac{1}{2}\sum_{p\in
                  \eta} \delta_{p} - \frac{1}{2}\sum_{p\not\in
                  \eta} \delta_{p}\right), \\
      \end{aligned}            
        \end{equation}
        or in abbreviated form as
       \begin{equation}\label{eq:ch-minus-ind-abbrev} 
                 \ch\bigl(-\ind(P)\bigr) = \iA_{\lambda}\cosh(\delta_{1}) +
                  \frac{\sinh(\delta_{1})}{\delta_{1}} B_{\eta},
   \end{equation}
        where $\iA_{\lambda}$ and $B_{\eta}$ are the indicated
        subexpressions of \eqref{eq:ch-minus-ind-d1}. Note that
        $\iA_{\lambda}$ is minus the numerical index of $P$.
        
        The above formula defines a graded infinite sum of elements of
        the algebra
        \[
        \begin{aligned}
        A_{n} &= \Q[\alpha, \delta_{1}, \dots, \delta_{n}]\bigm/ \bigl\langle 
                 \delta_{i}^{2}-\delta_{j}^{2}\bigr\rangle_{i,j} \\
                 &=
                 H^{*}(\bonf^{*}(S^{2}_{n});\Q)
                 \end{aligned}
                 \]         
        (see Definition~\ref{def:lower-jn}), thus an element of the formal
        completion \[
         \widehat
         H^{*}(\bonf^{*}(S^{2}_{n});\Q)\supset
         H^{*}(\bonf^{*}(S^{2}_{n});\Q). \]
        By the usual formulae
        expressing elementary symmetric polynomials in terms of power
        sums, there is a map
        \[
               \mathfrak{c}_{k} :\widehat
                  H^{*}(\bonf^{*}(S^{2}_{n});\Q) \to
                  H^{2k}(\bonf^{*}(S^{2}_{n});\Q)
        \]
        such that $\mathfrak{c}_{k}(\ch(V)) = c_{k}(V)$ for any $V$, 
        and so we have explicit formulae for
        \[
                       c_{k}(-\ind(P)) \in  H^{*}(\bonf^{*}(S^{2}_{n});\Q),
        \]
        given as $\mathfrak{c}_{k}(r)$, where $r$ is the right-hand
        side of \eqref{eq:ch-minus-ind-d1}. 
        The case we are interested in from \eqref{eq:cp} is the Chern
        class $c_{k}$, where $-k+1$ is the numerical index of $P$.
        From the constant term in the formula for the Chern character
        above, we read
        \begin{equation}\label{eq:lambda-to-k}
                  k = n/2  -2 \lambda - 1.
        \end{equation}
        So we make the following definition.
        
        \begin{definition}\label{def:omega}
            Given $\lambda$ an odd multiple of $1/4$ and given a
            subset $\eta\subset \pi= \{p_{1},\dots, p_{n}\}$ of size $h$,
            where $h$ satisfies the parity condition
            \eqref{eq:h-parity}, let $k$ be the integer given by
            \eqref{eq:lambda-to-k}, and denote by
            \[
                        \oo^{k}_{n,\eta} \in
                        H^{*}(\bonf^{*}(S^{2}_{n});\Q) \subset \cA_{n}
            \]
            the element $\mathfrak{c}_{k}(r)$, where $r$ is the
            right-hand side of 
            \eqref{eq:ch-minus-ind-d1}. 
        \end{definition}

        To illustrate the general shape of the answers here, we take
        $n=5$. When $\lambda=-1/4$, the value of $k$ is $2$. The parity condition allows the
        size of $\eta$ to be 1, 3 or 5, and we have 
        \[
           \oo^{2}_{5,\eta}=
           \frac{1}{2} \left(\bigl(\alpha+\frac{1}{2} \left(\pm \delta_1 \pm \delta_2
           \pm \delta_3 \pm \delta_4 \pm \delta_5\right)\bigr){}^2-\delta_1^2\right)
        \]
        where the sign is $+$ when $p_{i}\in \eta$ and $-$ otherwise.
        When $\lambda=1/4$, the value of $k$ is $1$, and the parity
        condition allows the size of $\eta$ to be 0, 2 or 4. We have,
        \[
          \oo^{1}_{5,\eta}   =    \alpha+\frac{1}{2} \left(\pm \delta_1 \pm \delta_2
           \pm \delta_3 \pm \delta_4 \pm \delta_5\right)
        \]
        
        Our definition means in particular that, in $H^{*}(T;\Q)$, we
        have $c_{k}(-\ind P) = \phi(  \oo^{k}_{n,\eta})$, where
        $\phi
        :A_{n} \to H^{*}(T;\Q)$ is the natural map (given, with slight
        abuse of notation, by $\alpha\mapsto \alpha$ and $\delta_{p}\mapsto \delta_{p}$).

        \begin{corollary}\label{cor:stratumPD}
            Let $(\fE, \fL) \to T\times S^{2}$ be a family of
            parabolic bundles on $S^{2}_{n}$ parametrized by $T$. Let
            $\lambda$ and $\eta$ be given, satisfying the conditions
            in Definition~\ref{def:omega}, and let
            $T^{\eta}_{\lambda}\subset T$ be the locus defined by
            \eqref{eq:T-stratum}. Assume that the corresponding family
            of Fredholm operators $P$ is transverse to the
            stratification by the dimension of the kernel.  Then the
            cohomology class dual to this stratum is given by
             \[
            \PD [T^{\eta}_{\lambda}] = \phi( \oo^{k}_{n,\eta}) 
             \]
             where $\phi$ is the natural linear map $A_{n}\to
             H^{*}(T;\Q)$, and $k$ is given in terms of $n$ and
             $\lambda$ by \eqref{eq:lambda-to-k}.
        \end{corollary}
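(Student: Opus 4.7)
The plan is to assemble the pieces that have already been assembled in the paragraphs preceding the statement. The substantive content is the Chern character computation culminating in \eqref{eq:ch-minus-ind-d1}; what remains is to feed this into Porteous's formula.

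First I would invoke Porteous's formula in the form recorded at \eqref{eq:cp}: under the transversality hypothesis on $P:T\to\mathrm{Fred}$, the locus where $P_t$ has non-zero kernel is Poincar\'e dual to $c_k(-\ind P)\in H^{2k}(T;\Q)$, where $-k+1$ is the numerical index of $P$. By the definition of $T^\eta_\lambda$, this locus is exactly $T^\eta_\lambda$, so
\[
        \PD[T^\eta_\lambda] = c_k(-\ind P) \in H^{2k}(T;\Q).
\]
Second, I would read off the numerical index from the constant ($\cosh$-zero) term of \eqref{eq:ch-minus-ind-d1}, which gives the index of $-\ind P$ as $n/2-2\lambda-2$. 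Setting $-k+1=2+2\lambda-n/2$ recovers $k=n/2-2\lambda-1$, matching \eqref{eq:lambda-to-k}, so the Chern class in question has the correct degree.

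Third, I would apply the operator $\mathfrak{c}_k$ (the universal polynomial expressing the degree-$k$ Chern class in terms of the Chern character) to the right-hand side of \eqref{eq:ch-minus-ind-d1}. By Definition~\ref{def:omega}, $\mathfrak{c}_k$ of this expression is precisely $\oo^k_{n,\eta}\in A_n$. Because the classes $\alpha$ and $\delta_p$ appearing in \eqref{eq:ch-minus-ind-d1} are, by construction, the pullbacks to $H^*(T;\Q)$ of the corresponding universal classes in $H^*(\bonf^*(S^2_n);\Q)$ under the classifying map for the family $(\fE,\fL)$, evaluating the formula in $H^*(T;\Q)$ rather than $A_n$ is the same as applying the natural map $\phi$. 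Hence $c_k(-\ind P) = \phi(\oo^k_{n,\eta})$, and combining with Porteous's formula gives the stated identity.

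No step is really an obstacle, since the Chern character of $\ind P$ has already been pinned down and Porteous's formula in the Fredholm setting is invoked as a black box from \cite{Porteous,Koschorke}. The one point that deserves a line of justification is the passage from the formal completion $\widehat{H}^*(\bonf^*(S^2_n);\Q)$, where $\ch(-\ind P)$ naturally lives, to the finite-dimensional cohomology of $T$: since $T$ is finite-dimensional, $\phi$ kills sufficiently high-degree terms, and $\mathfrak{c}_k$ depends on only finitely many terms of the Chern character, so there is no convergence issue.
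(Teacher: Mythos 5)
Your proposal is correct and follows essentially the same route as the paper: the corollary there is simply the assembly of Porteous's formula \eqref{eq:cp}, the index computation \eqref{eq:ch-minus-ind-d1} with $k$ read off via \eqref{eq:lambda-to-k}, and the observation that applying $\mathfrak{c}_k$ and the natural map $\phi$ yields $c_k(-\ind P)=\phi(\oo^k_{n,\eta})$. Your added remark about passing from the formal completion to the finite-dimensional cohomology of $T$ is a reasonable, harmless elaboration of what the paper leaves implicit.
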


        \begin{remarks}
            In Definition~\ref{def:loci-def},  the loci
            $T^{\eta}_{\lambda}$ are characterized by the existence of
            a holomorphic map $\iota: \cF\to\cE$ satisfying additional
            constraints at the distinguished points $\eta\subset \pi$.
            In the language of parabolic bundles, we can regard $\cF$
            as a line bundle with parabolic structure described by a
            subsheaf $\cF_{1}\subset \cF$ such that in a
            neighborhood $\cU_{p}$ of each $p\in\pi$ we have
            \[
\begin{aligned}
                \cF_{1}|_{\cU_{p}} &= \cF|_{\cU_{p}}, & p&\in\eta \\
                                \cF_{1}|_{\cU_{p}} &=
                                (\cF\otimes\cO[-p])|_{\cU_{p}}, &
                                p&\notin\eta. \\
\end{aligned}
            \]
            In these terms, what $T^{\eta}_{\lambda}$ describes is the
            existence of a map $\cF\to\cE$ of parabolic bundles: i.e.
            a map which respects the filtrations. When regarded as a
            line bundle with parabolic structure in this way, we shall
            call $\eta \subset \pi$ the set of ``hits'' for $\cF$. 
            \end{remarks}

        \subsection{The Mumford relations}
        \label{subsec:mumford}

         As a consequence of Corollary~\ref{cor:stratumPD}, we have
         the following statement, which is the essential mechanism in
         Mumford's relations. (See the discussion in
         \cite{AB} for the earlier history of such relations.)

         \begin{proposition}
            Let $(\fE, \fL)$ be a family of parabolic bundles on
            $S^{2}_{n}$ parametrized by a space $T$ as in the previous
            subsection. Suppose that for every $t\in T$ the parabolic
            bundle $(\cE_{t}, \cL_{t})$ on $S^{2}_{n}$ is stable (with
            $\alpha=1/4$ as always). Then for any $\lambda$ and
            $\eta$ 
            satisfying the conditions
            in Definition~\ref{def:omega},  with $\lambda<0$, 
            we have \[ \phi(\oo^{k}_{n,\eta})=0 \] in $H^{2k}(T;\Q)$, where
            $k = n/2  -2 \lambda - 1$ and $\phi:
            H^{*}(\bonf^{*}(S^{2}_{n});\Q) \to H^{*}(T;\Q)$
            is the natural map determined by the characteristic
            classes of\/ $\fE$ and $\fL$.
         \end{proposition}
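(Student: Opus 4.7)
My plan is to show that stability of $(\cE_t,\cL_t)$ at every $t$ forces the Fredholm operator $P_t$ of Definition~\ref{def:loci-def} to have zero kernel everywhere, and then to deduce $c_k(-\ind P)=0$ for dimensional reasons, bypassing any transversality issue.

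First I establish the geometric input: the locus $T^\eta_\lambda$ is empty. Suppose on the contrary that some $t\in T$ admits a non-zero holomorphic map $\iota:\cF\to\cE_t$ (with $\cF=\cO(f)$ and $f$ determined by \eqref{eq:constraint-lambda}) sending $\cF|_p$ into $\cL_{t,p}$ for each $p\in\eta$. Let $Z$ be the divisor of zeros of $\iota$ and set $\cF'=\cF(Z)\subset\cE_t$, the saturation of the image of $\iota$, so that $\deg\cF'=f+\deg Z$. At each $p\in\eta\setminus Z$ the value $\iota|_p$ is a non-zero vector in $\cL_{t,p}$, which forces $\cF'|_p=\cL_{t,p}$; hence the set $\eta'$ of parabolic hits of $\cF'$ contains $\eta\setminus Z$. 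Since points of $\eta\cap Z$ each contribute at least $1$ to $\deg Z$, a short count gives
\[
\pdeg\cF'\;\ge\;(f+\deg Z)+\tfrac{1}{4}\bigl(2(|\eta|-|\eta\cap Z|)-n\bigr)\;\ge\;-\lambda+\tfrac{1}{2}|\eta\cap Z|\;>\;0,
\]
contradicting the stability of $(\cE_t,\cL_t)$ (which, with $\Theta$ trivial, requires $\pdeg\cF'\le 0$). Therefore $T^\eta_\lambda=\emptyset$, i.e.\ $\ker(P_t)=0$ for every $t$.

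Next I convert this into the desired cohomological vanishing. Because $\ker(P_t)$ has constant rank zero on $T$, and the numerical index of $P_t$ equals $-\iA_\lambda=-(k-1)$, the cokernels form an honest holomorphic vector bundle $W\to T$ of rank $k-1$, with $-\ind(P)=[W]$ in $K(T)$. Hence $c_k(-\ind P)=c_k(W)=0$, simply because $k$ exceeds the rank of $W$. The index-theorem computation culminating in \eqref{eq:ch-minus-ind-d1} identifies $c_k(-\ind P)$ with $\phi(\oo^k_{n,\eta})$ in $H^{2k}(T;\Q)$ regardless of any transversality hypothesis (transversality was needed only in Corollary~\ref{cor:stratumPD} to interpret this class as the one dual to $T^\eta_\lambda$), which gives the claim.

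The only point that needs a little care is the saturation bookkeeping in the first step: one must track how the passage from $\cF$ to $\cF'=\cF(Z)$ can cost parabolic hits at $\eta\cap Z$ while simultaneously gaining degree from $Z$, and verify that the degree gain always dominates. The inequality $\deg Z\ge|\eta\cap Z|$ makes this accounting work, and everything else is a direct appeal to the constant-rank-kernel principle for families of Fredholm operators.
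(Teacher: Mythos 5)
Your proof is correct, and its geometric core is the same as the paper's: stability forces the locus $T^{\eta}_{\lambda}$ (equivalently the kernel locus of $P$) to be empty, and the vanishing of $\phi(\oo^{k}_{n,\eta})=c_{k}(-\ind P)$ follows. The paper disposes of both steps very quickly: the emptiness is folded into its earlier rephrasing of instability (the items preceding Definition~\ref{def:loci-def}), and the vanishing is obtained by citing Corollary~\ref{cor:stratumPD} with the transversality hypothesis vacuously satisfied. You instead make both points explicit, which buys something in each case. Your saturation bookkeeping ($\cF'=\cF(Z)$, hits lost at $\eta\cap Z$ compensated by $\deg Z\ge|\eta\cap Z|$, giving $\pdeg\cF'\ge-\lambda>0$) is exactly the argument hiding behind the paper's assertion that points of $T^{\eta}_{\lambda}$ with $\lambda<0$ are unstable. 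And your replacement of the Porteous/transversality appeal by the constant-rank principle --- kernel identically zero makes $-\ind(P)$ represented by an honest rank-$(k-1)$ cokernel bundle, so $c_{k}$ dies for rank reasons, while the identification $c_{k}(-\ind P)=\phi(\oo^{k}_{n,\eta})$ from \eqref{eq:ch-minus-ind-d1} is pure index theory --- is more self-contained and makes transparent that no genericity is needed. Two harmless slips: with $\Theta$ trivial, stability is the strict inequality $\pdeg\cF'<0$ (you wrote $\le 0$, but you derive strict positivity, so the contradiction stands), and the cokernel bundle need only be a topological vector bundle over $T$ (calling it holomorphic presumes structure on $T$ that is neither assumed nor needed).
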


         \begin{proof}
            When $\lambda<0$, the stratum $T^{\eta}_{\lambda}$
            consists of unstable parabolic bundles, so the hypothesis
            of the Proposition means that such strata are empty. The
            transversality condition is then vacuously satisfied and
            the result follows from Corollary~\ref{cor:stratumPD}.
         \end{proof}
        
          \begin{proposition}\label{prop:Mumford-relations}
          Let $\lambda=-1/4$ and let $\eta\subset \pi=\{p_{1}, \dots ,
          p_{n}\}$ be a subset whose size $h$ satisfies
          \begin{equation}\label{eq:h-parity-plus}
            \begin{gathered}
                h = (n+1)/2 \bmod 2,\\
                0 \le h \le n.
            \end{gathered}
          \end{equation}
          (The first condition is the parity condition
          \eqref{eq:h-parity} for $\lambda=-1/4$.) As in
          Definition~\ref{def:lower-jn}, let $j_{n}$ be the kernel
          of the restriction map to cohomology of the representation variety,
          $H^{*}(\Rep(S^{2}_{n});\Q)$. Then we have
          \[
                \oo^{m}_{n,\eta}\in j_{n},
          \]
          for $ m=(n-1)/2$ . That is, $\oo^{m}_{n,\eta}$ is a relation in
          the cohomology ring of\/ $\Rep(S^{2}_{n})$.
          \end{proposition}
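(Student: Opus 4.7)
The strategy is to apply the preceding proposition to $T = \cM(S^{2}_{n})$, the moduli space of stable parabolic bundles on $S^{2}_{n}$ with weight $\alpha = 1/4$, which by the Mehta--Seshadri identification coincides with $\Rep(S^{2}_{n})$. For odd $n$ every parabolic bundle parametrized by such a point is strictly stable, so the hypothesis ``every fiber is stable'' is automatic and the unstable strata $T^{\eta}_{\lambda}$ with $\lambda<0$ are empty by construction.

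The first step is to identify the map $\phi$ of the preceding proposition with the canonical restriction homomorphism $A_{n} = H^{*}(\bonf^{*}(S^{2}_{n});\Q) \to H^{*}(\Rep(S^{2}_{n});\Q)$, whose kernel is by definition $j_{n}$ (Definition~\ref{def:lower-jn}). The classes $\alpha$ and $\delta_{p}$ generating $A_{n}$ are canonical characteristic classes of the universal $\SO(3)$ bundle over $\bonf^{*}(S^{2}_{n}) \times S^{2}_{n}$, and this universal $\SO(3)$ bundle descends to $\cM(S^{2}_{n}) \times S^{2}_{n}$, even though a rank-$2$ lift $\fE$ may only exist after passing to an \'etale cover or after twisting by a square root of the determinant line $\Phi$, as was already allowed in Subsection~\ref{subsec:loci}. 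The Chern-character computation leading to \eqref{eq:ch-minus-ind-d1} was arranged precisely so that $\oo^{k}_{n,\eta}$ is a polynomial in $\alpha$ and the $\delta_{p}$ that is invariant under such twistings, so $\phi$ descends from any local model back to $T$ and coincides with the restriction map.

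It then remains to verify the numerical and parity conditions. Setting $\lambda = -1/4$ in $k = n/2 - 2\lambda - 1$ gives $k = (n-1)/2 = m$, while the parity condition $h \equiv (n - 4\lambda)/2 \pmod{2}$ becomes $h \equiv (n+1)/2 \pmod{2}$, which is exactly \eqref{eq:h-parity-plus}. Since $\lambda = -1/4 < 0$, the preceding proposition applies and yields $\phi(\oo^{m}_{n,\eta}) = 0$ in $H^{2m}(\Rep(S^{2}_{n});\Q)$, i.e., $\oo^{m}_{n,\eta} \in j_{n}$.

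The main (and essentially only) technical obstacle is bookkeeping: one must check carefully that the family-theoretic $\phi$, defined through the Chern classes of a local or twisted $\fE$ together with the substitution \eqref{eq:a-to-tilde-a} from $\hat\alpha$ to $\alpha$, agrees with the gauge-theoretic restriction $H^{*}(\bonf^{*}) \to H^{*}(\Rep)$. This matching is implicit in the derivation of \eqref{eq:ch-minus-ind-d1}; once it is in place, the proposition is immediate from the preceding one and does not require any further transversality or vanishing argument.
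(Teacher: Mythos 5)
Your proposal is correct and follows essentially the same route as the paper: the paper's proof is exactly the specialization of the preceding proposition to $T=\cM(S^{2}_{n})\cong\Rep(S^{2}_{n})$ with $\lambda=-1/4$, so that $k=n/2-2\lambda-1=m$ and the unstable strata are empty by stability. Your additional remarks about the universal family and the identification of $\phi$ with the restriction map are reasonable bookkeeping that the paper leaves implicit, but they do not change the argument.
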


          \begin{proof}
            This follows from the previous proposition by specializing
            to the case $\lambda=-1/4$, because $\Rep(S^{2}_{n}) \cong
            \cM(S^{2}_{n})$ parametrizes a family of stable parabolic bundles.
          \end{proof}

          \begin{definition}\label{def:Mumford-rel}
            Let $j_{n}\subset A_{n}$ be again the ideal of relations in the
            cohomology of $\Rep(S^{2}_{n})$.
            With $m=(n-1)/2$ and $\eta\subset\pi$ a subset whose size
            $h$ satisfies the parity condition
            \eqref{eq:h-parity-plus}, we refer to the relation
            $w^{m}_{n,\eta}\in j_{n}$ as a \emph{Mumford relation}.
            The collection of all these, as $\eta$ varies, are the
            \emph{Mumford relations} in the cohomology ring of
            $\Rep(S^{2}_{n})$.
          \end{definition}
          
         \subsection{Explicit formulae}

         The elements $w^{m}_{n,\eta}\in A_{n}$ appearing as the
         Mumford relations, and more generally the cohomology classes
         $w^{k}_{n,\eta}$ have been described using a characterization
         that does not immediately yield explicit formulae. In
         particular, $w^{k}_{n,\eta}$ is defined in terms of a Chern
         \emph{class} of an index element, while the explicit formula
         \eqref{eq:ch-minus-ind-d1} provides the Chern \emph{character} in closed form.

         As a first step towards  closed formula for
         $w^{k}_{n,\eta}$, as in
        \cite{Xie-Zhang} for
        example, and following \cite{Zagier}, a
        formula for the total Chern class can be derived as
        the formal series
        \begin{equation}\label{eq:total-Chern-class}
               \sum_{k=0}^{\infty} c_{k}(-\ind(P)) =
                       (1 + \beta)^{\iA_{\lambda}/2}\left(
                       \frac {1 + \delta_{1}}{1-\delta_{1}}
                       \right)^{B_{\eta}/(2\delta_{1})}
        \end{equation}
        where $\iA_{\lambda}$ and $B_{\eta}$ are as in
        \eqref{eq:ch-minus-ind-d1}:
        \begin{equation}\label{eq:AB-extracted}
\begin{aligned}
    \iA_{\lambda} &= (n/2-2\lambda -2 ) \\
    B_{\eta} &=  \alpha +
                  \frac{1}{2}\sum_{p\in
                  \eta} \delta_{p} - \frac{1}{2}\sum_{p\not\in
                  \eta} \delta_{p}.
\end{aligned}
        \end{equation}
        (See \cite{Zagier} for the
        interpretation of the right-hand side of this formula.) We can therefore write
        \begin{equation}\label{eq:chern-class-kth-deriv-proto}
            w^{k}_{n,\eta} = \frac{1}{k!}\left(\frac{
            d^{k}}{dt^{k}}\right)\bigg|_{t=0} \left(
            (1 + t^{2}\beta)^{\iA_{\lambda}/2}\left(
                       \frac {1 + t \delta_{1}}{1-t \delta_{1}}
                       \right)^{B_{\eta}/(2\delta_{1})}\right).
        \end{equation}
        Note here that $\iA_{\lambda}$ is minus the numerical index of
        $P$, and that in the definition of $w^{k}_{n,\eta}$ the
        integer $k$ is $-\ind(P)+1$, so we can write
                       \begin{equation}\label{eq:chern-class-kth-deriv}
            w^{k}_{n,\eta} = \frac{1}{k!}\left(\frac{
            d^{k}}{dt^{k}}\right)\bigg|_{t=0}\left(
            (1 + t^{2}\beta)^{(k-1)/2}\left(
                       \frac {1 + t \delta_{1}}{1-t \delta_{1}}
                       \right)^{B_{\eta}/(2\delta_{1})}\right).
        \end{equation}
         The following proposition gives a closed
         formula for this $k$'th term in the power series.

         \begin{proposition}\label{prop:explicit-formulae}
            We have
            \[
                  k!  w^{k}_{n,\eta} = \prod_{\substack{j=-k+1 \\ j =
                    -k+1 \bmod 2 }}^{k-1} (B_{\eta} + j \delta_{1}).
            \]
         \end{proposition}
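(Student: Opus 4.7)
The plan is to simplify the generating function in \eqref{eq:chern-class-kth-deriv} using the relation $\delta_1^2=-\beta$. This gives the factorization $1+t^2\beta = (1+t\delta_1)(1-t\delta_1)$, so with
\[
a = (k-1)/2 + B_\eta/(2\delta_1), \qquad b = (k-1)/2 - B_\eta/(2\delta_1),
\]
and in particular $a+b=k-1$, the bracketed function becomes the simpler product $(1+t\delta_1)^a(1-t\delta_1)^b$. The proposition is then equivalent to the formal identity
\[
k!\,[t^k]\bigl((1+t\delta_1)^a(1-t\delta_1)^b\bigr) \;=\; \prod_{r=0}^{k-1}\bigl(B_\eta+(2r-k+1)\delta_1\bigr). \qquad (\ast)
\]

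I would prove $(\ast)$ by viewing both sides as polynomials in $B_\eta$ over $\Q[\delta_1]$ and showing they coincide as monic polynomials of degree $k$ with the same $k$ roots. The right-hand side is manifestly a monic polynomial of degree $k$ in $B_\eta$, with $k$ distinct roots $B_\eta=(k-1-2s)\delta_1$ for $s=0,1,\dots,k-1$. For the left-hand side, expanding $\binom{a}{i}\binom{b}{j}(-1)^j$ in powers of $x := B_\eta/(2\delta_1)$ produces the leading term $x^{i+j}/(i!j!)$; summing over $i+j=k$ yields $(2x)^k/k!$, so after multiplying by $k!\delta_1^k$ the left-hand side also has leading term $B_\eta^k$. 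At each proposed root $B_\eta=(k-1-2s)\delta_1$ the exponents become $a=k-1-s$ and $b=s$, non-negative integers summing to $k-1$; the product $(1+t\delta_1)^a(1-t\delta_1)^b$ is then a genuine polynomial in $t$ of degree $k-1$, and its $t^k$-coefficient vanishes. Two monic degree-$k$ polynomials in $B_\eta$ sharing $k$ distinct roots must coincide, establishing $(\ast)$.

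The essential insight is the factorization $1+t^2\beta=(1+t\delta_1)(1-t\delta_1)$ coupled with the observation that $a+b=k-1$ is a nonnegative integer; this is what allows the key specializations where $a,b$ are non-negative integers summing to $k-1$, forcing the expression $(1+t\delta_1)^a(1-t\delta_1)^b$ to truncate below degree $k$ in $t$. After these two observations the argument is purely formal bookkeeping, and I do not anticipate any serious obstruction.
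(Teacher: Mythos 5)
Your argument is correct, but it takes a genuinely different route from the paper's. The paper proves the stronger identity $\frac{d^{k}}{dt^{k}}G_{k-1}(t)=\hat C_{k}\,G_{-k-1}(t)$ for all $t$ by induction on $k$ in steps of two, using closed forms for the first and second derivatives of $G_{k}$ together with the recurrence $\hat C_{k+2}=(B^{2}-(k+1)^{2}\delta^{2})\hat C_{k}$, and then sets $t=0$. You instead use $\beta=-\delta_{1}^{2}$ to factor the generating function as $(1+t\delta_{1})^{a}(1-t\delta_{1})^{b}$ with $a+b=k-1$, note that $k!$ times the $t^{k}$-coefficient is a monic polynomial of degree $k$ in $B_{\eta}$ (your leading-term computation $\sum_{i+j=k}1/(i!\,j!)=2^{k}/k!$ is right), and show it vanishes at the $k$ points $B_{\eta}=(k-1-2s)\delta_{1}$ because there $a=k-1-s$ and $b=s$ are non-negative integers summing to $k-1$, so the series truncates below degree $k$ in $t$; interpolation then forces equality with the product. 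This avoids the induction and the differential identities, and it has the virtue of explaining where the linear factors in the product come from: they are precisely the specializations at which the generating function degenerates to a polynomial of degree $k-1$. The one point you should make explicit is that the interpolation step must be carried out with $B_{\eta}$ and $\delta_{1}$ treated as independent indeterminates, say as polynomials in $B_{\eta}$ over the field $\Q(\delta_{1})$, where the $k$ roots are genuinely distinct and a nonzero polynomial of degree at most $k-1$ cannot have $k$ roots; the resulting universal identity in $\Q[B_{\eta},\beta]$ is then specialized into $A_{n}$, which is not an integral domain. Likewise the individual factors $(1\pm t\delta_{1})^{a}$ only make sense after formally inverting $\delta_{1}$, though their product has coefficients in $\Q[B_{\eta},\beta]$, so the final identity lives in the right ring. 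These are bookkeeping refinements, not gaps, and they sit at the same level of formality as the paper's own manipulation of the Zagier-type series.
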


         \begin{proof}
            Let us write
            \[
            C_{k} =   k!  w^{k}_{n,\eta} 
                          = \left(\frac{
            d^{k}}{dt^{k}}\right)\bigg|_{t=0} G_{k-1} (t)
            \]
            where
            \[
            G_{k-1}(t) =(1 + t^{2}\beta)^{(k-1)/2}\left(
                       \frac {1 + t \delta}{1-t \delta}
                       \right)^{B/(2\delta)},
            \]
            and we have abbreviated
            \[
            \begin{aligned}
            B &= B_{\eta} \\
            \delta &= \delta_{1}
            \end{aligned}
            \]
            to streamline the notation.
            
            Let $\hat C_{k}$ denote the right-hand side in the
            proposition,
            \[
                \hat C_{k} =  \prod_{\substack{j=-k+1 \\ j =
                    -k+1 \bmod 2 }}^{k-1} (B + j \delta),
            \]
            so that what we aim to prove is that $C_{k}$ and $\hat
            C_{k}$ are equal.
            We shall prove in fact that 
            \begin{equation}\label{eq:G-relation}
	\frac{d^{k}}{dt^{k}} G_{k-1}(t)=\hat C_k G_{-k-1}(t)
	\end{equation}
        which yields the desired equality $C_{k}=\hat C_{k}$ on
        substituting $t=0$, since $G_{l}(0)=1$ for all $l$.

 We prove \eqref{eq:G-relation} by induction on $k$: specifically, assuming
 that \eqref{eq:G-relation} holds for $k$, we prove the result for
 $k+2$. The seed cases, $k=0,1$, are clear.
Note first
            that $\hat C_{k}$ satisfies a recurrence relation
            \begin{equation}\label{eq:C-recurrence}
            \begin{aligned}
            \hat C_{k+2} &= (B^{2} + (k+1)^{2} \beta)\hat  C_{k} \\
                                &=  (B^{2} - (k+1)^{2} \delta^{2})
                               \hat C_{k}.
                                \end{aligned}
                                \end{equation}
Next we examine the first two derivatives of $G_{k}(t)$: by induction
on $k$ and using the fact that $G_{k}(t)= (1 + t^{2}\beta) G_{k-2}$,
we obtain the following identity for the first derivative:
\begin{equation}
\begin{aligned}
	\frac{d}{dt}G_k(t) & = (B-k \delta^2 t)G_{k-2}(t).
\end{aligned}
	\end{equation}
Applying this twice, we obtain an identity for the second derivative:
\begin{equation}
\begin{aligned}
\frac{d^2}{dt^{2}}G_k(t) &=
\left(B^2-q \delta^2 -2(k-1) B \delta^2 t +k(k-1)C^4 t^2\right)G_{k-4}(t).
\end{aligned}
	\end{equation}        
Using these identities for the first two derivatives, together with
the induction hypothesis \eqref{eq:G-relation}
and the recurrence relation \eqref{eq:C-recurrence}, we compute:        
\[
\begin{aligned}
	\frac{d^{k+2}}{dt^{k+2}} G_{k+1}(t) & = \frac{d^{k+2} (1-\delta^2t^2)G_{k-1}(t)}{dt^{k+2}}\\
	&  \begin{aligned} = (1-\delta^2t^2)\frac{d^{k+2}
        G_{k-1}(t)}{dt^{k+2}}  -2(k+2)\delta^2t & \frac{d^{k+1}
        G_{k-1}(t)}{dt^{k+1}}\\
       & -(k+2)(k+1)\delta^2\frac{d^{k}
        G_{k-1}(t)}{dt^{k}}
        \end{aligned}
        \\
	&= \Bigl((1-\delta^2t^2)\frac{d^2}{dt^2}-2(k+2)\delta^2t
        \frac{d}{dt}-(k+2)(k+1)\delta^2\Bigr)\frac{d^{k} G_{k-1}(t)}{dt^{k}}\\
	&=  \Bigl((1-\delta^2t^2)\frac{d^2}{dt^2}-2(k+2)\delta^2t
        \frac{d}{dt}-(k+2)(k+1)\delta^2\Bigr)\hat C_kG_{-k-1}(t)\\
	&= \left((B^2+\delta^2 (k+1)+2(k+2) B \delta^2t +(k+1)(k+2)\delta^4 t^2)\right.\\
	& \qquad\qquad\qquad -2(k+2)\delta^2t (B+(k+1)
        \delta^2t) \\
        &\qquad\qquad \qquad\qquad \left. -(k+2)(k+1)\delta^2(1-\delta^2t^2)\right)\hat C_kG_{-k-3}(t)\\
	&= (B^2-(k+1)^2\delta^2)\hat C_kG_{-k-3}(t)\\
	&= \hat C_{k+2}G_{-k-3}(t)
\end{aligned}
\]
as required.  
         \end{proof}

          \subsection{The Mumford relations as generators of the
          ideal}

         In \cite{Street}, a presentation of the cohomology ring of
         $\Rep(S^{2}_{n})$ is given by exhibiting a
         complete set of generators for the ideal of
         relations
         $j_{n}\subset A_{n}$, all of which have degree $m=(n-1)/2$.
         We now show that the elements $w^{m}_{n,\eta}$ also generate
         the ideal, by relating them to the generators in
         \cite{Street}.

         \begin{remark}
            The statement that the elements $w^{s}_{n,\eta}$, for
            $s\ge m$, generate the ideal is a counterpart of Kirwan's
            result \cite{Kirwan} in the case of a (non-orbifold)
            surface of genus $g$. Kirwan's result was strengthened by
            Kiem \cite{Kiem}, who showed that the relations in the
            middle dimension (i.e.~the case $s=m$ in our context) are
            sufficient. The results of \cite{Kirwan} were extended to
            the case of parabolic bundles on surfaces of genus $g\ge
            2$ with one marked point by Earl and Kirwan
            \cite{Earl-Kirwan}.
         \end{remark}

\begin{proposition}\label{prop:little-w-generate}
    Fix $n\ge 3$ odd, and write $n=2m+1$. Then as $\eta$ runs through
    all subsets of $\pi$ whose size $h=|\eta|$ satisfies the parity
    condition \eqref{eq:h-parity-plus}, the elements
    $\oo^{m}_{n,\eta}\in A_{n}$ form a set of generators of the
    ideal $j_{n}$. That is, the elements $\oo^{m}_{n,\eta}$ form a
    complete set of relations for the cohomology of\/ $\Rep(S^{2}_{n})$
    as a quotient of the algebra $A_{n}$.
\end{proposition}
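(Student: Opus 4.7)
The plan is to reduce the question to a finite-dimensional linear-algebra comparison against the generators of $j_n$ already exhibited by Street in \cite{Street}. Street's generators are homogeneous of degree $m=(n-1)/2$, and so are the Mumford relations $\oo^m_{n,\eta}$. Moreover $\oo^m_{n,\eta}\in j_n$ by Proposition~\ref{prop:Mumford-relations}. Since Street's generators are known to generate $j_n$ as an ideal, to prove the proposition it suffices to show that the $\Q$-linear span of $\{\oo^m_{n,\eta}\}$ inside the degree-$m$ piece of $A_n$ contains each of Street's generators.

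The key input is the closed-form product expression from Proposition~\ref{prop:explicit-formulae},
\[
m!\,\oo^m_{n,\eta} \;=\; \prod_{\substack{j=-m+1 \\ j\equiv -m+1\bmod 2}}^{m-1}\bigl(B_\eta + j\delta_1\bigr),
\]
where $B_\eta = \alpha + \tfrac12\sum_{p\in\eta}\delta_p - \tfrac12\sum_{p\notin\eta}\delta_p$. As $\eta$ runs over subsets of $\pi$ satisfying the parity condition \eqref{eq:h-parity-plus}, the linear form $B_\eta$ ranges over one half of the $2^n$ signed expressions $\alpha + \tfrac12\sum_p\varepsilon_p\delta_p$ with $\varepsilon_p\in\{\pm 1\}$. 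The other half comes from $\eta\mapsto\pi\setminus\eta$, which flips $B_\eta$ to $-B_\eta$ and hence multiplies the product by $(-1)^m$; so no linear information is lost by restricting to the parity class in \eqref{eq:h-parity-plus}.

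I would then perform a two-step change of basis. First, using the distinct integer shifts $j\delta_1$ appearing in the product, I would invert a Vandermonde-type system in $\delta_1$ to express the monomials $B_\eta^r\,\delta_1^{m-r}$, for $r=0,\dots,m$ and fixed $\eta$, as $\Q$-linear combinations of the $\oo^m_{n,\eta}$ for $\eta'$ running through the parity class (with $B_{\eta'}$ playing the role of an evaluation variable). Second, I would invert an appropriate restricted discrete Fourier transform in the sign vector $(\varepsilon_p)\in\{\pm 1\}^n$ — restricted to the parity class — to pass from expressions polynomial in $B_\eta$ back to explicit polynomials in $\alpha$ and the $\delta_p$, and in particular to Street's generators.

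The main obstacle will be verifying that the parity restriction on $|\eta|$ matches exactly what is required for this restricted Fourier inversion to land inside the span of Street's generators rather than producing redundant elements. The $\eta\leftrightarrow\pi\setminus\eta$ involution and the sign $(-1)^m$ that it induces on the $m$-fold product are precisely what force this matching, but checking it cleanly will require a small combinatorial identity comparing the $\binom{n}{h}$-style counts on the two sides of the correspondence. Once this is in place, the proof concludes at once: the $\oo^m_{n,\eta}$ lie in $j_n$, their $\Q$-span equals the degree-$m$ part of $j_n$, and by Street's theorem the latter generates the full ideal.
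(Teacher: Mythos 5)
Your overall reduction is the same as the paper's: both proofs use that Street's generators and the Mumford relations are homogeneous of degree $m$, that $j_{n}$ contains nothing below degree $m$ (Proposition~\ref{prop:below-middle}), and that both families have cardinality $2^{n-1}$, so everything comes down to showing that the $2^{n-1}$ elements $w^{m}_{n,\eta}$ are linearly independent (equivalently, span the degree-$m$ part of $j_{n}$). The problem is that this decisive linear-algebra step is exactly what your proposal does not actually carry out, and the two devices you offer in its place do not work as stated. First, the ``Vandermonde inversion in $\delta_{1}$'' is ill-posed: for each fixed $\eta$ there is a single product $\prod_{j}(B_{\eta}+j\delta_{1})$ with a fixed set of integer shifts, so there is no family of evaluations to invert; and the target of that step is in any case false, since the monomials $B_{\eta}^{r}\delta_{1}^{m-r}$ with $r<m$ are generally \emph{not} in $j_{n}$ (for $r=0$ one gets, up to sign, $\beta^{m/2}$ or $\beta^{(m-1)/2}\delta_{1}$, which need not be relations), hence cannot lie in the span of the $w^{m}_{n,\eta}$. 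Second, the symmetry you invoke is incorrect: complementation $\eta\mapsto\pi\setminus\eta$ sends $B_{\eta}$ to $2\alpha-B_{\eta}$, not to $-B_{\eta}$ (the $\alpha$ term does not change sign), so the claimed identity $w^{m}_{n,\pi\setminus\eta}=(-1)^{m}w^{m}_{n,\eta}$ fails, and with it the ``no linear information is lost'' justification; indeed the complementary parity class consists of genuinely different elements (they appear in the paper only as subleading terms $w^{m-1}_{n,\eta'}$). Finally, you explicitly defer the ``restricted Fourier inversion'' verification as an obstacle, which is precisely the content that needs proving.

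The paper closes this gap with a short specialization argument that you could adopt: since each $w^{m}_{n,\eta}$ is a $\Q$-combination of Street's degree-$m$ generators $R^{m}_{\zeta}$ (there being no lower-degree relations), it suffices to prove linear independence; setting $\beta=0$ the total Chern class collapses to $\exp B_{\eta}$, so $w^{m}_{n,\eta}$ specializes to $B_{\eta}^{m}/m!$, and after further setting $\alpha=1$ one expands in the square-free monomials $\prod_{p\in\zeta}\delta_{p}$ to obtain a $2^{n-1}\times 2^{n-1}$ coefficient matrix $C_{\eta,\zeta}$ whose columns are pairwise orthogonal by a parity count over $\eta$ (using that $\zeta_{1}\ominus\zeta_{2}$ is a nonempty proper subset). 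Orthogonality gives nonsingularity, hence independence, hence the $w^{m}_{n,\eta}$ span the degree-$m$ part of $j_{n}$ and generate the ideal. This is close in spirit to the ``Fourier'' step you gesture at, but it is a concrete character-sum computation rather than an inversion, and it is the ingredient your proposal is missing.
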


\begin{proof}
    From the results of \cite{Street}, in the ideal $j_{n}$, there
    is an element $r^{m}$ that has degree $m$ and belongs to the
    subalgebra $\Q[\alpha, \beta] \subset A_{n}$, where
    $\beta=-\delta_{p}^{2}$. The element $r^{m}$ is unique up to
    scale.
    According to \cite[Corollary
    1.6.2]{Street}, the ideal $j_{n}$ is generated by the elements
    \[
            R_{\zeta}^{m} = r^{m-|\zeta|}(\alpha,\beta) \prod_{p\in
            \zeta} \delta_{p}
    \]
    where $\zeta$ runs through all subsets of $\pi$ of size $0\le
    |\zeta| \le
    m$. These elements all have degree $m$.
 
    As we vary $\eta$, we
    obtain $2^{n-1}$  expressions $w^{m}_{n,\eta}$, all of which
    are elements of $j_{n}$ of degree $m$. Because $m$ is the lowest
    degree in which relations exist, each $w^{m}_{n,\eta}$ is a
    $\Q$-linear combination of the generators $R^{m}_{\zeta}$. The
    number of generators $R^{m}_{\zeta}$ is also $2^{n-1}$; so in
    order to see that the elements $w^{m}_{n,\eta}$ generate the ideal
    $j_{n}$, it will be enough if we show that they are linearly
    independent over $\Q$.

    The fact that the elements $w^{m}_{n,\eta}$ are linearly
    independent can be deduced through a direct
    examination of the formulae which define it, as follows. Let us
    specialize the formulae by setting
    $\beta=0$, 
    in which case the expression
    \eqref{eq:total-Chern-class} for the total
    Chern class of $-\ind(P)$ simplifies to
    \[
                (1 + 2\delta_{1})^{B_{\eta}/(2\delta_{1})} = \exp B_{\eta}
    \]
    because $\delta_{1}^{2}=0$. The element $w^{m}_{n,\eta}$ therefore
    specializes to $B^{m}_{\eta}/m!$, or if we further specialize by
    setting $\alpha=1$, to
    \[
          (1/m!) \bigl(  1 + \sum_{p}\eta_{p}\delta_{p}  \bigr)^{m}
    \]
    where $\eta_{p}=1$ for $p\in\eta$ and $\eta_{p}=-1$
    otherwise. We can expand this as
    \[
                    \sum_{|\zeta|\le m} C_{\eta,\zeta} \biggl(\prod_{p \in
                    \zeta}\delta_{p}\biggr)
    \]
    where the rational coefficient $C_{\eta,\zeta}$ is given by
    \[
      C_{\eta,\zeta} = \frac{1}{\bigl(m-|\zeta|\bigr)!}
      \biggl(\prod_{p\in\zeta}\eta_{p}\biggr).
    \]
    We wish to see that the matrix $C = (C_{\eta,\zeta})$, which is square
    of size $2^{n-1}$, is non-singular. To do this, we compute the dot product
    of the columns of $C$ corresponding to different subsets
    $\zeta_{1}$ and $\zeta_{2}$. For fixed $\eta$ we have
    \[
               C_{\eta,\zeta_{1}}
             C_{\eta,\zeta_{2}} = \frac{1}{\bigl(m-|\zeta_{1}|\bigr)!
             \bigl(m-|\zeta_{2}|\bigr)! } \times
             \begin{cases}
            +1, & \text{if $|\eta \cap (\zeta_{1}\ominus \zeta_{2})^{c}|$
            is even} \\
            -1, & \text{if $|\eta \cap (\zeta_{1}\ominus \zeta_{2})^{c}|$
            is odd} 
           \end{cases}
    \]
    where the superscript $c$ denotes the complement. Since $\zeta_{1}$
    and $\zeta_{2}$ are distinct subsets of size strictly less than
    $n/2$, their symmetric difference is a non-empty proper subset of
    $\pi$. The number of subsets $\eta$ of a given parity for which
    the intersection is even and
    the number for which it is odd are therefore equal, and we see
    that
    \[
                \sum_{\eta} C_{\eta,\zeta_{1}}
             C_{\eta,\zeta_{2}}  = 0.
    \]
    The columns are therefore orthogonal, showing that the square
    matrix $C$ is
    non-singular, as required.
\end{proof}

\begin{remarks}
    An alternative verification of the linear independence of the
    elements $w^{m}_{n,\eta}$, not depending on an examination of the
    formula, will be seen later, in the remarks at the end of
    section~\ref{subsec:proof-subleading}.
\end{remarks}

    \section{Relations in instanton homology}
     \label{sec:relations-instanton}
     
          \subsection{Deforming the Mumford relation with instanton
          terms}

          The element $\oo^{m}_{n,\eta} \in j_{n}$ in
          Proposition~\ref{prop:Mumford-relations} is a relation in
          the ordinary cohomology ring $H^{*}(\Rep(S^{2}_{n});\Q)$.
          Via its description in terms of the multiplicative
          generators $\alpha$ and $\delta_{p}$, as an
          explicit element of the ring
          \[
                  \Q[\alpha, \delta_{1}, \dots, \delta_{n}]\bigm/ \bigl\langle 
                 \delta_{i}^{2}-\delta_{j}^{2}\bigr\rangle_{i,j} ,
          \]
          we may regard $\oo^{m}_{n,\eta}$ also as an element of the ideal
          $J_{n} \subset \cA_{n}$ of
          Proposition~\ref{prop:coho-Z-cyclic}, where it is a relation in
          the ordinary cohomology ring $H^{*}(\Rep(Z_{n});\cR)$. As
          $\eta$ varies over all subsets of $\pi$ satisfying the
          parity condition, the elements $\oo^{m}_{n,\eta} \in J_{n}$
          form a set of generators of the ideal, as follows
          immediately from the corresponding statement for
          $\Rep(S^{2}_{n})$
          (Proposition~\ref{prop:little-w-generate}).

          The
          following proposition promotes $\oo^{m}_{n,\eta}$ to a relation
          between the corresponding operators on the instanton
          homology $I(Z_{n})$ by adding terms of lower degree. Recall
          that $\cJ_{n} \subset \cA_{n}$ is the annihilator of
          $I(Z_{n})$ as an $\cA_{n}$-module.
          
          \begin{proposition}\label{prop:quantum-Mumford}
          Let $n$ be odd and let $\eta\subset\pi$ be a subset whose size $h$ satisfies the parity
          condition \eqref{eq:h-parity-plus}. Write $m=(n-1)/2$ and
          let $\oo^{m}_{n,\eta}\in
          j_{n}\subset J_{n}$ be as in
          Proposition~\ref{prop:Mumford-relations}, regarded as a relation in the
          ordinary cohomology of the representation variety
          $\Rep(Z_{n})$. Then there
          is a unique element $\om^{m}_{\eta}\in\J{n}\subset \cA_{n}$
          in filtration degree $m$ whose
          leading term is $\oo^{m}_{n,\eta}$:
          \[
                \om^{m}_{\eta}=\oo^{m}_{n,\eta} \pmod
                {\cA_{n}^{(m-1)}}.
          \]
          As $\eta$ varies over all subsets satisfying the parity
          condition, these elements $\om^{m}_{\eta}$ form a set of
          generators for the ideal of relations $\J{n}$.
          \end{proposition}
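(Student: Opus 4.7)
The plan is twofold: first establish existence and uniqueness of $\om^{m}_{\eta}$ by directly invoking Corollary~\ref{cor:relations-coh-to-inst}, and then deduce the generation statement by an inductive argument combining Proposition~\ref{prop:little-w-generate} (the Mumford relations generate $j_{n}$) with Proposition~\ref{prop:assoc-graded} (the associated graded of the $\cA_{n}$-module $I(Z_{n})$ is $H_{*}(\Rep(Z_{n});\cR)$).

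For existence and uniqueness, I would apply Corollary~\ref{cor:relations-coh-to-inst} with $w = \oo^{m}_{n,\eta}$: this is a homogeneous element of degree $m=(n-1)/2$ lying in $J_{n}$ (since $j_{n}\subset J_{n}$), so the corollary produces a lift $\om^{m}_{\eta}\in \J{n}\cap \cA_{n}^{(m)}$ whose leading term is $\oo^{m}_{n,\eta}$. The ``$m\le (n-1)/2$'' clause of that same corollary, satisfied with equality here, delivers uniqueness.

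For the generation claim, let $I\subset \cA_{n}$ denote the ideal generated by the $\om^{m}_{\eta}$. The inclusion $I\subset \J{n}$ is immediate. For the reverse, I would argue by induction on filtration degree. The key input is that, by Proposition~\ref{prop:assoc-graded}, the associated graded ideal of $\J{n}$ coincides with $J_{n}$; combined with Proposition~\ref{prop:below-middle} this handles the base case $\J{n}\cap \cA_{n}^{(k)}=0$ for $k<m$. For the inductive step, take $x\in \J{n}\cap \cA_{n}^{(k)}$ with $k\ge m$. Its leading term $\bar x \in \cA_{n}^{(k)}/\cA_{n}^{(k-1)}$ lies in the degree-$k$ part of $J_{n}$. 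Using Proposition~\ref{prop:little-w-generate} together with the structural identity $J_{n}=(j_{n}+\epsilon j_{n})\otimes \cR$ and the fact that $\epsilon\in \cA_{n}$, I can write $\bar x = \sum_{\eta} c_{\eta}\,\oo^{m}_{n,\eta}$ with homogeneous $c_{\eta}\in \cA_{n}$ of degree $k-m$. Lifting each $c_{\eta}$ to an element of $\cA_{n}^{(k-m)}$ and setting $y=\sum_{\eta} c_{\eta}\,\om^{m}_{\eta}\in I$, the leading terms of $x$ and $y$ agree in degree $k$, so $x-y\in \J{n}\cap \cA_{n}^{(k-1)}$ and the induction hypothesis yields $x\in I$.

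The only point requiring care is bookkeeping between filtration and grading in the reduction step — specifically, verifying that multiplying a lift $c_{\eta}\in \cA_{n}^{(k-m)}$ by $\om^{m}_{\eta}\in \cA_{n}^{(m)}$ produces an element of $\cA_{n}^{(k)}$ whose associated graded class in degree $k$ is exactly $c_{\eta}\,\oo^{m}_{n,\eta}$. With Corollary~\ref{cor:relations-coh-to-inst} and Proposition~\ref{prop:assoc-graded} already doing the heavy lifting, there is no further substantive obstacle.
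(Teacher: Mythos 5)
Your proposal is correct and takes essentially the same route as the paper: existence and uniqueness come from Corollary~\ref{cor:relations-coh-to-inst} (whose uniqueness clause rests on Proposition~\ref{prop:below-middle}), and generation comes from Proposition~\ref{prop:little-w-generate} together with a comparison of $\J{n}$ with $J_{n}$ through the filtration. The only cosmetic difference is in how that last comparison is closed: the paper compresses it into the remark that $\cA_{n}/J_{n}$ and $\cA_{n}/\J{n}$ are free $\cR$-modules of the same rank, while you invoke Proposition~\ref{prop:assoc-graded} to identify the ideal of leading terms of $\J{n}$ with $J_{n}$ and then run the filtration induction explicitly -- the same mechanism, spelled out in more detail.
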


                   \begin{remark}
            Our notation for $\om^{m}_{\eta}$ does not include $n$,
            since $n$ is always related to $m$ in this context by
            $n=2m+1$.
          \end{remark}

\begin{proof}[{Proof of Proposition~\ref{prop:quantum-Mumford}}]
          Corollary~\ref{cor:relations-coh-to-inst} gives
          the existence of $\om^{m}_{\eta}\in\J{n}$ with leading term
          $\oo^{m}_{n,\eta}$. The uniqueness assertion is a consequence of
          Proposition~\ref{prop:below-middle}. The fact that these are
          a complete set of generators for the ideal follows from the
          corresponding statement for the elements $w^{m}_{n,\eta}\in
          J_{n}$ together with the fact that $\cA_{n}/J_{n}$ and
          $\cA_{n}/\J{n}$ are free modules of the same rank, because
          they are respectively the ordinary homology of
          $\Rep(Z_{n})$ and the instanton homology of $Z_{n}$
          (Proposition~\ref{prop:IZ-two-copies}).
\end{proof}          

           We aim to give an algorithm for computing $W^{m}_{\eta}$ as
           a deformation of $w^{m}_{n,\eta}$, and our first main step
           will be to determine the sub-leading term. That is,
           Corollary~\ref{cor:relations-coh-to-inst} provides
           the existence
          of $\oo(1)$ with
          \[
                \om^{m}_{\eta}=\oo^{m}_{n,\eta}  + \epsilon
                \oo(1) \pmod {\cA_{n}^{(m-2)}},
          \]
           and we wish to determine $\oo(1)$.
          
          \begin{proposition}\label{prop:quantum-Mumford-subleading}
         The subleading term of $\om^{m}_{\eta}$ is given
          by $ \epsilon\tau^{n-2h}  \oo^{m-1}_{n,\eta'}$, where $\eta'$ is the complement
          $\pp\setminus \eta$ and $h=|\eta|$, so
          \[
                \om^{m}_{\eta}=\oo^{m}_{n,\eta}  + \epsilon\tau^{n-2h}             
                \oo^{m-1}_{n,\eta'} \pmod {\cA_{n}^{(m-2)}}.
          \]        
          \end{proposition}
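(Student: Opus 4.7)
The plan is to combine the family-index description of $\oo^{m}_{n,\eta}$ from Corollary~\ref{cor:stratumPD} with a wall-crossing calculation on the minimal positive-action instanton moduli, in the style of \cite{Munoz2}. Expand $\om^{m}_{\eta}$ in the form
\[
\om^{m}_{\eta} = w(0) + w(2) + \cdots + \epsilon\bigl(w(1) + w(3) + \cdots\bigr)
\]
provided by Corollary~\ref{cor:relations-coh-to-inst}, with $w(0)=\oo^{m}_{n,\eta}$; the goal is to identify $w(1)\in\cA_{n}^{(m-1)}\cap\cA_{n}^{+}$ as $\tau^{n-2h}\,\oo^{m-1}_{n,\eta'}$. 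By Proposition~\ref{prop:leading-terms} and part~\ref{item:kappa-quantized} of Lemma~\ref{lemma:dimensions-kappa}, the $\epsilon$-components of the expansion receive contributions only from trajectories on $\R\times Z_{n}$ of action $\kappa\in(1/4)+(1/2)\Z_{\ge 0}$, and the smallest such action is $\kappa=1/4$. Dimension-counting using $\dim M_{\kappa}(c,c')=8\kappa+\ind(c)-\ind(c')$ shows that moduli with $\kappa\ge 3/4$ contribute only to $w(3)$ and to lower filtrations, so $w(1)$ is extracted entirely from $M_{1/4}(R_{+},R_{-})$.

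The first substantive step is to re-read $\oo^{m}_{n,\eta}$ as the Porteous class of the family of $\bar\partial$-operators from Definition~\ref{def:loci-def} at $\lambda=-1/4$ with hit-set $\eta$, parameterized by $R_{+}$; the vanishing in ordinary cohomology simply reflects the emptiness of the corresponding Mumford stratum over the stable locus. The second step is to extend this family of Fredholm operators across $M_{1/4}(R_{+},R_{-})$. A $\kappa=1/4$ trajectory, as in the proof of Lemma~\ref{lemma:dimensions-kappa}, can be interpreted (after compactification of the tube) as a parabolic bundle on $T^{2}\times S^{2}_{n}$ whose monopole number on each of the $n$ singular strands is a half-integer. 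Consequently the two end limits in $\Rep(S^{2}_{n})$ differ by a Hecke modification at every marked point, which swaps the parabolic line with its complement; the hit-set of the limiting line subbundle $\cF$ therefore flips from $\eta$ at $-\infty$ to the complement $\eta'=\pi\setminus\eta$ at $+\infty$, and the parameter $\lambda$ in Definition~\ref{def:loci-def} correspondingly shifts from $-1/4$ to $+1/4$, so that $k$ in \eqref{eq:lambda-to-k} drops from $m$ to $m-1$. Corollary~\ref{cor:stratumPD} then identifies the resulting Porteous class on $M_{1/4}$ with $\oo^{m-1}_{n,\eta'}$.

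Third, compute the local-coefficient weight. For a trajectory in $M_{1/4}$, the factor $\Gamma(A)=\tau^{\nu(A)+\frac{1}{2}\Sigma\cdot\Sigma}$ of \eqref{eq:Gamma-A} evaluates to $\tau^{n-2h}$: the half-integer monopole contribution at each strand $p_{i}$ contributes $+1/2$ to the exponent when $p_{i}\in\eta'$ and $-1/2$ when $p_{i}\in\eta$ (with respect to a consistent choice of framings for the $n$ strands), summing to $(n-h)-h=n-2h$. Combined with the previous step, this yields $w(1)=\tau^{n-2h}\,\oo^{m-1}_{n,\eta'}$, and the uniqueness clause of Corollary~\ref{cor:relations-coh-to-inst}, applicable since $m\le (n-1)/2$, then pins down $\om^{m}_{\eta}$ modulo $\cA_{n}^{(m-2)}$ as claimed.

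The main obstacle is the second step: making the identification of $M_{1/4}(R_{+},R_{-})$ with a moduli of parabolic bundles plus Hecke modifications fully rigorous, and then showing that the natural family of $\bar\partial$-operators obtained from this identification is precisely the family whose Porteous class is $\oo^{m-1}_{n,\eta'}$. The third step, the $\tau^{n-2h}$ weight, is bookkeeping once the geometric picture is fixed, but it does require the self-intersection correction $\frac{1}{2}\Sigma\cdot\Sigma$ to be computed against the same framings of $K_{n}$ that appear in the definition of the operator $\opdelta_{p}$.
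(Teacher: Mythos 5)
Your high-level strategy matches the paper's (isolate the $\kappa=1/4$ contribution, interpret it holomorphically via parabolic bundles, and get the weight $\tau^{n-2h}$ from the monopole number, whose bookkeeping you do correctly), but the proof has a genuine gap at exactly the step you flag as the ``main obstacle'': you never establish that the cut-down $\kappa=1/4$ moduli space, paired against an arbitrary class $v$ of complementary degree, reproduces the pairing of $\tau^{n-2h}\oo^{m-1}_{n,\eta'}v$ on the flat moduli. That identification is the entire content of the proposition, and the framework you propose for it is not the one that makes it work. Closing up the tube to $T^{2}\times S^{2}_{n}$ is only legitimate for computing the action quantization and the index (which is how the paper uses it in Lemma~\ref{lemma:dimensions-kappa}); the quantity you actually need is the matrix element of $\oo^{m}_{n,\eta}\smileI_{1/4}$ between $\1_{-}$ and the dual of $\1_{+}$, and since $\1_{+}$ is the relative invariant of $D^{2}\times S^{2}_{n}$, the correct closed bifold is $X=S^{2}\times S^{2}_{n}$, obtained by capping both ends with disks. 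On $X$ one has $b_{2}^{+}=1$, so the evaluation is chamber-dependent: the paper must show that the relevant metrics lie in the $B$-chamber, that the invariant vanishes in the $C$-chamber, that exactly one wall is crossed, and that in the $B$-chamber $M^{e}_{1/4}(X)$ consists of $2^{n-1}$ copies of $\CP^{n-2}$ parametrizing non-split extensions $\cO(1)\boxtimes\cG^{*}\to\cE\to\cG$. None of this chamber/wall-crossing structure appears in your cylinder-level argument, and without it there is no description of the moduli space on which to evaluate anything.

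Moreover, the mechanism you propose for the flip $\eta\mapsto\eta'$ --- that the two asymptotic flat limits of a trajectory differ by Hecke modifications at every marked point, so the hit-set of ``the limiting line subbundle'' flips --- is not how the identification goes, and as stated it is only a heuristic about individual trajectories rather than about the cut-down moduli spaces. In the paper the flip arises because the locus $M^{e}_{1/4}(X)\cap U^{\eta}_{-1/4}(b_{0})$ (extensions of $\cF$ by $\cO(1)\boxtimes\cF^{*}$ whose extension class vanishes on $\{b_{0}\}\times C$) and the locus $M_{0}(X)\cap U^{\eta'}_{1/4}(b_{0})$ (non-split extensions of $\cF$ by $\cF^{*}$ pulled back from $C$) are canonically the \emph{same} $\CP^{m-1}$, with the same restrictions to fibers $\{b\}\times C$, so all evaluations of classes $v$ coming from $\cA_{n}$ agree; the class with hit-set $\eta$ and $\lambda=-1/4$ on one moduli space cuts out the same cycle as the class with hit-set $\eta'$ and $\lambda=+1/4$ on the other. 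Your weight computation (third step) and the final appeal to the uniqueness clause of Corollary~\ref{cor:relations-coh-to-inst} are fine, but they sit on top of an unproved core, so the argument as proposed does not yet constitute a proof.
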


          The proof of this proposition will require some preparation. 
          To understand how to characterize the subleading term
          $\epsilon\oo(1)$, we
          draw on the mechanism behind
          Proposition~\ref{prop:leading-terms} and
          Corollary~\ref{cor:relations-coh-to-inst}. Let $\1_{+}$ again be the
          standard cyclic generator of $I(Z_{n})$ from
          Proposition~\ref{prop:instanton-cyclic}, and let $\1_{-} =
          \opepsilon\1_{+}$. Let $\Lam$ be the $\cR$-module
          isomorphism in Proposition~\ref{prop:leading-terms}, and
          let $1_{\pm} = \Lam(\1_{\pm}) \in H_{*}(\Rep(Z_{n});\cR)$.
          Then $w(1)$ is a homogeneous polynomial of degree $m-1$ in $\alpha$
          and the $\delta_{p}$, with coefficients in $\cR$, such that
          \[
                 \Lam(\oo^{m}_{n,\eta} \1_{-}) - \oo^{m}_{n,\eta} 1_{-} =
                 w(1) 1_{+}\quad \bmod \bigoplus_{k\le
                 2(m-2)}H^{k}(\Rep(Z_{n}); \cR).
          \]
          (The right-hand side is the $(m-2)$'th step of the
          increasing filtration of $H^{*}(\Rep(Z_{n};\cR)$.)

          Recall next we have 
          an expansion of the operator $\opalpha$ according to
          the action $\kappa \in (1/4)\Z$, as in
          \eqref{eq:opalpha-expansion} and
          Proposition~\ref{prop:leading-terms}. There is a similar expansion of
          each $\opdelta_{p}$. This gives a $\kappa$-expansion of any
          monomial in $\opalpha$ and the $\opdelta_{p}$, and therefore
          of the multiplication operator of any $u\in \cA_{n}$ acting
          on $I(Z_{n})$. That is, we may write 
          \[
                    u \xi = \sum_{\kappa\in (1/4)\Z} u
                    \smileI_{\kappa} \xi.
          \]
          This description is set up so that if $u\in \cA_{n}$ is in
          grading $k$ and $\Lam(\xi) \in H^{2l}(\Rep(Z_{n});\cR)$,
          then
          \[
                    \Lam (u
                    \smileI_{\kappa} \xi) \in H^{2(l+k) -
                    8\kappa}(\Rep(Z_{n});\cR).
          \]
          The description of $w(1)$ then becomes
          \begin{equation}\label{eq:w1-interpretation}
                w(1)1_{+} =\Lam( \oo^{m}_{n,\eta}  \smileI_{1/4} 1_{-}).
          \end{equation}
          Computation of $w(1)$ in this form therefore depends
          directly on
          understanding the instantons on the cylinder $\R\times
          Z_{n}$ with action $1/4$. We address this calculation in
          the following subsection, where the proof of
          Proposition~\ref{prop:quantum-Mumford} will be completed.
          
          \subsection{Characterizing the sub-leading term}

          From the discussion above, we are interested in the moduli
          space $M_{\kappa}(\R\times Z_{n})$ of anti-self-dual
          bifold $\SU(2)$ connections on the cylinder, particularly
          for $\kappa=1/4$. By attaching a
          copy of the bifold $D^{2}\times S^{2}_{n}$ to each of the
          two ends, we form from the
          cylinder a compact bifold
          \[
                    X = S^{2}\times S^{2}_{n}.
          \]
          For clarity in distinguishing the two factors here, we will
          write
          \[
                    X = B \times C
          \]
          where $B$ is $S^{2}$ and $C$ is the bifold $S^{2}_{n}$.
          We write $M_{\kappa}(X)$ for the moduli space of
          anti-self-dual
          $\SU(2)$ connections on the bifold $X$, with action
          $\kappa$, and we write $M_{\kappa}^{e}(X)$ for the moduli
          space corresponding to $w_{2}=[e]$, where $[e]=\{b\}\times
          C$. The moduli spaces depend,
          of course, on a choice of conformal structure on $X$. The moduli space
          $M_{\kappa}(X)$ is non-empty only if $\kappa\in
          (1/2)\Z$, while $M_{\kappa}^{e}(X)$ is non-empty only if $\kappa\in
          (1/2)\Z + 1/4$. The moduli spaces have formal dimension
          \[
                d(\kappa) = 8\kappa + 2n - 6.
          \]

          For any
          element $u \in \cA_{n}$ of degree $d(\kappa)/2$ in the
          variables $\alpha$ and $\delta_{i}$, we can seek
          to evaluate a Donaldson polynomial invariant by evaluating the
          corresponding cohomology class on $M_{\kappa}(X)$ or
          $M^{e}_{\kappa}(X)$. Because we working with local
          coefficients $\Gamma$, our Donaldson invariants should also
          involve $\cR$-valued weights. By the formula
          \eqref{eq:Gamma-A}, the local system $\Gamma$ defines a
          locally constant function
          \begin{equation}\label{eq:Gamma-M-weights}
                \Gamma: M_{\kappa}(X) \to \cR^{\times}
          \end{equation}
          and so the moduli spaces are a collection of oriented,
          weighted manifolds.
          
          However,
          the bifold $X$ has $b_{2}^{+}=1$, so the appearance of
          reducibles in one-parameter families means that the
          Donaldson invariant depends on a choice of chamber in the
          space of Riemannian metrics on $X$. We consider a product
          metric in which the area of $B$ is very large compared to the
          area of $C$, and we call this the \emph{$B$-chamber}. (This
          means that the self-dual 2-form for the Riemannian metric on
          $X$ is nearly Poincar\'e dual to
          a multiple of $\PD[C]$.)
          Similarly there is a distinguished chamber, the
          \emph{$C$-chamber}, in which the area of $C$ is very large
          compared to $B$. There is then a well-defined Donaldson
          invariant $q^{B}_{\kappa}$ 
          in the $B$-chamber,
          \[
          \begin{aligned}
          u &\mapsto q^{B}_{\kappa}(X;u) \\
          \cA_{n} &\to \cR,
          \end{aligned}
          \]
          calculated using either the moduli space $M_{\kappa}(X)$ or
          the moduli space
          $M^{e}_{\kappa}(X)$, depending on whether $4\kappa$ is even
          or odd respectively. Our notation again makes no explicit
          mention of the local coefficient system, but the
          contributions of the various components of the moduli spaces
          are to be weighted by the locally constant function
          \eqref{eq:Gamma-M-weights}.

          These Donaldson invariants of $X$ are
          related to the action of $u$ on $I(Z_{n})$ by a gluing
          argument, because of the description of $X$ as the union of
          the cylinder $[-1,1]\times Z_{n}$ and the two copies of
          $D^{2}\times S^{2}_{n}$. More specifically, let $\1_{+}\in
          I(Z_{n})$ be once more the cyclic generator obtained as the
          relative invariant of the manifold $D^{2}\times S^{2}_{n}$,
          and let $\1_{+}^{\dagger}$ be the element of the instanton
          \emph{co}homology group $I^{*}(Z_{n})$ obtained by regarding
          $D^{2}\times Z_{n}$ as a manifold with boundary $-Z_{n}$.
          Then for $\kappa\in(1/2)\Z$ and $u\in \cA_{n}^{+}$, we can write
          \[
                q^{B}_{\kappa}(X;u) =  \langle u*_{\kappa} \1_{+} ,
                \1^{\dag}_{+} \rangle
          \]
          where the pairing on the right is the $\cR$-valued pairing
          between $I(Z_{n})$ and $I^{*}(Z_{n})$. For $\kappa\in (1/4)
          + (1/2)\Z$, we have
          \[
                q^{B}_{\kappa}(X;\epsilon u) =  \langle u*_{\kappa} \1_{+} ,
                \1^{\dag}_{-} \rangle.
          \]
          From this relationship and Poincar\'e duality,
          It follows that 
          \eqref{eq:w1-interpretation}
          is equivalent to
          \begin{equation}\label{eq:subleading-relation-X}
                q^{B}_{1/4}(w(0) v)  = q^{B}_{0}(\epsilon w(1) v)
          \end{equation}
          for all $v\in \cA_{n}$ of degree
          \[
          \begin{aligned}
          \deg(v) &= (1/2)d(1/4) - \deg(w(0)) \\
                        &= n - 2  - m \\
                        &=  m-1 ,
                    \end{aligned}
          \]
           where $n=2m+1$ as usual.

           The situation is somewhat simplified now because the moduli
           spaces $M_{0}(X)$ and $M^{e}_{1/4}(X)$ are \emph{compact}. This
           is because non-compactness of the moduli space arises only
           from bubbling, and bubbles decrease $\kappa$ by multiples
           of $1/2$. So for $\kappa\le 1/4$, the Donaldson invariants are simply
           evaluations on $[M_{\kappa}(X)]$ or $[M^{e}_{\kappa}(X)]$
           of ordinary cohomology
               classes in $H^{*}(\bonf^{*}(X) ; \cR)$, weighted by the
               function locally constant \eqref{eq:Gamma-M-weights}.
               We will write $[\Gamma\cdot M_{\kappa}(X)]$ and
               $[\Gamma\cdot M_{\kappa}^{e}(X)]$ for these weighted
               fundamental classes, as elements of the ordinary
               homology $H_{*}(\bonf^{*}(X); \cR)$.

               Via the
           relationship between $\cA_{n}$ and
           $H^{*}(\bonf^{*}(Z_{n});\cR)$, we have an inclusion
           \[
              \cA_{n} \hookrightarrow H^{*}(\bonf^{*}(X) ; \cR).
           \]
           The relation \eqref{eq:subleading-relation-X} can therefore
           be stated in terms of ordinary pairings,  between these
           cohomology classes and the fundamental classes of the
           moduli spaces:
          \[
                          \bigl\langle w(0) v, [\Gamma\cdot M^{e}_{1/4}(X)]
                    \bigr\rangle = \bigl\langle w(1) v, [\Gamma\cdot M_{0}(X)]
                    \bigr\rangle .
          \]
          The assertion in
          Proposition~\ref{prop:quantum-Mumford-subleading}
          concerning the value of the subleading term $w(1)$ can
          therefore be restated as the following proposition.

          \begin{proposition}\label{prop:subleading-interpret}
          Let $n=2m+1$ as usual let $v\in \cA_{n}$ be any element of
          degree $m-1$. Let $\oo_{n,\eta}^{k} \in \cA_{n}$ be the
          explicit polynomials described in
          Definition~\ref{def:omega}. Then we have
          \[
                      \bigl\langle \oo_{n,\eta}^{m} v, [\Gamma\cdot M^{e}_{1/4}(X)]
                    \bigr\rangle = \bigl\langle  \tau^{n-2|\eta|}
                    \oo_{n,\eta'}^{m-1} v,  [\Gamma\cdot M_{0}(X)]
                    \bigr\rangle ,
          \]
          where the (compact) moduli space $M^{e}_{1/4}(X)$ is
          computed using a metric on $X$ in the $B$-chamber, and
          $M_{0}(X)$ is the moduli space of flat bifold connections,
          a copy of $\Rep(S^{2}_{n})$.
          \end{proposition}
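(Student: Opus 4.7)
The plan is to prove Proposition~\ref{prop:subleading-interpret} by a wall-crossing argument on the closed bifold $X = B \times C$, moving from the $B$-chamber (in which the left-hand side is computed) across to the $C$-chamber (in which the adiabatic limit exposes the parabolic bundle picture of Section~\ref{subsec:loci}). The strategy is modeled on Mu\~noz's analogous calculation in \cite{Munoz2}, cited in the introduction as the prototype.

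First, I would analyze $M^e_{1/4}(X)$ via the adiabatic limit in which the $C$-factor is stretched. In this limit, ASD solutions with $w_2 = \PD[\{b\}\times C]$ and action $1/4$ correspond to holomorphic families of parabolic bundles on $C$ parametrized by $B = S^{2}$; so long as every fiber is strictly stable, the resulting family lands in $\cM(S^{2}_{n}) \cong \Rep(S^{2}_{n})$, and the class $\oo^{m}_{n,\eta} v$ then evaluates to zero on the family by exactly the Mumford vanishing established in Proposition~\ref{prop:Mumford-relations}. Consequently, the entire $B$-chamber pairing on the left-hand side of Proposition~\ref{prop:subleading-interpret} is accounted for by the wall-crossing contribution that arises when the $C$-chamber moduli fails to be smooth.

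Second, I would identify the wall-crossing contribution with a sum of contributions indexed by the parabolic strata $T^{\eta'}_{-1/4}$ of Definition~\ref{def:loci-def}. Reducible connections on $X$ with the prescribed $w_{2}$ are classified by splittings $\cE = \cF \oplus \cF^{-1}$ of the parametric family of rank-2 bundles; each such splitting corresponds to a point of marginal instability, which is precisely the data of a stratum $T^{\eta'}_{-1/4}$, with the complementary hit set $\eta'$ recording that the dual subbundle $\cF^{-1}$ meets the parabolic flag at the points of $\eta$. Corollary~\ref{cor:stratumPD} then gives the Poincar\'e dual of this locus as $\oo^{m-1}_{n,\eta'}$, while the matching factor $\tau^{n-2|\eta|}$ comes from~\eqref{eq:Gamma-A}: the local coefficient weight carried by the reducible is a product of holonomies along the $n$ components of $K_{n}$, each contributing $\tau^{\pm 1}$ according as the component is a hit or a miss, for a total of $\tau^{(n-h) - h}$ with $h = |\eta|$. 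Pairing the remaining factor $v$, of degree $m-1$, against $[\Gamma \cdot M_{0}(X)] = [\Gamma \cdot \Rep(S^{2}_{n})]$ completes the identification with the right-hand side.

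The main obstacle is the rigorous execution of the adiabatic and wall-crossing analysis: establishing the correspondence between reducible instantons of action $1/4$ and the marginal strata $T^{\eta'}_{-1/4}$ with the appropriate transversality, computing the contribution of the normal bundle to the reducible locus so that Corollary~\ref{cor:stratumPD} applies verbatim, and pinning down the orientation and sign conventions so that only the hit sets of the correct parity contribute, with the correct $\tau$-power. An auxiliary check that I expect to require care is the claim that $C$-chamber smooth families genuinely contribute nothing to $\langle \oo^{m}_{n,\eta} v, \cdot \rangle$; this will either follow directly from the Mumford vanishing as sketched above, or demand that spurious contributions be absorbed into higher-action terms that lie below the filtration level in which we are working.
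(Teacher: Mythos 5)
Your overall frame agrees with the paper in one respect: the left-hand side is indeed a pure wall-crossing effect, since the invariant vanishes in the $C$-chamber and a single wall is crossed, after which $M^{e}_{1/4}(X)$ becomes $2^{n-1}$ copies of $\CP^{n-2}$ attached to the reducibles $\cF\oplus\cF^{-1}$ indexed by hit-sets $\xi$ of the right parity. But the step that actually proves the proposition is missing from your outline. First, the wall-crossing term is a sum over all $2^{n-1}$ reducibles, and you never explain why only the component with $\xi=\eta$ contributes, i.e.\ why $\oo^{m}_{n,\eta}v$ evaluates to zero on every other $\CP^{n-2}_{\xi}$. Second, and more seriously, the identification of the one surviving contribution with the right-hand side cannot be obtained just by citing Corollary~\ref{cor:stratumPD}: that corollary computes the Poincar\'e dual of a stratum inside a parametrizing space of parabolic bundles, whereas what is needed is an equality between an evaluation over a $\CP^{n-2}$ sitting in $M^{e}_{1/4}(X)$ and an evaluation of a \emph{different} class, $\oo^{m-1}_{n,\eta'}v$, over the $(2n-6)$-dimensional space $M_{0}(X)\cong\Rep(S^{2}_{n})$. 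The paper bridges this by cutting both moduli spaces down at a fixed fiber $\{b_{0}\}\times C$: it shows that $M^{e}_{1/4}(X)\cap U^{\eta}_{-1/4}(b_{0})$ is the $\CP^{m-1}$ of non-split extensions $\cO(1)\boxtimes\cF^{*}\to\cE\to\cF$ whose extension class vanishes at $b_{0}$, that $M_{0}(X)\cap U^{\eta'}_{1/4}(b_{0})$ is the $\CP^{m-1}$ of non-split extensions $\cF^{*}\to\cE\to\cF$ on $C$, that these two projective spaces are canonically identified with identical restrictions to $\{b\}\times C$ for $b\ne b_{0}$, and hence that $v$ (which is pulled back through such a restriction) pairs equally with both. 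That identification of extension families is the heart of the argument; labelling it ``rigorous execution of the wall-crossing analysis'' does not supply the idea.

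Two further points would need repair even if the above were filled in. The stratum relevant to $\oo^{m-1}_{n,\eta'}$ has $\lambda=+1/4$ (so $k=n/2-2\lambda-1=m-1$), not $\lambda=-1/4$ as you wrote, and $|\eta'|$ has exactly the parity required for $\lambda=+1/4$; with $\lambda=-1/4$ your Corollary~\ref{cor:stratumPD} would output $\oo^{m}_{n,\eta'}$, the wrong degree. And the factor $\tau^{n-2|\eta|}$ is not a product of holonomies along the components of $K_{n}$: on the closed bifold $X$ the exponent in \eqref{eq:Gamma-A} is a curvature integral, computed in the paper through the monopole number $l=|\eta|-n/2$ of the extension bundles, giving $\nu(A)=-2l=n-2|\eta|$ on the relevant component (and the weight is \emph{not} constant over all of $M^{e}_{1/4}(X)$, which is another reason the localization to $\xi=\eta$ must be established first). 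Finally, your justification for the $C$-chamber vanishing presumes that the $C$-chamber moduli space is a fibrewise-stable family on which the Mumford relations vanish; in fact it is empty, as one sees by stretching along $B\times S^{1}$, where the non-zero $w_{2}$ rules out flat limits, and the adiabatic picture of holomorphic families over $B$ belongs to the small-$C$ (i.e.\ $B$-chamber) limit rather than the stretched-$C$ one.
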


          The proof of Proposition~\ref{prop:subleading-interpret} is
          given in section~\ref{subsec:proof-subleading}, after
          a digression on the wall-crossing behavior of moduli spaces
          on $X$.

         \subsection{A wall-crossing argument}
          
          The structure of our argument up to this point is closely
          related to the work of Mu\~noz \cite{Munoz2}, in which a key
          step is the calculation of the contribution of the first
          non-trivial moduli space (our $M_{1/4}^{e}(X)$ in the
          present context). In \cite{Munoz2}, the relevant moduli
          space was of the form $M_{1/2}^{e}(S^{2}\times \Sigma_{g})$
          for a smooth surface $\Sigma_{g}$, and the key observation
          is that this moduli space is empty in one chamber (when the
          area of the $S^{2}$ factor is small, corresponding to the
          $C$-chamber in our notation) and undergoes a single
          wall-crossing where the metric passes to the $B$-chamber.
          (See \cite[Proposition 2]{Munoz2}.) 
          The description of the wall-crossing for $S^{2}\times
          \Sigma_{g}$ leads to a
          description of the moduli space on the $B$ side of
          the wall as a bundle over the Jacobian $J(\Sigma_{g})$ with
          fiber a complex projective space.

          Such a description has an
          exact parallel in our orbifold context, with the Jacobian
          $J(\Sigma_{g})$ in Mu\~noz's situation
          replaced now by the finite set of bifold line
          bundles on $S^{2}_{n}$ of a fixed bifold degree. That is,
          the wall-crossing contributes to
          $M_{1/4}^{e}(X)$ a finite number of copies of a complex
          projective space, where an explicit understanding of the
          cohomology classes allows a calculation of the Donaldson
          invariant. We now turn to the details of this calculation.

          \begin{lemma}
            In the $C$-chamber, the Donaldson invariants
            $q_{\kappa}^{C}(u)$ are zero when $\kappa$ is in $(1/4) +(1/2)\Z$.
          \end{lemma}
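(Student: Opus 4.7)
The plan is to show that the moduli space $M^e_\kappa(X)$ is empty in the $C$-chamber for every $\kappa\in(1/4)+(1/2)\Z$; since the Donaldson invariant $q^C_\kappa$ is constant on a chamber, this will give the vanishing. The key topological observation is that with $e=[\{b\}\times C]$, the class $w_2=\PD(e)$ is the generator of the K\"unneth summand $H^2(B;\Z/2)\otimes H^0(C;\Z/2)$ inside $H^2(X;\Z/2)$, and is therefore not contained in the image of the pull-back $\pi_C^{*}\colon H^2(C;\Z/2)\to H^2(X;\Z/2)$. In particular, the bundle in question is not a pull-back from $C$.

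I would then take a family of product metrics $g_t=t^2 g_B+g_C$, which for small $t>0$ lies deep inside the $C$-chamber. The core of the argument is a shrinking-fibre (adiabatic) compactness statement: as $t\to 0$, any sequence of ASD connections $A_t$ on $(X,g_t)$ of bounded action $\kappa$ must, after gauge transformations, bubble removal, and passage to a subsequence, converge to a limiting ASD connection pulled back from a connection on $C$. Since instanton bubbles are $\SU(2)$-connections carrying $w_2=0$, bubble removal preserves $w_2$, so the limit connection also has $w_2=\PD(e)$. But the limit is a pull-back from $C$, hence has $w_2$ in $\pi_C^{*}H^2(C;\Z/2)$; this contradicts the topological observation above. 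Therefore $M^e_\kappa(X,g_t)$ is empty for all small $t$, and $q^C_\kappa(u)=0$.

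The main obstacle is the shrinking-fibre compactness statement itself. In the smooth setting this is classical: ASD connections on a product $B\times C$ in the adiabatic limit of small $B$ concentrate on pull-backs from $C$ together with bubbles, by arguments in the spirit of Dostoglou--Salamon and of the Atiyah--Bott interpretation of solutions as holomorphic maps into a moduli space on $B$. In our bifold setting these arguments need to be adapted to accommodate the orbifold singularities of $C=S^2_n$, but since those singularities are transverse to the shrinking $B$-direction, the extension should be essentially routine. Once the emptiness of $M^e_\kappa(X,g_t)$ is established for small $t$, the invariant $q^C_\kappa$ vanishes identically on the $C$-chamber, concluding the proof.
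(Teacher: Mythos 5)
Your chamber identification and the topological observation about $w_{2}$ are fine, but the step the whole argument rests on --- that bounded-action ASD connections for $g_{t}=t^{2}g_{B}+g_{C}$ converge, after bubbling, to an ASD connection on $X$ pulled back from $C$ --- is not something the smooth-case adiabatic literature provides, and it is not "essentially routine" to extract. First, a connection pulled back from $C$ is ASD for a product metric only if it is flat, so the proposed limit object is essentially vacuous. Second, Dostoglou--Salamon-type compactness for a shrinking factor produces limits that are holomorphic curves into the moduli space of flat connections on the shrinking fiber (here $B=S^{2}$), not connections on $X$; moreover those theorems require the fiberwise flat moduli space to be nonempty and irreducible, whereas here the restriction of your bundle to a fiber $B\times\{c\}$ is the nontrivial $\SO(3)$-bundle on $S^{2}$ (this is exactly your $w_{2}$ observation read fiberwise), whose flat moduli space is empty, so there is no target for such a limit and the cited results do not apply. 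In the bounded-action adiabatic regime only $\int_{X}|F_{BB}|^{2}$ is forced to be small (of order $t^{2}$ in the fixed product metric), while $\int_{X}|F_{BC}|^{2}$ stays of order one and $\int_{X}|F_{CC}|^{2}$ may grow like $t^{-2}$, so no statement of the form "the limit is an ASD connection pulled back from $C$" can hold; the contradiction you draw therefore rests on a compactness assertion that is unproved and, as stated, false.

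The strategy can be salvaged, and without any bubbling analysis: the restriction of an element of $M^{e}_{\kappa}(X,g_{t})$ to a fiber $B\times\{c\}$ away from the $n$ orbifold points is a connection on the nontrivial $\SO(3)$-bundle over $S^{2}$, which admits no flat connection, so there is a uniform $\epsilon_{0}>0$ with $\int_{B\times\{c\}}|F_{BB}|^{2}\,\mathrm{dvol}_{g_{B}}\ge \epsilon_{0}$; integrating over $C$ gives $\int_{X}|F_{BB}|^{2}\ge \epsilon_{0}\,\mathrm{Area}(C)$, contradicting the bound $\int_{X}|F_{BB}|^{2}\le \mathrm{const}\cdot\kappa\, t^{2}$ that follows from the fixed action, and hence $M^{e}_{\kappa}(X,g_{t})=\emptyset$ for small $t$. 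Note also that the paper avoids the adiabatic limit altogether: it stretches a neck along a hypersurface $B\times S^{1}\subset B\times C$, where the same $w_{2}$ shows there are no flat connections on $B\times S^{1}$, so standard neck-stretching compactness gives emptiness for long-neck metrics, which lie in the $C$-chamber. Both routes use the identical topological input; the paper's is the one for which the required compactness is standard, and your route needs the fiberwise energy bound above in place of the claimed convergence statement.
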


          \begin{proof}
            The bifold $X$ decomposes into two parts along a copy of
            $B\times S^{1} \subset B\times C$, i.e. an $S^{2}\times
            S^{1}$. The bundle has $w_{2}$ non-zero on this
            $S^{2}\times S^{1}$ when $\kappa$ is in $(1/2)\Z
            + (1/4)$, so there are no flat connections on $B\times
            S^{1}$. A stretching argument therefore shows that the
            anti-self-dual moduli space
            is empty when the metric on $X$ contains a long neck
            $[-T,T]\times B \times S^{1}$. A metric with such a long
            neck lies in the $C$-chamber, so the invariant in this
            chamber is zero.
          \end{proof}

          \begin{lemma}
            For the moduli spaces $M_{\kappa}^{e}(X)$ with $\kappa\le
            1/4$, in a 1-parameter family of product metrics on $X=B\times
            C$ passing from the $C$-chamber to the $B$-chamber,
            exactly one wall is crossed.
          \end{lemma}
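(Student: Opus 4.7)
The plan is to classify the reducible locus explicitly via bifold line bundles on the product $X = B \times C$, and to show that under the action bound $\kappa \le 1/4$ all such reducibles lie on a single hyperplane in the 1-parameter space of product metrics.

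First, I would observe that a reducible in $M^e_\kappa(X)$ corresponds to a splitting of the associated bifold $\SU(2)$ bundle as $\mathcal{L} \oplus \mathcal{L}^{-1}$ for some bifold line bundle $\mathcal{L} \to X$ whose orbifold first Chern class $\mathbf{c} = c_1^{\mathrm{orb}}(\mathcal{L})$ satisfies $\mathbf{c} \bmod 2 \equiv [e]$ and $\kappa = -\mathbf{c}^2$ (orbifold self-intersection). Using the Künneth structure of $X$, I would write $\mathcal{L} = p_B^* \mathcal{O}(a) \otimes p_C^* \Lambda$, where $a \in \mathbb{Z}$ and $\Lambda$ is a bifold line bundle on $C = S^2_n$ of orbifold degree $d \in (1/2)\mathbb{Z}$. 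The $w_2 = [e]$ constraint translates into a parity condition on $a$ and on the monodromies of $\Lambda$ at the orbifold points, exactly as in the smooth analog of \cite{Munoz2}.

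Next, for a product metric of areas $(s,t)$ on $(B,C)$, the self-dual harmonic subspace of $H^2(X; \mathbb{R})$ is spanned (up to normalization) by $s^{-1}p_B^*\omega_B + t^{-1}p_C^*\omega_C$, and the wall condition $\mathbf{c} \wedge \omega_{\mathrm{SD}} = 0$ reduces after integration to a linear equation of the form $a t + d s = 0$. Thus each reducible defines a wall at the ratio $r_{\mathcal{L}} = s/t = -d/a$, which lies in the positive ray joining the $C$-chamber ($r \to 0$) to the $B$-chamber ($r \to \infty$) precisely when $a$ and $d$ have opposite signs.

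The third step is to bound the possibilities using $\kappa \le 1/4$. The orbifold self-intersection computation on $X$ gives $\mathbf{c}^2$ as a linear function of the product $a \cdot d$ (the off-diagonal Künneth term), and the bound forces $|a|$ and $|d|$ to assume their minimal allowed values. Combined with the parity constraint from $w_2$ and the opposite-sign condition, this leaves only configurations with $|a| = 1$ and a single fixed value of $|d|$, hence a single value of $r_{\mathcal{L}} = -d/a$. Therefore exactly one wall is crossed as $r$ sweeps from $0$ to $\infty$.

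The main obstacle is the precise orbifold intersection-form calculation on the bifold $X$: one must verify that $\mathbf{c}^2$ depends on $\Lambda$ only through its scalar orbifold degree $d$ and not on the individual monodromy data $(\epsilon_1, \dots, \epsilon_n)$, so that the $2^n$ possible monodromy patterns for $\Lambda$ all produce the same wall rather than splitting it into multiple parallel walls. This diagonality follows from the product structure of $X$ and the fact that the orbifold lines $L_i = B \times \{p_i\}$ are mutually disjoint with trivial normal bundles in the $B$-direction, so their contributions enter only through $d$.
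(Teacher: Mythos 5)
Your outline follows the same route as the paper --- classify reducibles by the K\"unneth components of the reducing bundle, locate walls by orthogonality to the self-dual class, and pin the wall down by quantization of the Chern class plus the action bound --- but the decisive arithmetic is wrong, and that arithmetic is the entire content of the lemma. Since $w_{2}=[e]\neq 0$, there is no global $\SU(2)$ bundle on $X$, so a reducible is not a splitting $\mathcal{L}\oplus\mathcal{L}^{-1}$ with $\mathcal{L}=p_{B}^{*}\mathcal{O}(a)\otimes p_{C}^{*}\Lambda$, $a\in\Z$ and orbifold degree $d\in\tfrac12\Z$; it is a reduction of the adjoint $\SO(3)$ bundle as $\R\oplus K$ with $c_{1}(K)\equiv e \pmod 2$. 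Writing $\PD\,\euler(K)=x[B]+y[C]$, the fact that $w_{2}$ pairs oddly with $[B]$ forces $y$ to be an odd integer, and the $n$ (odd) order-$2$ points on $C$ force $2x$ to be odd; in your normalization ($K=\mathcal{L}^{\otimes 2}$, so $x=2d$, $y=2a$) this means $a\in\Z+\tfrac12$ and $d\in\tfrac14+\tfrac12\Z$, not the lattice you state. (You have also mixed conventions: $\mathbf{c}\equiv[e]\bmod 2$ is a condition on $c_{1}(K)$, for which $\kappa=-\mathbf{c}^{2}/4$, whereas $\kappa=-\mathbf{c}^{2}$ is the formula for $c_{1}(\mathcal{L})$, for which the mod-$2$ condition falls on $2\mathbf{c}$.)

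This is not a bookkeeping quibble, because the lemma is exactly an arithmetic pigeonhole. With your lattice and your own formula $\kappa=-\mathbf{c}^{2}=-2ad$, the action is an integer, so $\kappa=1/4$ has no solutions: as written, your argument finds no reducibles and hence no wall at all, and your announced outcome ``$|a|=1$ and a single fixed value of $|d|$'' would require $|d|=1/8$, which is not in your lattice either. With the correct quantization the count closes as in the paper: $\kappa\le 1/4$ together with $\kappa\in\tfrac14+\tfrac12\Z$ for $w_{2}=[e]$ pins $\kappa=1/4$ (a step you leave implicit), and then $xy=-1/2$ with $y$ odd and $2x$ odd forces $(x,y)=(\pm\tfrac12,\mp 1)$, so the orthogonality relation $y=-tx$ puts the unique wall at $t=2$. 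Your closing concern --- that the $2^{n}$ monodromy patterns of $\Lambda$ might split this into parallel walls --- is dismissed for the right reason, since the wall location depends only on $(x,y)$ and not on the individual monodromies (indeed the paper's next lemma places all $2^{n-1}$ reducibles on this single wall), but that observation only acquires content once the lattice is correct.
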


          \begin{proof}
            The only non-empty moduli space $M^{e}_{\kappa}(X)$ with $\kappa\le 1/4$ is
            the moduli space $M_{1/4}^{e}(X)$, and a wall is crossed
            when the Riemannian metric allows the existence of a
            reducible anti-self-dual connection in this moduli space.
            We are therefore looking for a reduction of the bifold
            adjoint $\SO(3)$ bundle as $\R\oplus K$, where $K$ is a
            bifold 2-plane bundle. Let us write the bifold Euler class
            $\euler(K)$ as
            \[
                   \PD \euler(K) = x [B] + y [ C ].
            \]
            Here $y$ is an odd integer because $\euler(K)[B]$ is odd. On
            the curve $C$, the bundle $K$ has $n$ bifold points, and
            $n$ is odd; so $2x$ is also an odd integer. For a given
            Riemannian metric, let us write the class of the
            self-dual $2$-form as
            \[
                   \PD [\omega^{+}] = [ B] +  t [C] ,
            \]
            suitably normalized. The condition that the curvature of
            $K$ is anti-self-dual imposes the constraint that $\euler(K)$
            and $[\omega^{+}]$ are orthogonal, which is to say
            \[
                         y  = - tx. 
            \]
            The action $\kappa$ is $-\euler(K)^{2}/4$ which is $-xy/2$.
            Using the orthogonality condition, we write this as $\kappa= t
            x^{2}/2$. With $\kappa=1/4$, our constraints therefore become
\begin{enumerate}
        \item $t x$ and $2x$ are odd integers, and
        \item $ t x^{2} = 1/2$ .
\end{enumerate}
          These constraints force $x=\pm 1/2$ and $t=2$. The
          orientation of $K$ is indeterminate, and the sign of $x$ can
          therefore be taken to be positive. A path of Riemannian
          metrics passing from the $C$ chamber to the $B$ chamber is a
          path in which $t$ begins close to $0$ and ends close to
          $+\infty$, and the wall is crossed at $t=2$.
          \end{proof}
          
          The proof the lemma shows that the wall-crossing occurs when
          there is an orbifold 2-plane bundle $K$ with
          \[
                \PD \euler(K) = (1/2) [B] - [C].
          \]
          The degree of $K$ on $C=S^{2}_{n}$ is thus $1/2$. In terms
          of an $\SU(2)$ lift of on the curve $\{b\}\times C$ then, we can write
          the bundle as
          \[
                     F \oplus F^{-1}
          \]
          where $F$ is a complex line bundle with
          limiting holonomy $\pm i$ on the linking
          circles at the $n$ singular points. We orient $K$ as
          $F^{-2}$.  The Chern-Weil integral
          for the first Chern class of the singular connection on $F$
          is $-1/4$. As a parabolic bundle on
          $S^{2}_{n}$ we can write the underlying rank-2 vector bundle
          as $\cE = \cF \oplus \cF^{-1}$, and for each $p\in\pi$ the
          distinguished line $\cL_{p}\subset \cE_{p}$ is the summand
          $\cF_{p}$ if the limiting holonomy is $-i$ and
          $\cF^{-1}_{p}$ otherwise. Write $\xi \subset \pi$
          for the
          set where the holonomy is $-i$. Then
          \[
                c_{1}(\cF)[C] + |\xi|/4 - (n-|\xi|)/4 = - 1/4.
          \]
          This constraint imposes the parity condition $|\xi| =
          (n-1)/2$ mod $2$, which
          allows $2^{n-1}$ possibilities for $\xi$. We summarize this
          with another lemma.

          \begin{lemma}
            When the Riemannian metric on $X = B\times C$ lies on the
            wall between the two chambers, the moduli space
            $M^{e}_{\kappa}(X)$ consists of $2^{n-1}$ reducible
            anti-self-dual connections, corresponding to the subsets
            $\xi\subset \pi$ whose size $|\xi|$ has the same parity
            as $(n-1)/2$. 
          \end{lemma}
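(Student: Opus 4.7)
The plan is to formalize the wall-crossing analysis already carried out in the two preceding lemmas; this final lemma really just organizes their conclusions into an enumeration. I would structure the proof in three steps.

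First, I would invoke the previous lemma to fix the topological type of any reducible ASD connection at the wall: the adjoint bundle splits as $\R \oplus K$ with $\PD \euler(K) = (1/2)[B] - [C]$, with the sign of $x$ normalized to $+1/2$ so that $K$ is canonically oriented. On the product $X = B \times C$, such a reducible is, up to gauge, pulled back from $C$, since the orthogonality condition $y = -tx$ at $t = 2$ forces the component of the connection along $B$ to be flat on each fiber and hence trivial on the simply-connected base $B$.

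Second, I would classify such reductions on $C = S^{2}_{n}$. Lifting to $\SU(2)$ away from the singular locus, the bundle is $F \oplus F^{-1}$ for a complex line bundle $F$ with limiting holonomy $\pm i$ at each $p \in \pi$; a reducible is parameterized by the subset $\xi \subset \pi$ at which the limiting holonomy of $F$ is $-i$, equivalently the set of hits for the underlying parabolic line bundle $\cF$. The Chern-Weil computation performed just before the lemma gives the constraint
\[
c_{1}(\cF)[C] + \frac{|\xi|}{4} - \frac{n - |\xi|}{4} = -\frac{1}{4},
\]
and since $c_{1}(\cF)[C]$ is an integer this is equivalent to the parity condition $|\xi| \equiv (n-1)/2 \pmod{2}$. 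The number of subsets of an $n$-element set having a given parity of cardinality is exactly $2^{n-1}$.

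The one point of care is bookkeeping: distinct allowed $\xi$ must yield gauge-inequivalent reducibles and no reducible should be missed. The naive $F \leftrightarrow F^{-1}$ symmetry would threaten to halve the count, but it is already eliminated because the orientation of $K$ was fixed in the first step; and since $n$ is odd, the involution $\xi \leftrightarrow \pi \setminus \xi$ flips the parity of $|\xi|$, so each allowed $\xi$ represents a single orientation-preserving reducible. This is the main subtlety, but the parity mismatch makes it go through cleanly.
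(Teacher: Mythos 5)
Your steps 2 and 3 reproduce the paper's own argument: restrict to a curve $\{b\}\times C$, write the $\SU(2)$ lift there as $F\oplus F^{-1}$ with limiting holonomy $\pm i$ at the $n$ orbifold points, let $\xi$ be the set where the holonomy is $-i$, and observe that the Chern--Weil constraint $c_{1}(\cF)[C]+|\xi|/4-(n-|\xi|)/4=-1/4$ has an integer solution exactly when $|\xi|\equiv (n-1)/2 \pmod 2$, which allows $2^{n-1}$ subsets; your bookkeeping for the $F\leftrightarrow F^{-1}$ ambiguity (the orientation of $K$, reinforced by the parity flip of the complement when $n$ is odd) is consistent with the paper's normalization $K=F^{-2}$.

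The genuine problem is the first step. The claim that a reducible at the wall is pulled back from $C$, because orthogonality ``forces the component of the connection along $B$ to be flat,'' is false and contradicts the previous lemma's own output: there the wall occurs at $t=2$ with $x=1/2$, so $y=-tx=-1\neq 0$, i.e. $\PD\euler(K)=(1/2)[B]-[C]$ has degree $-1$ on $B$ as well as degree $1/2$ on $C$. A connection pulled back from $C$ would have $\euler(K)[B]=0$, hence action $\kappa=-xy/2=0$ rather than $1/4$, and would have $w_{2}=0$ on $B$, which is incompatible with the marking class $e=[\{b\}\times C]$ used to define $M^{e}_{1/4}(X)$. What the orthogonality condition actually gives is that this fixed, non-pulled-back class admits an anti-self-dual harmonic representative precisely at the wall metric; its curvature has nonzero components on both factors. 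The reduction of the classification to data on $C$ should instead be argued as follows: the topological type of the reduction ($w_{2}$ and $\euler(K)$) is already pinned down by the previous lemma, and since $H^{1}(X)=0$ there are no flat twists, so a reducible is determined up to gauge by the discrete data recording, on each component $B\times\{p\}$ of the singular set, which summand of $F\oplus F^{-1}$ contains the distinguished line -- that is, by the subset $\xi$; restricting to $\{b\}\times C$ then produces your Chern--Weil constraint and the parity count. With step 1 replaced by this argument, your enumeration goes through and agrees with the paper.
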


          Let $A_{0}$ denote any one of the reducible connections
          described in the lemma. The formal dimension of the moduli
          space $M^{e}_{1/4}(X)$ is $2n-4$. If we write the orbifold
          adjoint bundle as $\R\oplus K$ now on the whole of $X$,
          then in the deformation theory of $A_{0}$ we have a
          contribution of $1$ to the dimension of $H^{0}_{A_{0}}$ coming from the $\R$
          summand because $A_{0}$ is reducible, and there is a similar
          contribution of $1$ to the dimension of $H^{2}_{A_{0}}$
          from the $\R$ summand because $b^{+}_{2}=1$. If we assume
          that the deformation theory is otherwise unobstructed (an
          assumption which we will see later is justified for product
          metrics on $B\times C$, without
          the need for perturbing the equations), then it follows that
          $H^{1}_{A_{0}}$ has dimension $2n-2$ and that this comes
          from the $K$ summand of the adjoint bundle. With this in
          place, the standard model for wall-crossing describes the
          moduli space $M^{e}_{1/4}(X; g_{t})$ for a Riemannian metric
          $g_{t}$ whose conformal parameter $t$ is $2+\epsilon$ for
          small $\epsilon$ as a copy of $\CP^{n-2}$ in a neighborhood
          of each reducible $A_{0}$. We therefore have the following
          proposition.

          \begin{proposition}\label{prop:wall-crossing-summary}
          For a product metric on $X$ which lies in the
          $B$-chamber and is close to the wall, the moduli space
          $M^{e}_{1/4}(X)$ consists of
          $2^{n-1}$ copies of $\CP^{n-2}$.
           \end{proposition}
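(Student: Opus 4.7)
The plan is to apply the standard wall-crossing analysis for 4-manifolds with $b_{2}^{+}=1$, due to Donaldson and adapted by Mu\~noz in \cite{Munoz2}, transferred to the bifold setting. The preceding discussion has already reduced the problem to a local Kuranishi analysis at each of the $2^{n-1}$ wall reducibles $A_{0}$ indexed by subsets $\xi\subset\pi$, so what remains is to identify the Kuranishi model at each $A_{0}$ and then to check that the resulting $\CP^{n-2}$'s assemble into the full moduli space on the $B$-chamber side.

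The first step is to set up the deformation complex. For each reducible $A_{0}$, the adjoint bifold bundle splits as $\R\oplus K$ with $K$ the bifold complex line bundle of bidegree $(-1,1/2)$ appearing in the previous lemma. The complex splits accordingly, and on the $\R$-summand one reads off $H^{0}=\R$ (from the $U(1)$-reduction) and $H^{2}_{+}=\R$ (from $b_{2}^{+}=1$), with $H^{1}=0$; these are the data responsible for the wall itself. The content of the proposition is then concentrated entirely in the $K$-summand of the complex.

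The second step is to compute $H^{\bullet}_{A_{0}}(K)$ using the K\"ahler structure on the product bifold $X=B\times C$. By Hodge theory for K\"ahler bifolds, the twisted complex on the $K$-summand is identified with the Dolbeault complex of $K$ regarded as a holomorphic bifold line bundle whose parabolic structure at the $n$ marked points of $C$ is precisely the data $\xi$. Because $X$ is a product and $K=p_{B}^{*}L_{B}\otimes p_{C}^{*}L_{C}$ with $L_{B}=\cO_{B}(-1)$ and $L_{C}$ a parabolic line bundle of parabolic degree $1/2$ on $S^{2}_{n}$, the K\"unneth formula together with parabolic Riemann--Roch on $S^{2}_{n}$ yields $H^{0}_{A_{0}}(K)=H^{2}_{A_{0}}(K)=0$ and $\dim_{\C}H^{1}_{A_{0}}(K)=n-1$. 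This is the unobstructedness statement anticipated before the proposition, and it makes the Kuranishi model an honest complex-analytic obstruction map rather than something requiring abstract perturbation.

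The third step is to apply the standard local wall-crossing model. Because the only obstruction is the one-dimensional $H^{2}_{+}=\R$ on the $\R$-summand, the Kuranishi map at $A_{0}$ is a $U(1)$-equivariant quadratic form $H^{1}_{A_{0}}(K)\to \R$. As the Riemannian metric is moved off the wall into the $B$-chamber, this form is perturbed by the cup product with the self-dual harmonic form changing sign across the wall, yielding in the $B$-chamber a non-degenerate Hermitian form whose zero locus modulo the residual $U(1)$ is $\mathbb{P}(H^{1}_{A_{0}}(K))\cong\CP^{n-2}$; this is identical to the mechanism in \cite{Munoz2} with the Jacobian of $\Sigma_{g}$ replaced by the finite set of $2^{n-1}$ parabolic line bundles enumerated by $\xi$. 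Summing over the $2^{n-1}$ reducibles produces the asserted disjoint union of $2^{n-1}$ copies of $\CP^{n-2}$. The main obstacle is the bifold Dolbeault/K\"unneth computation in step two, which requires fixing compatible conventions for parabolic weight $1/4$ at each marked point and verifying that the standard gluing/wall-crossing model goes through unchanged in the presence of $\Z/2$-stabilizers along $K(X)$; both issues are essentially bookkeeping once one works on the orbifold $S^{2}_{n}$ in its parabolic incarnation, as in \cite{KM-gtes1}.
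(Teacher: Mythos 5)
Your proposal is correct and is essentially the paper's own argument: the preceding lemmas (emptiness of $M^{e}_{1/4}$ in the $C$-chamber, a single wall, and exactly $2^{n-1}$ reducibles on it, together with compactness at $\kappa\le 1/4$) reduce the statement to the standard $b_{2}^{+}=1$ Kuranishi wall-crossing model at each reducible, which contributes one copy of $\CP^{n-2}$ on the $B$-side, exactly as in the paragraph preceding the proposition and as in M\~unoz's situation with the Jacobian replaced by the finite set of parabolic line bundles indexed by $\xi$. The only place you go beyond the paper is in verifying unobstructedness of the $K$-summand on the spot via K\"ahler Hodge theory, K\"unneth and parabolic Riemann--Roch (the paper defers this justification to the later holomorphic identification of the moduli space with spaces of non-split extensions); note only that with your convention $K=\cO_{B}(-1)\boxtimes L_{C}$ the $(n-1)$-dimensional deformation space sits in $H^{1}$ of the inverse bundle $\cO_{B}(1)\boxtimes L_{C}^{-1}$ --- so one must take the full real ASD complex on the $2$-plane bundle rather than the Dolbeault complex of $K$ alone --- but the resulting real dimension $2n-2$ and the conclusion are unaffected.
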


           As mentioned earlier, this is a close counterpart to the
           result \cite[Proposition 2]{Munoz2}, where the
           corresponding description of the moduli space of smallest
           positive action is a bundle of projective spaces
           over the Jacobian of a smooth curve.

           \subsection{A proof of
          Proposition~\ref{prop:subleading-interpret}}
          \label{subsec:proof-subleading}

          From their definition, $w^{m}_{n,\eta}$ and
          $w^{m-1}_{n,\eta'}$ represent cohomology classes dual to loci
          $U^{\eta}_{-1/4}$ and $U^{\eta'}_{1/4}$ in the space of
          bifold connections $\bonf^{*}(S^{2}_{n})$. If we select a
          fiber
          \[
                \{b_{0}\} \times S^{2}_{n} \subset B\times
                S^{2}_{n}=X,
          \]
          then we obtain by restriction corresponding loci in the
          spaces of bifold connections on $X$:
          \[
                \begin{aligned}
                    U^{\eta}_{-1/4}(b_{0})&\subset \bonf^{*}(X)^{e} \\
                    U^{\eta'}_{1/4}(b_{0})&\subset \bonf^{*}(X).
                \end{aligned}
          \]
          In this way we can interpret the equality to be proved in
          Proposition~\ref{prop:subleading-interpret} as
         \begin{equation}\label{eq:subleading-loci}
                      \bigl\langle v, [\Gamma\cdot
                      M^{e}_{1/4}(X) \cap U^{\eta}_{-1/4}(b_{0})]
                    \bigr\rangle = \tau^{n-2|\eta|}\bigl\langle
                    v, [\Gamma\cdot M_{0}(X) \cap U^{\eta'}_{1/4}(b_{0})]
                    \bigr\rangle ,
          \end{equation}
          provided that the loci are transverse to the filtration of
          the space of Fredholm operators by the dimension of the
          kernel. The moduli spaces on $X$ should be obtained from
          metrics in the $B$-chamber as always.

                    We can obtain more information about
          $M^{e}_{1/4}(X)$ and the loci on both sides of
          \eqref{eq:subleading-loci} by interpreting the
          moduli space of bifold anti-self-dual connections as a
          moduli space of stable parabolic bundles on the pair $(X,
          \Sigma)$ where $\Sigma$ is the singular locus $B\times \pi
          \subset X$. To this end, we adopt the notation and results
          of \cite{KM-gtes2} to identify $M^{e}_{1/4}(X)$ with the
          moduli space of parabolic bundles $(\cE, \cL)$ with
          $\kappa=1/4$
          satisfying the parabolic stability condition
          with parameter $\alpha=1/4$. Here we can write $\kappa$ as
          $k+l/2$ following \cite{KM-gtes1, KM-gtes2}, where in this case
          \begin{equation}\label{eq:instanton-monopole-number}
                \begin{gathered}
                    k = (c_{2}(\cE) - \frac{1}{4} c_{1}(\cE)^{2})[X]
                    \\
                    l = (\frac{1}{2} c_{1}(\cE) - c_{1}(\cL) )[ \Sigma].
                \end{gathered}
          \end{equation}
          (The quantities $k$ and $l$ are the ``instanton number'' and
          ``monopole number'' in the notation of \cite{KM-gtes1}.)
          The rank-2 bundle $\cE$ should have $c_{1}(\cE)[B]$ odd, so
          we take
          \[
                \Lambda^{2}(\cE) = \cO(1,0),
          \]
          by which we mean the holomorphic line bundle with degree $1$
          on $B$. The moduli space $M_{0}(X)$ is similarly a moduli
          space of stable parabolic bundles on $X$, now with
          $\Lambda^{2}(\cE)=\cO$ and $\kappa=0$. These bundles are the
          pull-backs of the stable parabolic bundles on the curve $C=S^{2}_{n}$.

          The loci on either side of \eqref{eq:subleading-loci} have
          the following interpretations. Let $\cF\to
          C$ be
          the parabolic line bundle whose set of hits is $\eta$ and whose
          parabolic degree is $\pdeg{\cF}=1/4$. (See the remarks at
          the end of section~\ref{subsec:loci}. The dual parabolic
          bundle $\cF^{*}$ has parabolic degree $-1/4$ and its set of
          hits is $\eta'=\pi\setminus\eta$. Given a stable
          parabolic bundle $\cE$ on $X$, let $\cE_{b}$ be the
          parabolic bundle obtained by restriction to $\{b\} \times
          C$.          

          \begin{lemma}
            Let $\cF$ be the parabolic line bundle described above and
            $\cF^{*}$ its dual. Then:
            \begin{enumerate}
            \item
                the locus $M^{e}_{1/4}(X) \cap
                U^{\eta}_{-1/4}(b_{0})$ is the locus of stable parabolic
                bundles $\cE \in M^{e}_{1/4}(X)$ such that there
                exists a non-zero holomorphic map of parabolic bundles
                \[
                    \cF\to \cE_{b_{0}};
                \]
                \item
                               the locus $M_{0}(X) \cap
                U^{\eta'}_{1/4}(b_{0})$ is the locus of stable parabolic
                bundles $\cE \in M_{0}(X)$ such that there
                exists a non-zero holomorphic map of parabolic bundles
                \[
                    \cF^{*}\to \cE_{b_{0}}.
                \]
            \end{enumerate}
          \end{lemma}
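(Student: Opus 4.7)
The plan is to deduce both parts of the lemma directly from Definition~\ref{def:loci-def} and the parabolic reinterpretation given in the remark at the end of section~\ref{subsec:loci}, once the moduli spaces $M^{e}_{1/4}(X)$ and $M_{0}(X)$ are set up as parameter spaces for families of parabolic bundles on $C=S^{2}_{n}$. Specifically, each of these moduli spaces carries a universal stable parabolic bundle $(\fE,\fL)\to T\times X$, and restriction along $\{b_{0}\}\times C \hookrightarrow X$ yields a tautological family of parabolic bundles on $S^{2}_{n}$ parametrized by $T=M^{e}_{1/4}(X)$ or $T=M_{0}(X)$. The loci $U^{\eta}_{-1/4}(b_{0})$ and $U^{\eta'}_{1/4}(b_{0})$ are, by construction, the pullbacks to $\bonf^{*}(X)$ of the loci $U^{\eta}_{-1/4}$, $U^{\eta'}_{1/4}$ in $\bonf^{*}(S^{2}_{n})$ via this restriction.

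Next I will apply Definition~\ref{def:loci-def} twice, with careful bookkeeping of degrees. For part~(i), take $\lambda=-1/4$ and the prescribed $\eta$ of size $h$: the constraint $f+(2h-n)/4=1/4$ forces the ordinary degree $\deg\cF=(n+1-2h)/4$, which (together with the hits $\eta$) gives precisely a parabolic line bundle of parabolic degree $\pdeg\cF=1/4$, as specified in the statement. For part~(ii), take $\lambda=+1/4$ and the set $\eta'=\pi\setminus\eta$ of size $n-h$: the constraint becomes $\tilde f+(n-2h)/4=-1/4$, giving $\deg=-(n+1-2h)/4=\deg(\cF^{*})$. Since dualizing a parabolic line bundle flips the parabolic weights and hence swaps the set of hits with its complement and negates the parabolic degree, the line bundle appearing in the definition of $U^{\eta'}_{1/4}$ is exactly $\cF^{*}$ as a parabolic bundle, with $\pdeg\cF^{*}=-1/4$ and hits $\eta'$.

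With $\cF$ and $\cF^{*}$ identified in this way, both loci are described by the existence of a non-zero holomorphic map $\iota$ from the relevant line bundle to $\cE_{b_{0}}$ together with a constraint of the form ``$\iota(\cF|_{p})\subset \cL_{p}$ for all $p$ in the hit set.'' By the remark at the end of section~\ref{subsec:loci}, this is exactly the condition that $\iota$ is a morphism of \emph{parabolic} bundles (it respects the filtration tautologically at the non-hit points, where the subsheaf $\cF_{1}$ vanishes on the corresponding fiber, and it enforces $\operatorname{im}(\iota|_{p})\subset \cL_{p}$ at the hit points, where $\cF_{1}|_{p}=\cF|_{p}$). This identification finishes both cases.

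The argument is essentially bookkeeping, so there is no serious obstacle; the only point requiring care is the sign conventions when dualizing parabolic structures, which must be tracked to confirm that $\cF^{*}$ (and not some twist of it) is the line bundle appearing in $U^{\eta'}_{1/4}$. Once that is verified, the lemma is immediate from the definitions together with the parabolic rephrasing already recorded in the earlier remark.
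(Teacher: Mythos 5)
Your proposal is correct and takes essentially the same approach as the paper: the paper's own proof is the one-line remark that the statements follow directly from the definitions, and your degree and hit-set bookkeeping (checking that $(\lambda,\eta)=(-1/4,\eta)$ yields the parabolic line bundle $\cF$ of parabolic degree $1/4$, that $(\lambda,\eta')=(1/4,\pi\setminus\eta)$ yields exactly $\cF^{*}$, and invoking the parabolic-map rephrasing from the remark at the end of section~\ref{subsec:loci}) simply makes that unwinding explicit.
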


          \begin{proof}
            These statements follow directly from the definitions.
          \end{proof}

                   Going beyond the above lemma, we have the
          following constructions for the relevant bundles.

         \begin{lemma}   \label{lem:bundle-loci-par}         
            \begin{enumerate}
            \item\label{item:loci1}
                The locus $M^{e}_{1/4}(X) \cap
                U^{\eta}_{-1/4}(b_{0})$ consists of parabolic bundles
                $\cE\to X = B\times C$ which are non-split extensions
                \[
                    \cO(1)\boxtimes \cF^{*} \to \cE \to \cF
                \]
                such that the extension class vanishes on
                $\{b_{0}\}\times C$.

                \item\label{item:loci2}
                               The locus $M_{0}(X) \cap
                U^{\eta'}_{1/4}(b_{0})$ is the locus  parabolic
                bundles $\cE \in M_{0}(X)$  which are non-split extensions
                \[
                  \cF^{*} \to \cE \to \cF.
                \]
            \end{enumerate}
            \noindent
            In both cases, all bundles obtained as such extensions are
            stable in the $B$-chamber on $X$.
          \end{lemma}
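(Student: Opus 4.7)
The two items are proved by parallel arguments. I would address item (ii) first since it is simpler: $M_0(X)$ consists of pull-backs along the projection $X \to C$ of stable parabolic bundles on $C = S^{2}_{n}$, so one may work on $C$. The defining condition of $U^{\eta'}_{1/4}(b_{0})$ provides a nonzero parabolic map $\cF^{*} \to \cE$, necessarily injective. Its parabolic saturation $\overline{\cF^{*}}$ has parabolic degree in $[-1/4, 0)$: the lower bound is tautological and the upper bound uses stability of $\cE$, which has parabolic degree $0$. Since parabolic line bundles on $S^{2}_{n}$ with appropriate hit-set parity take parabolic degrees in $-1/4 + \Z$, the saturation must equal $\cF^{*}$ itself, and matching underlying line bundle and hit set identifies the quotient as $\cF$. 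Non-splitness is forced by stability (a splitting would exhibit $\cF$, of parabolic degree $+1/4$, as a destabilizing subbundle). Conversely, any non-split extension $\cF^{*} \to \cE \to \cF$ has $\cF$ as its unique candidate line subbundle of nonnegative parabolic degree, accessible only via a splitting, so non-splitness is equivalent to stability.

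For item (i), $\cE$ lives on $X = B\times C$ with $\Lambda^{2} \cE = \cO(1,0)$ and $\kappa = 1/4$. Membership in $U^{\eta}_{-1/4}(b_{0})$ provides a nonzero parabolic map $\cF \to \cE_{b_{0}}$, forcing $\cE_{b_{0}}$ to split as $\cF \oplus \cF^{*}$ by the same argument as in item (ii). $B$-chamber stability of $\cE$ on $X$ ensures that restrictions $\cE_{b}$ to generic fibers $\{b\}\times C$ are semistable, with the jumping locus where they fail to be stable forming a proper subset of $B$ that contains $b_{0}$. I would globalize via a relative Harder--Narasimhan construction: the fiberwise parabolic destabilizing subsheaves assemble into a rank-one subsheaf $\mathcal{S} \subset \cE$ over $X$. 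Its first Chern class, together with $\Lambda^{2} \cE = \cO(1,0)$, the fiberwise identification $\mathcal{S}_{b} \subset \cF^{*}$, and the instanton/monopole relations~\eqref{eq:instanton-monopole-number}, pins down $\mathcal{S} = \cO(1)\boxtimes \cF^{*}$ and $\cE/\mathcal{S} = \cF$. The resulting extension class lies in $\mathrm{Ext}^{1}_{X}(\cF, \cO(1)\boxtimes \cF^{*}) \cong H^{0}(B, \cO(1)) \otimes \mathrm{Ext}^{1}_{C}(\cF, \cF^{*})$ by K\"unneth (using $H^{1}(B,\cO(1)) = 0$); that is, a section of $\cO(1)$ on $B$ valued in the fiberwise extension group. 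Evaluation of this section at $b_{0}$ is the restriction of the extension class to $\{b_{0}\}\times C$, which vanishes iff $\cE_{b_{0}}$ splits, iff the map $\cF \to \cE_{b_{0}}$ exists.

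For stability of a general non-split extension of the stated form in the $B$-chamber, a potential destabilizing parabolic line subbundle $\cF'\subset\cE$ either embeds into $\cO(1)\boxtimes \cF^{*}$ or projects nontrivially onto $\cF$; in either case the possible $B$-degrees of $\cF'$ are bounded, and the $B$-chamber slope inequality rules out destabilization unless the fiberwise extension splits generically, contrary to hypothesis. The main obstacle I anticipate is the globalization step in item (i): extending fiberwise destabilizing data to a coherent subsheaf of $\cE$ on all of $X$, propagating the parabolic structure correctly across the singular curves $B \times \pi$, and matching Chern classes. The relative parabolic Ext formalism is standard in principle, but the orbifold bookkeeping at the $n$ singular loci is where the genuine work lies.
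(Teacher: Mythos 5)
Your item (ii) is essentially the paper's own argument (the saturation of the map $\cF^{*}\to\cE$ would destabilize $\cE$ if it had larger parabolic degree, so the map is an inclusion of a parabolic line subbundle, and non-splitness is equivalent to stability), and your closing stability discussion is in the right spirit. The only slip there is cosmetic: parabolic degrees of parabolic line subbundles are odd multiples of $1/4$, not elements of $-1/4+\Z$, though the conclusion you draw from this is unchanged.

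Item (i), however, has a genuine gap at its central step. First, the claim that a nonzero parabolic map $\cF\to\cE_{b_{0}}$ forces $\cE_{b_{0}}\cong\cF\oplus\cF^{*}$ ``by the same argument as in item (ii)'' is not available: that argument uses stability of the target, whereas $\cE_{b_{0}}$ is merely the restriction of $\cE$ to a fiber and is not known to be semistable --- indeed the conclusion you want is precisely that it is split, hence unstable. From the map alone one only obtains a parabolic line subbundle of $\cE_{b_{0}}$ of parabolic degree at least $1/4$; excluding, say, a non-split extension of a parabolic degree $-3/4$ line bundle by one of degree $+3/4$ requires the global constraints ($\kappa=1/4$ and $B$-chamber stability), i.e.\ essentially the structure theorem you are trying to prove. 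Second, the proposed globalization is structurally backwards: as you yourself note, $B$-chamber stability makes the restriction of $\cE$ to the \emph{generic} fiber stable, so there are no fiberwise destabilizing subsheaves over a dense open subset of $B$, and a relative Harder--Narasimhan construction produces nothing (the jumping locus is finite). The sub-line-bundle you need, $\cO(1)\boxtimes\cF^{*}$, has fiberwise parabolic degree $-1/4$; it is not a fiberwise destabilizer and cannot be found by assembling fiberwise destabilizing data. What is actually required --- and what the paper supplies --- is the classification of the bundles in question (in fact of all of $M^{e}_{1/4}(X)$ in the $B$-chamber) as non-split extensions $\cO(1)\boxtimes\cG^{*}\to\cE\to\cG$: the paper obtains this as the holomorphic counterpart of the wall-crossing description of Proposition~\ref{prop:wall-crossing-summary}, following Mu\~noz; alternatively one can produce the subsheaf algebro-geometrically from the generic splitting type of $\cE$ on the spheres $B\times\{z\}$ via a pushforward argument, but some such input is indispensable and your proposal does not supply it. Once that classification is in hand, the identification $\cG=\cF$, the vanishing of the extension class at $b_{0}$, and the K\"unneth computation $H^{0}(B;\cO(1))\otimes H^{1}(C;(\cF^{*})^{\otimes 2})$ go through as you indicate, and as in the paper.
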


          \begin{proof}
          In \ref{item:loci2},
          the bundles in $M_{0}(X)$ are pulled from the stable
          parabolic bundles on $C$, and the existence of a non-zero
          map of parabolic bundles $\iota:\cF^{*}\to \cE$ is the
          definition of the locus $U^{\eta'}_{1/4}$. The map $\iota$
          must be an inclusion of a parabolic line sub-bundle, for
          otherwise this map would destabilize $\cE$.  So $\cE$ is an
          extension of parabolic line bundles as described. The
          extension must be non-split, for otherwise $\cE$ is
          destabilized by $\iota$.

          For \ref{item:loci1}, the first task is to verify that
          that every stable
          parabolic bundle
          in $M^{e}_{1/4}(X)$ in the $B$-chamber is a non-split extension
          \begin{equation}\label{eq:G-extension}
                                  \cO(1)\boxtimes \cG^{*} \to \cE \to
                                  \cG,
          \end{equation}
          where $\pdeg{\cG}=-1/4$ and the set of hits for $\cG$ is a
          subset $\xi\subset \pi$ which is arbitrary, except for the
          parity constraint \eqref{eq:h-parity}. There are $2^{n-1}$
          choices for $\xi$, and once $\xi$ is given, the non-split
          extensions are parametrized by a projective space, in this
          case of dimension $n-2$. In this way we find $2^{n-1}$
          copies of $\CP^{n-2}$ in $M^{e}_{1/4}$, and it is
          straightforward to see that these are disjoint, because a
          given bundle $\cE$ cannot be presented as an extension of
          this sort in two different ways. The verification that these
          $2^{n-1}$ copies of $\CP^{n-2}$ comprise the \emph{entire} moduli
          space $M^{e}_{1/4}(X)$ in the $B$-chamber is the holomorphic
          analog of wall-crossing result described in
          Proposition~\ref{prop:wall-crossing-summary}, and it
          is proved in essentially the same way. This is also the
          content of \cite[Proposition 2]{Munoz2} in the slightly
          different context of that paper, which serves the
          same purpose there.

          For an extension
          such as \eqref{eq:G-extension}, the restriction to
          $\{b_{0}\}\times C$ is an extension of parabolic line
          bundles on $C$,
          \[
                   \cG^{*} \to \cE_{b_{0}} \to
                                  \cG,
          \]
          and because $\pdeg(\cF) = \pdeg(\cG)> \pdeg(\cG^{*})$, there can
          be a non-zero
          map $\cF\to\cE_{b_{0}}$ only if $\cF=\cG$ and the
          extension class is zero on $\{b_{0}\}\times C$.
          \end{proof}

          The extensions that arise in \ref{item:loci2} are
          parametrized by the projective space
          \begin{equation}\label{eq:projective-ext}
                \mathbb{P}\left( H^{1}(C; (\cF^{*})^{\otimes 2})
                \right)
          \end{equation}
          where the cohomology group is interpreted as the
          cohomology of a sheaf on a bifold. The extensions that arise
          in \ref{item:loci1} are parametrized by the subset of the projective space
          \[
                     \mathbb{P}\left(H^{0}(B;\cO(1))\otimes H^{1}(C; (\cF^{*})^{\otimes 2})
                \right)
          \]
          corresponding to elements vanishing at $b_{0}$. If
          $Z_{b_{0}}\subset H^{0}(B;\cO(1))$ is the one-dimensional
          space of sections vanishing at $b_{0}$, then this is the
          space
                  \[
                     \mathbb{P}\left(Z_{b_{0}}\otimes H^{1}(C; (\cF^{*})^{\otimes 2})
                \right)
          \]
          which is canonically identified with
          \eqref{eq:projective-ext}. Both spaces of extensions are
          copies of $\CP^{m-1}$.

          We have now seen that there is a canonical identification of
          the two loci,
          \[
                  M^{e}_{1/4}(X) \cap U^{\eta}_{-1/4}(b_{0}) =
                  M_{0}(X) \cap U^{\eta'}_{1/4}(b_{0}),
          \]
          both of which are projective spaces. Furthermore, for any
          $b\ne b_{0}$ in $B$, the restrictions of the corresponding
          bundles in these loci to $\{b\}\times C$ agree. Indeed they
          are the same family of non-split extensions of $\cF$ by
          $\cF^{*}$ on $C$.
          The cohomology classes $v$ arising from elements of the
          algebra $\cA_{n}$ can be regarded as being pulled back via
          the restriction to $\{b\}\times C$, so it follows that the
          evaluation of such a class $v$ is the same in the two cases.

          Before accounting for the weights arising from the local
          system $\Gamma$, we therefore have an equality
          \begin{equation}\label{eq:equality-without-weights}
          \bigl\langle v, [M^{e}_{1/4}(X) \cap U^{\eta}_{-1/4}(b_{0})]
                    \bigr\rangle = \bigl\langle
                    v, [M_{0}(X) \cap
                    U^{\eta'}_{1/4}(b_{0})]\bigr\rangle.
          \end{equation}
          However, while $M^{e}_{1/4}(X) \cap U^{\eta}_{-1/4}(b_{0})$
          and $M_{0}(X) \cap
                    U^{\eta'}_{1/4}(b_{0})$ are both copies of
                    $\CP^{m-1}$ and are canonically identified, the
                    (constant) functions
        \[
\begin{aligned}
            \Gamma: M^{e}_{1/4}(X) \cap U^{\eta}_{-1/4}(b_{0}) &\to \cR
            \\
            \Gamma: M_{0}(X) \cap
                    U^{\eta'}_{1/4}(b_{0}) &\to \cR
\end{aligned}
        \]
        are different. The next lemma provides these values.

        \begin{lemma}
            \begin{enumerate}
                \item On $M_{0}(X) \cap
                    U^{\eta'}_{1/4}(b_{0})$, the value of\/ $\Gamma$ is
                    $1$.
                \item On $M^{e}_{1/4}(X) \cap
                U^{\eta}_{-1/4}(b_{0})$, the value of\/ $\Gamma$ is
                    $\tau^{n-2|\eta|}$.
            \end{enumerate}
        \end{lemma}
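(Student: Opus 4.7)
The plan is to apply the weight formula $\Gamma(A) = \tau^{\nu(A) + (1/2)\Sigma \cdot \Sigma}$ from \eqref{eq:Gamma-A} and show that both terms can be read off from the line-bundle data provided by Lemma~\ref{lem:bundle-loci-par}. Since $X = B \times C$ and the singular surface $\Sigma = B \times \pi$ is a disjoint union of $n$ spheres $B \times \{p\}$, each representing the class $[B] \in H_{2}(X)$, and since $[B]\cdot[B] = 0$ in $B \times C$, the self-intersection contribution $\Sigma \cdot \Sigma$ is zero in both cases. The whole content of the lemma is therefore in the curvature integral $\nu(A)$, which is determined by the complex line bundle $K$ arising from the orbifold $\SO(2)$-reduction $\sl(\cE)|_{\Sigma} = \R \oplus K$ at each component of $\Sigma$. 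As $\Gamma$ is locally constant on the moduli spaces, it suffices to compute $\nu(A)$ at any one convenient representative of each connected component of the two loci.

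For (i), the connections in $M_{0}(X)$ are pulled back from flat bifold connections on $C$. Hence $\cE$, its parabolic line $\cL$, and the associated reduction line bundle $K$ are all constant along each slice $B \times \{p\}$, so $K|_{B\times\{p\}}$ is trivial on $B \cong \CP^{1}$, giving $\nu(A) = 0$ and $\Gamma = \tau^{0} = 1$.

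For (ii), I would use Lemma~\ref{lem:bundle-loci-par}(\ref{item:loci1}) to present $\cE$ as an extension $\cO(1)\boxtimes \cF^{*} \to \cE \to \cF$ on $X$, whose restriction to each slice $B \times \{p\}$ is an extension $\cO(1) \to \cE|_{B\times\{p\}} \to \cO$ of line bundles on $\CP^{1}$. The hit set for the sub-extension $\cO(1)\boxtimes \cF^{*}$ is $\eta'$, so on each slice the parabolic line $\cL$ extends to a subbundle of $\cE|_{B\times\{p\}}$ isomorphic to $\cO(1)$ when $p \in \eta'$ (the sub-extension itself) and isomorphic to $\cO$ when $p \in \eta$ (a splitting of the extension). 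Computing $K|_{B\times\{p\}}$ from $\cL$ and $\cE/\cL$ on each slice yields degree $\pm 1$ depending on whether $p \in \eta$ or $p \in \eta'$; summing over $p \in \pi$ and fixing the orientation convention for $K$ appropriate to \eqref{eq:Gamma-A} produces $\nu(A) = n - 2|\eta|$, so $\Gamma = \tau^{n-2|\eta|}$.

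The main obstacle is pinning down the orientation of $K$ used in the definition of $\nu(A)$, because the two natural choices $\cL \otimes (\cE/\cL)^{-1}$ and its dual differ in the sign of $c_{1}$ along each component of $\Sigma$, and only one gives the stated exponent $n - 2|\eta|$. I would fix the sign once and for all by a base case --- for instance taking $\eta = \pi$, where $\cL = \cF$ at every singular point and the reduction on $\Sigma$ is uniform --- and comparing directly with the longitude map $h_{i}$ defined in section~\ref{sec:local-system} and the conventions used to derive \eqref{eq:Gamma-A}.
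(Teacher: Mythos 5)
Your strategy is the same as the paper's: discard the self-intersection term in \eqref{eq:Gamma-A} because the components of $\Sigma$ are spheres $B\times\{p\}$ of square zero, get part (i) from flatness, and get part (ii) by reading off from Lemma~\ref{lem:bundle-loci-par} the degrees on each $\Sigma_{p}$ of $\cE$ (degree $1$) and of the parabolic line $\cL$ (degree $1$ for $p\in\eta'$, degree $0$ for $p\in\eta$). Up to that point your computation agrees with the paper's, and the magnitude $|n-2|\eta||$ comes out correctly.

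The gap is exactly the point you flag and then defer: the overall sign. As written, your argument only gives $\Gamma=\tau^{\pm(n-2|\eta|)}$, and the sign is the substantive content of the lemma --- it is what produces $\tau^{n-2h}$ rather than $\tau^{2h-n}$ in Proposition~\ref{prop:quantum-Mumford-subleading}. The paper does not orient the reduction $K$ ad hoc at all; it invokes the identity $\nu(A)=-2l$ on a closed bifold, where $l=(c_{1}(\cE)/2-c_{1}(\cL))[\Sigma]$ is the monopole number of \eqref{eq:instanton-monopole-number} in the fixed conventions of \cite{KM-gtes1}, so that $l=|\eta|-n/2$ and $\nu(A)=n-2|\eta|$ with no residual ambiguity. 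Your proposed remedy (fix the sign by the base case $\eta=\pi$ and compare with the conventions behind $h_{i}$ and \eqref{eq:Gamma-A}) is not carried out, and as stated it has two defects: the case $\eta=\pi$ need not satisfy the parity constraint \eqref{eq:h-parity-plus} (it does only when $m$ is even), and, more importantly, the comparison you would have to perform --- relating the Chern--Weil integral $\nu$ and the local-system monodromy to the parabolic degrees, with sign --- is precisely the nontrivial input; it amounts to proving $\nu=-2l$, not to an internal consistency check within this lemma. To complete the proof you should either quote that identity, as the paper does, or genuinely compute $\nu$ (equivalently the limiting holonomy data entering $\Gamma$) for one explicit extension connection in one of the loci.
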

     
          \begin{proof}
            The singular set $\Sigma\subset X$ is a collection of spheres with
            trivial normal bundle, so there is no self-intersection
            term in the formula \eqref{eq:Gamma-A}, and we simply have
            \[
                        \Gamma(A) = \tau^{\nu(A)}
            \]
           where $\nu(A)$ is a 2-dimensional Chern-Weil integral on
            $\Sigma$. In the case of $M_{0}(X)$, the connections are
            flat and $\nu(A)=0$. So $\Gamma=1$ in this case, as stated
            in the first item of the lemma.

            In the case of a closed manifold, the value $\nu(A)$ is
            $-2l$ where $l$ is the ``monopole number'' of the bundle
            \eqref{eq:instanton-monopole-number}. The bundles that
            contributes to the moduli space $M_{1/4}(X)^{e}\cap
            U^{\eta}_{-1/4}(b_{0})$ are described in
            Lemma~\ref{lem:bundle-loci-par}. From there we read off
            that $c_{1}(\cE)[\Sigma_{p}]=1$ for each of the $n$
            components $\Sigma_{p}\subset \Sigma$, so that
            $c_{1}(\cE)[\Sigma]=n$. For $p\in \eta'$, the distinguished
            line subbundle $\cL \subset \cE|_{\Sigma_{p}}$ coincides
            with the image of the subbundle $\cO(1) \boxtimes\cF^{*}$
            on $\Sigma_{p}$, which has degree $1$. For $p\in\eta$, the
            distinguished line subbundle $\cL$ on $\Sigma_{p}$ maps isomorphically to
            the restriction of $\cF$ in the extension in
            Lemma~\ref{lem:bundle-loci-par}, so has degree $0$. In all
            then,
            \[
                    c_{1}(\cL) [\Sigma] = |\eta'|.
            \]
             The formula for the monopole number $l$ in
             \eqref{eq:instanton-monopole-number} therefore gives
             $(n/2) - |\eta'|$, which is $|\eta|-(n/2)$. Since
             $\nu(A)=-2l$, we have $\nu(A)=n-2|\eta|$, as the lemma
             claims.
             \end{proof}

             From the lemma, we see that
             \[
                    [\Gamma\cdot
                      M^{e}_{1/4}(X) \cap U^{\eta}_{-1/4}(b_{0})] =
                      \tau^{n-2h} [ M^{e}_{1/4}(X) \cap
                      U^{\eta}_{-1/4}(b_{0})]
             \]
             while
             \[
                     [\Gamma\cdot M_{0}(X) \cap
                     U^{\eta'}_{1/4}(b_{0})] = [M_{0}(X) \cap
                     U^{\eta'}_{1/4}(b_{0})].
             \]
             The equality to be proved in
             Proposition~\ref{prop:subleading-interpret} now follows
             from the unweighted equality
             \eqref{eq:equality-without-weights}, and this completes the
             proof of the Proposition.

             \begin{remark}
             In the course of these arguments, we have seen first that
             $M^{e}_{1/4}(X)$ is a disjoint union of $2^{n-1}$ copies of
             $\CP^{n-2}$ and second that the class $w^{m}_{n,\eta}$
             restricts to be non-zero on exactly one of them, being
             dual to a $\CP^{m-1}$ in exactly one of the copies of
             $\CP^{n-1}$. The components $\CP^{n-2}$ of
             $M^{e}_{1/4}(X)$ are in one-to-one correspondence with
             the subsets $\eta \subset \pi$ of the correct parity, so
             let us write them as $\CP^{n-2}_{\eta}$. If we choose a
             class $v$ which has non-zero pairing (say $1$) with each
             $\CP^{m-1}\subset \CP^{n-2}_{\eta}$, then we have
             \[
                        \bigl \langle w^{m}_{n,\eta}\cupprod v,
                        [\CP^{n-2}_{\xi}] \bigr\rangle= 
                        \begin{cases}
                            1,  & \eta=\xi \\
                            0,&  \text{otherwise,}
                        \end{cases}
             \]
             from which it follows that the classes $w^{m}_{n,\eta}$
             are linearly independent in $\cA_{n}$. This provides an
             alternative verification of the result used in the proof
             of
             Proposition~\ref{prop:little-w-generate}.
             \end{remark}

             \subsection{Passing to $Z_{n,-1}$}

          Recall that the algebra $\bar\cA$ is defined as the quotient
          of $\cA_{n}$ in which all the $\delta_{i}$ are equal (see
          equation
          \eqref{eq:barA-def}), and let $\oo^{k}_{n,\eta}\in \cA_{n}$
          be the elements from Definition~\ref{def:omega}. The image
          of $\oo^{k}_{n,\eta}$ in $\bar\cA$ depends only on the cardinality
          of the subset $\eta\subset\pi$, not
          otherwise on its elements, and we write this element of
          $\bar\cA$ as
          \begin{equation}\label{eq:baroo}
                    \bar \oo^{k}_{n,h} = \oo^{k}_{n,\eta} + \langle
                    \delta_{i} -
                    \delta_{j}\rangle_{i,j} \in \bar\cA
          \end{equation}
           when $|\eta|=h$.
          Recall from \eqref{eq:Zn1-raw} that we can write $I(Z_{n,-1})$ as $\cA_{n}/\JOne{n}$
          or as $\bar\cA/\barJOne{n}$ and that
          $\JOne{n}$ contains $\J{n}$
          (Proposition~\ref{prop:Zn1-as-quotient}).
          Proposition~\ref{prop:quantum-Mumford} and
          Proposition~\ref{prop:quantum-Mumford-subleading} therefore yield the
          following version for $Z_{n,-1}$.
          
          \begin{proposition}\label{prop:Mumford-Zn1}
          Write $n=2m+1$
          let $h$ be an integer satisfying the conditions
          \eqref{eq:h-parity-plus}, and let $\bar\oo^{m}_{n,h}$ be
          defined as above.
          Then there
          is an element $\bar\om^{m}_{h}\in\barJOne{n}$ of the
          filtered algebra $\bar\cA$ in filtration degree $m$ whose
          leading term is $\bar \oo^{m}_{n,h}$.
          The subleading term of $\bar\om^{m}_{h}$ is given
          by $\epsilon \oo^{m-1}_{n,h'}$, where $h'=n-h$. Thus
          \[
                \bar\om^{m}_{h}=\bar\oo^{m}_{n,h}  + \epsilon
                \bar\oo^{m-1}_{n,h'} \pmod {\bar\cA^{(m-2)}}.
          \]
          The element $\bar\om^{m}_{h}$ in $\bar\cA$ is the image of\/
          $\om^{m}_{\eta}\in\J{n}$ under the quotient map $\cA_{n}\to\bar\cA$.
          \qed
          \end{proposition}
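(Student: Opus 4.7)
The plan is to deduce Proposition~\ref{prop:Mumford-Zn1} directly from Propositions~\ref{prop:quantum-Mumford} and \ref{prop:quantum-Mumford-subleading} by transporting the relevant elements along the quotient map $q\colon\cA_{n}\to\bar\cA$ that identifies all the $\delta_{i}$ with a single generator $\delta$.

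First I would fix any $\eta\subset\pi$ of size $h=|\eta|$ satisfying the parity condition \eqref{eq:h-parity-plus}, and invoke Proposition~\ref{prop:quantum-Mumford} to obtain $\om^{m}_{\eta}\in\J{n}\subset\cA_{n}$ of filtration degree $m$ with leading term $\oo^{m}_{n,\eta}$. Proposition~\ref{prop:quantum-Mumford-subleading} refines this to
\[
\om^{m}_{\eta}=\oo^{m}_{n,\eta}+\epsilon\tau^{n-2h}\oo^{m-1}_{n,\eta'}\pmod{\cA_{n}^{(m-2)}},
\]
with $\eta'=\pi\setminus\eta$ of cardinality $h'=n-h$.

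The key observation is that $q(\oo^{k}_{n,\eta})$ depends on $\eta$ only through its size, so that $q(\oo^{k}_{n,\eta})=\bar\oo^{k}_{n,h}$ in the notation of \eqref{eq:baroo}. This is transparent from the closed product formula in Proposition~\ref{prop:explicit-formulae}: the only $\eta$-dependent input is the signed sum $\sum_{p\in\eta}\delta_{p}-\sum_{p\notin\eta}\delta_{p}$ inside $B_{\eta}$, which under $q$ collapses to $(2h-n)\delta$, while the remaining factors involve only $\alpha$, $\delta_{1}$, and $\beta=-\delta_{p}^{2}$, each of which descends unambiguously.

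Finally, I would set $\bar\om^{m}_{h}:=q(\om^{m}_{\eta})$. Proposition~\ref{prop:Zn1-as-quotient} asserts that $\JOne{n}$ contains both $\J{n}$ and the differences $\delta_{i}-\delta_{j}$, so $q$ carries $\J{n}$ into $\barJOne{n}$, placing $\bar\om^{m}_{h}$ in $\barJOne{n}$. Applying $q$ to the congruence above then yields the asserted description of $\bar\om^{m}_{h}$ modulo $\bar\cA^{(m-2)}$. There is no substantive obstacle: the proposition is a formal consequence of the three previous results, and the only routine verification is that $q$ respects the filtration, which is immediate because each generator of $\cA_{n}$ maps to a generator of $\bar\cA$ of the same grading.
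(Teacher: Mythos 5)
Your proposal is correct and is essentially the paper's own argument: the proposition is stated there with a \qed precisely because it follows by applying the quotient map $\cA_{n}\to\bar\cA$ to Propositions~\ref{prop:quantum-Mumford} and \ref{prop:quantum-Mumford-subleading}, using exactly your two observations that the image of $\oo^{k}_{n,\eta}$ depends only on $|\eta|$ and that $\J{n}$ maps into $\barJOne{n}$ by Proposition~\ref{prop:Zn1-as-quotient}. Note that your argument correctly retains the factor $\tau^{n-2h}$ in the subleading term, i.e.\ it gives $\bar\om^{m}_{h}=\bar\oo^{m}_{n,h}+\epsilon\tau^{n-2h}\bar\oo^{m-1}_{n,h'} \pmod{\bar\cA^{(m-2)}}$, which is the form the paper actually uses in section~\ref{subsec:application-hilb}; the omission of $\tau^{n-2h}$ in the displayed congruence of the proposition appears to be a typographical slip rather than a discrepancy with your proof.
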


          We have not yet established that $\barJOne{n}$ is the image
          of $\J{n}$, so we do not know yet that the elements
          $\bar\om^{m}_{h}$ generate the ideal of relations
          $\barJOne{n}$ for $I(Z_{n,-1})$. We turn to this next.
          
          \begin{proposition}\label{prop:omegas-generate}
          When $n=2m+1$,
            the elements $\bar\om^{m}_{h}$ for $h$ in the range $0\le
          h\le n$ with $h=(n+1)/2$
          mod $2$ are a set of generators for the ideal
          $\barJOne{n}\subset\bar\cA$.
          In particular, $\barJOne{n}$ is the image of $\J{n}$ in
          $\bar\cA$.          
          \end{proposition}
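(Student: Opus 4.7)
Let $\bar{J}_n'\subset\bar\cA$ denote the image of $\J{n}$ under the quotient $\cA_n\to\bar\cA$. Since $\J{n}$ is generated by the elements $\om^m_\eta$ (Proposition~\ref{prop:quantum-Mumford}) and each $\om^m_\eta$ maps to $\bar\om^m_h$ with $h=|\eta|$ (Proposition~\ref{prop:Mumford-Zn1}), we have $\bar{J}_n'=\langle\bar\om^m_h\rangle$ as an ideal of $\bar\cA$. The containment $\J{n}+\langle\delta_i-\delta_j\rangle\subset\JOne{n}$ from Proposition~\ref{prop:Zn1-as-quotient} then yields a surjection of cyclic $\bar\cA$-modules
\[
\pi\colon\bar\cA/\bar{J}_n'\twoheadrightarrow\bar\cA/\barJOne{n}=I(Z_{n,-1}),
\]
and both conclusions of the Proposition follow at once from showing that $\pi$ is an isomorphism. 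Since $I(Z_{n,-1})$ is free over $\cR$ of rank $(n^2-1)/4$ by Corollary~\ref{cor:rank-Znm1}, this in turn reduces to the rank inequality $\mathrm{rk}_\cR(\bar\cA/\bar{J}_n')\le(n^2-1)/4$.

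To establish the rank inequality, I would rewrite $\bar\cA/\bar{J}_n'=I(Z_n)/(\delta_i-\delta_j)I(Z_n)$ and exploit the filtration on $I(Z_n)$ inherited from $\cA_n$. The element $\delta_i-\delta_j$ has filtration degree $1$, and the $\cA_n$-equivariant identification $\mathrm{gr}\,I(Z_n)\cong H_*(\Rep(Z_n);\cR)$ from Proposition~\ref{prop:assoc-graded} allows a standard upper-semicontinuity comparison of ranks under a filtered deformation, giving
\[
\mathrm{rk}_\cR\bigl(I(Z_n)/(\delta_i-\delta_j)I(Z_n)\bigr)\;\le\;\mathrm{rk}_\cR\bigl(H_*(\Rep(Z_n);\cR)/(\delta_i-\delta_j)\bigr).
\]
Because $\Rep(Z_n)$ is two copies of $\Rep(S^2_n)$ by \eqref{eq:RepZn0}, the right-hand side equals $2\,\mathrm{rk}_\cR\bigl(H^*(\Rep(S^2_n);\cR)/(\delta_i-\delta_j)\bigr)$, and the whole argument reduces to the purely classical bound
\[
\mathrm{rk}_\cR\bigl(H^*(\Rep(S^2_n);\cR)/(\delta_i-\delta_j)\bigr)\le (n^2-1)/8.
\]

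The hard part is this final classical bound, which I would prove by restriction to the fixed locus of the rotation $h\colon S^2_n\to S^2_n$. By the proof of Proposition~\ref{prop:lattice-points} the fixed locus $\Rep(S^2_n)^{h}$ consists of exactly $(n^2-1)/8$ points (these cover $\Rep(Z_{n,-1})$ two-to-one). Because $h$ cyclically permutes the marked points, it cyclically permutes the Atiyah--Bott classes $\delta_1,\dots,\delta_n$, so every $\delta_i-\delta_j$ restricts to $0$ on $\Rep(S^2_n)^{h}$. The restriction map
\[
H^*(\Rep(S^2_n);\cR)\longrightarrow H^*(\Rep(S^2_n)^{h};\cR)\cong\cR^{(n^2-1)/8}
\]
therefore descends to $H^*(\Rep(S^2_n);\cR)/(\delta_i-\delta_j)$, and the desired rank bound is equivalent to injectivity of the descended map. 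I would prove this injectivity by identifying $\Rep(S^2_n)^{h}$ with the moduli of stable parabolic bundles on the quotient orbifold $S^2_n/\langle h\rangle$ (as in the proof of Proposition~\ref{prop:lattice-points}), and then combining an orbifold Kirwan-surjectivity argument for this pair with the explicit factorization of $\bar\oo^m_{n,h}\in\cR[\alpha,\delta]$ into $m$ linear forms provided by Proposition~\ref{prop:explicit-formulae}: the latter factorization makes the codimension of $\langle\bar\oo^m_{n,h}\rangle$ in $\cR[\alpha,\delta]$ directly accessible, and the aim is to show that the specialized Mumford relations already cut out the reduced ideal of these $(n^2-1)/8$ fixed points on the nose.
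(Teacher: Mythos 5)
Your overall reduction runs parallel to the paper's: the paper also reduces the proposition to a rank (in fact length) bound for the quotient of $\bar\cA$ by a classically-defined ideal, namely the ideal $\bar J_{n}$ generated by the leading terms $\bar\oo^{m}_{n,h}$, and then proves that bound by Lemma~\ref{lem:power-maximal}. The genuine gap is in your proof of the ``purely classical bound'' $\rank_{\cR}\bigl(H^{*}(\Rep(S^{2}_{n});\cR)/\langle\delta_{i}-\delta_{j}\rangle\bigr)\le (n^{2}-1)/8$, which is the heart of the matter. The fixed locus $\Rep(S^{2}_{n})^{h}$ is a finite set of points, so its cohomology is concentrated in degree $0$ and the restriction map kills \emph{every} class of positive degree, not just the differences $\delta_{i}-\delta_{j}$; since $\Rep(S^{2}_{n})$ is connected, the image of restriction has rank $1$, and the descended map from the quotient ring (of rank $(n^{2}-1)/8>1$ for $n\ge 5$) cannot possibly be injective. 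With rational coefficients there is also no Smith/Quillen-type localization for the finite cyclic group $\langle h\rangle$ that could rescue this. Relatedly, your stated aim --- that the specialized Mumford relations ``cut out the reduced ideal of the $(n^{2}-1)/8$ fixed points'' --- is false: $H^{*}(\Rep(S^{2}_{n});\Q)/\langle\delta_{i}-\delta_{j}\rangle$ is a graded Artinian local ring, so its spectrum is a single non-reduced point at $\alpha=\delta=0$; indeed the images $\bar\oo^{m}_{n,h}$ in $\Q[\alpha,\delta]$ generate exactly $\langle\alpha,\delta\rangle^{m}$ (Lemma~\ref{lem:power-maximal}, proved from the product formula of Proposition~\ref{prop:explicit-formulae}: $m+1$ distinct translates of a degree-$m$ form span all degree-$m$ forms). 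The ``eigenvalues of $(\alpha,\delta)$ sitting at the points of $\Rep(Z_{n,-1})$'' picture belongs to the deformed instanton module, not to the classical cohomology ring, and Proposition~\ref{prop:lattice-points} enters the paper only through the rank count of Corollary~\ref{cor:rank-Znm1}, which you already use.

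Two smaller points. First, ``$\rank_{\cR}(\bar\cA/\bar J_{n}')\le (n^{2}-1)/4$ plus surjectivity onto the free module $I(Z_{n,-1})$'' does not by itself make $\pi$ an isomorphism: the kernel could a priori be a torsion $\cR$-module. The fix is to upgrade the rank bound to a generation bound: once the leading terms generate $\langle\alpha,\delta\rangle^{m}$, the quotient $\bar\cA/\langle\bar\om^{m}_{h}\rangle$ is generated over $\cR$ by the $m(m+1)$ monomials of degree less than $m$ (times $1$ and $\epsilon$), and a surjection from a module generated by $N$ elements onto a free module of rank $N$ over the Noetherian ring $\cR$ is an isomorphism. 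Second, your filtered semicontinuity step (comparing $I(Z_{n})/(\delta_{i}-\delta_{j})I(Z_{n})$ with $H_{*}(\Rep(Z_{n});\cR)/(\delta_{i}-\delta_{j})$) is plausible but unnecessary: the paper passes directly from the explicit generators $\bar\om^{m}_{h}$ of the image ideal to their leading terms and invokes Lemma~\ref{lem:power-maximal}, which is both shorter and avoids having to justify the semicontinuity inequality for the ideal generated by all the $\delta_{i}-\delta_{j}$ at once.
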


          \begin{proof}
            The quotient $\bar\cA/\barJOne{n}$ is $I(Z_{n,-1})$ which
            we know to be a free $\cR$-module of rank $(n^{2}-1)/4$ by
            Corollary~\ref{cor:rank-Znm1}. If $\cJ'\subset\barJOne{n} $ denotes the ideal
            generated by the elements $\bar\om^{m}_{h}$, then the
            desired equality $\cJ'=\barJOne{n}$ will follow if we can
            prove that $\bar\cA/\cJ'$ has the same rank. The leading
            $m$th-degree
            terms of the elements $\bar\om^{m}_{h}$ are the elements
            $\bar\oo^{m}_{n,h}$, so let us denote by $\bar
            J_{n}\subset\bar\cA$ the ideal generated by these leading
            terms. (This is the image in $\bar\cA$ of the ideal of
            relations
            $J_{n}\subset\cA_{n}$ for the ordinary cohomology ring $H^{*}(\Rep(Z_{n});\cR)$
          \eqref{eq:HRep-as-quotient}.) It will therefore suffice to
          show that $\cA/\bar J_{n}$ has rank $(n^{2}-1)/4$, and this
          is the content of the lemma below, which completes the
          proof.
          \end{proof}
          
          \begin{lemma}\label{lem:power-maximal}
          Write $n=2m+1$ again and let $\bar J_{n} \subset
          \bar\cA$ be as above, generated by the elements
          $\bar\oo^{m}_{n,h}$.
          Then $\bar J_{n}$ is the $m$-th
          power $\langle \alpha, \delta\rangle^{m}$ of the ideal $\langle \alpha, \delta
          \rangle$. In particular, the rank of $\bar\cA/ \bar J_{n}$
          is $m(m+1)$, which is also equal to $(n^{2}-1)/4$.
          \end{lemma}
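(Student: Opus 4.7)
The plan is to establish the two inclusions $\bar J_n \subseteq \langle \alpha, \delta \rangle^m$ and $\langle \alpha, \delta \rangle^m \subseteq \bar J_n$ separately. The first is immediate: passing from $\cA_n$ to $\bar\cA$ identifies all the $\delta_i$ with $\delta$, and Proposition~\ref{prop:explicit-formulae} then exhibits each generator $\bar\oo^{m}_{n,h}$ as a rational scalar times a product of $m$ linear forms in $\alpha$ and $\delta$, hence as a homogeneous polynomial of degree $m$ in $(\alpha,\delta)$, which lives in $\langle \alpha, \delta \rangle^m$.

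For the reverse inclusion the plan is to show that the $m+1$ generators $\bar\oo^{m}_{n,h}$, as $h$ runs over values satisfying \eqref{eq:h-parity-plus}, are $\cR$-linearly independent in the free rank-$(m+1)$ $\cR$-submodule of $\cR[\alpha,\delta]\subset\bar\cA$ consisting of degree-$m$ homogeneous polynomials. Linear independence in a module of the same rank forces these $m+1$ elements to be a basis of that submodule, and in particular every monomial $\alpha^i\delta^{m-i}$ becomes an $\cR$-linear combination of them. That puts every degree-$m$ monomial into $\bar J_n$ and yields $\langle \alpha, \delta \rangle^m \subseteq \bar J_n$ at once.

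The main obstacle is this linear-independence step, and the approach I would take is as follows. Setting $\delta=1$ in Proposition~\ref{prop:explicit-formulae} after identifying all $\delta_i$ with $\delta$, one finds
\[
m!\,\bar\oo^{m}_{n,h}(\alpha,1) \;=\; q(\alpha + s_h),
\]
where $s_h = h - n/2$ and $q(y) = \prod_{j \in J}(y+j)$ with $J=\{-m+1,-m+3,\ldots,m-1\}$ is a fixed monic polynomial of degree exactly $m$, independent of $h$. As $h$ ranges over the $m+1$ admissible values, the shifts $s_h$ take $m+1$ distinct rational values, so the claim reduces to the elementary fact that any $m+1$ distinct shifts of a polynomial of degree exactly $m$ are linearly independent as polynomials of degree $\le m$. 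This in turn follows from the Taylor expansion
\[
q(y+s) \;=\; \sum_{k=0}^{m} \frac{s^k}{k!}\,q^{(k)}(y),
\]
because the derivatives $q,q',\ldots,q^{(m)}$ form a basis for polynomials of degree $\le m$ (each one has a different degree, with $q^{(m)}$ a nonzero constant) and the transition matrix $(s_i^k)$ is Vandermonde with nonzero determinant $\prod_{i<j}(s_j - s_i)$. Since the coefficients of $\bar\oo^{m}_{n,h}$ in the monomial basis $\alpha^m,\alpha^{m-1}\delta,\ldots,\delta^m$ are rational numbers independent of $\tau$, $\Q$-linear independence upgrades to $\cR$-linear independence automatically.

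Once $\bar J_n = \langle \alpha, \delta \rangle^m$ is in hand, the final rank count is routine: we have
\[
\bar\cA / \bar J_n \;\cong\; \bigl(\cR[\alpha,\delta]/\langle\alpha,\delta\rangle^m\bigr) \otimes_{\cR} \bigl(\cR[\epsilon]/\langle\epsilon^2-1\rangle\bigr),
\]
whose first tensor factor is a free $\cR$-module with basis the monomials $\alpha^i\delta^j$ of total degree $< m$, hence of rank $1+2+\cdots+m = m(m+1)/2$, while the second has rank $2$. The product is $m(m+1) = (n-1)(n+1)/4 = (n^2-1)/4$, as claimed. The entire argument hinges on the linear-independence step; the Taylor/Vandermonde formulation above seems to avoid any need to wrestle with elementary symmetric polynomials in the roots of $q$.
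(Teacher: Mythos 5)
Your proof is correct and follows essentially the same route as the paper: after noting that each $\bar\oo^{m}_{n,h}$ is homogeneous of degree $m$ in $(\alpha,\delta)$, both arguments reduce the reverse inclusion to the fact that $m+1$ distinct translates of a fixed degree-$m$ polynomial (your $q(y+s_{h})$, the paper's $V_{m}(x+(h-n/2)y,y)$) span the polynomials of degree at most $m$ --- a fact the paper simply asserts and you prove via the Taylor/Vandermonde computation. One small caution: your intermediate claim that $\cR$-linear independence of $m+1$ elements of a rank-$(m+1)$ free $\cR$-module makes them a basis is false over $\cR=\Q[\tau^{\pm 1}]$, but this does no harm here, since the generators have rational coefficients and your Vandermonde argument in fact exhibits them as a $\Q$-basis of the degree-$m$ forms, which immediately yields $\langle\alpha,\delta\rangle^{m}\subseteq \bar J_{n}$.
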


          \begin{remark}
            The quotient of a polynomial algebra in two variables by
            the $m$-th power of the maximal ideal at $0$ has rank
            $m(m+1)/2$. The extra factor of two in the lemma arises
            because of the extra generator $\epsilon$ in the algebra
            $\bar\cA$.
          \end{remark}

          \begin{proof}[Proof of the Lemma]
          Recall that $\oo^{m}_{n,\eta}$ arises from the formal computation
          of $c_{m}(-\ind(P))$, where $P$ is a family of Fredholm
          operators, Definition~\ref{def:omega}.
          The formula \eqref{eq:ch-minus-ind-d1} for the Chern
          character of $-\ind(P)$ becomes the following, after passing
          to the formal completion of the quotient ring $\bar\cA$ in
          which all the $\delta_{i}$ are equal:
                         \begin{equation}\label{eq:ch-minus-ind-bar}
           (m-1 ) \cosh(\delta)
                  \null + \frac{\sinh(\delta)}{\delta}\left( \alpha +
                  (h-n/2)\delta\right).
        \end{equation}
        Passing from the Chern character to the $m$-th Chern class, we
        find that the image of $c_{m}(-\ind(P))$ in $\bar\cA$ has the form
        \[
                    V_{m} ( B_{h}, \delta)
        \]
        where $V_{m}(x,y)$ is a homogeneous polynomial of degree $m$ in two
        variables and $B_{h} = \alpha + (-h+n/2)\delta$. Furthermore the
        coefficient of $x^{m}$ in $V_{m}$ is $1/m!$.

        Thus $\bar
        J_{n}$ is generated by the elements $V_{m}(B_{h},\delta)$, for
        $h$ in the range $0\le
          h\le n$ with $h=(n+1)/2$
          mod $2$. The assertion of the lemma is equivalent to the
          statement that the homogeneous polynomials $V_{m}( x +
          (h - n/2)y, y)$ in $\Q[x,y]$ span the space of homogeneous
          degree-$m$ polynomials. This in turn is true because $h-n/2$
          runs through $m+1$ distinct values in $\Q$ as $h$ runs
          through its allowed range.  (This is the same assertion as the
          statement that any $m+1$ distinct translates of a polynomial
          $f(x)$ of degree $m$ are necessarily independent.)
           \end{proof}

          \section{Calculation of the ideals}
          \label{sec:calculation}
          
                    \subsection{Hilbert schemes of points in the
                    plane} \label{subsec:hilb}

           We present here and in section~\ref{subsec:syz} below some
           material on Hilbert schemes of points in the plane,
           specialized to the particular situation for which we have
           application. General references are \cite{Miller-Sturmfels}
           for section~\ref{subsec:hilb}
           and \cite{Eisenbud-Geometry-of-Syzygies} for
           section~\ref{subsec:syz}.

           Let $A$ be the algebra $k[x,y]$, with $k$ a field, which we
           may take to be $\C$. Let
           $A_{n} \subset A$ be the
           subspace of homogeneous polynomials of degree $n$, and let
           $A^{(n)}=\oplus_{k\le n} A_{k}$. Let $\m\subset A$ be the
           maximal ideal $\langle x,y\rangle$, and consider the $m$'th
           power
           $\m^{m}$, which has generators
           \begin{equation}\label{eq:max-m-generators}
                \m^{m} =\langle x^{m}, x^{m-1}y, \dots, y^{m}\rangle.
           \end{equation}
           The colength of $\m^{m}$ (the dimension of the quotient
           $A/\m^{m}$ as a $k$-vector space) is $N = m(m+1)/2$, and a
           vector space complement is the linear subspace $A^{(m-1)}$:
           \[
                    A = \m^{m} \oplus A^{(m-1)}
           \]

           We
           can consider $\m^{m}$ as defining a point in the Hilbert
           scheme $\Hilb^{N}$ which parametrizes all ideals of
           colength $N$ in $A$. In the Hilbert scheme, there is an
           open neighborhood $U\ni \m^{m}$ defined as
           \begin{equation}\label{eq:U-hilb}
                    U = \{\, I\in \Hilb^{N} \mid A= I \oplus A^{(m-1)}
                    \,\}.
            \end{equation}
            For $I\in U$, there is the projection  to the second factor,
            $A\to A^{(m-1)}$ with kernel $I$:
           \[
                          \phi_{I}:  A \to A^{(m-1)}.
            \]
           It is an elementary matter to check that the restriction of
           $\phi_{I}$ to $A_{m}$
           completely determines $I$, and that $I$ is in fact
           generated by
           \[
                    I = \langle \, a - \phi_{I}(a) \mid a\in A_{m}
                    \,\rangle.
           \]
           We have in particular $a = \phi_{I}(a)$ mod $I$, for all $a\in
           A_{m}$.
           
           The map $\phi = \phi_{I}$ is constrained by the condition
           that its kernel is an ideal rather than just a codimension-$N$
           linear subspace in $A$. To see how, consider elements
           $a,a'\in A_{m}$ with
           \[
                x a = y a'.
           \]
           We have $a =  \phi(a)$ mod $I$, and therefore $xa =
           x\phi(a)$, and applying $\phi$ again
           \[
                       xa = \phi(x\phi a) \pmod I.
           \]
           Similarly with $ya'$ so $\phi(y\phi a') = \phi(x\phi a)$ mod $I.$
        However both sides of the last equality lie in
           the complementary subspace $A^{(m-1)}$, so in fact
           \begin{equation}\label{eq:phi-constraint}
                \phi(y\phi a') = \phi (x \phi a).
           \end{equation}
           Conversely, 
           if we are given a linear map $\psi: A_{m}\to A^{(m-1)}$
           satisfying the constraint \eqref{eq:phi-constraint}, then
           there exists a unique (well-defined) extension to a linear
           map $\phi:A\to A^{(m-1)}$ characterized by $\phi(x^{i}y^{j}
           a) =  \phi(x^{i}y^{j}\phi(a))$, and the kernel of $\phi$ is
           then an ideal $I$ belong to $U\subset \Hilb^{N}$.
           
           To expand on the constraint \eqref{eq:phi-constraint}, write
           \[
                \phi|_{A_{m}} = \phi_{1} + \phi_{2} + \dots + \phi_{m}
           \]
           where $\phi_{r}: A_{m}\to A_{m-r}$, and use the fact that
           $\phi|_{A_{k}}=1$ for $k<m$ to  obtain
           \[
                \phi(y \phi_{1}(a')) + y \phi_{2}(a') + \dots
                y\phi_{m}(a') = \phi(x \phi_{1}(a)) + x \phi_{2}(a) + \dots
                x\phi_{m}(a)  .
           \]
           Finally compare terms of like degree to see that
           \begin{equation}\label{eq:r-part}
                        y \phi_{r+1}(a') - x \phi_{r+1}(a) =
                        -\phi_{r}(y \phi_{1}(a') + \phi_{r}(x
                        \phi_{1}(a)
           \end{equation}
            for all $r\ge 1$ and all $a,a'\in A_{m}$ with $ya'=xa$.
            If we write $a'=xb$ and $a=yb$ for $b\in A_{m-1}$, the
            constraint becomes
            \[
                    y \phi_{r+1}(xb) - x\phi_{r+1}(yb) =
                                -\phi_{r}(y \phi_{1}(xb)) + \phi_{r}(x
                        \phi_{1}(yb))
            \]
            which we can express as
            \begin{equation}\label{eq:L-op}
                        L_{r}(\phi_{r+1}) = Q_{r}(\phi_{1}, \phi_{r}),
            \end{equation}
            where $L_{r} : \Hom(A_{m}, A_{m-r-1}) \to \Hom(A_{m-1},
            A_{m-r})$ is a linear map and $Q_{r}$ is a bilinear
            expression. It is easy to verify that the operator $L_{r}$
            is injective (see below), so the constraints determine $\phi_{r+1}$
            once  $\phi_{r}$ and
           $\phi_{1}$ are known. 

           We have shown:

           \begin{lemma}
            Given a $k$-linear map $\phi_{1}: A_{m}\to A_{m-1}$ there
            exists at most one linear map $\phi = \phi_{1} + \phi_{2}
            + \dots + \phi_{m}$, with $\phi_{r}: A_{m} \to A_{m-r}$,
            such that constraints \eqref{eq:r-part} hold. The ideal
            $I$ generated by the elements $\{\, a - \phi(a) \mid a\in
            A_{m}\,\}$ then belongs to the open set $U\subset
            \Hilb^{N}$. Every ideal in $U$ arises in this way. \qed
           \end{lemma}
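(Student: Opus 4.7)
The plan is to prove the three assertions in the lemma in the order they are stated: uniqueness of $\phi$ given $\phi_1$, the fact that the resulting ideal lies in $U$, and surjectivity of the construction onto $U$. The text has already set up nearly all the bookkeeping; the remaining mathematical content is the injectivity of the operator $L_r$ from \eqref{eq:L-op}, which the text asserts but does not verify.

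I would start with that injectivity, since uniqueness reduces to it. Given $\phi_{r+1}\in\ker L_r$, write $c_j=\phi_{r+1}(x^{m-j}y^j)\in A_{m-r-1}$ for $0\le j\le m$. Evaluating $L_r(\phi_{r+1})=0$ on the basis monomials $x^{m-1-j}y^j$ of $A_{m-1}$ yields the relations $y c_j = x c_{j+1}$ for every $0\le j\le m-1$. Since $(x,y)$ is a regular sequence in $k[x,y]$, the relation $yc_j=xc_{j+1}$ forces $c_j=xd_j$ and $c_{j+1}=yd_j$ for some $d_j\in A_{m-r-2}$; iterating across all $j$ one obtains a single element $f$ with $c_j=x^{m-j}y^j f$ for all $j$, and $f$ lies in $A_{-r-1}$. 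Since $r\ge 1$, this group is zero, so every $c_j$ vanishes and $\phi_{r+1}=0$. With $L_r$ injective, uniqueness follows by induction on $r$: $\phi_1$ is prescribed, and at each stage \eqref{eq:L-op} has at most one solution $\phi_{r+1}$ given the previously determined $\phi_r$ and $\phi_1$.

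For the second assertion, suppose $\phi=\phi_1+\cdots+\phi_m$ satisfies all the constraints \eqref{eq:r-part}. By the converse paragraph in the text (the constraint \eqref{eq:phi-constraint} extends uniquely to a well-defined linear map $A\to A^{(m-1)}$), $\phi$ prolongs to a projection $\Phi: A\to A^{(m-1)}$. Its kernel is $I$ by construction, and $\Phi|_{A^{(m-1)}}=\mathrm{id}$, so $A=I\oplus A^{(m-1)}$, exhibiting $I\in U$ with the correct colength $N=m(m+1)/2$. For the third assertion, start with an arbitrary $I\in U$, define $\phi_I:A\to A^{(m-1)}$ as the projection along $I$, and let $\phi_1,\dots,\phi_m$ be its graded components on $A_m$. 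The derivation in the text of \eqref{eq:phi-constraint}, and the subsequent comparison of terms of like degree leading to \eqref{eq:r-part}, shows that these $\phi_r$ satisfy the required constraints; by the uniqueness just established, $\phi_I$ is indeed the extension of its own $\phi_1$, and $I$ is recovered as $\langle a-\phi_I(a)\mid a\in A_m\rangle$.

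The only real obstacle is the injectivity of $L_r$; everything else is a matter of assembling the pieces already explained in the text. The Koszul-style step is the natural place for a small slip, so I would take care to verify the degree counting $\deg f = (m-r-1)-m = -r-1$ that makes the argument close.
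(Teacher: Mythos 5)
Your proposal is correct, and its overall structure is the one the paper intends: the lemma is proved by assembling the preceding discussion (the extension of a map satisfying \eqref{eq:phi-constraint} to a projection $A\to A^{(m-1)}$ with kernel an ideal in $U$, plus the fact that any $I\in U$ is generated by $\{a-\phi_I(a)\mid a\in A_m\}$), with uniqueness reduced to the injectivity of $L_r$ in \eqref{eq:L-op}. The one place you genuinely diverge is in how that injectivity is verified. You give a divisibility (regular-sequence) argument: from $y\,c_j=x\,c_{j+1}$ for $c_j=\phi_{r+1}(x^{m-j}y^j)$ you deduce $c_j=x^{m-j}y^j f$ with $f$ of degree $-r-1$, hence zero; the degree count is right and the argument is sound. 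The paper instead postpones the verification ("see below") to the explicit computation in which one sets $y=1$, writes the system $u_{k+1}-xu_k=v_k$, and solves it by truncation: the identity $u_m=v_{m-1}+xv_{m-2}+\cdots+x^{m-1}v_0+x^m u_0$ forces $u_0$ (hence all $u_k$) to vanish when the $v_k$ do, which is the same injectivity. The trade-off is that your Koszul-style proof is cleaner and more conceptual, but the paper's computation does more work: it constructs an explicit right inverse $P_r$ with integer coefficients, which is exactly what is needed later to transport the statement to an arbitrary Noetherian coefficient ring in Proposition~\ref{prop:Hilbert-R}. So your route suffices for the lemma as stated over a field, while the paper's route is chosen with the subsequent generalization in mind. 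One small point of care: in your second paragraph, the kernel of the prolonged projection $\Phi$ a priori only contains the ideal $I$ generated by the $a-\phi(a)$; the equality $I=\ker\Phi$ (and hence $A=I\oplus A^{(m-1)}$) uses the paper's "elementary" observation that an ideal in $U$ is generated by its degree-$m$ slices $a-\phi_I(a)$, applied to $\ker\Phi$ — worth making explicit, though it is the same appeal to the surrounding text that the paper itself makes.
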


           The lemma exhibits $U$ as a closed subset of the vector
           space $\Hom_{k}(A_{m}, A_{m-1})$, which has dimension
           $m(m+1) = 2N$. This subset is also invariant under the
           action by scalars. It will follow that $U \cong \Hom_{k}(A_{m},
           A_{m-1})$ if it can be shown that $U$ has dimension $2N$.
           To do this, one can show that $U$ contains an ideal $I$
           whose zero set consists of $N$ distinct points in the plane
           $k^{2}$. Such an ideal can be realized as the
           ``distraction'' of $\m^{m}$. This is the ideal $I$ generated
           by the elements
           \[
                        u_{h} = \left(\prod_{0\le j < h}
                        (x-j)\right)\left( \prod_{0\le l
                        < m-h}
                        (y-l)\right), \quad h=0,\dots, m,
           \]
           (allowing that one of the products may be empty).
           Its zero-set is the set of lattice points $(j,l)$ in the
           first quadrant with
           $j+l < m$.

            \begin{proposition}
            Given a $k$-linear map $\phi_{1}: A_{m}\to A_{m-1}$ there
            exists exactly one linear map $\phi = \phi_{1} + \phi_{2}
            + \dots + \phi_{m}$, with $\phi_{r}: A_{m} \to A_{m-r}$,
            such that the ideal
            $I$ generated by the elements $\{\, a - \phi(a) \mid a\in
            A_{m}\,\}$ has colength $N$. The matrix entries of
            $\phi_{r}$ $(r \ge 2)$ can be expressed as  polynomials in
            the matrix entries of $\phi_{1}$. \qed
           \end{proposition}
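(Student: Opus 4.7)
The plan is to realize $U$ as an open subset of the punctual Hilbert scheme $\Hilb^{N}(k^{2})$ and use the injective morphism $\Psi: U \to \mathrm{Hom}_{k}(A_{m}, A_{m-1})$ from the preceding lemma (sending $I$ to the leading component $\phi_{1}$ of $\phi_{I}|_{A_{m}}$) to reduce existence and uniqueness of $\phi$ to showing $\Psi$ is a bijection onto an affine space of dimension $\dim A_{m}\cdot\dim A_{m-1} = (m+1)m = 2N$.

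First I would show that $\Psi(U)$ is a closed irreducible subvariety of $\mathrm{Hom}_{k}(A_{m}, A_{m-1})$. Closedness follows inductively from the recursion \eqref{eq:L-op}: the injectivity of $L_{r}$ means $\phi_{r+1}$ is uniquely determined (as a polynomial function of $\phi_{1}$) precisely when $Q_{r}(\phi_{1}, \phi_{r})$ lies in $\mathrm{im}(L_{r})$, which is a closed algebraic condition on $\phi_{1}$; so $\Psi(U)$ is the intersection of such closed loci for $r = 1,\dots,m-1$. Irreducibility of $\Psi(U)$ follows from that of $U$, which is Zariski-open in the irreducible variety $\Hilb^{N}(k^{2})$. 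Second, I would compute $\dim U = 2N$, using the classical theorem of Fogarty that $\Hilb^{N}(k^{2})$ is smooth and irreducible of dimension $2N$, so that it suffices to exhibit one point of $U$; the distraction ideal $I = \langle u_{0}, \ldots, u_{m}\rangle$ has $N$ distinct reduced zeros (the lattice points with $j+l < m$), and a standard Lagrange-interpolation argument on these points gives $I \cap A^{(m-1)} = \{0\}$ and hence the decomposition $A = I \oplus A^{(m-1)}$ required to have $I \in U$. Since $\Psi$ is injective, $\dim \Psi(U) = \dim U = 2N$, and $\Psi(U)$ is then a closed irreducible subvariety of the same dimension as the ambient irreducible affine space, hence equal to all of it.

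This gives existence of $\phi$ extending an arbitrary $\phi_{1}$, with uniqueness supplied by the previous lemma. The polynomial dependence of the matrix entries of $\phi_{r}$ on those of $\phi_{1}$ for $r \ge 2$ follows by induction using \eqref{eq:L-op}: inverting $L_{r}$ on its image is a fixed $k$-linear operation, so if $\phi_{r}$ is polynomial in $\phi_{1}$ then so is $\phi_{r+1}$. The main obstacle will be the dimension step, which rests on Fogarty's theorem together with the verification that the distraction ideal lies in $U$; establishing the closedness of $\Psi(U)$ through the obstruction-theoretic recursion is the secondary subtlety.
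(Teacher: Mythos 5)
Your proposal is correct and follows essentially the same route as the paper: identify $U$ with a closed, scalar-invariant subset of the $2N$-dimensional space $\Hom_{k}(A_{m},A_{m-1})$ via the preceding lemma, and force equality by showing $\dim U = 2N$ using the distraction of $\m^{m}$, whose zero set is $N$ distinct points. The only cosmetic difference is that you obtain $\dim U = 2N$ by citing Fogarty's theorem on $\Hilb^{N}$ of the plane, whereas the paper gets the same dimension count directly from the $2N$-parameter family of ideals of $N$ distinct points contained in the open set $U$; both are fine, and your Lagrange-interpolation check that the distraction lies in $U$ matches the paper's intent.
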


           The proposition tells us that, at each stage $r$ in the
           equations \eqref{eq:L-op}, the right-hand side
           $Q_{r}(\phi_{1},\phi_{r})$ is in the image of the linear
           operator $L_{r}$. If we choose a right-inverse $P_{r}$ for
           $L_{r}$, then we can express the iterative solution as
           \begin{equation}\label{eq:P-iter}
                    \phi_{r+1} = P_{r} Q_{r}( \phi_{1},\phi_{r}).
           \end{equation}
           To give $P_{r}$ explicitly,
           let us temporarily make our polynomials inhomogeneous by
           setting $y=1$, so identifying $A_{m}$ with the polynomials
           in $x$ of degree at most $m$, and let us write
           \[
                    u_{k} = \phi_{r+1}(x^{k})
           \]
           as a polynomial of degree at most $m-r-1$ in $x$. Then the
           equations \eqref{eq:L-op} take the form
           \[
                     u_{k+1} - x u_{k} = v_{k}
           \]
           for $k=0,\dots, m-1$, where $v_{k}$ is a given polynomial in
           $x$ of degree at most $m-r$ and the equations are to be
           solved for $u_{k}$ of degree at most $m-r-1$. If a solution
           exists, then
           \[
                u_{m} = v_{m-1} + x v_{m-2} + \dots + x^{m-1}v_{0} +
                x^{m} u_{0}.
           \]
           Since all polynomials $u_{k}$ and $v_{k}$ here have
           degree less than $m$, this equation determines the
           coefficients of $u_{0}$ as linear combinations of the
           coefficients of the $v_{k}$:
           \[
                    u_{0} = - (x^{-m}v_{m-1} + x^{-m+1} v_{m-2} +
                    \dots + x^{-1}v_{0})_{+},
           \]
           where the subscript $+$ means to discard the negative
           powers of $x$. Having found $u_{0}$, we can
           express the complete solution, if it exists, by the recurrence
           \[
                    u_{k+1} = \mathrm{trunc}_{m-r-1}(v_{k} + x u_{k})
           \]
           where $\mathrm{trunc}_{m-r-1}$ is the truncation of the polynomial
           to the given degree. Whether or not a solution exists,
           this process defines $u_{k}$ as a linear function of the
           $v$'s, and so defines a right inverse $P_{r}$ for the
           linear map $L_{r}$. In this
           form, the coefficients of $P_{r}$ are integers,
           and this allows us to pass to any ring. These leads to the
           following  version.

           \begin{proposition}\label{prop:Hilbert-R}
            Let $R$ be a Noetherian ring, let $A=R[x,y]$ and let
            $I\subset A$ be an ideal such that 
            \begin{itemize}
                \item $A/I$ is a free $R$-module of rank $N=m(m+1)/2$;
                \item there is an $R$-module homomorphism $\phi :
                A_{m}\to A^{(m-1)}$ such that $a-\phi(a)\in I$ for all
                $a\in A_{m}$.
            \end{itemize}
            Then $I$ is generated by the elements $a-\phi(a)$ for
            $a\in A_{m}$. Furthermore, if we write
            \[
                        \phi = \phi_{1} + \phi_{2} + \dots + \phi_{m}
            \]
            with $\phi_{r}: A_{m}\to A_{m-r}$, then $\phi_{r}$ for
            $r\ge 2$ is determined by $\phi_{1}$ through the iteration
            \eqref{eq:P-iter}. This establishes a bijection between
            ideals $I$ satisfying the above two constraints and module
            homomorphisms
            $\phi_{1} : A_{m}\to A_{m-1}$.
           \end{proposition}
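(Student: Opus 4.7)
The plan is to follow the field-case argument step by step, replacing vector-space dimension counts with $R$-module reasoning where needed. First I would establish that $I$ is generated by $\{a - \phi(a)\mid a\in A_{m}\}$; let $J\subseteq I$ be the ideal they generate. The hypothesis $\phi(A_{m})\subset A^{(m-1)}$ combined with induction on degree shows that every element of $A$ is congruent mod $J$ to one in $A^{(m-1)}$: a monomial $x^{i}y^{j}$ with $i+j\ge m$ factors as $x^{i-(m-j)}\cdot x^{m-j}y^{j}$, and replacing $x^{m-j}y^{j}$ by $\phi(x^{m-j}y^{j})$ strictly lowers total degree. Hence the composition
\[
A^{(m-1)} \twoheadrightarrow A/J \twoheadrightarrow A/I
\]
is a surjection between two free $R$-modules of the same rank $N$. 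Since a surjective endomorphism of a finitely generated $R$-module is an isomorphism, this composite is an iso, forcing both arrows to be isos; in particular $J = I$, and one obtains the splitting $A = I \oplus A^{(m-1)}$ of $R$-modules.

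Next I would derive the recursion. Let $\Phi: A \to A^{(m-1)}$ be the canonical $R$-linear projection with kernel $I$ provided by the splitting. For each $b \in A_{m-1}$,
\[
y\phi(xb) - x\phi(yb) = x\bigl(yb - \phi(yb)\bigr) - y\bigl(xb - \phi(xb)\bigr) \in I,
\]
so $\Phi\bigl(y\phi(xb) - x\phi(yb)\bigr) = 0$ in $A^{(m-1)}$. Expanding $\phi = \phi_{1}+\cdots+\phi_{m}$ and separating homogeneous components, the degree-$(m-r)$ piece of this identity is exactly $L_{r}(\phi_{r+1}) = Q_{r}(\phi_{1},\phi_{r})$ from \eqref{eq:L-op}. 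The derivation uses only $R$-module operations.

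Uniqueness of $\phi_{r+1}$ solving this recursion --- hence the closed formula $\phi_{r+1} = P_{r}Q_{r}(\phi_{1},\phi_{r})$ using the integer-coefficient right inverse $P_{r}$ --- together with the surjectivity of the bijection (every $\phi_{1}$ comes from an admissible $I$) is best handled by a universality argument. Given $\phi_{1} \in \Hom_{R}(A_{m},A_{m-1})$, the iteration \eqref{eq:P-iter} yields a $\phi$ whose matrix entries are polynomials with integer coefficients in those of $\phi_{1}$; set $I = \langle a - \phi(a) \mid a \in A_{m}\rangle$. By functoriality it suffices to treat the universal base $R_{0} = \Z[t_{ij}]$ with $t_{ij}$ the entries of a universal $\phi_{1}$. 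At every geometric point of $\mathrm{Spec}\, R_{0}$ the field-case proposition gives that $A/I_{0}$ has colength $N$; combined with the always-available surjection $A^{(m-1)} \twoheadrightarrow A_{R_{0}}/I_{0}$, a standard fiberwise-rank criterion yields freeness of $A_{R_{0}}/I_{0}$ over $R_{0}$ of rank $N$. Step~1 then supplies the admissible $\phi$ for this $I_{0}$, and base change from $R_{0}$ to an arbitrary Noetherian $R$ completes the proof; the same base change promotes the field-case uniqueness of $\phi$ over $\Q(t_{ij})$ to the claimed polynomial identity $\phi_{r+1} = P_{r}Q_{r}(\phi_{1},\phi_{r})$ over $R$.

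The main obstacle is the freeness step in this universality argument. In the field case one concluded $U = \Hom(A_{m}, A_{m-1})$ by a dimension count on the Hilbert scheme; the replacement needed here is a fiberwise-rank-plus-flatness argument over $\Z[t_{ij}]$, which is routine but does use the Noetherian hypothesis on $R$ to ensure the fiberwise rank can be detected after base change.
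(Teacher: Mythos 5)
Your Steps 1 and 2 are, in essence, the paper's own proof. The degree-reduction argument shows $A^{(m-1)}\to A/I$ is onto, equality of ranks of free modules forces it to be an isomorphism, and from the resulting splitting $A=I\oplus A^{(m-1)}$ the projection $\Phi$ satisfies $\Phi|_{A_{m}}=\phi$, so your identity $\Phi\bigl(y\phi(xb)-x\phi(yb)\bigr)=0$ reproduces \eqref{eq:phi-constraint} and, after separating homogeneous components, \eqref{eq:L-op}. Your bookkeeping with the intermediate ideal $J$ is a useful extra precision for the generation claim, which the paper leaves implicit.

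Where you genuinely diverge is the last step, and there the logic as written has a soft spot. To conclude $\phi_{r+1}=P_{r}Q_{r}(\phi_{1},\phi_{r})$ for the \emph{given} $\phi$ over the \emph{given} $R$, you cannot base-change ``the field-case uniqueness of $\phi$ over $\Q(t_{ij})$'': the higher components $\phi_{r}$, $r\ge 2$, of your given $\phi$ are not known in advance to be specializations of the universal solution --- that is exactly what is to be proved. No universality is needed here: the operator $L_{r}$ admits the integer-coefficient one-sided inverse $P_{r}$ with $P_{r}L_{r}=\mathrm{id}$ (the explicit back-substitution of section~\ref{subsec:hilb}, an identity of integer matrices, hence valid over any $R$), so applying $P_{r}$ to the constraint you already derived over $R$ in Step 2 yields \eqref{eq:P-iter} immediately; this is the paper's route. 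Your universal-base flatness argument is doing different work, namely the surjectivity of the bijection (every $\phi_{1}$ arises from an admissible $I$), a point the paper's proof does not address and which is not needed for the applications. If you retain it, two things must be made explicit: the field-case colength statement has to be available over residue fields of \emph{every} characteristic of $\Z[t_{ij}]$ (the paper's Hilbert-scheme discussion is written for $k=\C$; irreducibility of the Hilbert scheme of points on the affine plane in arbitrary characteristic, or the Hilbert-function property of distractions, fills this in), and after the constant-fiber-rank criterion gives local freeness of rank $N$ over the reduced ring $\Z[t_{ij}]$, it is the surjection from the rank-$N$ free module $A^{(m-1)}$ that upgrades local freeness to freeness (its kernel is a finitely generated projective module of rank zero, hence zero), and the same remark gives freeness of $A/I$ over $R$ after base change.
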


           \begin{proof}
            If $I$ satisfies the second condition, then the relations
            $a=\phi(a)$ mod $I$ show that the map $A^{(m-1)}\to A/I$
            is surjective. The first of the two conditions tells us
            that these are free $R$-modules of equal rank, and it
            follows that the map is an isomorphism because $R$ is
            Noetherian. Thus we have a
            direct sum decomposition $A=I \oplus A^{(m-1)}$. As before, the
            constraints then lead to the relations \eqref{eq:P-iter} which
            determine $\phi_{r}$ for $r\ge 2$.  
           \end{proof}

           \subsection{Syzygies}
\label{subsec:syz}
           Proposition~\ref{prop:Hilbert-R}, which determines $\phi$
           entirely in terms of $\phi_{1}$,
           will be applied in section~\ref{subsec:application-hilb} to
           see that the generators $\bar\om^{m}_{h}$ of the ideal
           $\barJOne{n}$ can be determined completely in terms of the
           leading and subleading terms. (The subleading terms are already supplied by
           Proposition~\ref{prop:Mumford-Zn1}.) This will provide a
           complete description of the instanton homology
           $I(Z_{n,-1})$. First however, we pursue further our
           discussion of the Hilbert scheme of points in the plane, to
           explain that the way in which $\phi_{1}$ determines $\phi$
           can be packaged by considering the syzygies of the
           module $A/I$. This will lead to quite explicit formulae for
           the generators.
           
           We return temporarily to the case $A=k[x,y]$ as above, and
           we take $k=\C$. Fix $m$ again and write $N=m(m+1)/2$. Let
           $U\subset \Hilb^{N}$ be as before \eqref{eq:U-hilb}. An
           ideal $I\in U$ contains no non-zero polynomials of degree
           less than $m$ and is generated by $m+1$ elements whose
           leading terms are a basis for $A_{m}$. Choose a basis for
           $A_{m}$ so as to identify $A_{m}=A^{\oplus (m+1)}$, say the
           monomial basis \eqref{eq:max-m-generators}. We then have
           generators for $I$ in the form
           \[
                    g_{i} = x^{m-i}y^{i} - \phi( x^{m-i}y^{i}).
           \]
           Because $A$ has dimension $2$, a resolution of $A/I$ has
           only one more step, and we therefore have a presentation of
           the ideal $I$ in the form
           \begin{equation}\label{eq:syzygy-presentation}
                0 \to A^{\oplus m} \stackrel{S}{\to} A^{\oplus (m+1)}
                \stackrel{g}{\to} I \to 0.
           \end{equation}
           Here $g = (g_{i})$ is given by the generators (the
           relations in $A/I$) and $S$ is the matrix of syzygies.

           In the special case that $I=\m^{m}$ and
           $g_{i}=x^{m-i}y^{i}$ the syzygy matrix can be taken to be
           \begin{equation}\label{eq:S0}
                    S_{0} = 
                    \begin{pmatrix}
                        -y  & 0 & 0 & \dots & 0 \\
                        \phantom{+}x & -y & 0 & \dots & 0 \\
                        0 & \phantom{+}x & -y & \dots & 0 \\
                        \vdots & \vdots & \vdots & \ddots & \vdots \\
                        0 & 0 & 0 & \dots & -y \\
                        0 & 0 & 0 & \dots & \phantom{+}x 
                    \end{pmatrix}.
           \end{equation}

           \begin{lemma}
            For a general $I\in U$, the syzygy matrix $S$ has the form
            $S=S_{0} + S_{1}$, where $S_{0}$ is as above and $S_{1}$
            is a matrix of scalars (polynomials of degree $0$).
           \end{lemma}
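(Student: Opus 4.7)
The plan is to construct $m$ explicit syzygies among the generators $g_0, \ldots, g_m$ of $I$, verify that they assemble into a matrix of the form $S_0 + S_1$ with $S_1$ scalar, and then show by a filtration argument that these span the entire syzygy module.

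Each generator has the form $g_i = x^{m-i}y^i - \phi(x^{m-i}y^i)$ with $\phi(x^{m-i}y^i) \in A^{(m-1)}$. For each $i \in \{0, \ldots, m-1\}$ I form
\[
    r_i := x\, g_{i+1} - y\, g_i = y\, \phi(x^{m-i}y^i) - x\, \phi(x^{m-i-1}y^{i+1}),
\]
in which the leading monomials $x^{m-i}y^{i+1}$ cancel; hence $r_i \in I$ and $\deg r_i \leq m$. Expanding its degree-$m$ piece in the monomial basis as $r_i^{(m)} = \sum_j c_{ji}\, x^{m-j}y^j$ for unique scalars $c_{ji} \in k$, the element $r_i - \sum_j c_{ji}\, g_j$ lies in $I$ and has degree strictly less than $m$. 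The defining property $A = I \oplus A^{(m-1)}$ of $U$ then forces it to vanish, producing the genuine syzygy
\[
    -y\, g_i + x\, g_{i+1} - \sum_{j=0}^{m} c_{ji}\, g_j = 0,
\]
whose coefficient column is the $i$-th column of $S_0$ augmented by the scalar column $(-c_{ji})_j$. Assembling over $i = 0, \ldots, m-1$ yields the matrix $S = S_0 + S_1$ with $S_1 = -(c_{ji})$ of degree zero, which is the form claimed.

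To promote these $m$ syzygies to a set of generators of the full syzygy module of $I$, I would use the standard filtration of $A$ by total degree. Since $I \cap A^{(m-1)} = 0$, the initial ideal of $I$ is exactly $\mathfrak{m}^m$ and $\mathrm{in}(g_i) = x^{m-i}y^i$. Placing the basis vector $e_i \in A^{\oplus(m+1)}$ at filtration degree $m$ and $f_i \in A^{\oplus m}$ at filtration degree $m+1$ makes both maps in the complex $A^{\oplus m} \xrightarrow{S} A^{\oplus(m+1)} \xrightarrow{g} I \to 0$ filtration-preserving, and its associated graded is the standard (exact) minimal free resolution $A^{\oplus m} \xrightarrow{S_0} A^{\oplus(m+1)} \to \mathfrak{m}^m \to 0$ of $\mathfrak{m}^m$. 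A routine lifting lemma — surjection on associated graded pieces implies surjection, for exhaustive filtrations on finitely generated modules over a polynomial ring — then gives $\mathrm{im}(S) = \ker(g)$, so our constructed syzygies generate.

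The main subtlety lies in the bookkeeping of the last step: the correct degree shifts on $A^{\oplus m}$ and $A^{\oplus(m+1)}$ must be chosen so that $\mathrm{gr}(S) = S_0$ on the nose. Once those shifts are in place, everything reduces to the exactness of the well-known resolution of $\mathfrak{m}^m$ together with the standard associated-graded surjectivity lemma; the construction of the individual syzygies in the middle step is a direct consequence of the decomposition $A = I \oplus A^{(m-1)}$.
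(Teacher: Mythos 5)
Your proof is correct, but it follows a genuinely different route from the paper's. The paper argues by degeneration: it forms the one-parameter family $g^{t}=g(0)+tg(1)+t^{2}g(2)+\cdots$, notes that constancy of the colength makes this a flat family, lifts the syzygy matrix $S_{0}$ of the special fibre $\m^{m}$ to a syzygy matrix $S^{t}$ over the family, and then uses homogeneity of $g^{t}$ in $(t,x,y)$ to force $S^{t}=S_{0}+tS_{1}$ with $S_{1}$ of degree $0$; setting $t=1$ gives the lemma. You instead construct the $m$ syzygies by hand: the combination $xg_{i+1}-yg_{i}$ kills the leading monomials, so after subtracting the scalar combination $\sum_{j}c_{ji}g_{j}$ matching its degree-$m$ part you land in $I\cap A^{(m-1)}=0$, which is exactly the splitting defining $U$; this produces columns of the shape (column of $S_{0}$ as in \eqref{eq:S0}) plus scalars, with $S_{1}=-(c_{ji})$, consistent with \eqref{eq:S1-condition}. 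You then show these generate the full syzygy module by a filtration argument: since $\mathrm{in}(I)=\m^{m}$, the associated graded of your complex (with the shifts you specify) is the standard exact resolution of $\m^{m}$, and the bounded-below, exhaustive filtration lets you lift exactness back, giving $\mathrm{im}(S)=\ker(g)$. What each approach buys: the paper's is shorter once one accepts the lifting of syzygies along a flat family, while yours is more elementary and explicit --- it avoids any flatness or semicontinuity input, identifies $S_{1}$ concretely as the degree-$m$ coefficients of $xg_{i+1}-yg_{i}$, and, because both the construction and the graded resolution of $\m^{m}$ work over an arbitrary Noetherian coefficient ring, it transfers essentially verbatim to the setting of Proposition~\ref{prop:syz-R}.
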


           \begin{proof}
            Write $g = g(0)+g(1)+ \dots + g(m)$, where $g(r)$ is a
            vector of homogeneous polynomials of degree $m-r$ and $g(0)$ is the
            basis of monomials of degree $m$. (So the entries of
            $g(r)$ are the polynomials $-\phi_{r}(x^{m-i}y^{i})$.) Let
            \[
                    g^{t} = g(0) + t g(1) + t^{2} g(2) + \cdots,
            \]
            and let $I^{t}$ be the ideal generated by the entries of
            $g^{t}$. Because the co-length of $I=I^{1}$ is the same as
            that of $I^{0}$, this is a flat family, and the syzygy
            matrix $S^{0}$ for $g^{0}$ therefore lifts to a syzygy
            matrix $S^{t}$, whose entries are polynomials in $(x,y,t)$
            and which coincides with $S^{0}$ at $t=0$. Because the
            entries of $g^{t}$ are homogeneous (of degree $m$) in
            $(t,x,y)$, we may assume that $S^{t}$ is also homogeneous.
            Since $S_{0}$ has homogeneous degree $1$, so too does
            $S^{t}$, and it follows that
            \[
                        S^{t} = S_{0} + t S_{1},
            \]
            where the entries of $S_{1}$ have degree $0$ in $(x,y)$.
            \end{proof}

            Note that in the above lemma, the matrix $S_{1}$ is
            entirely determined by the leading term $g(0)$ and the
            subleading term $g(1)$ (or equivalently by $\phi_{1} :
            A_{m} \to A_{m-1}$) via the condition
            \begin{equation}\label{eq:S1-condition}
                    g(0) \cdot S_{1} + g(1) \cdot S_{0} = 0.
            \end{equation}
            Quite concretely, taking $g(0)$ to be again the standard
            monomial basis, taking $S_{0}$ as above, and writing the
            subleading terms $g_{i}(1)$ as
            \[
                        g_{i}(1) = \sum_{j=0}^{m-1} G_{ij} x^{m-1-j}
                        y^{j}
            \]
            then
            \begin{equation}\label{eq:S1}
                        S_{1} = 
                        \begin{pmatrix}
                             -G_{1,0} &
                           -G_{2,0} & \dots &
                           -G_{m,0} \\
                            G_{0,0}-G_{1,1} & G_{1,0}-G_{2,1} & \dots &
                            G_{m-1,0}-G_{m,1}\\
                            \vdots & \vdots & \ddots & \vdots \\
                            G_{0,m-2}-G_{1,m-1} & G_{1,m-2}-G_{2,m-1}
                            & \dots & G_{m-1,m-2} - G_{m, m-1} \\
                            G_{0,m-1} &
                            G_{1,m-1} & \dots &
                            G_{m-1,m-1} & 
                        \end{pmatrix}
            \end{equation}

            \begin{proposition}
                Let $S=S_{0}+S_{1}$ be the syzygy matrix as above, so
                that $S_{0}$ is the matrix of syzygies of the standard
                monomial
                ideal $\m^{m}$ and $S_{1}$ is determined by the
                subleading terms $g_{i}(1)$ by \eqref{eq:S1}. Then the generators
                $g_{0}, \dots, g_{m}$ of the ideal $I$ are precisely
                the $m\times m$ minors of the $(m+1)\times m$ matrix
                $S$ (i.e. the determinants of the matrices obtained by
                deleting a single row of $S$, with alternating sign).
            \end{proposition}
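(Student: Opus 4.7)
The statement is essentially an instance of the classical Hilbert--Burch theorem applied to the resolution \eqref{eq:syzygy-presentation}. My plan is to invoke that theorem to conclude that the $g_{i}$ equal a common multiple of the signed maximal minors of $S$, and then to pin down that multiple as $1$ by comparing leading terms.

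For the Hilbert--Burch step, the hypotheses are immediate: $A=k[x,y]$ is a regular Noetherian ring of dimension $2$, and the ideal $I\in U$ has codimension $2$ because $A/I$ has finite length (equal to $N=m(m+1)/2$). The sequence \eqref{eq:syzygy-presentation} is therefore a free resolution of $I$ of the expected Hilbert--Burch shape. The theorem then produces a non-zerodivisor $d\in A$ with
\[
     g_{i}\,=\,d\cdot(-1)^{i}M_{i},\qquad i=0,\dots,m,
\]
where $M_{i}$ denotes the determinant of the $m\times m$ matrix obtained from $S=S_{0}+S_{1}$ by deleting row~$i$.

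To identify $d$, I would compare the degree-$m$ homogeneous components on both sides. Since the entries of $S_{0}$ are homogeneous of degree $1$ in $(x,y)$ while the entries of $S_{1}$ are scalars, each $M_{i}$ has total degree at most $m$, with its degree-$m$ component equal to the corresponding minor of $S_{0}$ alone. Removing row $i$ from the bidiagonal matrix $S_{0}$ in \eqref{eq:S0} leaves a block-triangular matrix with an upper-left block of determinant $(-y)^{i}$ and a lower-right block of determinant $x^{m-i}$; hence this top-degree minor equals $(-1)^{i}x^{m-i}y^{i}$, and the degree-$m$ part of $(-1)^{i}M_{i}$ is $x^{m-i}y^{i}$. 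Since this is also the leading term of $g_{i}$ by construction, the identity $g_{i}=d\cdot(-1)^{i}M_{i}$ forces $d=1$, and the proposition follows.

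The only non-routine ingredient is the invocation of Hilbert--Burch; the leading-term computation is direct. If a self-contained argument is preferred, one can instead verify by a Cramer-type expansion that the row vector of signed minors $\bigl((-1)^{i}M_{i}\bigr)$ satisfies $M\cdot S=0$, conclude from exactness of \eqref{eq:syzygy-presentation} that the minors lie in $I$, and then use a length count: the minors generate an ideal whose leading terms include all of $A_{m}$, so the quotient has length at most $N$, and equality $(-1)^{i}M_{i}=g_{i}$ follows. I do not expect any of this to be a real obstacle; the substantive content of the proposition is the existence and shape of the syzygy matrix $S_{0}+S_{1}$ established in the preceding lemma and \eqref{eq:S1-condition}.
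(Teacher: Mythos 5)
Your main argument is correct, but it takes a different route from the paper. The paper's proof is deliberately low-tech: writing $h=(h_{0},\dots,h_{m})$ for the signed minors, it observes that both $h$ and $g$ lie in the left kernel of $S$ (i.e.\ $h\cdot S=0$ by Laplace expansion and $g\cdot S=0$ by construction), that this kernel has rank one since $S$ has rank $m$, so $ah=bg$ for some $a,b\in A$, and then the equality of the degree-$m$ leading terms $x^{m-i}y^{i}$ forces $h=g$. You instead invoke Hilbert--Burch for the resolution \eqref{eq:syzygy-presentation} to get $g_{i}=d\,(-1)^{i}M_{i}$ for a single non-zerodivisor $d$, and then pin down $d=1$ by exactly the same leading-term computation (deleting row $i$ of $S_{0}$ gives $(-1)^{i}x^{m-i}y^{i}$, and the scalar entries of $S_{1}$ cannot contribute in top degree). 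The two proofs share the decisive leading-term comparison; what differs is how the proportionality between $g$ and the minor vector is obtained. The paper's rank-one-kernel argument is self-contained and elementary, while your appeal to Hilbert--Burch is a clean citation of standard machinery (consistent with the paper's own reference to \cite{Eisenbud-Geometry-of-Syzygies}) and packages the proportionality, including the fact that the factor $d$ is a single polynomial acting uniformly on all components, without further argument.

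One caution about your optional ``self-contained'' variant at the end: the step ``verify $M\cdot S=0$ and conclude from exactness of \eqref{eq:syzygy-presentation} that the minors lie in $I$'' is a non sequitur as stated. Exactness of \eqref{eq:syzygy-presentation} controls the \emph{column} span of $S$ (the kernel of $g$), whereas $M\cdot S=0$ is a statement about the \emph{left} kernel, so it does not by itself place the $M_{i}$ in $I$; indeed Hilbert--Burch only gives $I\subseteq I_{m}(S)$ in general, with equality requiring the unit/depth-two part of that theorem or an argument like the paper's rank-one comparison. Since your primary Hilbert--Burch argument is complete on its own, this does not affect the correctness of your proposal, but the sketched alternative would need to be repaired along the lines of the paper's proof.
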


            \begin{proof}
                Let $h=(h_{0}, h_{1}, \dots, h_{m})$ be the minors. We
                have both $h\cdot S=0$ (by standard properties of
                determinants) and $g\cdot S=0$ (by construction), and
                it follows that $a h = b g$ for some $a$ and $b$ in
                $A$, because the rank of the kernel of $S^{\mathrm{T}}$ is $1$.
                On the other hand, by inspection, the leading
                term of $h_{i}$ is the same as that of $g_{i}$, namely
                $x^{m-i}y^{i}$. So $h=g$.
            \end{proof}

            Finally, we can pass from the case of $k[x,y]$ to more
            general coefficients without difficulty. The next
            proposition summarizes the situation.

            \begin{proposition}\label{prop:syz-R}
            As in Proposition~\ref{prop:Hilbert-R}, 
            let $R$ be a Noetherian ring, let $A=R[x,y]$ and let
            $I\subset A$ be an ideal such that 
            \begin{itemize}
                \item $A/I$ is a free $R$-module of rank $N=m(m+1)/2$;
                \item there is an $R$-module homomorphism $\phi :
                A_{m}\to A^{(m-1)}$ such that $a-\phi(a)\in I$ for all
                $a\in A_{m}$.
            \end{itemize}
            Let $(g_{0}(0),\dots ,g_{m}(0))$ be a basis for $A_{m}
            \cong
            A^{\oplus (m+1)}$ and let
            \[
                        \begin{aligned}
                            g_{i} &= g_{i}(0) - \phi (g_{i}(0)) \\
                                  &= g_{i}(0) + g_{i}(1) + g_{i}(2) +\dots +
                                  g_{i}(m)
                        \end{aligned}
            \]
            where $g_{i}(j)$ is homogeneous of degree $m-j$. 
            Then the elements $(g_{0}, \dots ,
            g_{m})$ are generators of the ideal $I$.
            Furthermore, let $S_{0}$ be a matrix of
            syzygies for the leading parts $g_{i}(0)$, with entries
            which are homogeneous of degree $1$, and let $S_{1}$
            be the matrix of scalars determined by the subleading
            parts $g_{i}(1)$ via equation \eqref{eq:S1-condition}.
            Then:
            \begin{enumerate}
                \item the matrix $S=S_{0}+S_{1}$ is the matrix of syzygies
            for the generators $(g_{0}, g_{1},\dots, g_{m})$ of the
            ideal $I$ ;
            \item if $h_{0},
            \dots, h_{m}$ are the $m\times m$ minors of the matrix
            $S$, then $(h_{0}, h_{1},\dots, h_{m})$ is a set of
            generators for $I$;
            \item if $S_{0}$ is chosen so that its minors are the
            leading terms $(g_{0}(0), \dots, g_{m}(0))$, then the
            generators $g_{i}$ for $I$ are equal to the minors $h_{i}$
            of $S$.
            \end{enumerate}
            In this way, the generators $g$ are determined by their
            leading and subleading terms, $g(0)$ and $g(1)$.
           \end{proposition}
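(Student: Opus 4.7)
The plan is to reduce everything to the $\C[x,y]$ case already treated in Section~\ref{subsec:syz}, exploiting the fact that every quantity in the statement depends polynomially (with integer coefficients) on the matrix entries of $\phi_1$. The generation of $I$ by $(g_0,\dots,g_m)$ is immediate from Proposition~\ref{prop:Hilbert-R}: since $I$ is generated by $\{a-\phi(a) : a\in A_m\}$ and the $g_i(0)$ form a basis of $A_m$, the elements $g_i = g_i(0)-\phi(g_i(0))$ generate $I$.

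For the syzygy statement, I would introduce the universal ring $R_0 = \Z[\{\Phi_{ij}\}]$ whose variables are the matrix entries of an abstract $\phi_1 : A_m \to A_{m-1}$, together with $A_0 = R_0[x,y]$. The iteration \eqref{eq:P-iter} is given by formulas with integer coefficients, so it produces a universal homomorphism $\phi$ over $R_0$, universal generators $g_i$, and the universal scalar matrix $S_1$ determined from $\phi_1$ through \eqref{eq:S1-condition}. Any instance $(R,I,\phi)$ in the hypotheses is obtained from this universal family by base change along the ring map $R_0 \to R$ sending the $\Phi_{ij}$ to the matrix entries of the given $\phi_1$, so it suffices to verify the syzygy identity $g \cdot (S_0 + S_1) = 0$ over $R_0$. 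Since $R_0$ is a polynomial ring over $\Z$ it embeds into $\C$, and under any ring homomorphism $R_0 \to \C$ the identity reduces to the assertion proved in Section~\ref{subsec:syz} by flat deformation from $\m^m$. The identity therefore holds in $A_0$ and, after base change, in $A$.

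Once $S = S_0 + S_1$ is known to be a syzygy matrix, the statements about the minors follow by standard commutative algebra. The leading part $S_0$ has rank $m$ (its minors are, up to sign, the linearly independent basis elements $g_i(0)$ of $A_m$), so $S$ also has rank $m$ and the kernel of right multiplication by $S^{\mathrm T}$ on $A^{\oplus(m+1)}$ is a rank-$1$ free $A$-submodule. Both $(g_0,\dots,g_m)$ and the minors $(h_0,\dots,h_m)$ lie in this kernel, so $h_i = a\,g_i$ for some $a \in A$. In the particular setup where $S_0$ is chosen so that its minors are $(g_0(0),\dots,g_m(0))$, comparing leading terms forces $a = 1$ and gives the final equality $h_i = g_i$.

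The main obstacle is the verification of the syzygy identity at the universal level. Once this is established --- either by the reduction to $\C$ sketched above, or by a direct check that the integer-coefficient recursion producing the $\phi_r$ is precisely what is needed to cancel the successive homogeneous pieces $g(r) \cdot S_0 + g(r-1) \cdot S_1$ of the product $g \cdot (S_0 + S_1)$ --- the remaining assertions are essentially formal.
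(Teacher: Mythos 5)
Your overall strategy -- pass to the universal ring $R_{0}=\Z[\{\Phi_{ij}\}]$, observe that the iteration \eqref{eq:P-iter} and the rule \eqref{eq:S1-condition} are integer-coefficient formulas, and check identities after an embedding $R_{0}\hookrightarrow \C$ where the results of section~\ref{subsec:syz} apply -- is sound, and it is in substance the same device the paper uses (the paper phrases it as ``universal polynomials with integer coefficients'' rather than a base change from an explicit universal ring). The first assertion, that the $g_{i}$ generate $I$, is indeed immediate from Proposition~\ref{prop:Hilbert-R}, and your verification of $g\cdot(S_{0}+S_{1})=0$ by specializing the $\Phi_{ij}$ to algebraically independent complex numbers is correct.

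The gap is in the paragraph that derives the minor statements over the general ring $R$. From the fact that the maximal minors of $S_{0}$ are (up to sign) a basis of $A_{m}$ you conclude that the kernel of $v\mapsto vS$ on $A^{\oplus(m+1)}$ is a free $A$-module of rank one, and hence that $h_{i}=a\,g_{i}$ for a single $a\in A$. Neither step is available when $R$ is merely Noetherian: in the intended application $R=\cR[\epsilon]/\langle\epsilon^{2}-1\rangle$ has zero divisors, and the kernel of a map of free $R[x,y]$-modules need not be free of rank one (over $\C[x,y]$ this is fine -- the kernel is a rank-one torsion-free, indeed reflexive, module over a two-dimensional UFD, which is what the argument of section~\ref{subsec:syz} exploits -- but that reasoning does not transfer). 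Moreover, even granting freeness you would only obtain a proportionality $b\,h=c\,g$; to divide and get $h=a\,g$ you would need to know that $g$ generates the kernel, which is essentially item (1), and you have only shown that the columns of $S$ are syzygies, not that they generate all syzygies. The repair is to push your universal argument one step further, exactly as the paper does: the desired conclusion $h_{i}=g_{i}$ is itself an identity between integer-coefficient polynomial expressions in the entries of $\phi_{1}$ (the minors of $S_{0}+S_{1}$ on one side, the output of the iteration \eqref{eq:P-iter} on the other), so it can be verified over $R_{0}\hookrightarrow\C$ using the Proposition of section~\ref{subsec:syz} and then specialized to $R$; item (2) then follows at once because the $g_{i}$ generate $I$.
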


           \begin{proof}
            We may take it that $g(0)$ is the standard monomial basis
            and that $S_{0}$ is given \eqref{eq:S0}. The matrix
            $S_{1}$ is then given by \eqref{eq:S1} where the terms
            $G_{i,j}$ are the coefficients of the subleading terms
            $g(1)$. According to Proposition~\ref{prop:Hilbert-R},
            the lower terms in the entries of $g$ are expressible as
            universal polynomials in the coefficients of $g(1)$. On
            the other hand, the recipe in terms of the minors of $S$
            expresses the lower terms of $g$ as polynomials in the
            coefficients of $g(1)$, at least when $R$ is a field $k$.
            The polynomials occurring in the minors have integer
            coefficients, and must coincide with the polynomials in
            Proposition~\ref{prop:Hilbert-R}.
           \end{proof}
           
           \subsection{Equations for the curve $D_{n}$}
           \label{subsec:application-hilb}

           Let $\cR=\Q[\tau,\tau^{-1}]$. Let $R$
           temporarily denote the ring \[ R = \cR[\epsilon]/\langle
           \epsilon^{2}-1 \rangle. \] The algebra $\bar\cA$ in
           \eqref{eq:barA-def} is
           $R[\alpha,\delta]$ and the instanton homology $I(Z_{n,-1})$
           is described as a quotient $\bar\cA/ \barJOne{n}$ in
           \eqref{eq:Zn1-raw}. We know that $I(Z_{n,-1})$ is a free
           $\cR$-module of rank $(n^{2}-1)/4 = m(m+1)$ from
           Corollary~\ref{cor:rank-Znm1}, and it is a free $R$-module
           of rank $m(m+1)/2$.  We know that there are elements
           $\bar\om^{m}_{h}$ in $\barJOne{n}$ of degree $m$ in
           $(\alpha,\delta)$ having the form
         \begin{equation}\label{eq:model-om}
         \begin{aligned}
         \bar\om^{m}_{h} &= w(0)_{h} + \epsilon w(1)_{h} +
                                \dots \\
                                &=\bar\oo^{m}_{n,h}  + \epsilon
                \bar\oo^{m-1}_{n,h'} + \cdots
                \end{aligned}
                \end{equation}
           (see Proposition~\ref{prop:Mumford-Zn1}).
           The leading and subleading terms $w(0)$
           and $\epsilon w(1)$ are known from
           Proposition~\ref{prop:Mumford-Zn1} and
           Definition~\ref{def:omega}.  We also know that
           the leading terms $w(0)_{h}$ are a basis for the $m$'th
           power of the maximal ideal, $\langle\alpha, \delta\rangle^{m}$,
           by Lemma~\ref{lem:power-maximal}.

           The ideal $\barJOne{n}\subset R[\alpha,\delta]$ therefore
           satisfies the hypotheses of
           Proposition~\ref{prop:Hilbert-R} and
           Proposition~\ref{prop:syz-R}. In the notation of
           Proposition~\ref{prop:syz-R}, we know $\phi_{1}$ explicitly, as it is
           determined by the subleading terms $\epsilon w(1)_{h}$. We
           therefore have the following result as a corollary. In this
           statement, we write $n=2m+1$ as usual.

           \begin{theorem}\label{thm:Jbar-main}
           Let
           $\barJOne{n}$ be the ideal of relations for the
              instanton homology $I(Z_{n,-1})$ with local
                coefficients, and let
                \[
                        \bar\om^{m}_{h} = w(0)_{h} + w(1)_{h} + \dots
                        + w(m)_{h}, \qquad (0 \le h \le n, \; h =
                        m+1 \bmod 2),
                        \]
                be the generators for this ideal, as in \eqref{eq:model-om}.
            There are explicit polynomial formulae which express the
            coefficients of all the lower terms $w(r)_{h}$ for $r\ge
            2$  in terms of the
            leading and subleading terms
            \[
            \begin{aligned}
            w(0)_{h}& = \bar\oo^{m}_{n,h},\quad  \text{and}\\
                w(1)_{h} &= \epsilon
                \bar\oo^{m-1}_{n,n-h}
                \end{aligned}
            \]
            in
                Proposition~\ref{prop:Mumford-Zn1}. If the syzygy matrix
                \[
                        S = S_{0} + S_{1}
                \]
                is constructed as in Proposition~\ref{prop:syz-R}, as
                 a matrix whose entries are inhomogeneous linear forms
                 in $(\alpha,\delta)$ with coefficients in
                 $R=\cR[\epsilon]/\langle \epsilon^{2}-1 \rangle$,
                then the generators
                $\bar\om^{m}_{h}$ are the $m\times m$ minors of $S$.
                \qed
           \end{theorem}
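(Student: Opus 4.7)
The plan is to deduce the theorem directly from Proposition~\ref{prop:syz-R}, applied with coefficient ring $R = \cR[\epsilon]/\langle \epsilon^{2}-1\rangle$, polynomial algebra $A = R[\alpha,\delta] = \bar\cA$, and ideal $I = \barJOne{n}$. The proof then reduces to verifying the two bulleted hypotheses of Proposition~\ref{prop:syz-R}, after which every claim of the theorem is immediate from that proposition together with Proposition~\ref{prop:Hilbert-R}.

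To verify the rank hypothesis, I would argue that $A/I = \bar\cA/\barJOne{n} \cong I(Z_{n,-1})$ is free of rank $m(m+1)/2$ as an $R$-module. By Corollary~\ref{cor:rank-Znm1} it is free of rank $m(m+1)=(n^{2}-1)/4$ over $\cR$, so it is enough to see that the involution $\opepsilon$ splits it into $\epsilon$-eigenspaces of equal rank $m(m+1)/2$ which are interchanged by multiplication by $\epsilon$. This follows from the two-component structure of the representation variety as in Proposition~\ref{prop:IZ-two-copies}, transported to $Z_{n,-1}$ via the surjection $I(W^{\dag}): I(Z_{n}) \to I(Z_{n,-1})$ of Corollary~\ref{cor:rankZn1} and Lemma~\ref{lem:epsilon-squared}.

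To verify the second hypothesis, I would construct the required $R$-linear map $\phi: A_{m}\to A^{(m-1)}$ directly from the Mumford-type generators. Lemma~\ref{lem:power-maximal} says that as $h$ ranges over integers with $0\le h\le n$ and $h \equiv m+1 \pmod{2}$, the $m+1$ elements $\bar\oo^{m}_{n,h}$ form an $R$-basis for the degree-$m$ piece $A_{m} \subset A$. Proposition~\ref{prop:Mumford-Zn1} then supplies, for each such $h$, an element $\bar\om^{m}_{h} = w(0)_{h} + w(1)_{h} + \dots + w(m)_{h} \in \barJOne{n}$ with $w(0)_{h} = \bar\oo^{m}_{n,h}$ and $w(1)_{h} = \epsilon\bar\oo^{m-1}_{n,n-h}$. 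Defining $\phi$ on the basis by $\phi(\bar\oo^{m}_{n,h}) = -\bigl(w(1)_{h}+\dots+w(m)_{h}\bigr) \in A^{(m-1)}$ and extending $R$-linearly, one has $a-\phi(a) \in \barJOne{n}$ for every $a \in A_{m}$, as required.

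With both hypotheses in hand, Proposition~\ref{prop:syz-R} gives everything at once: the lower-order parts $w(r)_{h}$ for $r\ge 2$ are determined by universal polynomial formulae from $\phi_{1}$ (i.e.\ from $w(1)_{h}$) via the iteration \eqref{eq:P-iter}; the syzygy matrix $S = S_{0} + S_{1}$ is the one constructed from the leading and subleading terms via \eqref{eq:S1-condition} (with $S_{0}$ chosen so its minors recover the basis $\bar\oo^{m}_{n,h}$); and the $m\times m$ minors of $S$ coincide with the generators $\bar\om^{m}_{h}$ themselves. The only real subtlety is bookkeeping: one must check that the calculations of Section~\ref{subsec:hilb}, originally carried out over a field, extend to the Noetherian ring $R$. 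This is precisely what Propositions~\ref{prop:Hilbert-R} and~\ref{prop:syz-R} are set up to certify, so no additional argument is needed here beyond quoting them.
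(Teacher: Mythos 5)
Your overall strategy is the paper's own: Theorem~\ref{thm:Jbar-main} is obtained there precisely by feeding $R=\cR[\epsilon]/\langle\epsilon^{2}-1\rangle$, $A=\bar\cA=R[\alpha,\delta]$ and $I=\barJOne{n}$ into Propositions~\ref{prop:Hilbert-R} and~\ref{prop:syz-R}, and your verification of the second hypothesis is exactly what is intended: by (the proof of) Lemma~\ref{lem:power-maximal} the $m+1$ leading terms $\bar\oo^{m}_{n,h}$ form an $R$-basis of $A_{m}$, so Proposition~\ref{prop:Mumford-Zn1} lets you define $\phi$ on this basis by $\phi(\bar\oo^{m}_{n,h})=-\bigl(w(1)_{h}+\dots+w(m)_{h}\bigr)$, giving $a-\phi(a)\in\barJOne{n}$ for all $a\in A_{m}$.

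The one step that does not hold up as written is your verification of the first hypothesis, that $I(Z_{n,-1})\cong\bar\cA/\barJOne{n}$ is free of rank $m(m+1)/2$ over $R$. The $\epsilon$-eigenspaces are preserved, not interchanged, by $\epsilon$; and, more seriously, a surjection $I(W^{\dag}):I(Z_{n})\to I(Z_{n,-1})$ does not transport an equal-rank eigenspace splitting from source to target, since the kernel could a priori meet the two eigenspaces in different ranks, and no analogue of Lemma~\ref{lem:epsilon-squared} (identifying $\epsilon$ with the interchange of the two summands) is established for $Z_{n,-1}$. The fact you need can instead be extracted from ingredients you already quote, with no eigenspace count at all: because the leading terms $\bar\oo^{m}_{n,h}$ span $A_{m}$ over $R$ and $\bar\om^{m}_{h}\in\barJOne{n}$, induction on degree gives $\bar\cA=\barJOne{n}+\bar\cA^{(m-1)}$; hence the free $R$-module $\bar\cA^{(m-1)}$ of rank $m(m+1)/2$ --- equivalently a free $\cR$-module of rank $m(m+1)$ --- surjects onto $I(Z_{n,-1})$, which by Corollary~\ref{cor:rank-Znm1} is free over $\cR$ of that same rank; a surjection between free $\cR$-modules of equal finite rank is an isomorphism, so $I(Z_{n,-1})\cong\bar\cA^{(m-1)}\cong R^{m(m+1)/2}$. (This is essentially the first step in the proof of Proposition~\ref{prop:Hilbert-R}.) With that repair, the rest of your argument coincides with the paper's proof.
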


           To obtain a final form for the generators, we now need to
           find an explicit formula for the syzygy matrix $S$,
           starting from our formulae for $w(0)_{h}$ and $w(1)_{h}$.
           In section~\ref{subsec:syz} above, we illustrated the
           calculation when the leading terms of the generators were the
           standard monomial basis in the polynomials in two
           variables, so that the term $S_{0}$ was the standard syzygy
           matrix \eqref{eq:S0}. The leading terms $w(0)_{h}$ are not
           monomials in our case, so we must first write down a
           suitable matrix of syzygies $S_{0}$ for these.

 From Proposition~\ref{prop:explicit-formulae},  on setting
           all $\delta_{i}$ equal to $\delta$ to pass from the ring
           $\cA_{j}$ to $\bar\cA$, we obtain an expression for
           $w(0)_{h} = \bar\oo^{m}_{n,h}$ as a product of linear
           factors. It is convenient to remove the combinatorial
           factor of $1/m!$ and write
           \[
           \begin{aligned}
           g(0)_{h} &= m! w(0)_{h} \\
                             &= m! \bar\oo^{m}_{n,h},
                             \end{aligned}
           \]
           for which Proposition~\ref{prop:explicit-formulae} yields
           the formula
           \[
           \begin{aligned}
           g(0)_{h} &= \prod_{\substack{j=-m+1 \\ j =
                    -m+1 \bmod 2 }}^{m-1} \bigl(\alpha + (2h-n - 2j)
                    \delta/2\bigr), \\
               &= \bigl(\alpha + (2h-3)\delta/2\bigr)\bigl(\alpha + (
               2h-7)\delta/2\bigr) \cdots \bigl( \alpha + ( 2h-4m+1) \delta/2\bigr)
               .
                    \end{aligned}
                    \]
           We introduce some abbreviated notation, setting
           \[
                           \begin{aligned}
                            L(k)&= (\alpha + k \delta/2),\\
                            P(k,l)&= L(k)L(k+4)L(k+8) \cdots L(l).
                           \end{aligned}
           \]
           (The latter notation will be used only when $k=l\bmod 4$.)
           Then we can write,
           \[
           g(0)_{h} = P(2h-4m+1, 2h-3).
           \]
           If we compare $g(0)_{h}$ to $g(0)_{h+2}$, only the first
           and last factors in this product differ, so we have a
           relation
           \[
                -L(2h+1) g(0)_{h} +
                 L(2h-4m+1) g(0)_{h+2}
                 = 0.
           \]
           That is, for $h'$ in the range $0\le h' \le n-2$ with $h' =
           m+1 \bmod 2$, we have
           \[
                        \sum_{h} S^{h'h}_{0} g(0)_{h} = 0,
           \]
           where
           \begin{equation}\label{eq:S0-answer}
                        S^{h'h}_{0} = 
                        \begin{cases}
                            - L(2h'+1),
                            & h=h',\\
                            L(2h'-4m+1), & h=h'+2,\\
                            0, &\text{otherwise}.
                        \end{cases}
           \end{equation}
            This is therefore the leading part $S_{0}$ of the required
            syzygy matrix $S= S_{0}+S_{1}$. It is straightforward to
            verify that the minors of $S^{h'h}_{0}$ are the terms
            $g(0)_{h}$, as required. 

            We normalize the subleading terms just as we did the
            leading terms, so that
            \[
            \begin{aligned}
                g(1)_{h} &= m! w(1)_{h} \\
                             &= m! \epsilon\tau^{n-2h}            
                             \bar\oo^{m-1}_{n,n-h},
                             \end{aligned}
            \]
            from Proposition~\ref{prop:quantum-Mumford-subleading}.
            We then have the explicit formulae again from
            Proposition~\ref{prop:explicit-formulae} (noting that
            $|\eta'| = n-h$),
                     \[
           \begin{aligned}
           g(1)_{h} &=
          m \epsilon\tau^{n-2h} 
           \prod_{\substack{j=-m+2 \\ j =
                    m \bmod 2 }}^{m-2} \bigl(\alpha + (n-2h - 2j)
                    \delta/2\bigr), \\
               &\begin{aligned} = m \epsilon\tau^{n-2h} 
               P(-2h+5, -2h + 4m-3)
              .
               \end{aligned}
                    \end{aligned}
                    \]
           To obtain the other term $S_{1}$ in the syzygy matrix, we
           need to solve  the following equations for $S_{1}^{h'h}$:
           \[
                 \sum_{h} S^{h'h}_{1} g(0)_{h} + \sum_{h} S^{h'h}_{0}
                 g(1)_{h} =0,
           \]
           ($h, h' = m+1 \bmod 2$, $0\le h \le n$ and $0\le h' \le
           n-2$). Using the formulae for $g(0)_{h}$, $g(1)_{h}$ and
           $S_{0}^{h'h}$, we write this out as
           \[
           \begin{aligned}
            0 = &\sum_{h}  S^{h'h}_{1} P(2h-4m+1, 2h-3)\\
                  \null  & - m \epsilon\tau^{n-2h'}  L(2h' +1)
                             P(-2h'+5, -2h' + 4m-3) \\
                    \null & +m\epsilon\tau^{n-2h'-4}
                    L(2h'-4m+1) 
                           P(-2h'+1, -2h' + 4m-7).
                                      \end{aligned}
                    \]
           The solution $S^{h'h}_{1}$ consisting of scalars in $R$ is
           unique,  because the terms
            $g(0)_{h}$ are a basis for the homogeneous polynomials of
            degree $m$ in $(\alpha,\delta)$.

            The last two of the three
            terms above have at least $m-2$ common linear factors
            $L(k)$, and
            have $m-1$ common factors in two edge cases. The $m-2$
            factors are the expression
            \[
                    Q(h') = P(-2h'+5, -2h'+4m-7). 
            \]
            The edge cases are $h'=0$ (which only occurs when $m$ is
            odd), and $h'=n-2$ (which occurs only when $m$ is even).
            In these two edge cases the $m-1$ common factors are
            respectively,
            \[
            \begin{aligned}
            Q_{+} &= L(1) Q(0) \\
                 &=   P(1, 4m-7)
            \end{aligned}
            \]
            and 
            \[
            \begin{aligned}
                    Q_{-} &=L(-1) Q(n-2) \\
                      &= P(-4m+7,-1).
            \end{aligned}
            \]
            We seek a solution $S_{1}^{h'h}$ to the above equations in
            the special form where, for each $h'$, the coefficients
            $S_{1}^{h'h}$ are non-zero only for those values of $h$
            for which $g(0)_{h}$ is divisible by $Q(h')$ (respectively
            $Q_{+}$ or $Q_{-}$ in the edge cases). Excluding the edge
            cases, there are three such values of $h$, namely
            \begin{equation}\label{eq:generic-positions-S1}
                    h \in \{\, n-h'-3,\;  n-h'-1,\;
                    n-h'+1\,\},
                    \quad (0<h'<n-2).
            \end{equation}
            In each of the edge cases, there are two such values of
            $h$:
            \begin{equation}\label{eq:edge-positions-S1}
\begin{aligned}
                    &h \in \{\, n-3,\;  n-1\,\},
                    &&\quad (h'=0) \\
                    &h \in \{\,   1,
                    3\,\},
                    &&\quad (h'=n-2) \\ 
\end{aligned}
            \end{equation}
            In the non-edge cases, the equations for the non-zero
            coefficients $S_{1}^{h'h}$ then take the general
            shape
            \begin{equation}\label{eq:general-shape-S1-eqn}
                    S_{1}^{h',\,n-h'-3} A+
                      S_{1}^{h',\,n-h'-1} B  +
                        S_{1}^{h',\,n-h'+1} C + D = 0,
            \end{equation}
            where $A$, $B$ and $C$ are the homogeneous quadratic polynomials in
            $(\alpha,\delta)$ given by
            \[
                        g(0)_{h} \big/ Q(h'), \qquad  h \in \{\, n-h'-3,\;  n-h'-1,\;
                    n-h'+1\,\}
            \]
            and $D$ is a quadratic polynomial
            \[
                   D = \bigl(S_{0}^{h',\,h'}g(1)_{h'} +
                   S_{0}^{h',\,h'+2}g(1)_{h'+2}\bigr)\big/Q(h').
            \]
            Explicitly,
            \[
             \begin{aligned}
                A &= L(-2h' -3)L(-2h' +1) \\
                B&= L(-2h'+1)L(-2h'+4m-3) \\
                C&= L(-2h' +4m-3) L(-2h'+4m+1) \\
        \end{aligned}
\]
        and
\[
                D= m\epsilon\tau^{n-2h'}\bigl(  -L(2h'+1)   L(-2h' + 4m -3)   +
                \tau^{-4}   L(2h'-4m+1)   L(-2h'+1) \bigr).
              \]
            The three
            polynomials $A$, $B$ and $C$ are independent, and we know
            there to be a unique solution, which we can now find by
            equating coefficients of $\alpha^{2}$, $\alpha\delta$ and
            $\delta^{2}$ in \eqref{eq:general-shape-S1-eqn}.
            The two edge cases are similar. Thus
            in the case $h'=0$, the equations for the two unknown
            coefficients of $S_{1}$ take the form
            \begin{equation}\label{eq:edge-1-shape-S1-eqn}
                    S_{1}^{0,\,n-3} X+
                      S_{1}^{0\,n-1} Y = Z,
            \end{equation}
            where $X$, $Y$ and $Z$ are homogeneous linear forms in
            $(\alpha,\delta)$, while in the case $h'=n-2$ we have
            similar
            equations
                \begin{equation}\label{eq:edge-2-shape-S1-eqn}
                    S_{1}^{n-2,\,1} X'+
                      S_{1}^{n-2,\,3} Y' = Z'.
            \end{equation}
       
            Solving the equations
            (\ref{eq:general-shape-S1-eqn}--\ref{eq:edge-2-shape-S1-eqn})
            for the coefficients $S_{1}^{h'h}$ leads to the following
            answer, valid for all $h'$,  whether or not we are in an
            edge case. We find:
            \begin{equation}\label{eq:S1-answer}
                  S^{h'h}_{1} = 
                        \begin{cases}
                            \epsilon\tau^{n-4-2h'}(-n+2+h'),
                            & h=n-h'-3,\\
                            \epsilon\tau^{n-4-2h'}(m-h'-1+(m-h')\tau^{4}),
                            & h=n-h'-1,\\
                           \epsilon\tau^{n-2h'}h',
                            & h=n-h'+1,\\
                             0, &\text{otherwise},
                        \end{cases} 
            \end{equation}
            for all $h', h$ in the range $0\le h \le n$ and $0\le h'
            \le n-2$ with the parity constraint $h=h'=m+1 \pmod 2$.
            So we have obtained the desired closed form for the
            generators of the ideal $\barJOne{n}$ for the instanton
            homology $I(Z_{n,-1})$:

            \begin{theorem}\label{thm:barJn-via-syz}
                  Let $S = S_{0} + S_{1}$ be an $m\times (m+1)$
                  with rows indexed by $h'$ and columns indexed by $h$
                  in the range $0\le h \le n$ and $0\le h'
            \le n-2$ with the parity constraint $h=h'=m+1 \pmod 2$.
             Let the entries of $S_{0}$ be given by
             \eqref{eq:S0-answer} and the entries of $S_{1}$ be given
             by \eqref{eq:S1-answer}, so that the entries of $S$
             belong to the ring $\bar\cA = \Q[\tau,\tau^{-1},\epsilon,\alpha,\delta]/ \langle
             \epsilon^{2}=1 \rangle$.  Then the normalized  generators
             $m!\,\bar\om^{m}_{h}$ of the ideal $\barJOne{n}$ are
             given by the $m\times m$ minors of $S$.    
            \end{theorem}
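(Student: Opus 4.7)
The plan is to apply Proposition~\ref{prop:syz-R} with base ring $R = \cR[\epsilon]/\langle \epsilon^{2}-1\rangle$, polynomial algebra $A = R[\alpha,\delta] = \bar\cA$, and ideal $I = \barJOne{n}$. First I verify the two hypotheses of that proposition. Corollary~\ref{cor:rank-Znm1} gives that $\bar\cA/\barJOne{n} \cong I(Z_{n,-1})$ is free over $\cR$ of rank $(n^{2}-1)/4 = m(m+1)$, and the $\pm 1$ eigenspace decomposition for the involution $\epsilon$ (each eigenspace a cyclic $\cR[\alpha,\delta]$-module of equal rank by the $\tau\leftrightarrow -\tau$ symmetry) exhibits the quotient as free over $R$ of rank $N = m(m+1)/2$. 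For the second hypothesis, Lemma~\ref{lem:power-maximal} asserts that the leading terms $\bar\oo^{m}_{n,h}$ form a basis of $\langle\alpha,\delta\rangle^{m} \subset A_{m}$ over $\cR$, hence over $R$; the elements $\bar\om^{m}_{h}\in\barJOne{n}$ from Proposition~\ref{prop:Mumford-Zn1} therefore yield the required $R$-linear map $\phi : A_{m}\to A^{(m-1)}$ with $a - \phi(a) \in \barJOne{n}$ for all $a\in A_{m}$. Proposition~\ref{prop:syz-R} then reduces the problem to exhibiting a syzygy matrix $S_{0}$ for the normalized leading terms $g(0)_{h} = m!\,\bar\oo^{m}_{n,h}$ and computing the correction $S_{1}$ that is determined by the subleading terms $g(1)_{h} = m!\,\epsilon\tau^{n-2h}\bar\oo^{m-1}_{n,n-h}$ via equation \eqref{eq:S1-condition}.

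Computing $S_{0}$ is immediate from the product formula of Proposition~\ref{prop:explicit-formulae}: specializing all $\delta_{i}$ to a single $\delta$ gives $g(0)_{h} = P(2h-4m+1,2h-3)$ in the shorthand of section~\ref{subsec:application-hilb}. Comparing $g(0)_{h}$ with $g(0)_{h+2}$, all but the first and last linear factors cancel, producing the two-term relation $-L(2h+1)\,g(0)_{h} + L(2h-4m+1)\,g(0)_{h+2} = 0$. This is precisely the two-band matrix $S_{0}$ recorded in \eqref{eq:S0-answer}, and a Laplace expansion confirms that its $m\times m$ minors recover the $g(0)_{h}$ up to sign.

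The computational heart of the proof is the determination of $S_{1}$. Applying Proposition~\ref{prop:explicit-formulae} once more (now with $|\eta'| = n-h$) yields $g(1)_{h} = m\,\epsilon\tau^{n-2h}\,P(-2h+5,-2h+4m-3)$. Equation \eqref{eq:S1-condition}, read row by row in the index $h'$, requires that $\sum_{h}S_{1}^{h'h}\,g(0)_{h}$ equal $-\bigl(S_{0}^{h',h'}\,g(1)_{h'} + S_{0}^{h',h'+2}\,g(1)_{h'+2}\bigr)$. The crucial structural observation is that the two terms on the right share $m-2$ common linear factors, collected into $Q(h') = P(-2h'+5,-2h'+4m-7)$, and even $m-1$ in the two edge cases $h'=0$ and $h'=n-2$. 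This motivates the ansatz that $S_{1}^{h'h}$ be supported only on those $h$ for which $g(0)_{h}$ is divisible by $Q(h')$, namely the three values listed in \eqref{eq:generic-positions-S1} generically, or the two values in \eqref{eq:edge-positions-S1} at the edges. Dividing out the common factor reduces \eqref{eq:S1-condition} to the small linear systems \eqref{eq:general-shape-S1-eqn}, \eqref{eq:edge-1-shape-S1-eqn}, \eqref{eq:edge-2-shape-S1-eqn} in the space of quadratic (respectively linear) forms in $(\alpha,\delta)$. Because the three reduced polynomials $A,B,C$ are linearly independent, the systems have unique solutions; equating coefficients of $\alpha^{2},\alpha\delta,\delta^{2}$ (respectively $\alpha,\delta$) produces the closed-form expressions \eqref{eq:S1-answer}.

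The main obstacle is really only the bookkeeping in this last step: one must respect the parity condition $h = h' = m+1 \pmod 2$, handle the two edge cases separately, and then check that the uniform formula \eqref{eq:S1-answer} specializes correctly at $h'\in\{0,n-2\}$. Once \eqref{eq:S0-answer} and \eqref{eq:S1-answer} are both established, the theorem follows immediately from the concluding clause of Proposition~\ref{prop:syz-R}, which identifies the generators $m!\,\bar\om^{m}_{h}$ with the $m\times m$ minors of $S = S_{0}+S_{1}$.
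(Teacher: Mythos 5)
Your proposal is correct and follows essentially the same route as the paper: you verify the hypotheses of Proposition~\ref{prop:syz-R} for $\barJOne{n}$ (via Corollary~\ref{cor:rank-Znm1}, Lemma~\ref{lem:power-maximal} and Proposition~\ref{prop:Mumford-Zn1}), read off $S_{0}$ from the product formula for the leading terms, and determine $S_{1}$ from the subleading terms by the same supported ansatz and small linear systems \eqref{eq:general-shape-S1-eqn}--\eqref{eq:edge-2-shape-S1-eqn} used in section~\ref{subsec:application-hilb}. The only (harmless) addition is your brief eigenspace/$\tau\mapsto-\tau$ justification that $I(Z_{n,-1})$ is free of rank $m(m+1)/2$ over $\cR[\epsilon]/\langle\epsilon^{2}-1\rangle$, a fact the paper simply asserts.
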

            
            \begin{remark}
The matrix the matrix $S$ has $m+1$ different $m\times m$
minors, and explicitly the generators of the ideal can be expressed as
\[
                    m!\,\bar \om^{m}_{h} = \pm \det S[h],
\]
where $S[h]$ is obtained from $S$ by deleting the column indexed by
$h$. (Recall again that the indexing of the columns is by only those
integers
$h$ with $h=m+1$ mod $2$.) The signs alternate as usual. Although there
are $m+1$ generators in this description, in fact only two generators suffice, as
the following proposition states.

\begin{proposition}\label{prop:two-minors}
    The ideal $\barJOne{n}$ is generated by the two elements
    $\bar\om^{m}_{m-1}$ and $\bar\om^{m}_{m+1}$, or equivalently by
    the two determinants
    \[
    \begin{gathered}
    \Gen{n}{1}=\det S[m-1]\\
            \Gen{n}{2}=\det S[m+1].
            \end{gathered}
    \]        
    \end{proposition}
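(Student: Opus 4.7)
The plan is to show, by an iterative argument, that each generator $\bar\om^m_h \propto \det S[h]$ lies in the ideal generated by $\bar\om^m_{m-1}$ and $\bar\om^m_{m+1}$; two ingredients are needed.

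The first is the standard Cramer-style identity: for each row index $h'$ of $S$, the relation
\[
\sum_{h}\pm\, S_{h',h}\,\det S[h]=0
\]
holds in $\bar\cA$. It follows by expanding along its first row the determinant of the $(m+1)\times(m+1)$ matrix formed by prepending row $h'$ of $S$ to $S$ itself: the resulting matrix has a repeated row. This gives $m$ linear relations among the $m+1$ minors.

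The second ingredient is that specific entries of $S_{1}$ are units in $\bar\cA$. From \eqref{eq:S1-answer}, the entry at position $(h',n-h'-3)$ is $(h'-2m+1)\epsilon\tau^{n-4-2h'}$ and the entry at $(h',n-h'+1)$ is $h'\epsilon\tau^{n-2h'}$; each is an integer multiple of the unit $\epsilon\tau^{k}$, and so itself a unit whenever the integer coefficient is nonzero. By the parity constraint on $h'$, the column $n-h'-3$ meets the support of $S_{0}$ in row $h'$ only when $h'=m-1$, and the column $n-h'+1$ only when $h'=m+1$. For all other valid $h'$, the $S_{1}$-entry is the complete matrix entry $S_{h',h}$ and is a unit in $\bar\cA$.

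The propagation then proceeds outward from the pair $\{m-1,m+1\}$. Row $h'=m-1$ has only three nonzero columns, namely $\{m-1,m+1,m+3\}$, with coefficient $(m-1)\epsilon\tau^{3}$ on $\det S[m+3]$; this unit lets me express $\det S[m+3]$ in the ideal $(\bar\om^m_{m-1},\bar\om^m_{m+1})$. Row $h'=m+1$ has nonzero columns $\{m-3,m-1,m+1,m+3\}$ with unit coefficient $(2-m)\epsilon\tau^{-5}$ on $\det S[m-3]$, so combining with the previous step puts $\det S[m-3]$ in the ideal also. For each $k\ge 1$, row $h'=m-1-2k$ then reduces $\det S[m+3+2k]$ via the unit coefficient $(m-1-2k)\epsilon\tau^{3+4k}$, while row $h'=m+1+2k$ reduces $\det S[m-3-2k]$ via the unit coefficient $(2+2k-m)\epsilon\tau^{-5-4k}$. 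At each step the remaining columns that appear in the Cramer relation are either $m-1$, $m+1$, or have been handled earlier.

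The main point requiring care is to verify that each time an integer coefficient $h'$ or $h'-2m+1$ in an $S_{1}$-entry vanishes, the corresponding column $n-h'+1$ or $n-h'-3$ simultaneously falls outside the valid range $[0,n]$, so that step of the induction is vacuous. The small cases $m=1$, where $S$ is $1\times 2$ and the statement is immediate, and $m=2$, where row $h'=m-1$ alone suffices, serve as the base.
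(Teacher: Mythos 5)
Your proposal is correct and takes essentially the same route as the paper: the paper reduces the claim to exhibiting an $(m-1)\times(m-1)$ submatrix of $S[m-1,m+1]$ that becomes triangular after permuting rows and columns, with units (nonzero integers times $\epsilon\tau^{k}$) on the diagonal, and your outward propagation through the Cramer relations is precisely back-substitution through that same triangular system, built on the same unit entries of $S_{1}$. The only difference is presentational: you make explicit the Cramer identity among the maximal minors and the vacuity of the steps where the integer coefficients vanish, which the paper subsumes in the assertion that full rank of $S[m-1,m+1]$ suffices.
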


    \begin{proof}
        It is sufficient to show that the matrix $S[m-1, m+1]$
        obtained by deleting both columns $h=m-1$ and $h=m+1$ has full
        rank $m-1$. To do this, let us examine the $(m-1)\times (m-1)$
        matrix $T$ obtained from $S[m-1,m+1]$ by deleting either the first or
        last row, according as $m$ is odd or even respectively. An
        inspection of the entries of $S$ reveals first that the entries of
        $T$ on the contra-diagonal are all units in $\bar\cA$: they
        are non-zero integers
        times powers of $\tau$. Furthermore, a reordering of the rows
        and columns makes $T$ triangular, with these same units on the
        diagonal. The determinant of $T$ is therefore non-zero, which
        shows that $S[m-1,m+1]$ indeed has full rank as desired.
    \end{proof}

As illustration, when $m=3$ (i.e.~$n=7$) the two generators
$\Gen{7}{1}$ and $\Gen{7}{2}$  are:
\begin{multline*}
(1/48) \biggl( 8 \alpha^3+36 \alpha^2 \delta +22 \alpha  \delta^2
-21 \delta^3 +24\epsilon\tau^3 \alpha^2 
-72 \epsilon \tau^3 \alpha  \delta + 30 \epsilon\tau^3 \delta^2\\
- (88 \tau^2 + 16  \tau^{-2}) \alpha  -
   (52 \tau^2 + 56 \tau^{-2})\delta  -24\epsilon
   \tau^5-96\epsilon \tau \biggr)
   \end{multline*}
and
\begin{multline*}
(1/48) \biggl(8 \alpha^3-12 \alpha^2 \delta -26 \alpha  \delta^2 +15 \delta^3
 +24\epsilon \tau^{-1} \alpha^2 
 +24\epsilon\tau^{-1} \alpha  \delta - 18 \epsilon\tau^{-1} \delta^2 \\
 -(40 \tau^2 +64 \tau^{-2})\alpha 
   +(68 \tau^2 -32 \tau^{-2}) \delta  -72 \epsilon\tau- 48 \epsilon\tau^{-3}\biggr).
\end{multline*}
            \end{remark}
           
           \subsection{Relating different values of $n$}

        Theorem~\ref{thm:barJn-via-syz} provides a complete
        description of the instanton homology of $Z_{n,-1}$ with local
        coefficients, but  we have not yet presented a full description for
        the case of $Z_{n}$. As preliminary material for this, we
        describe how the functoriality of instanton homology can be
        used to obtain relations between the ideal of relations in
        $Z_{n}$ for different values of $n$.
                   
        The fact that the ideal $\J{n}$ annihilates $I(Z_{n})$
        leads, via a standard approach, to the interpretation of the
        elements of $\J{n}$ as universal relations that hold for the
        maps defined by general bifold cobordisms.
       To spell this out,
        let $W$ be a homology-oriented
        bifold cobordism from $Z^{0}$ to $Z^{1}$, both of
        which are admissible. We have seen in
        section~\ref{subsec:operators} that $W$ gives rise to
        homomorphisms of $\cR$-modules
        \[
                I(W,a) : I(Z^{0})\to I(Z^{1})
        \]
        depending linearly on
        \[
                a \in \mathrm{Sym}_{*}\biggl( H_{2}(W; \Q) \oplus
                          H_{0}(\Sigma(W); O)\biggr) \otimes \cR,
        \]
        where $O$ is the orientation bundle of the singular set
        $\Sigma(W)$ with coefficients $\Q$. Further, given a distinguished
         2-dimensional class $e$ we can use marked connections with
         non-zero $w_{2}$ to define maps
         \[
              I(W,a)^{e} : I(Z^{0})\to I(Z^{1}).
          \]
         Using $\delta_{p}$ to denote the generator of the symmetric
         algebra corresponding the homology class of a
         point $p\in \Sigma(W)$ with local orientation, let us
         imitate the definition of $\cA_{n}$ and write
        \[
                    \cA(W) = \left(\mathrm{Sym}_{*}\Bigl( H_{2}(W; \Q) \oplus
                          H_{0}(\Sigma(W); O)\Bigr)
                            \otimes  \cR[\epsilon] \right) \bigm/ \bigl\langle \epsilon^{2}-1,
                 \delta_{p}^{2}-\delta_{q}^{2}\bigr\rangle_{p,q}
        \]
        where the indexing in the ideal runs through all pairs of
        points $p,q$ in $\Sigma(W)$.        We obtain a linear map
        \begin{equation}\label{eq:Psi}
             \Psi:   \cA(W) \to \mathrm{Hom} ( I(Z^{0}),  I(Z^{1})) 
        \end{equation}
        by
        \[
                a_{1} + \epsilon a_{2} \mapsto I(W, a_{1}) + I(W,
                a_{2})^{e}.
        \]
        This construction has been phrased so that, in the special
        case that $W$ is the product cobordism from $Z_{n}$ to
        itself and $e$ is the generator of $H_{2}$, the algebra
        $\cA(W)$ coincides with $\cA_{n}$ as defined above, and the
        map $\Psi$ is the action of the algebra $\cA_{n}$ on the
        module $I(Z_{n})$ via the instanton module structure.

        Continuing with the case of a general cobordism $W$, we
        suppose now that we have an embedded orbifold sphere $S\subset
        W$ meeting the singular set in $n$ orbifold points $\{p_{1},
        \dots, p_{n}\}$. 
        After choosing an orientation for $S$ we obtain local
        orientations for the singular set at the $n$ points of
        intersection, and hence we obtain elements $\delta_{p_{k}} \in
        \cA(W)$, where for the class $e$ in the definition of $\cA(W)$
        we take the fundamental class $[S]$. 
        Let us suppose
        that the normal bundle of $S$ is trivial so that the boundary
        of the tubular neighborhood of $S$ is a copy of $Z_{n}$.
        From the definitions, there is a natural map
        \[
                i_{*}:\cA_{n} \to \cA(W)
        \]
        arising from the inclusion; the map is defined so that
        $i_{*}(\delta_{k})=\delta_{p_{k}}$ for all $k$ and
        $i_{*}(\alpha)=[S] \in H_{2}(W)$.

        \begin{proposition}\label{prop:universalW}
            For an embedded orbifold sphere $S\subset W$ as above, the
            ideal $\J{n}$ lies in the kernel of the map $\Psi$ defined
            at \eqref{eq:Psi}. That is, for $a=a_{1} + \epsilon
            a_{2}\in \J{n}\subset \cA_{n}$, we have
            \[
                I(W, i_{*}(a_{1})) + I(W, i_{*}(a_{2}))^{e}=0.
            \]
            More generally, if $b$ is another class in $\cA(W)$ which
            an be expressed as a polynomial in cycles disjoint from
            $S$, then we have
            \[
                I(W, i_{*}(a_{1})b) + I(W, i_{*}(a_{2})b)^{e}=0.
            \]
        \end{proposition}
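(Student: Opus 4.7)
The plan is to reduce the statement to the fact that $\J{n}$ annihilates $I(Z_{n})$ by cutting $W$ open along a copy of $Z_{n}$ and invoking the gluing formula for the operators $I(W,a)$ and $I(W,a)^{e}$. Since the normal bundle of $S$ is trivial (in the orbifold sense), an orbifold tubular neighborhood $N$ of $S$ is bifold-isomorphic to $D^{2}\times S^{2}_{n}$ with $\partial N \cong Z_{n}$. Let $W'=W\setminus\mathrm{int}(N)$, so that $W'$ is a bifold cobordism from $Z^{0}\sqcup Z_{n}$ to $Z^{1}$ and
\[
W = W'\cup_{Z_{n}} N.
\]
I would first check that the $w_{2}$-marking data associated with $e=[S]$ is concentrated in $N$: on $W'$ the class $e$ restricts trivially (as $S$ is disjoint from $W'$), while on $N\cong D^{2}\times S^{2}_{n}$, the class $[S]=[\{0\}\times S^{2}_{n}]$ is precisely the distinguished generator whose associated marked moduli space defines the operator $\opepsilon$ on $I(Z_{n})$.

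Next, I would apply the composition formula from section~\ref{subsec:operators} to factor the operators $I(W,\cdot)$ and $I(W,\cdot)^{e}$ through $I(Z_{n})$. The relative invariant of the disk-orbifold $N=D^{2}\times S^{2}_{n}$ is exactly the cyclic generator $\1_{+}\in I(Z_{n})$, and inserting a class $i_{*}(a_{1})$ supported on $S\subset N$ turns this relative invariant into $a_{1}\cdot\1_{+}$, where the $\cA_{n}$-action is the one defined in section on the instanton homology of $Z_{n}$. Similarly, the relative invariant of $N$ with $w_{2}$-marking $[S]$ and insertion $i_{*}(a_{2})$ is $a_{2}\cdot\opepsilon\,\1_{+}=a_{2}\cdot\1_{-}$. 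Combining these via the gluing formula, I would obtain, for any $\xi\in I(Z^{0})$,
\[
\begin{aligned}
I(W,i_{*}(a_{1}))(\xi)+I(W,i_{*}(a_{2}))^{e}(\xi)
&= I(W')\bigl(\xi\otimes(a_{1}\1_{+}+a_{2}\1_{-})\bigr)\\
&= I(W')\bigl(\xi\otimes((a_{1}+\epsilon a_{2})\cdot\1_{+})\bigr).
\end{aligned}
\]
Since $a=a_{1}+\epsilon a_{2}\in\J{n}$ and $\J{n}$ is the annihilator of $I(Z_{n})$ (by Proposition~\ref{prop:instanton-cyclic} and the definition of $\J{n}$), the right-hand factor $(a_{1}+\epsilon a_{2})\cdot\1_{+}$ vanishes in $I(Z_{n})$, so the whole expression is zero.

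For the more general statement with an auxiliary class $b$ representable by cycles disjoint from $S$, I would place those cycles inside $W'$ and observe that the same gluing formula produces
\[
I(W,i_{*}(a_{1})b)(\xi)+I(W,i_{*}(a_{2})b)^{e}(\xi)
= I(W',b)\bigl(\xi\otimes((a_{1}+\epsilon a_{2})\cdot\1_{+})\bigr)=0,
\]
by the same vanishing of $a\cdot\1_{+}$. The main point requiring care will be verifying that the composition law of section~\ref{subsec:operators} holds in the required form with local coefficients $\Gamma$ and with $w_{2}$-marking data, i.e.\ that the $\tau$-weights assigned to trajectories on $W$ split multiplicatively across the cutting surface $Z_{n}$, and that the identification of the marking class $e=[S]$ with the operator $\opepsilon$ on $I(Z_{n})$ is compatible with the sign convention in \eqref{eq:e-sign}; both are standard consequences of the constructions in \cite{KM-yaft, KM-ibn1}, but writing them out in the present orbifold/bifold setting is the one nontrivial task in completing the argument.
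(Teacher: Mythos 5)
Your proposal is correct and is essentially the paper's own argument: the paper also factors $W$ through a copy of $Z_{n}$ (presenting $W$ as a composite whose first piece is a cobordism from $Z^{0}$ to $Z^{0}\amalg Z_{n}$ built from the tubular neighborhood $D^{2}\times S^{2}_{n}$ of $S$), uses the tensor-product description of the instanton homology of the disjoint union together with functoriality, and concludes from the fact that $\cJ_{n}$ annihilates $I(Z_{n})$, so in particular kills $a\cdot\1_{+}$. Your write-up just spells out the same gluing step in more detail, including the identification of the marked insertion along $e=[S]$ with $\opepsilon$ and the placement of the auxiliary class $b$ in the complement $W'$, which matches the paper's intent.
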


        \begin{proof}
            This is a standard argument based on the observation that
            we can factor the cobordism $W$ as a composite cobordism
            in which the first factor is the cobordism from $Z^{0}$ to
            $Z^{0} \amalg Z_{n}$. For the disjoint union, we can construct
            the instanton homology as a tensor product, and then we apply
            functoriality. See \cite{KM-Tait} and \cite{Street}, for
            example, for similar arguments.
            \end{proof}

        Our application of Proposition~\ref{prop:universalW} is
        equivalent to \cite[Corollary~2.6.8]{Street}. (A
        closely-related result appears in \cite{Munoz1}.) Suppose that
        \[
                n = n' + 2f,
        \]
        where $f \ge 0$. Consider an embedding of the orbifold sphere
        $S=S^{2}_{n}$ in the trivial cobordism $W=[0,1]\times
        Z_{n'}$, representing the generator in homology.
        This means that $S$ meets the singular locus $[0,1]
        \times K(Z_{n'})$ geometrically in $n'+2f$ points, while the
        algebraic intersection number is $n'$. There are therefore $2f$
        signed intersection points that cancel in pairs. Such a
        sphere $S\subset [0,1]\times Z_{n'}$ can be constructed as by
        taking the standard generating sphere $S'\subset Z_{n'}$ and
        introducing $2f$ extra intersection points by doing $f$
        ``finger moves'' to the sphere $S'$. We take these extra
        intersection points
        to be the orbifold points numbered $n'+1,\dots,n'+2f$ in
        $S \cong S^{2}_{n}$, and we suppose that they all lie on the component
        $[0,1]\times K^{n'}  \subset [0,1]\times K(Z_{n'})$. Among
        these $2f$ points, there are $f$ of them that have negative
        intersection number, and we can take it that these are the
        points numbered $n'+f+1, \dots, n'+2f$ in $S^{2}_{n}$. There is
        a corresponding map
        \[
                i_{*}^{n,n'} : \cA_{n} \to \cA_{n'}, \qquad (n=n'+2f),
        \]
         and our choice of numbering  means that it is given by
         \[
           i_{*}^{n,n'}(\alpha) = \alpha,
         \]
         and
         \[
           i_{*}^{n,n'}(\delta_{k}) = 
           \begin{cases}
            \hphantom{+}\delta_{k}, & 1 \le k\le n', \\
            \hphantom{+}\delta_{n'}, & n'+1\le k \le n'+f, \\
            -\delta_{n'}, & n'+f+1 \le k \le n'+2f.
           \end{cases}
                   \]
          Proposition~\ref{prop:universalW} now yields the following.

          \begin{corollary}[\protect{\cite[Corollary~2.6.8]{Street}}]
          \label{cor:recursive-1}
            When $n=n'+2f$ and $i_{*}^{n,n'} : \cA_{n} \to \cA_{n'}$ is
            defined as above, we have an inclusion of ideals,
            \[
                    i_{*}^{n,n'}\J{n}    \subset \J{n'}.
            \]
            \qed
          \end{corollary}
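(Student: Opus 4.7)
The plan is to apply Proposition~\ref{prop:universalW} directly to the cobordism $W = [0,1] \times Z_{n'}$ equipped with the embedded orbifold sphere $S \cong S^{2}_{n}$ described immediately before the statement of the corollary. The key observation is that, when $W$ is a product cobordism, the map $\Psi$ of \eqref{eq:Psi} is nothing other than the module action: an element $a_{1} + \epsilon a_{2} \in \cA(W)$ is sent to the operator on $I(Z_{n'})$ given by acting through the algebra $\cA_{n'}$, with $\epsilon$ corresponding to the distinguished class $[\{\mathrm{pt}\}\times S^{2}_{n'}]$.

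First, I would unpack the identification between the map $i_{*}\colon \cA_{n}\to\cA(W)$ of Proposition~\ref{prop:universalW} (determined by $\alpha \mapsto [S]$ and $\delta_{k}\mapsto \delta_{p_{k}}$) and the map $i_{*}^{n,n'}\colon \cA_{n}\to \cA_{n'}$ described in the corollary. Under the natural identification $H_{2}(W;\Q)\cong H_{2}(Z_{n'};\Q)$, the sphere $S$ represents the generator $\alpha \in \cA_{n'}$, because $S$ was built by introducing $2f$ cancelling intersection points (via finger moves) to a standard generating sphere $S'\subset Z_{n'}$. For the $\delta_{p_{k}}$, the points $p_{1},\dots,p_{n'}$ meet the $n'$ distinct components of $K(Z_{n'})$ transversally and positively, so these map to $\delta_{1},\dots,\delta_{n'}\in \cA_{n'}$; the extra $2f$ points all lie on the $n'$-th component $[0,1]\times K^{n'}$, with $f$ of them oriented positively (giving $+\delta_{n'}$) and $f$ negatively (giving $-\delta_{n'}$). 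This matches the formulae defining $i_{*}^{n,n'}$ exactly.

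With that identification in hand, the corollary follows immediately: if $a = a_{1} + \epsilon a_{2}\in \J{n}$ (with $a_{1},a_{2}\in \cA_{n}^{+}$), then Proposition~\ref{prop:universalW} applied to our $S\subset W$ gives
\[
I(W, i_{*}(a_{1})) + I(W, i_{*}(a_{2}))^{[S]} = 0
\]
as homomorphisms $I(Z_{n'})\to I(Z_{n'})$. Since $W$ is the product cobordism and $[S]$ corresponds to the class $e$ defining $\opepsilon$ on $I(Z_{n'})$, the left-hand side is precisely the action of $i_{*}^{n,n'}(a_{1}) + \epsilon\,i_{*}^{n,n'}(a_{2}) = i_{*}^{n,n'}(a)$ on $I(Z_{n'})$. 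Hence $i_{*}^{n,n'}(a)$ annihilates $I(Z_{n'})$ and thus belongs to $\J{n'}$.

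The main point requiring care is the bookkeeping in the second paragraph: one must check that the choice of local orientations at the $2f$ extra intersection points really produces the signs $\pm\delta_{n'}$ as stated, and that the class $[S]\in H_{2}(W;\Q)$ used as the marking data $e$ in Proposition~\ref{prop:universalW} genuinely agrees with the class used to define $\opepsilon$ on $I(Z_{n'})$. Both are straightforward once the geometric picture of the finger moves is drawn, and given these identifications the conclusion is formal.
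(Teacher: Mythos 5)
Your proof is correct and follows exactly the route the paper intends: the corollary is stated with a \qed as an immediate consequence of Proposition~\ref{prop:universalW} applied to the finger-moved sphere $S$ in the product cobordism $[0,1]\times Z_{n'}$, with $\Psi$ being the $\cA_{n'}$-module action and $\J{n'}$ the annihilator of $I(Z_{n'})$. Your spelled-out bookkeeping of the signs at the extra intersection points and the identification of $[S]$ with the class $e$ defining $\opepsilon$ matches the paper's setup preceding the corollary.
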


          With a little more work and an examination of the explicit
          formulae for the leading and subleading terms of the
          generators of $\J{n}$
          (Proposition~\ref{prop:explicit-formulae}), we can
          strengthen the above corollary as follows.

          \begin{proposition}
            In the situation of Corollary~\ref{cor:recursive-1} above,
            we have inclusions
            \[
               (\tau^{4}-1)^{f}  \J{n'} \subset  i_{*}^{n,n'}\J{n}
               \subset \J{n'}.
            \]
            In particular, the ideals $i_{*}^{n,n'}\J{n}$ and $\J{n'}$
            become equal after tensoring with the field of fractions of the
            ring $\cR=\Q[\tau,\tau^{-1}]$.
          \end{proposition}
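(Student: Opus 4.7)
The right inclusion $i_*^{n,n'}\J{n}\subset\J{n'}$ is exactly Corollary~\ref{cor:recursive-1}, so all the work goes into the left inclusion. I would proceed by induction on $f$. Factoring $i_*^{n,n'}$ as $i_*^{n-2,n'}\circ i_*^{n,n-2}$ (removing two points at a time), the inductive hypothesis applied to the first factor gives $(\tau^4-1)^{f-1}\J{n-2}\subset i_*^{n,n-2}\J{n}$; applying the $\cR$-linear map $i_*^{n-2,n'}$ and combining with the $f=1$ statement then yields the general result. The substance is the case $f=1$, i.e.\ $n=n'+2$.

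For each generator $\om^{m'}_{\eta'}$ of $\J{n'}$ (with $m'=(n'-1)/2$), I would construct an element $\Omega_{\eta'}\in\J{n}$ with $i_*^{n,n'}(\Omega_{\eta'})=(\tau^4-1)\om^{m'}_{\eta'}$. A convenient reduction: by Proposition~\ref{prop:below-middle} applied to $n'$, any element of $\J{n'}$ of filtration degree strictly less than $m'$ vanishes. Since both $i_*^{n,n'}(\Omega_{\eta'})$ and $(\tau^4-1)\om^{m'}_{\eta'}$ lie in $\J{n'}$ (by the right inclusion), once I match their leading (degree-$m'$) parts, equality of the full elements is automatic throughout the filtration.

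The leading-term calculation is the key input, and it is afforded by the closed product formula of Proposition~\ref{prop:explicit-formulae}. The two natural lifts of $\eta'$ are $\eta'\cup\{p_{n'+1}\}$ and $\eta'\cup\{p_{n'+2}\}$; these have $|\eta|=h'+1$, which is of the parity required for $\J{n}$ (opposite to that required for $\J{n'}$, because $(n+1)/2$ flips parity when $n$ increases by $2$). A direct computation using the identifications $\delta_{n'+1}\mapsto\delta_{n'}$, $\delta_{n'+2}\mapsto -\delta_{n'}$ and the relation $\delta_p^2=\delta_q^2$ in $\cA_{n'}$ gives
\[
i_*^{n,n'}\bigl(m!\,\oo^m_{n,\eta'\cup\{p_{n'+j}\}}\bigr) \;=\; \bigl(B^{(n')}_{\eta'}+(-1)^{j-1}(m'+1)\delta_{n'}\bigr)\cdot m'!\,\oo^{m'}_{n',\eta'},\qquad j=1,2,
\]
where $m=m'+1$ and $B^{(n')}_{\eta'}$ is the linear form of Section~\ref{subsec:mumford}; so each lift's Mumford polynomial image equals $m'!\,\oo^{m'}_{n',\eta'}$ multiplied by an extra linear factor. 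The subleading terms $\epsilon\tau^{n-2h}\oo^{m-1}_{n,\bar\eta}$ from Proposition~\ref{prop:quantum-Mumford-subleading} produce analogous products, weighted by $\tau^{n-2h}=\tau^{n'-2h'}$. I propose to build $\Omega_{\eta'}$ as a linear combination of these two lifts, multiplied by explicit polynomials in $\cA_n$, together with a correction drawn from the ``parity-complementary'' generators $\om^m_\eta$ where $\eta\cap\{p_{n'+1},p_{n'+2}\}$ is empty or equal to the whole pair. The combination is arranged so that the extra linear factors cancel in the $i_*$-image of the leading term and the remaining $\tau$-weights collapse to $\tau^k(\tau^4-1)$ for the appropriate exponent $k$, leaving precisely $(\tau^4-1)\,\oo^{m'}_{n',\eta'}$ as the leading term.

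The main obstacle is the intricate combinatorial bookkeeping needed to verify that the cancellation produces exactly the coefficient $\tau^4-1$. The appearance of this precise factor is not accidental: it reflects the shift of $1/2$ in the monopole number between the moduli spaces $M_{1/4}^e(X)$ and $M_0(X)$ which is responsible for the wall-crossing analysis of Section~\ref{sec:relations-instanton}, so morally the factor encodes the action of one extra ``bubbling pair'' being transferred from $Z_n$ to $Z_{n'}$. Finally, the last assertion—equality of $i_*^{n,n'}\J{n}$ and $\J{n'}$ after tensoring with $\mathrm{Frac}(\cR)$—is immediate, since $\tau^4-1$ is a unit in $\mathrm{Frac}(\cR)$.
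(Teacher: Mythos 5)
Your skeleton is sound: the reduction to $f=1$, the observation that it suffices to exhibit, for each admissible $\eta'\subset\{p_{1},\dots,p_{n'}\}$, an element of $i_{*}^{n,n-2}\J{n}$ whose degree-$m'$ leading term is a unit times $(\tau^{4}-1)\oo^{m'}_{n',\eta'}$, and your product formula for the images of the two one-point lifts are all correct (the product in Proposition~\ref{prop:explicit-formulae} may be written with any $\delta_{p}$ as reference, which is what your factorization uses). The gap is that you never actually produce such an element. The pair you build the argument around cannot supply the factor $\tau^{4}-1$ by itself: the lifts $\eta'\cup\{p_{n'+1}\}$ and $\eta'\cup\{p_{n'+2}\}$ have the same cardinality $h'+1$, so by Proposition~\ref{prop:quantum-Mumford-subleading} their subleading terms carry the identical weight $\tau^{n-2(h'+1)}=\tau^{n'-2h'}$, and no difference of $\tau$-powers ever enters. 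Concretely, writing $\eta''$ for the complement of $\eta'$ in $\{p_{1},\dots,p_{n'}\}$, since $i_{*}^{n,n-2}$ preserves the grading one finds
\[
 i_{*}^{n,n-2}\bigl(\om^{m}_{\eta'\cup\{p_{n'+1}\}}-\om^{m}_{\eta'\cup\{p_{n'+2}\}}\bigr)
 = 2\delta_{n'}\,\oo^{m'}_{n',\eta'} - 2\epsilon\tau^{n'-2h'}\delta_{n'}\,\oo^{m'-1}_{n',\eta''} + \bigl(\text{terms of degree} \le m'-1\bigr),
\]
whose top term is $2\delta_{n'}\oo^{m'}_{n',\eta'}$, not anything divisible by $\tau^{4}-1$. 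The sentence ``the combination is arranged so that \dots the remaining $\tau$-weights collapse to $\tau^{k}(\tau^{4}-1)$'' is therefore exactly the assertion that needs proof, and the monopole-number heuristic is not a substitute; as written, the entire burden is shifted onto the unspecified ``correction''.

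For comparison, the paper extracts the factor from a different pairing: the zero-point and two-point lifts $\eta_{0}$ and $\eta_{0}\cup\{p_{n-1},p_{n}\}$ of a subset $\eta_{0}\subset\{p_{1},\dots,p_{n-2}\}$ of the parity appropriate to $n$. Because one added point maps to $+\delta_{n-2}$ and the other to $-\delta_{n-2}$, the images of $B_{\eta_{0}}$ and $B_{\eta_{0}\cup\{p_{n-1},p_{n}\}}$ coincide, so the images of the leading terms cancel exactly, while the subleading terms have equal images but weights $\tau^{n-2h}$ and $\tau^{n-2h-4}$ (with $h=|\eta_{0}|$); hence the image of the difference is a unit times $(\tau^{4}-1)\epsilon\,\oo^{m'}_{n-2,\zeta}$ plus lower terms, with $\zeta$ the complement of $\eta_{0}$, and the vanishing of $\J{n-2}$ below filtration degree $m'$ pins this down as a unit times $(\tau^{4}-1)\epsilon\,\om^{m'}_{\zeta}$; Proposition~\ref{prop:little-w-generate} then shows these $\zeta$ exhaust the generators. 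These zero/two-point pairs are precisely the ``parity-complementary'' generators you defer to, so if they are what carries your cancellation, your construction collapses to the paper's and the one-point lifts are superfluous. Two smaller points: the vanishing of $\J{n'}$ (as opposed to $J_{n'}$) below degree $m'$ is not literally Proposition~\ref{prop:below-middle}; it requires the leading-term comparison of Proposition~\ref{prop:assoc-graded} or Corollary~\ref{cor:relations-coh-to-inst}. And in your induction step the roles are swapped: the $f=1$ case applies to $i_{*}^{n,n-2}$ and the inductive hypothesis to $i_{*}^{n-2,n'}$.
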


          \begin{proof}
            It is sufficient to treat the case $f=1$, so $n'=n-2$. Let
            $\eta_{0}\subset{1,\dots, n-2}$, and let
            $\eta_{1},\eta_{2}\subset \{1, \dots, n\}$ be respectively
            the same as $\eta_{0}$ and $\eta_{0}\cup \{n-1,n-2\}$.
            From the explicit formulae, we see
            \[
                    i_{*}^{n,n-2}(w^{m}_{n,\eta_{1}}) =
                    i_{*}^{n,n-2}(w^{m}_{n,\eta_{2}}) ,
            \]
            because $i_{*}^{n,n-2}(B_{\eta_{1}})=
            i_{*}^{n,n-2}(B_{\eta_{2}})$. 
            Similarly
            \[
                                    i_{*}^{n,n-2}(w^{m-1}_{n,\eta'_{1}}) =
                    i_{*}^{n,n-2}(w^{m-1}_{n,\eta'_{2}}) .
            \]
            We therefore have (using the general shape of the
            subleading term),
            \[
            \begin{aligned}
            i_{*}^{n,n-2}(W^{m}_{\eta_{1}} -
                        W^{m}_{\eta_{2}}) &= \epsilon(
                        \tau^{n-2h}-\tau^{n-2h-4})
                        i_{*}^{n,n-2}(w^{m-1}_{n,\eta'_{1}}) +
                        \text{lower terms} \\
                        &= u 
                        (\tau^{4}-1)i_{*}^{n,n-2}(w^{m-1}_{n,\eta'_{1}}) +
                        \text{lower terms}.
                        \end{aligned}
            \]
            where $u$ is a unit in $\Q[\tau,\tau^{-1}]$.
            By the previous corollary, these belong to
            $\J{n-2}$. It is now enough to show that the
            elements $i_{*}^{n,n-2}(w^{m-1}_{n,\eta'_{1}})$ generate the ideal
            $j_{n-2}$ of relations in the ordinary cohomology of
            $\Rep(S^{2}_{n-2})$, because the statement about instanton
            homology will follow as before. From the formulae in
            Proposition~\ref{prop:explicit-formulae}, we see that
            this is the same as showing that the elements
            $w^{m-1}_{n-2,\eta'_{0}}$  generate the ideal $j_{n-2}$, which has
            already been established (as the case $n-2$) in
            Proposition~\ref{prop:little-w-generate}.
          \end{proof}

          The homomorphism $i_{*}^{n,n'}$ does not pass to a
          homomorphism between the quotient rings $\bar\cA$.
          But we can at least compose with the quotient
          map $\cA_{n'}\to\bar\cA$ to get the following immediate
          corollary. In the statement of the corollary, we note that
          the choices of sign in the definition of $i_{*}^{n,n'}$ are
          arbitrary and can be replaced by a more general phrasing.

          \begin{corollary}\label{cor:recursive}
            Let $\nu \in \{\pm 1\}^{n}$ be any choice of signs. Write
            $n' = \sum \nu_{i}$ and assume $n'\ge 1$.
            Consider the homomorphism
            $\bar\imath_{\nu} : \cA_{n} \to \bar\cA$ defined
            by $\bar\imath_{\nu}(\delta_{i})=\nu_{i}\delta$ for all
            $i$. Then we have an inclusions of ideals in
            $\bar\cA = \cR[\delta,\alpha,\epsilon]/\langle
            \epsilon^{2}-1\rangle$,
            \[
                  (\tau^{4}-1)^{(n-n')/2} \barJOne{n'}   \subset
                  \bar\imath_{\nu}(\J{n}) \subset  \barJOne{n'}.
            \]
            \qed
          \end{corollary}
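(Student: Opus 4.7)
The plan is to deduce the corollary directly from the preceding proposition (which provides $(\tau^{4}-1)^{f}\J{n'} \subset i_{*}^{n,n'}\J{n} \subset \J{n'}$ with $f=(n-n')/2$) by composing with the quotient map $q : \cA_{n'} \to \bar\cA$ that kills $\langle \delta_i - \delta_j \rangle$, together with an $S_n$-symmetry argument to reduce arbitrary sign patterns $\nu$ to the specific one that arises in $i_{*}^{n,n'}$.

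First I would record the fact that $\J{n}$ is invariant under the action of the symmetric group $S_n$ permuting the $\delta_i$. This is immediate from the generator description: Proposition~\ref{prop:quantum-Mumford} identifies the generators of $\J{n}$ as the elements $\om^m_\eta$ indexed by subsets $\eta \subset \pi$ satisfying the parity condition, and a permutation $\sigma \in S_n$ sends $\om^m_\eta$ to $\om^m_{\sigma(\eta)}$, which is another generator (since the parity constraint is on $|\eta|$ alone). Hence $\sigma_*(\J{n}) = \J{n}$ for every $\sigma$.

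Given an arbitrary $\nu \in \{\pm 1\}^n$ with $n' = \sum \nu_i \ge 1$, let $\nu_0$ be the distinguished sign pattern with $\nu_0(k) = +1$ for $k \le (n+n')/2$ and $\nu_0(k) = -1$ for the remaining $f = (n-n')/2$ indices; choose $\sigma \in S_n$ with $\nu = \nu_0 \circ \sigma$. Then $\bar\imath_\nu = \bar\imath_{\nu_0} \circ \sigma_*$, and by the symmetry just noted $\bar\imath_\nu(\J{n}) = \bar\imath_{\nu_0}(\J{n})$. By direct inspection of the formula defining $i_{*}^{n,n'}$ at the end of the previous subsection, the map $\bar\imath_{\nu_0}$ factors as $q \circ i_{*}^{n,n'}$: the signs assigned to the doubled-up points by $i_{*}^{n,n'}$ exactly match those of $\nu_0$ on the last $2f$ positions, while the signs on the first $n'$ positions become irrelevant after applying $q$.

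It then remains only to apply $q$ to the chain of inclusions supplied by the preceding proposition and to identify $q(\J{n'})$ with $\barJOne{n'}$. The latter identification is exactly the content of Proposition~\ref{prop:omegas-generate}, which asserts that $\barJOne{n'}$ is the image of $\J{n'}$ in $\bar\cA$. This yields
\[
(\tau^{4}-1)^{f}\,\barJOne{n'} \subset \bar\imath_\nu(\J{n}) \subset \barJOne{n'},
\]
which is the statement of the corollary. I do not anticipate a genuine obstacle: the argument is essentially formal, and the two ingredients that actually require verification—the $S_n$-symmetry of $\J{n}$ and the identification $q(\J{n'}) = \barJOne{n'}$—have both already been established in the text.
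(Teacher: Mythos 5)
Your argument is essentially the paper's: the corollary is obtained by composing the two inclusions of the preceding proposition with the quotient map $q:\cA_{n'}\to\bar\cA$, identifying $q(\J{n'})=\barJOne{n'}$ via Proposition~\ref{prop:omegas-generate}, and observing that $q\circ i_{*}^{n,n'}=\bar\imath_{\nu_{0}}$ for the standard sign pattern. The only divergence is how arbitrary $\nu$ is handled: the paper notes that the placement and signs of the $2f$ extra intersection points in the finger-move construction of $i_{*}^{n,n'}$ were an arbitrary choice, so the preceding proposition holds verbatim for any pattern, whereas you fix $\nu_{0}$ and invoke $S_{n}$-invariance of $\J{n}$. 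That invariance is true, but your justification of it is mildly circular: to conclude $\sigma_{*}(\om^{m}_{\eta})=\om^{m}_{\sigma(\eta)}$ from the uniqueness clause of Proposition~\ref{prop:quantum-Mumford} you must already know that $\sigma_{*}(\om^{m}_{\eta})$ lies in $\J{n}$, i.e.\ that $\sigma_{*}$ preserves $\J{n}$, which is the point at issue. The fix is immediate: the permutation is realized by a mapping class of $Z_{n}$ fixing the cyclic generator $\1_{+}$ and intertwining the operators $\opdelta_{p_{i}}$ with $\opdelta_{p_{\sigma(i)}}$, so the annihilator ideal of the cyclic module is preserved (this is exactly the symmetry the paper records at the start of the subsection on equations for the curve $C_{n}$); alternatively, avoid symmetry altogether by using the paper's remark that the sign choices in $i_{*}^{n,n'}$ are arbitrary.
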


           We refer to the relations between the ideals in
          Corollaries~\ref{cor:recursive-1} and \ref{cor:recursive} as
          ``finger-move relations'', because of the interpretation of
          the sphere $S$ as having been obtained from the standard
          sphere $S'\subset W$ by finger moves.
          
          \begin{remark}
            A second application of Proposition~\ref{prop:universalW}
            will be given in the proof of
            Proposition~\ref{prop:divisible} later in this paper.
          \end{remark}
          
                    \subsection{Decomposition of the instanton curve}

           We are now ready to harness our understanding of
           $I(Z_{n,-1})$ from Theorem~\ref{thm:barJn-via-syz}
           to obtain a description of $I(Z_{n})$.         
           Write \[V_{n} = \Spec \Q[\tau,\tau^{-1}, \alpha, \delta_{1},
           \dots, \delta_{n}, \epsilon].\] The set of complex-valued points
           $V_{n}(\C)$ is
           $\C^{\times}\times \C^{n+2}$, with $\tau$ a coordinate on
           the first factor. We can describe the $\cA_{n}$-module
           $I(Z_{n})$ geometrically as the coordinate ring of the
           closed subscheme
           \[
                    C_{n} \subset V_{n}
           \]
           defined by the vanishing of the elements of the ideal
           $\J{n}$ together with the additional relations that define
           the algebra $\cA_{n}$, namely the vanishing of
           $\delta_{i}^{2}-\delta_{j}^{2}$ and $\epsilon^{2}-1$.
           We can write $C_{n} = \Spec(I(Z_{n}))$, where $I(Z_{n})$ is
           considered as a quotient ring of the algebra $\cA_{n}$. To
           describe $I(Z_{n})$ as an $\cA_{n}$-module, we can
           therefore use geometrical language to describe the
           subscheme $C_{n}$. Note that the relation $\epsilon^{2}=1$
           means that $C_{n}$ is contained in the union of the two
           hyperplanes $\epsilon=1$ and $\epsilon=-1$, so we may write
           \[
                    C_{n} = C_{n}^{+} \cup C_{n}^{-}.
           \]

           In a similar way, let us write
           \[
                \bar V = \Spec \Q[\tau,\tau^{-1}, \alpha,
                \delta,\epsilon],
           \]
           so that the instanton homology group $I(Z_{n,-1}$ defines,
           (via its ideal of relations $\barJOne{n}$ and
           the relation $\epsilon^{2}=1$), a subscheme
           $D_{n}=\Spec(I(Z_{n,-1}))$, which is a closed
           subscheme of $\bar V$:
           \begin{equation}\label{eq:Dn}
                    D_{n} = D_{n}^{+} \cup D_{n}^{- }\subset \bar V.
           \end{equation}
            
           We can interpret  
           Corollary~\ref{cor:recursive} as describing a relation
           between the curves $C_{n}$ for $I(Z_{n})$ and $D_{n}$ for
           $I(Z_{n,-1})$. 
           First, given any choice of signs $\nu\in \{\pm 1\}^{n}$, define
           a morphism
           \[
                       \bar\imath^{*}_{\nu}: \bar V \to V_{n}
           \]
           by $\delta_{i}\mapsto \nu_{i}\delta$. Write
           \[
                    V_{n,\nu}\subset V_{n}
           \]
           for the image of $\imath^{*}_{\nu}$. This is the subvariety
           cut out by the linear relations $\nu_{i}
           \delta_{i}=\nu_{j}\delta_{j}$. Their union is the
           subvariety defined by $\delta_{i}^{2}=\delta_{j}^{2}$ for
           all $i$, $j$; so we have
           \[
                    C_{n} \subset \bigcup_{\nu} V_{n,\nu}.
           \]
          Given $\nu$ as above, write $n'=n'(\nu)=\sum \nu_{i}$ and suppose
           that this odd integer $n'$ is positive.
          We have an
            isomorphic copy of the affine scheme $D_{n'}$ as its image
            under the embedding $\imath^{*}_{\nu}$:
            \begin{equation}\label{eq:subscheme-nu}
                        \imath^{*}_{\nu}( D_{n'}) \subset V_{n,\nu}.
            \end{equation}
            just the image of $\bar C_{n'}$ under the embedding.
            
           \begin{proposition}\label{prop:union}
             The subscheme $C_{n}\subset V_{n}$ is the union of the
             subschemes \eqref{eq:subscheme-nu} as $\nu$ runs through
             all choices of sign $\{\pm 1\}^{n}$ with $n'(\nu)>0$:
             \begin{equation}\label{eq:C-is-union}
                        C_{n} = \bigcup_{\substack{ \nu \\ n'=n'(\nu)>0}}
                        \bar\imath^{*}_{\nu}(D_{n'}).
             \end{equation}
             \end{proposition}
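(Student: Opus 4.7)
The containment $\bigcup_\nu \bar\imath^*_\nu(D_{n'(\nu)}) \subseteq C_n$ is immediate from the right inclusion $\bar\imath_\nu(\J{n}) \subseteq \barJOne{n'(\nu)}$ of Corollary~\ref{cor:recursive}: this translates under the closed embedding $\bar\imath^*_\nu$ to $\bar\imath^*_\nu(D_{n'(\nu)}) \subseteq C_n$ for each $\nu$, and the scheme-theoretic union, defined by the intersection of the defining ideals, still lies in $C_n$. For the reverse inclusion I would show that $\bigcap_{\nu\colon n'(\nu)>0} \bar\imath_\nu^{-1}(\barJOne{n'(\nu)}) \subseteq \J{n}$ as ideals in $\cA_n$. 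Since $\cA_n/\J{n} = I(Z_n)$ is a free $\cR$-module by Proposition~\ref{prop:IZ-two-copies} and $\cR = \Q[\tau^{\pm 1}]$ is a PID, it suffices to verify this after tensoring with the fraction field $K=\Q(\tau)$; over $K$ the element $\tau^4-1$ is a unit, so the sandwich of Corollary~\ref{cor:recursive} collapses to $\bar\imath_\nu(\J{n})\otimes K = \barJOne{n'(\nu)}\otimes K$. The problem is now the injectivity of
\[
\psi_K\colon I(Z_n)\otimes K \longrightarrow \prod_{\nu\colon n'(\nu)>0} I(Z_{n'(\nu),-1})\otimes K,\qquad f\mapsto (\bar\imath_\nu(f))_\nu.
\]

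The strategy for $\psi_K$ is character analysis. Any character $\chi\colon \cA_n\otimes K\to\overline K$ factoring through $I(Z_n)\otimes K$ must have $\chi(\delta_i)^2 = \chi(\delta_j)^2$ because of the relation $\delta_i^2 - \delta_j^2 = 0$ in $\cA_n$; hence $\chi(\delta_i) = \nu_i\chi(\delta_1)$ for some $\nu\in\{\pm 1\}^n$, and after possibly replacing $\nu$ by $-\nu$ I may assume $n'(\nu)>0$. Then $\chi$ factors as $\bar\chi\circ\bar\imath_\nu$ for a character $\bar\chi$ that automatically annihilates $\bar\imath_\nu(\J{n})\otimes K = \barJOne{n'(\nu)}\otimes K$, so $\chi$ lies in the image of $\Spec(\psi_K)$. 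By the Nullstellensatz this puts $\ker\psi_K$ inside the nilradical of $I(Z_n)\otimes K$.

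The principal remaining obstacle is to prove that $I(Z_n)\otimes K$ is reduced, so that this nilradical is zero. I would address this by a rank comparison. The explicit description of each $\barJOne{n'}$ as the ideal of $m\times m$ minors of the syzygy matrix $S_0+S_1$ in Theorem~\ref{thm:barJn-via-syz} shows that $I(Z_{n',-1})\otimes K$ is generically reduced, with exactly $(n'^2-1)/4$ distinct $\overline K$-points at a generic value of $\tau$. Combined with the enumeration of $\overline K$-points of $C_n\otimes K$ supplied by the character analysis above (which exhibits every closed point as an image under some $\bar\imath^*_\nu$), one can attempt to verify that the number of distinct closed points of $C_n\otimes K$ matches the $K$-dimension $2\dim_\Q H^*(\Rep(S^{2}_{n});\Q)$ of $I(Z_n)\otimes K$ given by Proposition~\ref{prop:IZ-two-copies}, which would force reducedness. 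The dimension bookkeeping, coupling the Poincar\'e polynomial recursion from the proof of Lemma~\ref{lem:Morse-even} with the combinatorics of sign vectors $\nu\in\{\pm 1\}^n$ with $n'(\nu)>0$, together with a careful treatment of the overlaps among the loci $V_{n,\nu}$ at points where $\delta=0$, is where I expect the main technical work to lie.
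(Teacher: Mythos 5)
Your opening moves are sound and run parallel to the paper's: the inclusion $\bigcup_{\nu}\bar\imath^{*}_{\nu}(D_{n'(\nu)})\subseteq C_{n}$ from Corollary~\ref{cor:recursive}, the reduction of the reverse inclusion to injectivity of $\psi_{K}$ (legitimate because $I(Z_{n})$ is a free, hence torsion-free, $\cR$-module), and the character argument placing $\ker\psi_{K}$ inside the nilradical of $I(Z_{n})\otimes K$ are all correct. The gap is the final step. You need that nilradical to vanish, i.e.\ reducedness of the generic fibre of $C_{n}$ over the $\tau$-line, and your proposed source for it --- that Theorem~\ref{thm:barJn-via-syz} ``shows'' each $I(Z_{n',-1})\otimes K$ is generically reduced with exactly $(n'^{2}-1)/4$ distinct geometric points --- is not a proof: the syzygy-matrix description supplies generators of $\barJOne{n'}$ but says nothing about multiplicities of components. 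Indeed the paper explicitly leaves open whether $C_{n}$ (and likewise the $D_{n'}$) is reduced (see the remark immediately after the proof of Proposition~\ref{prop:union}), so an argument that needs reducedness as an input is aiming at something strictly stronger than the statement and cannot be completed from the results available. Your point-count route is also circular in shape: comparing the number of $\overline{K}$-points of $C_{n}\otimes K$ with $\dim_{K}I(Z_{n})\otimes K$ would yield reducedness only if you already knew both that each $D_{n'}\otimes\overline{K}$ attains the maximal count $(n'^{2}-1)/4$ (i.e.\ is reduced) and that the images for distinct $\nu$ are disjoint (no $\delta=0$ points in the generic fibre); neither is supplied, and since the proposition is a scheme-theoretic equality, matching closed points alone could never finish in any case.

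The paper's proof uses the very same numerical identity you would need, $2^{n-3}(n-1)=\sum_{\nu}\bigl(n'(\nu)^{2}-1\bigr)/4$, but deploys it so that reducedness never enters. One has a surjection of coordinate rings $I(Z_{n})\to I'$, where $I'$ is the coordinate ring of the union $C'$; the chain $\deg C_{n}\le\sum_{\nu}\deg\bigl(C_{n}\cap V_{n,\nu}\bigr)=\sum_{\nu}\deg D_{n'(\nu)}$ (the last equality because $C_{n}\cap V_{n,\nu}$ and $\bar\imath^{*}_{\nu}(D_{n'})$ agree where $\tau^{4}\ne 1$, by the two-sided inclusion of Corollary~\ref{cor:recursive}), combined with the exact equality of ranks, forces $\deg C'=\deg C_{n}$ and, as a by-product, that the pieces share no component of positive degree --- which also disposes of your overlap worry. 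A surjection of $\cR$-modules of equal rank out of a free module has torsion, hence zero, kernel, and the scheme-theoretic equality follows with all multiplicities accounted for. To salvage your proposal, replace the Nullstellensatz-plus-reducedness step by this module-theoretic rank comparison; as written, the argument does not close.
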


             The curves $D_{n'}$ are completely known via their
             defining equations from Theorem~\ref{thm:barJn-via-syz},
             so the proposition above is a complete characterization
             of the curve $C_{n}$ for $I(Z_{n})$.
             In the language of the defining ideals, this proposition
             is a converse to Corollary~\ref{cor:recursive}. In other
             words, we have the following:

             \begin{corollary}\label{cor:intersection}
                In the notation of Corollary~\ref{cor:recursive}, the
                defining ideal $\J{n}$ for $I(Z_{n})$ can be
                characterized as:
                \[
                                \J{n} = \{ \, w\in \cA_{n} \mid
                                \bar\imath_{\nu}(w) \in
                                \barJOne{n'(\nu)}, \forall \nu \,\}.
                \]
                Thus $I(Z_{n})$ is
                determined as an $\A_{n}$-module by the finger-move
                constraints, once $I(Z_{n',-1})$ is known for all odd
                $n'\le n$.
             \end{corollary}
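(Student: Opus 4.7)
The inclusion $\bigcup_\nu \bar\imath^*_\nu(D_{n'(\nu)}) \subseteq C_n$ is immediate from Corollary~\ref{cor:recursive}: the containment $\bar\imath_\nu(\J{n}) \subseteq \barJOne{n'(\nu)}$ forces every defining relation of $C_n$ to vanish on each $\bar\imath^*_\nu(D_{n'(\nu)})$. For the reverse inclusion, the plan is to prove the equivalent algebraic statement that the $\cA_n$-module homomorphism
\[
  \Phi : I(Z_n) = \cA_n/\J{n} \longrightarrow \prod_{\nu : n'(\nu) > 0} \bar\cA/\barJOne{n'(\nu)} = \prod_\nu I(Z_{n'(\nu),-1}),
\]
assembled from the maps $\bar\imath_\nu$, is injective. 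A preliminary observation is a rank comparison: by Corollary~\ref{cor:rank-Znm1} the target has $\cR$-rank $\sum_{k=0}^{m}\binom{n}{k}((n-2k)^2-1)/4$, and this agrees with $\rank_\cR I(Z_n) = 2\dim_\Q H^*(\Rep(S^2_n);\Q)$ computed from Proposition~\ref{prop:IZ-two-copies} and Street's recursion for $P_n$ (verified inductively). Since both modules are free over $\cR$, it suffices to show $\Phi\otimes_\cR K$ is an isomorphism, where $K=\Q(\tau)$.

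The key device is an idempotent decomposition over a localization. Write $D=\delta_1$ and work in $\cA_n\otimes K[D^{-1}]$. Because the relations $\delta_i^2=\delta_1^2$ give $(\delta_i/D)^2=1$, the elements $e_i^{\pm}:=(1\pm\delta_i/D)/2$ are orthogonal idempotents, and for each $\nu\in\{\pm 1\}^n$ with $\nu_1=+1$ the product $e_\nu:=\prod_{i=2}^{n}e_i^{\nu_i}$ defines a complete system of $2^{n-1}$ orthogonal idempotents summing to $1$. On the summand $e_\nu\cdot\cA_n\otimes K[D^{-1}]$ the relation $\delta_i=\nu_i D$ holds, identifying it with $\bar\cA\otimes K[\delta^{-1}]$ via $\bar\imath_\nu$. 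Each such $\nu$ corresponds bijectively to a factor of the target: if $n'(\nu)>0$, use $\nu$ itself; if $n'(\nu)<0$, use $-\nu$ (which has $n'(-\nu)>0$), and note that $\bar\imath_{-\nu}=\sigma\circ\bar\imath_\nu$ where $\sigma$ is the involution $\delta\mapsto-\delta$ on $\bar\cA$, so the summand maps isomorphically to the $(-\nu)$-factor via $\sigma$. Reducing modulo $\J{n}$ and invoking the sharpened form of Corollary~\ref{cor:recursive} (equality $\bar\imath_\nu(\J{n})\otimes K = \barJOne{n'(\nu)}\otimes K$, valid since $\tau^4-1$ is a unit in $K$), these identifications assemble into an isomorphism
\[
  (I(Z_n)\otimes K)[D^{-1}]\;\xrightarrow{\;\sim\;}\;\bigoplus_{\nu:\,n'(\nu)>0} I(Z_{n'(\nu),-1})\otimes K[\delta^{-1}]
\]
that coincides with $\Phi\otimes K$ after the indicated localizations.

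The main obstacle is to descend from this localized statement to $\Phi\otimes K$ itself. Given the rank equality, it is enough to verify that $D$ and $\delta$ act as non-zero-divisors on $I(Z_n)\otimes K$ and on each $I(Z_{n'(\nu),-1})\otimes K$ respectively; the localization maps on these finite-dimensional $K$-algebras are then bijective and the isomorphism descends. For the targets, non-zero-divisibility of $\delta$ is visible from the explicit determinantal presentation of Theorem~\ref{thm:barJn-via-syz}. For the source, the closed formulae of Propositions~\ref{prop:explicit-formulae} and~\ref{prop:quantum-Mumford-subleading} specialize at $\delta_p=0$ (all $p$) to
\[
  \om^{m}_{\eta}|_{\delta=0} \;=\; \tfrac{1}{m!}\alpha^m \;+\; \tfrac{\epsilon\tau^{n-2h}}{(m-1)!}\alpha^{m-1} \;+\; (\text{lower order in }\alpha),
\]
a nonzero polynomial in $\cR[\alpha,\epsilon]/\langle\epsilon^2-1\rangle$. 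Hence no irreducible component of $C_n\otimes K$ lies in the hyperplane $\{\delta_1=0\}$, so $\delta_1$ is a non-zero-divisor on $I(Z_n)\otimes K$. Combining these, $\Phi\otimes K$ is an isomorphism, $\Phi$ is injective, and Proposition~\ref{prop:union} follows. Corollary~\ref{cor:intersection} is then the direct reformulation in terms of defining ideals.
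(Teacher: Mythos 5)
Your overall scaffolding parallels the paper's (the easy inclusion from Corollary~\ref{cor:recursive}, plus the binomial rank identity $\rank_{\cR}I(Z_{n})=\sum_{\nu}\rank_{\cR}I(Z_{n'(\nu),-1})$), and your idempotent splitting of $\cA_{n}\otimes K[\delta_{1}^{-1}]$ into sign sectors is a legitimate reformulation of the generic decomposition. But the descent step is where your argument breaks, and it is a genuine gap, not a detail. To pass from the localized isomorphism back to $\Phi\otimes K$ you need $\delta_{1}$ to be a unit on $I(Z_{n})\otimes K$ (equivalently: no associated point of the generic fibre of $C_{n}$ lies in $\{\delta_{1}=0\}$), and your justification is a non sequitur: the observation that $\om^{m}_{\eta}|_{\delta=0}=\alpha^{m}/m!+\epsilon\tau^{n-2h}\alpha^{m-1}/(m-1)!+\cdots$ is a \emph{nonzero} polynomial does not show the specialized ideal is the unit ideal — a nonzero polynomial of positive degree in $\alpha$ has roots, and the ``lower order in $\alpha$'' terms are exactly the unknown tails $w(2),w(3),\dots$ that the whole section is trying to determine, so you cannot control the common zero locus at $\delta=0$ from the leading and subleading terms alone. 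Indeed this invertibility is a genuinely deformation-theoretic fact: at $\tau=1$ Street's computation shows \emph{every} point of $C_{n}$ has $\delta=0$, so no soft argument from the known terms can rule out points with $\delta_{1}=0$ at generic $\tau$. The companion claim that $\delta$ is a non-zero-divisor on each $I(Z_{n',-1})\otimes K$, ``visible from the determinantal presentation,'' is likewise only asserted; it amounts to showing that $S_{0}+S_{1}$ retains full rank after setting $\delta=0$, for all $\alpha$ and generic $\tau$, which is a nontrivial statement about the specific entries of $S_{1}$ (it visibly fails at $\tau^{4}=1$). If you did prove the target-side statement, your argument could be completed — the localized isomorphism plus the rank identity would then force $\delta_{1}$ to be a unit on the source as well — but as written the chain of implications has a hole precisely at its load-bearing step.

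By contrast, the paper never needs any $\delta$-invertibility. Its proof of Proposition~\ref{prop:union} (of which the corollary is the ideal-theoretic restatement) runs a pure degree count: since $C_{n}\subset\bigcup_{\nu}V_{n,\nu}$, one has $\deg C_{n}\le\sum_{\nu}\deg(C_{n}\cap V_{n,\nu})=\sum_{\nu}\deg D_{n'(\nu)}$, using the two-sided inclusions of Corollary~\ref{cor:recursive} only to identify degrees away from $\tau^{4}=1$; the exact rank identity then forces equality, hence no common components of positive degree and $\deg C'=\deg C_{n}$, and the surjection $I(Z_{n})\to I'$ of a free $\cR$-module onto a quotient of equal rank has torsion, hence zero, kernel. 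That route sidesteps entirely the question of what happens on $\{\delta=0\}$, which is exactly where your localization argument gets stuck.
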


           \begin{proof}[Proof of Proposition~\ref{prop:union}]
           Let us write $C'$ for the union on the right hand side of
           \eqref{eq:C-is-union}.
           The inclusion of
           ideals $\bar\imath_{\nu}(\J{n}) \subset  \barJOne{n'}$ in
           Corollary~\ref{cor:recursive} says that the curve $C_{n}$
           contains $C'$

           The coordinate ring of the scheme on the left hand side of
           \eqref{eq:C-is-union} is $I(Z_{n})$, and if we temporarily
           write $I'$ for the coordinate ring of the affine
           scheme $C'$, then the inclusion of schemes means that
           we have a surjection of rings,
           \[
                            I(Z_{n}) \to I'.
           \]
           We know that $I(Z_{n})$ is a free $\cR$-module of finite
           rank, where $\cR=\Q[\tau,\tau^{-1}]$.  So to prove that the
           rings are isomorphic, and to complete the proof of the
           proposition, it will suffice to prove that these two
           $\cR$-modules have the same rank, or in geometrical
           language,
           \[
                        \deg C_{n} = \deg C'
           \]
           where $\deg$ denotes the degree of the projection to the
           $\tau$ coordinate. (The inclusion one way means that we
           already have
           $\deg C_{n} \ge \deg C'$.)

           To prove this last equality we note that
           \begin{equation}\label{eq:inequality-intersection-nu}
                    \deg C_{n} \le \sum_{\substack{\nu\\ n'(\nu)>0} }\deg ( C_{n} \cap
                    V_{n,\nu})
           \end{equation}
           with equality if and only if the schemes $C_{n}\cap
           V_{n,\nu}$ for different $\nu$ have no common component of
           positive degree.  The two-way inclusions of
           Corollary~\ref{cor:recursive} tell us that $C_{n}\cap
           V_{n,\nu}$ and $i^{*}_{\nu}(\bar C_{n'})$ coincide over
           locus where $\tau^{4}-1$ is non-zero. In particular,
           \[
          \deg \bigl(C_{n}\cap
           V_{n,\nu}\bigr)= \deg \bigl(i^{*}_{\nu}(D_{n'})\bigr)
           \]
           and if the schemes on the left have no common component of
           positive degree for different $\nu$, then the same is true
           of the schemes on the right. From
           \eqref{eq:inequality-intersection-nu} we therefore obtain
            \begin{equation}\label{eq:inequality-bar-nu}
                    \deg C_{n} \le \sum_{\substack{\nu\\ n'(\nu)>0} }
                    \deg D_{n'}
           \end{equation}
           with equality if and only if the schemes on the right hand
           side of \eqref{eq:C-is-union} have no common component of
           non-zero degree.

                               In terms of instanton homology, the
                               inequality
           \eqref{eq:inequality-intersection-nu} can be restated as
                   \begin{equation}\label{eq:inequality-of-ranks-I}
                        \rank_{\cR} I(Z_{n})
                       \le \sum_{\substack{\nu \\ n'(\nu)>0}}
                        \rank_{\cR} I(Z_{n',-1}).
           \end{equation}
           On the other hand we can verify directly that we have
           equality here:
           \begin{equation}\label{eq:rank-equality}
                        \rank_{\cR} I(Z_{n})
                        = \sum_{\substack{\nu \\ n'(\nu)>0}}
                        \rank_{\cR} I(Z_{n',-1}).
           \end{equation}
           Indeed, the right hand side can be calculated by
           Corollary~\ref{cor:rank-Znm1}, and is
           \[
           \sum_{f=0}^{(n-1)/2}  \binomial{n}{f}
             \bigl((n-2f)^{2}-1\bigr)/4.
           \]
           The left-hand side of \eqref{eq:rank-equality}
           is twice the rank of the ordinary cohomology of
           the representation variety  $\Rep(S^{2}_{n})$ calculated by
           Boden
           \cite{Boden}, and can be expressed as
           \[
           \begin{aligned}
           \rank_{\cR} I(Z_{n}) &= 2^{n-3}(n-1) \\
                                &= (F''(1)-F(1))/8
           \end{aligned}
           \]
           where
           $F(t)=(t+t^{-1})^{n}$. Equality with the right-hand side of
           \eqref{eq:rank-equality} can be seen easily from the
           binomial expansion of $F(t)$.

           It follows that the parts making up the union $C'$ on the
           right-hand side of \eqref{eq:C-is-union} have no common
           components of positive degree, and we therefore have
           \[
           \begin{aligned}
           \deg C' &= \sum_{\nu} \deg D_{n'(\nu)} \\
                        &= \deg C_{n}
                        \end{aligned}
            \]
           as required.
           \end{proof}
           
           \begin{remark}
           In the course of the proof, we have seen that $C_{n}$ has
           pure dimension $1$, and we refer to it as the instanton
           curve for $Z_{n}$. Although it has no embedded points, we
           have not shown that the curve $C_{n}$ is reduced: it may
           perhaps
           have components with multiplicity larger than $1$, but the
           authors have not seen this arise in calculations.
           \end{remark}

          \subsection{Equations for the curve $C_{n}$}

           We now have a geometric description of
           $I(Z_{n})$ as a module, namely as the coordinate ring of an
           affine curve $C_{n}$. The curve $C_{n}$ is a union of
           curves each of which is isomorphic to some $D_{n'}$.
           However, although we have an explicit description of the
           defining relations for the $D_{n'}$, the resulting
           description of $C_{n}$ does not immediately provide
           explicit generators for the corresponding ideal $\cJ_{n}
           \subset \cA_{n}$. Instead, it describes the ideal
           $\cJ_{n}$ as an intersection of known ideals (expressed
           essentially in Corollary~\ref{cor:intersection}).

          To practically compute the intersection of the ideals in
          this particular context, we can leverage what we know about
          $\J{n}$. 
          From Propositions~\ref{prop:quantum-Mumford} and
          \ref{prop:quantum-Mumford-subleading}, we know the ideal
          $\J{n}$ is generated by elements $\om^{m}_{\eta}$ which can be
          written in the form
          \begin{equation}\label{eq:omega-expansion}
               \om_{\eta}^{m} = w(0) + \epsilon w(1) + w(2) + \epsilon w(3) +
               \cdots
          \end{equation}
          where $w(i)$ is a homogeneous polynomial of degree $m-i$ in
          $(\alpha,\delta_{1}, \dots, \delta_{n})$, and furthermore
          \[
               \begin{aligned}
                w(0) &= w^{m}_{n,\eta} \\
                w(1) &= w^{m-1}_{n,n-\eta},
               \end{aligned}
          \]
          Furthermore, the element $\om_{\eta}^{m}$ is the unique
          element of the ideal having leading term $w(0)$. The lower
          terms in $\om_{\eta}^{m}$ are therefore uniquely
          characterized by the linear constraints of
          Corollary~\ref{cor:intersection}, namely that
          $bar\imath_{\nu}(\om_{\eta}^{m})$ belongs to the known ideal
                                $\barJOne{n'(\nu)}$, for all $\nu$.
                                Solving this large linear system
                                provides the generators.
                                
           There is an alternative way to package the calculation of
           $\om_{\eta}^{m}$, which does not explicitly pass through a
           determination of the ideals $\barJOne{n}$, albeit the same
           ingredients are used. To set this up, 
          he terms in \eqref{eq:omega-expansion}
          which are as yet \emph{unknown} are the terms which belong
          to a lower part of the increasing filtration of $\cA_{n}$:
          we write, 
          \[
               L_{\eta}^{m} = w(2) + \epsilon w(3) +
               \cdots ,
          \]
          so that
          \begin{equation}\label{eq:omega-to-B}
          \begin{gathered}
          \om_{h}^{m} = w(0) + \epsilon w(1) + 
                    L_{\eta}^{m} \\
                    L_{\eta}^{m} \in \cA_{n}^{(m-2)}.
          \end{gathered}
                    \end{equation}
                    
          There is some symmetry that can be usefully exploited. The
          braid group $B_{\pi}$ for the $n$-element subset
          $\pi\subset S^{2}$ acts on $I(Z_{n})$ because of its
          interpretation as a mapping class group. This action factors
          through the symmetric group $S_{\pi}$, as one can see from
          the description of $I(Z_{n})$ as a cyclic module for the
          algebra $\cA_{n}$. Indeed, given a permutation
          $\sigma\in S_{\pi}$, we obtain an automorphism $\sigma_{*}:
          \cA_{n}\to \cA_{n}$ permuting the generators $\delta_{p}$
          and preserving the ideal $\J{n}\subset \cA_{n}$, so
          establishing the automorphism $\sigma_{*}: I(Z_{n})\to
          I(Z_{n})$. From this, we can see that
          \[
                    \sigma_{*}(W^{m}_{\eta}) = W^{m}_{\sigma(\eta)}.
          \]
          In particular, the element $W^{m}_{\eta}\in \cA_{n}$ is
          invariant under the action of group of permutations
          $S_{\eta}\times S_{\eta'} \subset S_{\pi}$.

          The lower terms $L_{\eta}^{m}$ therefore have the same
          symmetry. Furthermore, it will be enough if we determine
          $L_{\eta}^{m}$ for just one subset $\eta\subset \pi$ of each
          cardinality $h$ satisfying the parity condition
          \eqref{eq:h-parity}. Note also that the expression $L_{\eta}^{m}$ is empty
          unless $m$ is at least $2$ (i.e. $n$ is at least 5). 
                    
          The proposed recursive procedure for identifying the lower
          terms $L_{\eta}^{m}$ is to again use Corollary~\ref{cor:recursive-1},
          which gives us the finger-move relation
          \begin{equation}\label{eq:finger-constraints}
               i_{*}^{n,n-2}(\om_{\eta}^{m}) \in \J{n-2}   
          \end{equation}
           We would like to see that, f the ideal $\J{n-2}$ is
           already known, then the constraint
           \eqref{eq:finger-constraints} will be sufficient to
           determine the lower terms. In line with the remarks above,
           since either $\eta$ or $\eta'$ can be assumed to have at
           least $m+1$ elements (i.e. more than half), we will assume that the indices
           $\{m, m+1, \dots, n\}$ all belong either to $\eta$ or to
           $\eta'$. In particular this means that $\om^{m}_{\eta}$ and
           its lower terms $L^{m}_{\eta}$ are invariant under the
           symmetric group $S_{m+1}$ acting by permutation of the
           variables $\{\delta_{m}, \delta_{m+1},\dots, \delta_{n}\}$.
           (These indices include the three indices $\{n-2,n-1,n\}$
           which are involved in the definition of the finger move
            $i^{n,n-2}$.)

           \begin{lemma}\label{lem:unique-B}
             Write $n=2m+1$ and
             let $L\in \cA_{n}^{(m-2)}$ be an element that is
             symmetric in the variables
             $\delta_{m+1},\dots,\delta_{n-1},\delta_{n}$ (i.e.~more
             than half of the variables).
             Suppose $L$ satisfies
             \begin{equation}\label{eq:B-finger}
             i_{*}^{n,n-2}(L)  \in \J{n-2}.
             \end{equation}
             Then $L=0$. 
           \end{lemma}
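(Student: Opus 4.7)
The plan is to first upgrade the hypothesis to $i_*^{n,n-2}(L)=0$ using the filtration bound, then to expand $L$ in a basis adapted to the symmetry and extract a recursion from the action of $i_*^{n,n-2}$ on the symmetric generators, and finally to combine this recursion with the filtration vanishing to conclude $L=0$.

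First, I observe that $i_*^{n,n-2}:\cA_n\to\cA_{n-2}$ preserves the filtration degree, since it sends each grading-one generator to a grading-one element. Hence $i_*^{n,n-2}(L)\in\J{n-2}\cap\cA_{n-2}^{(m-2)}$. Proposition~\ref{prop:below-middle} applied to $n-2$ gives $J_{n-2}\cap\cA_{n-2}^{(m-2)}=0$, with $m-2=((n-2)-3)/2$ saturating the bound, and Proposition~\ref{prop:assoc-graded} transports this vanishing to the instanton ideal $\J{n-2}$ by an inductive comparison of associated graded pieces. So $i_*^{n,n-2}(L)=0$.

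Second, set $X=\{1,\ldots,m\}$, $Y=\{m+1,\ldots,n\}$, $B=\cR[\alpha,\beta,\epsilon]/\langle\epsilon^2-1\rangle$ with $\beta=-\delta_1^2$, and write $\delta^T=\prod_{k\in T}\delta_k$. Since $\cA_n$ is a free $B$-module with basis $\{\delta^S:S\subset[n]\}$, the symmetry of $L$ in the variables indexed by $Y$ yields the unique expansion
\[
L \;=\; \sum_{U\subset X}\sum_{r=0}^{m+1} c_{U,r}\,\delta^U M_r,
\qquad M_r \;=\; \sum_{\substack{V\subset Y\\|V|=r}}\delta^V,\quad c_{U,r}\in B.
\]
The filtration constraint $L\in\cA_n^{(m-2)}$ forces $c_{U,r}=0$ for all $r\ge m-1$, since the term $c_{U,r}\delta^U M_r$ has total grading at least $r$.

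A direct computation using $i_*^{n,n-2}(\delta_{n-1})=\delta_{n-2}$, $i_*^{n,n-2}(\delta_n)=-\delta_{n-2}$, and $\delta_{n-2}^2=-\beta$ yields
\[
i_*^{n,n-2}(M_r) \;=\; M_r^{Y_*}+\beta\,M_{r-2}^{Y_*},\qquad Y_*=\{m+1,\ldots,n-2\},
\]
the two cross-terms proportional to $\delta_{n-2}$ cancelling because of the opposite signs carried by $\delta_{n-1}$ and $\delta_n$. Since $X$ and $Y_*$ partition $[n-2]$, the elements $\delta^U M_s^{Y_*}$ are $B$-linearly independent in $\cA_{n-2}$; so $i_*^{n,n-2}(L)=0$ reduces to the system $c_{U,s}+\beta\,c_{U,s+2}=0$ for $0\le s\le m-1$ and $U\subset X$. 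Seeded at the top by the filtration vanishing of $c_{U,r}$ for $r\ge m-1$, a downward two-step induction forces $c_{U,s}=0$ for all $U,s$, so $L=0$. The main point driving the argument is the cancellation in the formula for $i_*^{n,n-2}(M_r)$, which is precisely where the symmetry hypothesis on the pair $\{\delta_{n-1},\delta_n\}$ gets used; once that is in hand, the resulting clean two-step recursion, anchored by the filtration bound at the top, does the rest.
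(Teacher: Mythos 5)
Your proof is correct and follows essentially the same route as the paper: first reduce \eqref{eq:B-finger} to $i_{*}^{n,n-2}(L)=0$ via Proposition~\ref{prop:below-middle} (the paper asserts the transfer to $\J{n-2}$ directly, where you justify it via Proposition~\ref{prop:assoc-graded}), then expand $L$ using the elementary symmetric sums in $\delta_{m+1},\dots,\delta_{n}$ — your $M_{r}$ are exactly the paper's $\sigma_{r}$ — and observe that $i_{*}^{n,n-2}$ acts by $M_{r}\mapsto M_{r}^{Y_{*}}+\beta M_{r-2}^{Y_{*}}$, whose triangular (two-step) structure, anchored by the filtration bound, forces all coefficients to vanish. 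Your treatment is just slightly more explicit about the free $B$-module basis and the absence of cancellation, but the argument is the same.
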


           \begin{proof}
            Let $\sigma_{k}$ be the $k$'th symmetric polynomial in
            $\delta_{m+1},\dots, \delta_{n}$, and let $\sigma'_{k}$ be
            the symmetric polynomial in $\delta_{m+1},\dots,
            \delta_{n-2}$, regarded as elements of $\cA_{n}$ and
            $\cA_{n-2}$ respectively. From
            Proposition~\ref{prop:below-middle}, we now that
            $\J{n-2}\cap \cA_{n-2}^{m-2}=0$, so the hypothesis
            $i_{*}^{n,n-2}(L)  \in \J{n-2}$ actually means that
            $i_{*}^{n,n-2}(L)$ is zero. We compute what
            $i_{*}^{n,n-2}$ does to $\sigma_{k}$, and we find
            \[
                    i_{*}^{n,n-2}(\sigma_{k}) =
                    \begin{cases}
                        \sigma'_{k}, & k=0,1 \\
                        \sigma'_{k} + \beta \sigma'_{k-2},
                                    & 2\le k \le m-1 \\
                        \beta\sigma'_{k-2},
                                    & k=m, m+1,
                    \end{cases}                    
            \]
            where $\beta=-\delta_{p}^{2}$ (independent of $p$).
            Because $L$ has degree at most $m-2$, we can write it as
            \[
                       L= \sum_{k=0}^{m-2}P_{k} \sigma_{k}
            \]
            where each $P_{k}$ is an expression in $\cA_{m}$,
            i.e.~involving only $\delta_{1},\dots,\delta_{m}$. Thus
            \[
                    i_{*}^{n,n-2}(L) = \sum_{k=0}^{m-2}(P_{k} + \beta
                    P_{k+2})\sigma'(k)
            \]
            where we set $P_{j}=0$ for $j>m-2$. The injectivity of
            $i_{*}^{n,n-2}$ is now clear from the upper triangular
            nature of this linear transformation, because the
            symmetric functions $\sigma'(k)$ are non-zero in this
            range.
           \end{proof}

           The lemma tells us that the finger move constraint can be
           used to determine the lower terms $L^{m}_{\eta}$ uniquely.
           So we obtain a procedure which determines the ideals
           $\J{n}$ recursively for all odd $n$, as follows.
           \begin{enumerate}
            \item In the base case $n=1$, the ideal $\J{1}$ is
            $\langle 1\rangle$.
            \item For general $n \ge 3$ (and $n$ odd as always), assume that the ideal
            $\J{n'}$ is already known for $n'<n$.
            \item Write $m=(n-1)/2$.  According to
            Propositions~\ref{prop:quantum-Mumford} and
            \ref{prop:quantum-Mumford-subleading}, for each $\eta$
            satisfying the parity condition \eqref{eq:h-parity-plus},
            there exists an element $\om_{\eta}^{m} \in
            \J{n}$ which can be written in the form
            \eqref{eq:omega-to-B}:
\[
\begin{aligned}
\om_{h}^{m} &= w(0) + \epsilon w(1) + w(2) + \epsilon w(3) + \cdots \\
            &= w(0) + \epsilon w(1) + 
                    L_{\eta}^{m} 
                   , \qquad
                    L_{\eta}^{m} \in \cA_{n}^{(m-2)}.
                                        \end{aligned}
\]
            The first
            terms $w(0)+ \epsilon w(1)$ are known because $w(0)$ is
            the Mumford relation and
            Proposition~\ref{prop:quantum-Mumford-subleading} provides the term
            $w(1)$.
            \item According to Lemma~\ref{lem:unique-B}, the unknown
            terms $L_{\eta}^{m}$ in $\om^{m}_{\eta}$ are uniquely
            determined by the finger-move relations
            \eqref{eq:finger-constraints}, which impose linear
            conditions on the coefficients of $L_{\eta}^{m}$. Solving
            these linear equations determines $L_{\eta}^{m}$ and hence
            determines $\om_{\eta}^{m} \in \cA_{n}$.
            \item As $\eta$ runs through the subsets satisfying
            \eqref{eq:h-parity-plus}, the elements $\om_{\eta}^{m}$  generate the ideal
            $\J{n}\subset \cA_{n}$ according to
            Proposition~\ref{prop:quantum-Mumford}. So we have a
            known set of generators for $\J{n}$. This determines
            $\J{n}$ and completes the inductive step.
           \end{enumerate}

\section{Further remarks}
\label{sec:further}

\subsection{Singularities of the instanton curve}

When the local coefficient system $\Gamma$ is replaced by constant
coefficients $\Q$, we obtain a description of the instanton homology
$I(Z_{n};\Q)$ which was earlier completely determined by Street
\cite{Street}. Those results therefore provide a description of the
scheme-theoretic intersection of the curve $C_{n}$ with the hyperplane
$\tau=1$. It is shown in \cite{Street} that the simultaneous
eigenvalues of the pair of operators $(\alpha,\delta)$ on
$I(Z_{n};\Q)$ are of the form $(\lambda,\delta)$, where $\lambda$ runs
through the odd integers in the range $|\lambda| < n$. The
multiplicities of the eigenspaces is also computed.

We can apply these results to learn that the curve $D_{n}$
corresponding to $I(Z_{n,-1}; \Gamma)$ intersects the plane $\tau=1$
in the points
\[
         x_{\lambda} :   (\tau,\alpha,\delta,\epsilon) = (1, \lambda, 0, \pm 1)
\]
where $\lambda$ runs through the same odd integers, and the sign of
$\epsilon$ is $(-1)^{(\lambda+1)/2}$. We also learn that the
intersection multiplicity at $x_{\lambda}$ is
$\mu_{\lambda}=(n-|\lambda|)/2$.

Knowing the intersection multiplicity puts an upper bound on the order
of a possible singular point of the curve at $x_{\lambda}$. In
particular, it means that $D_{n}$ is smooth at the points
$x_{\lambda}$ for the two extreme values of $\lambda$, namely
$\lambda=\pm (n-2)$, because the intersection multiplicity is $1$ at
those points.

A
little experimentation suggests that equality holds at all the points
$x_{\lambda}$ where $D_{n}$ meets $\tau=1$: that is,
\begin{equation}
\begin{aligned}
\ord( D_{n} ,x_{\lambda}) &= \mu_{\lambda} \\
                                  &= (n - |\lambda|)/2.  
                                  \end{aligned}
\end{equation}
With the understanding that these results have been verified only
experimentally for modest values of $n$, one can describe the
singularity of $D_{n}$ at $x_{\lambda}$ in greater detail. First of
all, we have seen that the ideal $\barJOne{n}$ which defines $D_{n}$
has just two generators $\Gen{n}{1}$, $\Gen{n}{2}$ (Proposition~\ref{prop:two-minors}),
and it follows that the singularity of $D_{n}$ at $x_{\lambda}$ is a
local complete intersection. Indeed, each of $D_{n}^{+}$ and
$D_{n}^{-}$ is cut out as a global complete intersection inside the
variety defined by $\epsilon=\pm 1$ and $\tau\ne 0$. Experiment also
indicates that the surfaces defined by the vanishing of $\Gen{n}{1}$ and
$\Gen{n}{2}$ are both smooth at $x_{\lambda}$. Indeed, the
$\alpha$-derivative of both is non-zero. By the implicit function
theorem, the zero-sets of $\Gen{n}{1}$ and $\Gen{n}{2}$ are therefore described
in a local analytic neighborhood of $x_{\lambda}$ by
\[
\begin{aligned}
    \alpha &= \lambda + f_{n,\lambda,1}(\delta,\tau) \\
    \alpha &= \lambda + f_{n,\lambda,2}(\delta,\tau) 
    \end{aligned}
\]
for two analytic functions $f_{n,\lambda,1}$ and $f_{n,\lambda,2}$. At the singular
points -- that is, when $|\lambda|<n-2$ -- the derivatives of both
$f_{n,\lambda,1}$ and $f_{n,\lambda,2}$ vanish at
$(\delta,\tau)=(0,1)$. The singular germ $(D_{n},x_{\lambda})$ is
therefore analytically isomorphic to the germ of the analytic plane
singularity
\[
\begin{gathered}
g_{n,\lambda}(\delta,\tau)=0 \\
g_{n,\lambda}= f_{n,\lambda,1}-f_{n,\lambda,2}
\end{gathered}
\]
at $(\delta,\tau)=(0,1)$.

In computations up to $n=31$, the function
$g_{n,\lambda}$ vanishes to order $\mu_{\lambda}$ at $(0,1)$,
verifying that $\mu_{\lambda}$ is the indeed the order of the
singular point. Furthermore we find
\[
        g_{n,\lambda}(\delta,\tau) = \mathrm{const.} \bigl( \delta \pm
        2(\tau-1)\bigr)^{\mu_{\lambda}} +
        O(\delta, \tau-1)^{\mu_{\lambda}+1},
\]
where the sign depends on $\epsilon$ and $\lambda$.
This means that the tangent cone to the singular point is the line
$\delta\pm 2(\tau-1)=0$, with multiplicity $\mu_{\lambda}$.

The highest-order
singular points on the curve are the points $x_{\lambda}$ with $\lambda=\pm 1$,
where the order of the singularity is $m=(n-1)/2$. At these points,
the analytic form of the singularity is $x^{m}=y^{m+1}$ where
$x=\delta \pm 2(\tau-1)$. In particular the singularity is unibranch.
The authors have not determined (even experimentally) whether the
singularity is unibranch at other singular points. Note, however, that
the entire curves $D_{n}^{\pm}$ are reducible when $n$ is composite
(as discussed below) and it follows that the singularities are not
unibranch when $\lambda$ and $n$ have a common factor.

One further experimental observation is that the local form of the
surface $\Gen{n}{i}=0$, given by $\alpha = \lambda   +  f_{n,\lambda,i}(\delta,\tau)$
at $x_{\lambda}$,
appears to approach a smooth limit as $n$ increases with $\lambda$
fixed. Indeed, after scaling by $\lambda$, we find that the limit
is independent of $\lambda$ also. That is, there is a convergent power series
$F(\delta,\tau)$ independent of $n$, $\lambda$ and $i=1,2$, such that
\[
   \lambda + f_{n,\lambda,i} (\delta,\tau) \to \lambda F(\delta,\tau).
\]
The difference vanishes at
$(0,1)$ to order $(\delta,\tau-1)^{O(n)}$. Up to terms of degree $5$,
the series $F$ is
\begin{multline*}
F(\delta, 1+\sigma) = 1-\frac{\delta ^2}{16}+\frac{31 \delta \sigma }{4}+\frac{\sigma
^2}{4}-\frac{31 \delta \sigma ^2}{8}-\frac{\sigma ^3}{4}-\frac{5
\delta ^4}{1024}+\frac{31 \delta ^3 \sigma }{128}+\frac{5 \delta ^2
\sigma ^2}{128}+\frac{31 \delta
\sigma ^3}{32}\\+\frac{15 \sigma ^4}{64}-\frac{31 \delta ^3 \sigma ^2}{256}-\frac{5 \delta ^2 \sigma
^3}{128}+\frac{31 \delta \sigma ^4}{64}-\frac{7 \sigma ^5}{32} +
\cdots.
   \end{multline*}

\subsection{Reducibility when $n$ is composite}

The curves $D^{+}_{n}$ and $D^{-}_{n}$ arising as $\Spec(I(Z_{n,-1}))$
are irreducible when $n$ is prime in all cases that the authors have
calculated. It seems to be an
interesting conjecture whether this holds in general. For composite
$n$, however, the curves $D^{+}_{n}$ and $D^{-}_{n}$ are reducible, as
the following result implies.

\begin{proposition}\label{prop:divisible}
    If $n'$ divides the odd integer $n$, then the curves $D^{+}_{n}$
    and $D^{-}_{n}$
    contain $\psi(D^{+}_{n'})$ and
    $\psi(D^{-}_{n'})$ respectively, where $\psi$ is the map on the
    ambient space $\bar V$ given by
    $\psi(\tau,\tau^{-1},\delta,\alpha,\epsilon) =
    (\tau,\tau^{-1},\delta, (n/n')\alpha, \epsilon)$.
    \end{proposition}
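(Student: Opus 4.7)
The proposition is equivalent to an algebraic assertion. The image $\psi(D_{n'}^{\pm})$ is the subscheme cut out by the ideal $\Phi^{-1}(\barJOne{n'})$ in $\bar\cA$, where $\Phi\colon\bar\cA\to\bar\cA$ is the ring automorphism $\alpha\mapsto (n/n')\alpha$, $\delta\mapsto\delta$, $\tau\mapsto\tau$, $\epsilon\mapsto\epsilon$. Containment $\psi(D_{n'}^{\pm})\subset D_{n}^{\pm}$ is therefore equivalent to $\Phi(\barJOne{n})\subset\barJOne{n'}$; note that $\Phi$ preserves the hyperplanes $\epsilon = \pm 1$, which is why the claim decomposes over the two sign components.

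The plan is to establish this inclusion by another application of Proposition~\ref{prop:universalW}, to the cylindrical cobordism $W=[0,1]\times Z_{n',-1}$. The key geometric input is an embedded orbifold sphere $S\cong S^{2}_{n}$ in $W$, with $k=n/n'$, such that: its homology class equals $k\cdot[S^{2}_{n'}]$ in $H_{2}(W)\cong\mathbb{Z}$; it meets the singular surface $[0,1]\times K(Z_{n',-1})$ transversely in $n$ points (necessarily on the single component of the singular locus); after orienting $S$, the induced local orientations at all $n$ intersection points agree; and $S$ has trivial normal bundle. Granted such an $S$, the inclusion induces $i_{*}\colon\cA_{n}\to\cA(W)$; since $W$ is a cylinder one has canonically $\cA(W)=\bar\cA$, and the cobordism endomorphism $I(W,\cdot)$ is just the natural action on $I(Z_{n',-1})$. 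The formulas $i_{*}(\alpha)=[S]=k\alpha$ and $i_{*}(\delta_{j})=\delta$ identify $i_{*}$ with the composite $\Phi\circ q$, where $q\colon\cA_{n}\to\bar\cA$ is the quotient sending $\delta_{j}\mapsto\delta$. Now Proposition~\ref{prop:universalW}, applied to any $a\in\J{n}$, says that $i_{*}(a)$ annihilates $I(Z_{n',-1})$, i.e.~$\Phi(q(a))\in\barJOne{n'}$. Since the elements $q(a)$ generate $\barJOne{n}$ as $a$ ranges over $\J{n}$ (Proposition~\ref{prop:omegas-generate}), this yields $\Phi(\barJOne{n})\subset\barJOne{n'}$, as required.

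The main obstacle is the construction of the sphere $S$. A natural first attempt is to take $k$ parallel copies of the standard fiber sphere $\{t_{j}\}\times\{\theta_{0}\}\times S^{2}\cong S^{2}_{n'}$ at distinct levels $t_{1}<\cdots<t_{k}$ in $[0,1]$: each meets $[0,1]\times K$ in $n'$ points with matching induced orientations, and together they realize the class $k\cdot[S^{2}_{n'}]$ with $n$ coherent intersection points. Connect-summing these $k$ spheres via tubes disjoint from $[0,1]\times K$ produces a single connected surface of genus $k-1$ with the correct homology class and the correct intersection data. The delicate step is to reduce the genus to zero while preserving these intersections and the triviality of the normal bundle. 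This amounts to locating embedded disks in $W$ bounding the core circles of the tubes, whose interiors are disjoint from both $[0,1]\times K$ and the rest of the surface, and then surgering along them. Such disks can be produced by a four-dimensional transversality argument in the ambient manifold $[0,1]\times Z_{n',-1}$, whose fundamental group is $\mathbb{Z}$. Carrying this construction out in detail, and verifying that it yields an embedded $S^{2}$ with trivial normal bundle, is the principal technical content of the argument; the remainder of the proof is a formal consequence via the plan above.
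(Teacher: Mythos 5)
Your algebraic skeleton is correct and is essentially the paper's own argument: the paper also proves the scheme containment by applying Proposition~\ref{prop:universalW} to an embedded sphere representing $l=n/n'$ times the generator of $H_{2}$ in a product cobordism, and then uses Proposition~\ref{prop:omegas-generate} to identify the image of $\J{n}$ in $\bar\cA$ with $\barJOne{n}$. The only organisational difference is that the paper takes $W=[0,1]\times Z_{n'}$, first obtains $\Psi_{l}(\J{n})\subset\J{n'}$ inside $\cA_{n'}$, and only afterwards sets the $\delta_{k}$ equal, whereas you work directly in $[0,1]\times Z_{n',-1}$ and land in $\barJOne{n'}$ at once; both variants are fine (yours uses in addition that the annihilator of the cyclic module $I(Z_{n',-1})=\bar\cA/\barJOne{n'}$ is exactly $\barJOne{n'}$, which is true).

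The gap is in the geometric step you defer and call the principal technical content, and it rests on a miscount. If you join $k$ pairwise disjoint embedded spheres in a $4$-manifold by tubes along $k-1$ embedded arcs, each arc running between two \emph{distinct} components with interior disjoint from the surfaces and from $[0,1]\times K$ (which generic arcs are, since an arc is $1$-dimensional and the singular set is $2$-dimensional in an ambient $4$-manifold), the result is the iterated ambient connected sum, i.e.\ again an embedded sphere --- not a surface of genus $k-1$; genus is created only when one tubes a connected surface to itself. So no genus-reducing surgery is needed, and the normal bundle is automatically trivial because the sphere has self-intersection $(l[S^{2}])^{2}=0$, which determines the normal bundle of a sphere in an oriented $4$-manifold. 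Moreover, the repair you sketch would not work as stated: a $2$-disk and a $2$-dimensional surface in a $4$-manifold generically meet in isolated points, so a transversality argument cannot supply embedded disks whose interiors avoid the surface and $[0,1]\times K$; removing such intersection points is exactly the kind of problem general position does not solve. As written, then, your proof is incomplete at the construction of $S$, but the hole is filled by the one-line tubing observation above, after which your argument coincides with the paper's.
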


 \begin{proof}
        This is an application of the general principal described by
        Proposition~\ref{prop:universalW}. In the context of that
        proposition,
        take $W$ to be the product cobordism $[0,1]\times
        Z_{n'}$. Write $l=n/n'$. We can embed a sphere
        $S\hookrightarrow W$ representing $l$ times the generator of
        $H_{2}(W)$ and meeting the singular set in $ln'$ points, all
        with the same orientation.
        The relevant map $\Psi$ in Proposition~\ref{prop:universalW}
        is then  the  homomorphism of algebras
        \[
                \Psi_{l} : \cA_{n} \to \cA_{n'}
        \]
        which is given by (with our standardly named generators, and suitably
        numbering the intersection points),
        by
        \[
                \begin{gathered}
                    \Psi_{l}(\alpha) = l \alpha \\
                    \Psi_{l}(\delta_{k})  = \delta_{(k\bmod n')}.
                \end{gathered}
        \]
       The conclusion of Proposition~\ref{prop:universalW} is that we
       have an  inclusion of ideals
            $\Psi_{l}(\J{n}) \subset \J{n'}$.

        Passing to the quotient rings $\bar{\cA}$ in which all the
        $\delta_{k}$ are equal, and using the fact that $\barJOne{n}$
        is the image of $\J{n}$ in the quotient ring
        (Proposition~\ref{prop:omegas-generate}),
        we obtain an inclusion of ideals $\psi_{l}(\barJOne{n})
            \subset \barJOne{n'}$ when $n=ln'$, where $\psi_{l}$ is
            the with $\psi_{l}(\alpha)=l \alpha$ and
            $\psi_{l}(\delta)=\delta$.
            Proposition~\ref{prop:divisible} is just a restatement of
            this inclusion of ideals, in the geometrical language of
            the subschemes that they define.
 \end{proof}

   \subsection{Interpretation as the quantum cohomology ring}

   For
   every odd $n$, the representation variety $M=\Rep(S^{2}_{n})$ is
   naturally a smooth symplectic manifold, by a standard construction
   \cite{Goldman}. If $n$ points in $\CP^{1}$ are chosen, then $M$
   becomes also a smooth complex-algebraic variety of dimension $n-3$,
   as a consequence of its interpretation as a moduli space of stable
   parabolic bundles. With the symplectic form, it is a K\"ahler
   manifold, and the cohomology class of the K\"ahler form is a
   negative multiple of the canonical class. The latter assertion is
   the statement of ``monotonicity'' for the symplectic structure. It
   can be deduced as a particularly simple case from \cite{KM-yaft},
   for example, or it can be deduced from the fact that there is only
   one class in $H^{2}$ which is invariant under the ``flip''
   symmetries \cite{Street}. This is therefore a Fano variety. (A
   concrete description is discussed in \cite{Casagrande}.)

       The quantum cohomology ring of such a Fano variety is defined
       using a deformation of the usual triple intersection product.
       Given cycles $A$, $B$, $C$, the quantum intersection product is
       a scalar which is a weighted count of isolated pseudo-holomorphic curves $u :
       \CP^{1} \to M$, with the constraint that $u$
       maps three marked points to $A$, $B$ and $C$. For our purposes,
       the weight will be of the form $\tau^{[u]\cdot T}$ for a
       suitable 2-dimensional cohomology class $T = 2\sum
       \delta_{i}$. This leads to a
       quantum cohomology ring $\mathit{QH}(M)$ which is a module over
       the ring of Laurent polynomials $\cR$. In the spirit of results from \cite{Munoz2}
       and \cite{Dostoglou-Salamon}, one should expect that the
       $\epsilon=1$ component of $I(Z_{n})$ is isomorphic to
       $\mathit{QH}(M)$ as an algebra.

   \subsection{General local coefficients}

   As an alternative to the local coefficient system $\Gamma$ for
   $I(Z_{n})$, there is a larger local coefficient system $\Gamma_{n}$
   that can be used. Rather than being a system of rank-1 modules over
   $\cR=\Q[\tau^{-1},\tau]$, the ground ring for $\Gamma_{n}$ is the
   ring of finite Laurent series in $n$ distinct variables
   $\tau_{1},\dots,\tau_{n}$ attached to the $n$ components of the
   singular set of $Z_{n}$:
    \[
             \cR_{n} = \Q[\tau_{1}, \tau_{1}^{-1}, \dots ,\tau_{n},
             \tau_{n}^{-1}]
    \]
    The instanton homology $I(Z_{n}; \Gamma_{n})$ is then a module
   over the ring
   \[
        \cR_{n}[\delta_{1}, \dots, \delta_{n}, \alpha,\epsilon ].
   \]
   It is no longer true that $\delta_{i}^{2}=\delta_{j}^{2}$; instead we
   have
   \[
   \delta_{i}^{2}-\tau_{i}^{2}-\tau_{i}^{-2}
        = \delta_{j}^{2}-\tau_{j}^{2}-\tau_{j}^{-2}
   \]
   for all $i,j$.

   It should be possible to compute
   $I(Z_{n}; \Gamma_{n})$ by adapting the ideas of this paper.
   As the
   simplest example, our two generators for the relations in
   $I(Z_{3,-1})$, where all $\delta_{i}$ and all $\tau_{i}$ are equal,
   were
   \[
   \begin{aligned}
   & \alpha + (3/2)\delta + \epsilon \tau^{3} \\
   & \alpha - (1/2)\delta + \epsilon \tau^{-1}
   \end{aligned}
   \]
   For $I(Z_{3}; \Gamma_{3})$ the corresponding  relations are
   \[
   \begin{aligned}
   & \alpha + (1/2)(\delta_{1} + \delta_{2} + \delta_{3}) +
   \epsilon \tau_{1}\tau_{2}\tau_{3} \\
   \end{aligned}
   \]
   and
     \[
   \begin{aligned}
   & \alpha + (1/2)(\delta_{1} - \delta_{2} - \delta_{3}) +
   \epsilon \tau_{1}\tau^{-1}_{2}\tau^{-1}_{3}, \\
   \end{aligned}
   \]
 together with cyclic rotations of the second one. The instanton
 homology $I(Z_{3}; \Gamma_{3})$ is a free $\cR_{3}$-module of rank
 $2$.

 There is an
 additional symmetry present when using $\Gamma_{n}$ which comes from
 the flip relation. So the ideal of generators is invariant under the
 symmetry which changes the sign of $\delta_{i}$ and $\delta_{j}$ for
 any two distinct indices while changing $\tau_{i}$ and $\tau_{j}$ to
 $\tau_{i}^{-1}$ and $\tau_{j}^{-1}$. In the example of
 $I(Z_{3};\Gamma_{3})$ there are four generators corresponding to the
 four subsets $\eta\subset \{1,2,3\}$ of even parity, and the corresponding
 relations are all obtained from the first one (corresponding to
 $\eta=\varnothing$) by applying flips. For larger $n$, the leading
 and sub-leading terms follow the same pattern. So the adaptation of
 Proposition~\ref{prop:quantum-Mumford-subleading} to the case of
 $\Gamma_{n}$ has the same leading term while the factor of
 $\tau^{n-2h}$ in front of the subleading term is replaced by
 \[
            \prod_{i\notin\eta} \tau_{i} \prod_{i\in\eta}
            \tau_{i}^{-1}. 
 \]

   \subsection{Instanton homology for torus knots}

   As mentioned in the introduction, a motivation for this paper comes
   from wishing to calculate variants of framed instanton homology for
   torus knots. In \cite{KM-InstConc}, concordance invariants of knots
   were defined using a version of framed instanton homology
   $\Isharp$. In that paper, for a knot $K\subset Y$, the framed
   instanton homology is defined using the connected sum $(Y,K)\#
   (S^{3}, \Theta)$, where $\Theta$ is a theta-graph in $S^{3}$. A
   local coefficient system is used in \cite{KM-InstConc}, where the
   ground ring is the Laurent polynomials in three variable
   $\tau_{i}$ corresponding to the three edges of $\Theta$. Because of
   the phenomenon of bubbling in codimension $2$ which arises from the
   vertices of $\Theta$, it was necessary in \cite{KM-InstConc} to
   use  ring of characteristic $2$.

   It is possible instead to work in characteristic zero by abandoning
   the pair $(S^{3},\Theta)$ and using the pair $Z_{3}$ instead (as
   described just above). The local coefficient system comes from
   $\Gamma_{3}$. Because $I(Z_{3};\Gamma_{3})$ has rank $2$, one
   should take just the $+1$ eigenspace of $\epsilon$ to obtain a
   rank-1 module. Thus one can define $\Isharp(Z;\Gamma_{3})$ for
   general bifolds $Z$ as being $I(Z\# Z_{3}; \Gamma_{3})_{+}$. The
   connected sum is of the 3-manifolds, not a connected sum of
   pairs. But a connected sum of pairs can be used instead to define a
   reduced version $\Inat(Z;\Gamma_{3})$.
   
     A variant of the connected sum theorem
        from \cite{Daemi-Scaduto} allows one to pass to 
        $\Inat(Z_{n,-1};\Gamma_{3})$ starting from the calculation of $I(Z_{n,-1})$
        in this paper.        
        Using the surgery exact
        triangle for instanton homology, one can therefore take the
        calculation of $I(Z_{n,-1})$ as a first step towards understanding
        the reduced instanton homology with local coefficients for torus knots
        in $S^{3}$. The authors hope to return to this in a future
        paper.

        \subsection{Universal relations}

        The relations in the instanton homology of $Z_{n}$ and
        $Z_{n,-1}$ give rise to universal relations for general
        admissible bifolds $(Y,K)$ containing spheres. The following
        is an illustration.

       \begin{proposition}\label{prop:universalJ}
        Let $(Y,K)$ be a bifold and suppose that the singular set $K$
        is a knot meeting an embedded sphere $S$ in $Y$ transversely
        with odd geometric intersection number $n$ and algebraic
        intersection number $n'$. Orient the sphere and $K$ so that $0<n'\le
        n$. Let $\alpha$ be the operator on $I(Y,K)$ corresponding
        the sphere $S$ and let $\delta$ be the operator arising from a
        point on $K$. Let $\epsilon$ be the involution on $I(Y,K)$
        arising from $S$. Then the elements of the ideal 
        \[
           (\tau^{4}-1)^{(n-n')/2}\barJOne{n'}
        \subset \cR[\delta,\alpha,\epsilon]/\langle
        \epsilon^{2}-1\rangle\]
        annihilate $I(Y,K)$. 
       \end{proposition}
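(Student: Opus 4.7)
The plan is to reduce the statement to Corollary~\ref{cor:recursive} via a direct application of the universal-relation principle of Proposition~\ref{prop:universalW}. Take the trivial cobordism $W = [0,1]\times Y$ from $(Y,K)$ to itself, and regard $S' = \{1/2\}\times S$ as an embedded orbifold sphere in $W$ with trivial normal bundle (using the given orientation of $S$ to orient $S'$). The boundary of a tubular neighborhood of $S'$ is a copy of $Z_{n}$, since $S$ meets $K$ in $n$ points and the normal bundle of $S'$ is trivial. Number the intersection points $p_{1},\dots,p_{n}$ and let $\nu_{i}\in\{\pm 1\}$ record whether the local orientation of $K$ at $p_{i}$ (as the singular set, inherited from an arbitrary fixed orientation of $K$) agrees with the orientation determined by $S'$. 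Then $n' = \sum_{i}\nu_{i}$ is the algebraic intersection number by construction, so exactly $(n-n')/2$ of the $\nu_{i}$'s equal $-1$.

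First I would verify that the $\cA_{n}$-action on $I(Y,K)$ induced by the inclusion data $(S', p_{1},\dots,p_{n})$ factors through the quotient map $\bar\imath_{\nu}: \cA_{n}\to\bar\cA$ of Corollary~\ref{cor:recursive}. The class $[S'] \in H_{2}(W;\Q)$ provides the operator $\opalpha$ on $I(Y,K)$, and the oriented points $p_{i}\in\Sigma(W)$ provide operators $\opdelta_{p_{i}}$. Because $K$ is connected (a knot), the $n$ operators $\opdelta_{p_{i}}$ on $I(Y,K)$ all coincide up to sign with the single operator $\opdelta$ determined by any fixed basepoint of $K$; the sign at $p_{i}$ is precisely $\nu_{i}$. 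The involution $\opepsilon$ is obtained exactly as in the proof of Lemma~\ref{lem:epsilon-squared} from the class $e=[S']$. Thus the map $\cA_{n}\to\mathrm{End}_{\cR}(I(Y,K))$ arising from Proposition~\ref{prop:universalW} sends $\alpha\mapsto\opalpha$, $\delta_{i}\mapsto\nu_{i}\opdelta$, $\epsilon\mapsto\opepsilon$, and therefore factors through $\bar\imath_{\nu}$.

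Now Proposition~\ref{prop:universalW}, applied to $W$ and the sphere $S'$, shows that the entire ideal $\J{n}\subset\cA_{n}$ annihilates $I(Y,K)$ via this map. Since the map factors through $\bar\imath_{\nu}$, the image $\bar\imath_{\nu}(\J{n})\subset\bar\cA$ annihilates $I(Y,K)$ (viewed as a $\bar\cA$-module through $\opalpha,\opdelta,\opepsilon$). But Corollary~\ref{cor:recursive} gives the inclusion
\[
    (\tau^{4}-1)^{(n-n')/2}\,\barJOne{n'}\ \subset\ \bar\imath_{\nu}(\J{n}),
\]
so the left-hand ideal annihilates $I(Y,K)$, which is exactly the claim.

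The only place where care is required is matching the orientation conventions in step two: the algebraic intersection number $n'$ depends on the chosen orientations of $S$ and $K$, and the hypothesis $0<n'\le n$ simply fixes those orientations so that the majority of the $\nu_{i}$'s are $+1$, matching the convention of Corollary~\ref{cor:recursive}. The remaining verifications (triviality of the normal bundle of $S'$, identification of the boundary of its tubular neighborhood with $Z_{n}$, and identification of the $\opdelta_{p_{i}}$ on a connected singular set) are routine and mirror the setup preceding Proposition~\ref{prop:universalW}. I do not foresee a main obstacle beyond bookkeeping for the signs $\nu_{i}$.
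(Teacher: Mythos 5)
Your proposal is correct and follows essentially the same route as the paper: apply Proposition~\ref{prop:universalW} to the product cobordism with the embedded sphere to see that $\J{n}$ annihilates $I(Y,K)$, note that connectedness of $K$ makes the action factor through $\bar\imath_{\nu}$ with the signs $\nu_{i}$ recording the local intersection signs, and then invoke the inclusion $(\tau^{4}-1)^{(n-n')/2}\barJOne{n'}\subset\bar\imath_{\nu}(\J{n})$ from Corollary~\ref{cor:recursive}. The paper's own proof is just a terser version of the same argument.
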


       \begin{proof}
       Let
       $\delta_{1},\dots,\delta_{n}$ be the operators corresponding
       the intersection points of $K$ with $S$, all oriented with the
       normal orientation to $S$. From an application of the general
       principle of Proposition~\ref{prop:universalW}, the instanton
       homology $I(Y,K)$ is annihilated by the ideal $\cJ_{n}$ in the
       algebra $\cA_{n}$. On the other hand, because $K$ is a knot,
       all the operators $\delta_{i}$ are equal up to sign, so the
       action of the algebra $\cA_{n}$ factors through the quotient
       $\bar\cA=\cR[\delta,\alpha,\epsilon]/\langle
       \epsilon^{2}-1\rangle$
       in which we set $\delta_{i}=\pm \delta$ according to the sign
       of the corresponding intersection point of $K$ with $S$. From
       Corollary~\ref{cor:recursive-1} and 
       Corollary~\ref{cor:recursive} the image of $\cJ_{n}$ in the
       quotient contains the ideal described in the Proposition.
       \end{proof}

       As a simplest example, if $K$ is a knot in $Y=S^{1}\times S^{2}$
       which has geometric intersection $3$ and algebraic
       intersection $1$ with $S^{2}$, then $I(Y,K)$ is a torsion
       $\cR$-module annihilated by $\tau^{4}-1$. In general, the
       proposition provides a lower bound on the geometric
       intersection number of $K$ and $S^{2}$. 

       \begin{corollary}
        Let $Y$ contain an oriented $2$-sphere $S$, and let $K\subset
        Y$ be a knot having odd algebraic
        intersection number $n' > 0$ with $S$. Then a lower bound for
        the transverse geometric intersection number $K\cap S$
        for any knot isotopic to $K$ is
        $n' + 2f$, where
        \[
                f = \min \{ \, F\ge 0 \mid
                          \text{\textup{$(\tau^{4}-1)^{F} \Gen{n'}{i}$
                          annihilates $I(Y, K)$ for
                          $i=1,2$}}\,\} 
        \]
        and $\Gen{n'}{1}$ and $\Gen{n'}{2}$ are the two generators in
        Proposition~\ref{prop:two-minors}. \qed
       \end{corollary}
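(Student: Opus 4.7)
The plan is to deduce the intersection bound directly from Proposition~\ref{prop:universalJ} together with the fact that $\barJOne{n'}$ is generated by the two elements $\Gen{n'}{1}$ and $\Gen{n'}{2}$ (Proposition~\ref{prop:two-minors}).

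First, let $K'$ be any knot isotopic to $K$, and suppose $K'$ meets $S$ transversely in $n$ points, so $n = n' + 2f'$ for some integer $f' \ge 0$ (since the algebraic and geometric intersection numbers have the same parity). Because $I(Y,K) \cong I(Y,K')$ as $\cR[\delta,\alpha,\epsilon]/\langle \epsilon^{2}-1\rangle$-modules (the operators $\delta$, $\alpha$, $\epsilon$ being defined from the knot, sphere, and the involution, all of which are preserved up to the appropriate identifications under isotopy), we may compute the annihilator using $K'$. By Proposition~\ref{prop:universalJ} applied to $(Y,K')$, the ideal $(\tau^{4}-1)^{f'}\barJOne{n'}$ annihilates $I(Y,K)$. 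Since $\Gen{n'}{1}$ and $\Gen{n'}{2}$ lie in $\barJOne{n'}$, the elements $(\tau^{4}-1)^{f'}\Gen{n'}{i}$ annihilate $I(Y,K)$ for $i=1,2$.

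By the definition of $f$ as the minimal such exponent, this gives $f \le f'$, hence $n = n' + 2f' \ge n' + 2f$. Taking the minimum over isotopic representatives $K'$ yields the stated lower bound on the transverse geometric intersection number.

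The only subtle point, and essentially the only non-formal step, is to ensure that the operators $\delta$, $\alpha$ and the involution $\epsilon$ attached to $(Y,K')$ using the same sphere $S$ act on $I(Y,K')$ in the way required by Proposition~\ref{prop:universalJ}: $\alpha$ is the operator from $[S]$, $\delta$ is the operator from any point of the knot $K'$ (which is unambiguous because $K'$ is connected), and $\epsilon$ is the involution determined by $[S]$ as a $\Z$-class. These definitions depend only on $(Y,K',S)$ as a bifold with chosen surface, and isotopies of $K'$ in $Y$ preserve the full data up to the bifold cobordism constructed from the isotopy, so the module structure on $I(Y,K)$ is intrinsic to the isotopy class. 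Once this is noted, the bound is immediate.
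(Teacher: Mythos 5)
Your proof is correct and is essentially the paper's own (implicit) argument: the corollary is stated with \qed because it follows immediately from Proposition~\ref{prop:universalJ} applied to an isotopic transverse representative, together with the fact that $\Gen{n'}{1},\Gen{n'}{2}$ lie in $\barJOne{n'}$, which is exactly the deduction you give. Your added remark on the isotopy-invariance of the $\bar\cA$-module structure is the right point to flag and matches the paper's intent.
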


       In light of the results from \cite{Xie-Zhang} concerning
       higher-genus orbifolds, it is possible that the bound $n'+2f$
       defined in the Corollary is not particularly strong. It may be
       that $n'+2f$ is a lower bound for $n_{g} + 2g$, where $n_{g}$
       is the geometric intersection number with a surface $S_{g}$
       of genus $g$ homologous to $S$. It is easy to visualize
       examples where $n_{1}+2$ is much smaller than $n_{0}$, for
       example.

       In the case that $n=n'$ in Proposition~\ref{prop:universalJ}
       (i.e~when algebraic and geometric intersection numbers are
       equal), the $\bar\cA$-module $I(Y,K)$ is annihilated by the defining ideal
       of the curve $D_{n}$. This means that we can interpret $I(Y,K)$
       as a coherent sheaf on $D_{n}$.

       \subsection{The degrees of the relations}

          \begin{figure}
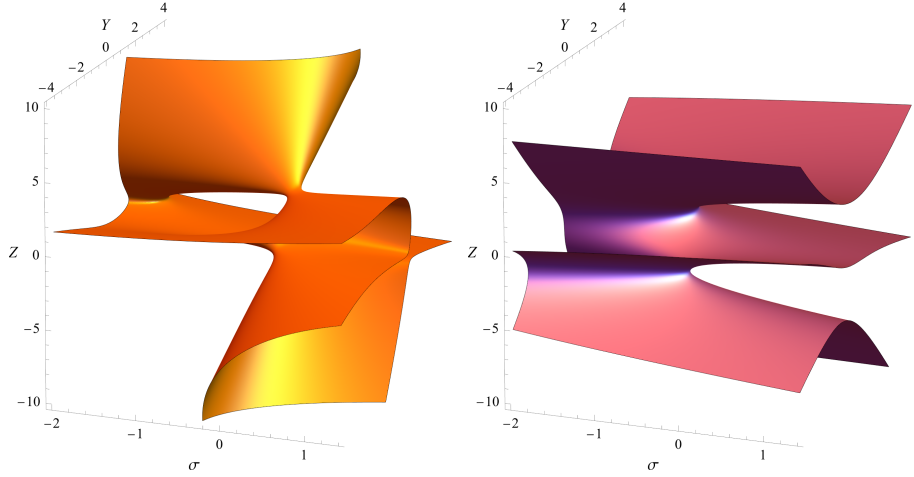

           \begin{center}
       \includegraphics[width=6cm]{figures/Surface-1-m3}
        \includegraphics[width=6cm]{figures/Surface-2-m3}
    \end{center}
    \caption{\label{fig:surfaces}The real loci defined by the
    vanishing of the generators $G_{1}(n)$ (left) and $G_{2}(n)$
    (right) for $n=7$ in the coordinates $(\sigma,Y,Z)$. Only the
    part with $\epsilon=1$ is shown. The part with $\epsilon=-1$ is
    obtained by changing the sign of $Y$ and $Z$. These are smooth affine
    cubic surfaces.}
    \end{figure}
       
       The two generators $G_{1}(n)$, $G_{2}(n)$ for the ideal of
       relations for $I(Z_{n,-1})$ both have total degree $m=(n-1)/2$
       in $(\alpha,\delta)$ but larger degree in $\tau$. However, a
       substitution simplifies the polynomials a little: if we
       substitute
       \[
            \begin{gathered}
                Z = \tau \alpha \\
                Y = \tau \delta
            \end{gathered}
       \]
       then (after clearing unnecessary powers of $\tau$ from the
       denominator) we obtain a polynomial in $Z$, $Y$ and $\tau^{4}$.
       Writing $\sigma=\tau^{4}$, the total degree of the generators
       $G_{i}(n)$ in $(\sigma, Z, Y)$ is $m$. The real loci defined by
       the vanishing of these two polynomials in $(\sigma, Y, Z)$ are
       shown in Figure~\ref{fig:surfaces} for $n=7$.

       \bibliographystyle{abbrv}
\bibliography{itk}

 \end{document}